\DeclareMathOperator{\Hom}{Hom}
\DeclareMathOperator{\RHom}{\mathrm{R}\!\Hom}
\DeclareMathOperator{\cHom}{\mathscr{H}\hspace{-2pt}{\it om}}
\DeclareMathOperator{\End}{End}
\DeclareMathOperator{\REnd}{\mathrm{R}\!\End}
\DeclareMathOperator{\cEnd}{\mathscr{E}\hspace{-2pt}{\it nd}}
\def\lotimes{\otimes^\mathrm{L}}
\DeclareMathOperator{\rT}{\mathrm{T}}
\DeclareMathOperator{\cocone}{cocone}
\DeclareMathOperator{\HH}{HH}
\DeclareMathOperator{\md}{mod}
\renewcommand{\mod}{\md}
\DeclareMathOperator{\Mod}{Mod}
\DeclareMathOperator{\fl}{fl}
\DeclareMathOperator{\coh}{coh}
\DeclareMathOperator{\thick}{thick}
\DeclareMathOperator{\per}{per}
\DeclareMathOperator{\Loc}{Loc}
\DeclareMathOperator{\add}{add}
\def\seg{\#}
\def\rD{\mathrm{D}}
\def\rC{\mathrm{C}}
\def\Db{\mathrm{D^b}}
\newcommand\recollement[3]{\xymatrix{{#1}\ar[r]&{#2}\ar[r]\ar@/_9pt/[l]\ar@/^9pt/[l]&{#3}\ar@/_9pt/[l]\ar@/^9pt/[l] }}
\newcommand\recollementwithmaps[9]{\xymatrix{{#1}\ar[r]|-{#8}&{#2}\ar[r]|-{#5}\ar@/_10pt/[l]_-{#7}\ar@/^10pt/[l]^-{#9}&{#3}\ar@/_10pt/[l]_-{#4}\ar@/^10pt/[l]^-{#6} }}
\def\A{\mathscr{A}}
\def\B{\mathscr{B}}
\def\C{\mathscr{C}}
\renewcommand{\O}{\mathscr{O}}
\def\P{\mathscr{P}}
\def\cQ{\mathscr{Q}}
\renewcommand\S{\mathscr{S}}
\def\T{\mathscr{T}}
\def\U{\mathscr{U}}
\def\X{\mathscr{X}}
\def\Y{\mathscr{Y}}
\def\rL{\mathrm{L}}
\def\G{\Gamma}
\def\Ga{\Gamma}
\def\e{\varepsilon}
\def\La{\Lambda}
\def\Si{\Sigma}
\def\al{\alpha}
\def\om{\omega}
\def\Z{\mathbb{Z}}
\def\AA{\mathbb{A}}
\def\PP{\mathbb{P}}
\def\op{\mathrm{op}}
\def\dg{\mathrm{dg}}
\def\rsimeq{\rotatebox{-90}{$\simeq$}}
\def\xsimeq{\xrightarrow{\simeq}}
\def\ysimeq{\xleftarrow{\simeq}}
\def\vsubset{\rotatebox{90}{$\subset$}}
\def\vin{\rotatebox{90}{$\in$}}
\newtheorem{Thm}{Theorem}[section]
\newtheorem{Lem}[Thm]{Lemma}
\newtheorem{Prop}[Thm]{Proposition}
\newtheorem{Cor}[Thm]{Corollary}
\newtheorem{Prop-Def}[Thm]{Proposition-Definition}
\newtheorem{Thm-Def}[Thm]{Theorem-Definition}
\theoremstyle{definition}
\newtheorem{Def}[Thm]{Definition}
\newtheorem{Ex}[Thm]{Example}
\newtheorem{Qs}[Thm]{Question}
\newtheorem{Setup}[Thm]{Setting}
\theoremstyle{remark}
\newtheorem{Rem}[Thm]{Remark}
\newcounter{step}
\def\disoplus{\displaystyle\bigoplus}
\title{Calabi--Yau completions for roots of dualizing dg bimodules}
\author{Norihiro Hanihara}
\thanks{This work is supported by JSPS KAKENHI Grant Number JP22KJ0737}
\subjclass[2020]{16E45, 18G80, 16E35, 16G10, 16S38, 14A22}
\keywords{Inverse dualizing bimodule, Root pair, Calabi-Yau completion, Graded Calabi-Yau dg category, $a$-Segre product, $a$-folded cluster category, Beilinson's theorem}
\address{Faculty of Mathematics, Kyushu University, 744 Motooka, Nishi-ku, Fukuoka, 819-0395, Japan}
\email{hanihara@math.kyushu-u.ac.jp}
\date{}
\def\seg{\#}
\def\e{\varepsilon}
\def\ev{\mathrm{ev}}
\begin{document}
\begin{abstract}
Roots of shifted Serre functors appear naturally in representation theory and algebraic geometry. We give an analogue of Keller's Calabi-Yau completion for roots of shifted inverse dualizing bimodules over dg categories.
Given a positive integer $a$, we introduce the notion of the $a$-th root pair on smooth dg categories and define its Calabi-Yau completion. We prove that the Calabi-Yau completion has the Calabi-Yau property when the $a$-th root pair has certain invariance under an action of the cyclic group of order $a$, and observe that it is only twisted Calabi-Yau in general.
Next, we establish a bijection between Adams graded Calabi-Yau dg categories of Gorenstein parameter $a$ and $a$-th root pairs on a dg category with the cyclic invariance.
Applying this bijection, we prove that a certain operation on dg categories, called the $a$-Segre product, allows us to reproduce Calabi-Yau dg categories.
Furthermore, we discuss the cluster category of these Calabi-Yau completions, and prove that it is a $\Z/a\Z$-quotient of the usual cluster category, which thereby establishes the $a$-th root versions of cluster categories.
In the appendix, we give a generalization of Beilinson's theorem on tilting bundles on projective spaces to the setting of Adams graded dg categories.
\end{abstract}

\maketitle
\setcounter{tocdepth}{1}
\tableofcontents
\section{Introduction}
Serre duality on triangulated categories is a fundamental structure appearing in various areas of mathematics including algebraic geometry and representation theory, especially in the study of derived categories and their variants. 
If there exists a functor which realizes the Serre duality, called {\it Serre functor}, it is unique up to a natural isomorphism, thus it is an invariant of a triangulated category. 

Often, a description of the Serre functor $S$ of a triangulated category shows that there is a natural root of a {\it shifted Serre functor} $S_n:=S\circ [-n]$, which is the subject of this paper.
For example, the derived category $\Db(\coh\PP^n)$ of the projective space has the Serre functor $S=(-n-1)[n]$, so the shifted Serre functor $S_n=(-n-1)$ has an $(n+1)$-st root $(-1)$.
Moreover, by the uniqueness of Serre functors, the existence of a root of a shifted Serre functor translates under triangle equivalences. 
In our example, Beilinson's equivalence $\Db(\coh\PP^n)\simeq\Db(\mod A)$ for a finite dimensional algebra $A$ shows that the category $\Db(\mod A)$ also has an $(n+1)$-st root of $S_n$, which gives some non-trivial implications on the finite dimensional algebra $A$.
Such roots of shifted Serre functors arise not only from projective varieties, but also from representation theory of finite dimensional algebras or commutative rings, for example as certain reflection functors in quiver representations \cite{KMV,ha4} and tilting theory for singularity categories \cite{haI}.

The aim of this paper is to study the relationship between such roots of shifted Serre functors and Calabi-Yau algebras. We give an analogue of Keller's {Calabi-Yau completion} \cite{Ke11} for these roots, give their basic properties, including some results on generalized Segre products, and their application to cluster categories.

\subsection{Root pairs and their Calabi-Yau completions}
Algebraically, (the inverse of) the Serre functor is formalized, at the level of differential graded (dg) enhancements, as the {\it (inverse) dualizing bimodule}. Let $A$ be a dg algebra (rather than a category for simplicity) over a field $k$ enhancing a given triangulated category $\T$, that is, the perfect derived category $\per A$ of $A$ is triangle equivalent to $\T$. Then the {\it inverse dualizing bimodule} of $A$ is
\[ \RHom_{A^e}(A,A^e)\in\rD(A^e), \]
where $A^e=A^\op\otimes_k A$ is the enveloping algebra over $k$. This bimodule exhibits the Serre duality for certain pairs of objects in the derived category (see \cite{Ke08}).

Next, recall that a dg algebra $A$ is {\it smooth} if it is perfect as a bimodule, that is, $A\in\per A^e$. Then the {\it $n$-Calabi-Yau completion} \cite{Ke11} of a smooth dg algebra $A$ is defined for each integer $n$ as the derived tensor algebra
\[ \Pi_{n}(A):=\rT^\rL_A(\RHom_{A^e}(A,A^e)[n-1]), \]
more precisely, the tensor algebra $\rT_A\!\theta$ of a cofibrant resolution $\theta\to\RHom_{A^e}(A,A^e)[n-1]$ over $A^e$. The morphism $A\to\Pi_{n}(A)$ is an algebraic counterpart of the canonical bundle $\om_X\to X$ over a variety $X$.
It is a fundamental result due to Keller that the Calabi-Yau completion $\Pi=\Pi_{n}(A)$ is indeed $n$-Calabi-Yau, that is, $\Pi$ is smooth and there is an isomorphism
\[ \RHom_{\Pi^e}(\Pi,\Pi)[n]\simeq\Pi \text{ in } \rD(\Pi^e). \]
In fact, the above isomorphism comes from a class in the (negative) cyclic homology of $\Pi$, called the (left) Calabi-Yau structure \cite{Ke11+,BD19}.

Such Calabi-Yau (dg) algebras and their variants have been of great interest in representation theory, commutive or non-commutative algebraic geometry, and so on. They play an essential role in the categorification of cluster algebras \cite{BMRRT,Am09}. At the same time, they are higher dimensional Auslander algebras \cite{Iy07b} as well as non-commutative crepant resolutions \cite{VdB04,IW14}.
We refer, for example, to \cite{Am09,AS,Br,Gi,ha6,IQ,Iy07b,IW14,Ke11,VdB04,Wu23,Ye16} for such studies.

\bigskip

Our first goal is to generalize the above construction and result to the setting of roots of shifted Serre functors.
For this we start with an algebraic formulation of such roots.
Recall that a dg bimodule $X$ over a dg algebra $A$ is {\it invertible} if there is a dg bimodule $Y$ such that $X\lotimes_AY\simeq A$ and $Y\lotimes_AX\simeq A$ in $\rD(A^e)$. For $X\in\rD(A^e)$ we denote by $X^a$ the $a$-fold derived tensor power over $A$.
To construct a Calabi-Yau dg algebra from a root of a shifted Serre functor, we need not just a root of a shifted inverse dualizing bimodule, but also a projective module. This leads to the following definition.
\begin{Def}
Let $A$ be a smooth dg algebra, let $a$ be a positive integer, and let $d$ be an arbitrary integer.
\begin{enumerate}
\item An {\it $a$-th root} of the $d$-shifted inverse dualizing bimodule is an invertible bimodule $U$ over $A$ satisfying
\[ U^a\simeq\RHom_{A^e}(A,A^e)[d] \]
in $\rD(A^e)$.
\item An {\it $a$-th root pair} of the $d$-shifted inverse dualizing bimodule is a pair $(U,P)$ consisting of $U$ as above and $P\in\add A$ satifying the following.
\[ \per A=\thick (P\lotimes_AU^{a-1})\perp\cdots\perp\thick (P\lotimes_AU)\perp\thick P, \]
where $\perp$ means the stable $t$-structure (or the semi-orthogonal decomposition).
\item Let $(U,P)$ be an $a$-th root pair of $\RHom_{A^e}(A,A^e)[d]$ and let $e\in A$ be the idempotent corresponding to $A$. The {\it $(d+1)$-Calabi-Yau completion} of the root pair $(U,P)$ is the idempotent truncation of the derived tensor algebra
\[ \Pi_{d+1}^{(1/a)}(A; U,P)=\Pi_{d+1}^{(1/a)}(A):=e(\rT^\rL_A\!U)e. \]
\end{enumerate}
\end{Def}
For $a=1$, we must have $U=\RHom_{A^e}(A,A^e)[d]$ and $\add P=\add A$, thus our Calabi-Yau completion is nothing but the ordinary $(d+1)$-Calabi-Yau completion. We shall mostly use the notation $\Pi_{d+1}^{(1/a)}(A)$ for the Calabi-Yau completion of an $a$-th root pair on a dg algebra $A$. Note, however, that the Calabi-Yau completion depends on the root pair $(U,P)$, not just on $A$, so this notation is ambiguous.

A motivating example of a root pair comes from Beilinson's equivalence $\Db(\coh\PP^d)\simeq\Db(\mod A)$. As mentioned above, the $d$-shifted inverse Serre functor $\RHom_{A^e}(A,A^e)[d]$ has a $(d+1)$-st root corresponding the twist $(1)$ on $\Db(\coh\PP^d)$. Moreover, the tilting object $\bigoplus_{i=0}^d\O(i)$ yields a stable $t$-structure, hence a $(d+1)$-st root pair on $A$. In this case, the Calabi-Yau completion will be the polynomial ring in $d+1$ variables.
Geometrically, the diagram $e(\rT^\rL_A\!U)e\rightarrow \rT^\rL_A\!U\leftarrow A$ corresponds to $\AA^{d+1}\leftarrow\O_{\PP^d}(-1)\rightarrow\PP^d$, where the rightgoing map is the total space of the $(d+1)$-st root $\O(-1)$ of the canonical bundle over $\PP^d$, and  the leftgoing map is the blow-up of $\AA^{d+1}$ at the origin.

Note that the above example indicates that we need to take the idempotent subalgebra of the tensor algebra $\rT^\rL_A\!U$ in order for the algebra to be Calabi-Yau. In fact, we will see in \ref{fakeCY} how far the tensor algebra is from being Calabi-Yau.

We also remark that a semi-orthogonal decomposition as in (2) is also called a (rectangular) {Lefschetz decomposition} \cite{Ku07}; we refer to \cite{Ku19} for examples arising from algebraic geometry.

\medskip
A natural expectation is that our Calabi-Yau completion $\Pi_{d+1}^{(1/a)}(A)$ of a root pair $(U,P)$ is Calabi-Yau. We observe, however, that this is {\it not} necessarily the case; in fact, $\Pi_{d+1}^{(1/a)}(A)$ is only {twisted} Calabi-Yau in general, see \ref{example} for a specific example. This indicates that we need to take a careful choice of the $a$-th root $U$. We introduce the notion of {\it cyclic invariance} of a root pair (\ref{cyc}) which is a subtle condition for the Calabi-Yau completion to be indeed Calabi-Yau. The first main result of this paper is the following.
\begin{Thm}[=\ref{CYstr}]
Let $A$ be a smooth dg algebra and let $(U,P)$ be an $a$-th root pair of the $d$-shifted inverse dualizing bimodule which is cyclically invariant. Then the Calabi-Yau completion $\Pi_{d+1}^{(1/a)}(A)$ is smooth and $(d+1)$-Calabi-Yau.
\end{Thm}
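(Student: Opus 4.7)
My strategy is to first establish a twisted Calabi-Yau property for the full tensor dg algebra $\tilde\Pi := \rT^\rL_A U$, and then use the cyclic invariance of $(U,P)$ to trivialize the twist upon passing to the idempotent truncation $\Pi = e\tilde\Pi e$. Smoothness will be formal: the invertible (hence perfect) bimodule $U$ makes $\tilde\Pi$ fit into the standard tensor-algebra short exact sequence of $\tilde\Pi$-bimodules
\[ 0\to\tilde\Pi\otimes_A U\otimes_A\tilde\Pi\to\tilde\Pi\otimes_A\tilde\Pi\to\tilde\Pi\to 0, \]
and combined with smoothness of $A$ this exhibits $\tilde\Pi$ as perfect over its enveloping algebra; smoothness then descends to the idempotent truncation $\Pi$.

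To compute the inverse dualizing bimodule of $\tilde\Pi$, apply $\RHom_{\tilde\Pi^e}(-,\tilde\Pi^e)$ to the triangle above. Using invertibility of $U$ together with the hypothesis $U^a\simeq\RHom_{A^e}(A,A^e)[d]$, and following the pattern of Keller's computation for the ordinary Calabi-Yau completion, I expect a quasi-isomorphism
\[ \RHom_{\tilde\Pi^e}(\tilde\Pi,\tilde\Pi^e)[d+1]\simeq \tilde\Pi\langle a\rangle \quad\text{in }\rD(\tilde\Pi^e), \]
where $\langle a\rangle$ denotes the shift of the internal tensor-power $\ZZ$-grading by $a$. In other words, $\tilde\Pi$ is $(d+1)$-twisted Calabi-Yau, with the twist given by the grading-shift automorphism, which is exactly what one should expect for a tensor algebra whose generator is only an $a$-th root of the shifted inverse dualizing bimodule.

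The final step is to descend to $\Pi = e\tilde\Pi e$. The cyclic invariance of the root pair $(U,P)$ should supply precisely the data needed to trivialize the grading-shift automorphism on $\Pi$, giving an isomorphism $(e\tilde\Pi e)\langle a\rangle\simeq e\tilde\Pi e$ as ungraded $\Pi$-bimodules. Heuristically, cyclic invariance encodes a canonical identification $P\otimes_A U^a\simeq P[d]$ compatible with the semi-orthogonal decomposition; upon restriction by the idempotent $e$, this matches the grading shift on $\tilde\Pi$ with an inner automorphism of $\Pi$, yielding the desired quasi-isomorphism $\RHom_{\Pi^e}(\Pi,\Pi^e)[d+1]\simeq\Pi$.

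The main obstacle will be this last step. As the paper notes explicitly, without cyclic invariance the Calabi-Yau completion is only twisted Calabi-Yau, so the nontriviality lies entirely in translating the abstract cyclic invariance condition into an honest bimodule isomorphism compatible with the Calabi-Yau duality established for $\tilde\Pi$. In particular, one must verify that the trivialization lifts to the underlying cyclic-homology Calabi-Yau structure (as in Keller's original construction), not merely that the bimodule isomorphism exists at the level of $\rD(\Pi^e)$.
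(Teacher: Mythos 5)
Your proposed intermediate step is incorrect, and it is the crux of the matter. You assert that for $\Si := \rT^{\rL}_A U$ one gets a quasi-isomorphism $\RHom_{\Si^e}(\Si,\Si^e)[d+1]\simeq\Si\langle a\rangle$, a grading-shift ``twist'' of $\Si$ itself. But a grading shift is invisible at the level of underlying ungraded bimodules, so this would say $\Si$ is genuinely $(d+1)$-Calabi--Yau --- which is false. The motivating example already rules it out: for the Beilinson algebra of $\PP^d$ with the $(d+1)$-st root of $\nu_d^{-1}$, the tensor algebra $\Si$ corresponds geometrically to the blow-up of $\AA^{d+1}$ at the origin, which is not Calabi--Yau. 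What is actually true (this is Theorem \ref{fakeCY}/\ref{fakeMM} in the paper) is $\RHom_{\Si^e}(\Si,\Si^e)[d+1]\simeq\Si_{\geq a-1}$, where $\Si_{\geq a-1}=\bigoplus_{l\geq a-1}U^l$ is a \emph{proper} two-sided ideal of $\Si$. If you run your own computation --- dualize $\Si\lotimes_A U\lotimes_A\Si\to\Si\lotimes_A\Si\to\Si$, substitute $\RHom_{A^e}(A,A^e)[d]\simeq U^a$ and $\RHom_{A^e}(U,A^e)[d]\simeq U^{a-1}$ (Lemma \ref{duals}), and compare with the triangle in Lemma \ref{ten} --- the cone you are forced to compare with is $\Si_{\geq a-1}$, not $\Si$.

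This also means the role you assign to cyclic invariance is misplaced. It is not used to ``trivialize a twist'' after restricting to $\Pi=e\Si e$; it is needed \emph{before} that, to show that the two triangles above are compatible, i.e.\ that the dualized map and the map from Lemma \ref{ten} agree between their isomorphic first two terms so that the cones $\Si^\vee[d+1]$ and $\Si_{\geq a-1}$ can be identified. This is the content of Proposition \ref{daiji}, which is the technical heart and is phrased in terms of Casimir elements and the $\Z/a\Z$-action on Hochschild chains $M^a\otimes_{A^e}A$. Once $\Si^\vee[d+1]\simeq\Si_{\geq a-1}$ is in hand, the descent to $\Pi$ is a ``soft'' step: applying $e_0(-)e_{a-1}$, one uses the orthogonality in \ref{setup}(iii) to see that both $e_0\Si e_{a-1}$ and $e_0\Si_{\geq a-1}e_{a-1}$ compute $\Pi$, and Lemma \ref{Pi-dual} identifies the restricted dual with $\RHom_{\Pi^e}(\Pi,\Pi^e)$. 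No further use of cyclic invariance is made in that step. Your smoothness argument, by contrast, is essentially fine and matches the paper's (restrict the standard triangle and use $e_0\Si\in\per\Pi^\op$, $\Si e_{a-1}\in\per\Pi$).
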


Note that by \cite{VdB15}, any Calabi-Yau algebra (with certain compleness) is obtained as an (ordinary) Calabi-Yau completion, up to a deformation. 
Nevertheless, our description of Calabi-Yau algebras has advantages of, for example, being able to reproduce themselves, and having nice descriptions of the cluster categories, and we are totally ignorant of the effects of deformations on these structures.


\subsection{Correspondences}
Notice that the Calabi-Yau dg algebra $\Pi_{d+1}^{(1/a)}(A)=e(\rT^\rL_A\!U)e$ constructed above has a natural grading induced from the tensor grading on $\rT^\rL_A\!U$. Such an additional grading to the cohomological grading of dg algebras is sometimes called an {\it Adams grading}. It has played a prominent role in the structure theory of various kinds of (dg) algebras and categories, e.g.\!  \cite{MM,Ke11,IQ,haI,FKQ}.
One of the landmarks in this direction is a bijection between the following two classes of algebras (see \ref{mm}):
\begin{itemize}
\item positively graded $(d+1)$-Calabi-Yau (non-dg) algebras of Goresntein parameter $1$ (see \ref{GPDef}), and
\item finite dimensional $d$-representation infinite algebras in the sense of \cite{HIO} (see \ref{dRIDef}).
\end{itemize}
We extend the above correspondence to higher Gorenstein parameters in terms of root pairs. Furthermore, as a generalization of \cite{HaIO}, we state the result in the setting of dg algebras (rather than ordinary algebras). We refer to \ref{qe} for the notion of quasi-equivalence of root pairs.
\begin{Thm}[=\ref{MM}]\label{iMM}
Let $d$ be an integer and let $a$ be a positive integer. There exists a bijection between the following.
\begin{enumerate}
\renewcommand{\labelenumi}{(\roman{enumi})}
\renewcommand{\theenumi}{\roman{enumi}}
\item The set of graded quasi-equivalence classes of Adams positively graded $(d+1)$-Calabi-Yau algebras $\Pi$ of Gorenstein parameter $a$, that is, there is an isomorphism
\[ \RHom_{\Pi^e}(\Pi,\Pi^e)[d+1]\simeq\Pi(a) \text{ in } \rD^\Z(\Pi^e), \]
and such that $\Pi_0\in\per\Pi$ and $\Pi_0\in\per\Pi^\op$.
\item The set of quasi-equivalence classes of cyclically invariant root pairs $(U,P)$ on smooth dg algebras $A$.
\end{enumerate}
The correspondences are given as follows.
\begin{itemize}
	\item From {\rm(\ref{CYdgalg})} to {\rm(\ref{smdg})}: Take $\Pi$ to $A=\begin{pmatrix}\Pi_0&0&\cdots&0\\ \Pi_1&\Pi_0&\cdots&0\\ \vdots&\vdots&\ddots&\vdots\\ \Pi_{a-1}&\Pi_{a-2}&\cdots&\Pi_0\end{pmatrix}$, $U=\begin{pmatrix}\Pi_1 &\Pi_0&\cdots& 0\\\vdots&\vdots&\ddots&\vdots \\ \Pi_{a-1}&\Pi_{a-2}&\cdots&\Pi_0\\ \Pi_a&\Pi_{a-1}&\cdots&\Pi_1\end{pmatrix}$, and let $P=eA$ for the idempotent $e$ at the upper left entry of the matrix.
	\item From {\rm(\ref{smdg})} to {\rm(\ref{CYalg})}: Take the $(d+1)$-Calabi-Yau completion.
\end{itemize}
\end{Thm}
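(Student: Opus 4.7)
The plan is to define two maps $\Phi$ and $\Psi$ going in opposite directions and verify that they are mutually inverse bijections on quasi-equivalence classes. The direction $\Psi$ is the Calabi-Yau completion itself, which is already known to produce a smooth $(d+1)$-Calabi-Yau dg algebra by Theorem~\ref{CYstr}; the main work is to set up $\Phi$ and check the round trips.

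For $\Phi$, given an Adams graded $(d+1)$-Calabi-Yau dg algebra $\Pi$ of Gorenstein parameter $a$, I form the block lower-triangular matrix dg algebra $A$ with $(i,j)$-entry $\Pi_{i-j}$ for $0\le i,j\le a-1$ (using $\Pi_k=0$ for $k<0$), and the matrix bimodule $U$ with $(i,j)$-entry $\Pi_{i-j+1}$; set $P:=eA$ for the top-left idempotent $e$. The hypothesis $\Pi_0\in\per\Pi$ (and its opposite) implies, via a standard derived Morita argument applied to the graded truncations of $\Pi$, that $A$ is smooth. I would then verify the three properties of a root pair: (a) $U$ is invertible, with inverse read off from the CY duality; (b) $U^{\lotimes_A a}\simeq\RHom_{A^e}(A,A^e)[d]$, because the $(i,j)$-entry of $U^{\lotimes_A a}$ is $\Pi_{i-j+a}$, and the Gorenstein parameter hypothesis $\RHom_{\Pi^e}(\Pi,\Pi^e)[d+1]\simeq\Pi(a)$ matches these with the entries of $\RHom_{A^e}(A,A^e)[d]$; (c) the semi-orthogonal decomposition follows from the columnwise block structure of $A$ together with the vanishing $\Hom_{\per A}(e_iA, e_jA[\ast])=0$ for $i<j$, where $e_0,\ldots,e_{a-1}$ are the diagonal idempotents. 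Cyclic invariance then comes naturally from the grading automorphism of $\Pi$ rotating the blocks.

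For the round trip $\Psi\circ\Phi$: starting from $\Pi$, the completion gives $eU^{\lotimes_A n}e\simeq\Pi_n$ on each degree-$n$ piece via a direct matrix computation, and the tensor multiplication in $\rT^\rL_A U$ matches the multiplication in $\Pi$, recovering $\Pi$ as an Adams graded dg algebra. Conversely for $\Phi\circ\Psi$: starting from $(U,P)$, the blocks $\Pi_k = eU^{\lotimes_A k}e$ for $0\le k\le a-1$ reassemble into the matrix algebra, and the semi-orthogonal decomposition ensures that the generator $\bigoplus_k P\lotimes_A U^k$ of $\per A$ is identified with the idempotent decomposition of the reconstructed matrix algebra, so that the recovered root pair is quasi-equivalent to the original. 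One also needs to check that the Gorenstein parameter of $\Psi(U,P)$ is exactly $a$; this follows from combining cyclic invariance with $U^{\lotimes_A a}\simeq\RHom_{A^e}(A,A^e)[d]$ to upgrade the CY structure from Theorem~\ref{CYstr} into an Adams graded isomorphism of Gorenstein parameter $a$.

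The main obstacle will be tracking the cyclic invariance coherently. The map $\Phi$ produces a root pair with a natural $\Z/a\Z$-equivariant structure inherited from the Adams grading on $\Pi$; to complete the bijection I must show that $\Psi$ lands precisely in cyclically invariant root pairs, and that the invariance datum is faithfully reconstructed. This forces the constructions to be carried out at the level of dg bimodules with appropriate cofibrant resolutions, rather than just in the derived category, and to track equivariant compatibilities through the Calabi-Yau completion. A secondary but nontrivial point is to verify that both constructions are compatible with quasi-equivalences on either side, so that the bijection descends to the quasi-equivalence classes claimed in the statement.
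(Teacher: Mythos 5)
Your overall plan mirrors the paper's (define the two maps, check the three root-pair axioms, verify the round trips), but two of the steps you treat as routine are in fact where the real work in the paper lives, and the arguments you sketch for them would not go through.

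First, the claim that ``the $(i,j)$-entry of $U^{\lotimes_A a}$ is $\Pi_{i-j+a}$'' by ``a direct matrix computation'' is not correct as stated. The tensor powers here are \emph{derived} tensor powers over the block-triangular dg algebra $A=\REnd_\Pi^\Z(T)$, and the natural composition map $\RHom_\Pi^\Z(T,T(1))\lotimes_A\RHom_\Pi^\Z(T,T(1))\to\RHom_\Pi^\Z(T,T(2))$ need not be a quasi-isomorphism for a general tilting object; its cone is controlled by Ext-vanishing conditions that are not matrix-visible. In the paper this is precisely Lemma~\ref{power}, and it is proved by passing to the graded cluster category $\rC^\Z(\Pi)$ and invoking the Beilinson-type Theorem~\ref{B} together with Lemma~\ref{F} (which identifies $\Hom$ in $\per^\Z\!\Pi$ with $\Hom$ in $\rC^\Z(\Pi)$ in the relevant range). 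Without this machinery, identifying $U^{\lotimes_A l}$ with $\RHom_\Pi^\Z(T,T(l))$ is a gap, and it infects both the verification of axiom~(i) of a root pair and your round-trip computation $eU^{\lotimes_A n}e\simeq\Pi_n$.

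Second, and more seriously, your treatment of cyclic invariance does not work. You write that it ``comes naturally from the grading automorphism of $\Pi$ rotating the blocks,'' but the Adams-degree shift $(1)$ is not an automorphism of the matrix algebra $A$ (it is a Morita equivalence, not an identification of $A$ with itself), and even if one had such an automorphism it would be an automorphism of $A$, not a datum about the cyclic $\Z/a\Z$-action on the Hochschild-type complex $U^a\lotimes_{A^e}A$, which is what cyclic invariance actually refers to (Definition~\ref{cyc}). The paper's proof of this point is Lemma~\ref{well}: one analyzes the induced map $\HH_\ast(\Pi)\to\HH_\ast(\Si)$ on Hochschild homologies, shows by a grading/truncation argument (Lemmas~\ref{geqa} and~\ref{gr-Pi-dual}) that it sends isomorphisms $\Pi^\vee[d+1]\xrightarrow{\sim}\Pi(a)$ to isomorphisms $\Si^\vee[d+1]\xrightarrow{\sim}\Si_{\geq a-1}(a)$, and then invokes the equivalence between cyclic invariance and compatibility of the pair $(\varphi,\psi)$ established in Proposition~\ref{daiji}. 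This chain of reductions --- from $\Pi$ to $\Si$, from the graded CY property of $\Si$ to the compatibility of the relevant morphisms, and from there to cyclic invariance via~\ref{daiji} --- is the technical heart of the direction (i)$\Rightarrow$(ii), and it is absent from your proposal. You correctly flag cyclic invariance as the ``main obstacle,'' but the route you indicate toward resolving it is not viable, so this counts as a genuine gap rather than a deferred detail.
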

As a special case, when the Calabi-Yau dg algebras in (\ref{CYdgalg}) are concentrated in (cohomological) degree $0$, then we get $d$-representation infinite algebras with root pairs as the ones in (\ref{smdg}), see \ref{fdMM}. Also, we refer to \cite{HaIO} which is one of motivations of \ref{iMM}, for a special case $a=1$ into another direction.

\medskip
We also have the following characterization of (non-Calabi-Yau) dg algebras which appear as the tensor algebra $\Si:=\rT^\rL_A\!U$ of a bimodule $U$ giving an $a$-th root of $\RHom_{A^e}(A,A^e)[d]$. For such $\Si=\bigoplus_{i\geq0}U^i$, we denote by $\Si_{\geq l}:=\bigoplus_{i\geq l}U^i$ the two-sided ideal of $\Si$, where $U^i$ means the $i$-fold derived tensor power of $U$ over $A$.
\begin{Thm}[=\ref{fakeMM}]
Let $d$ be an integer and let $a$ be a positive integer. There exists a bijection between the following.
\begin{enumerate}
\renewcommand{\labelenumi}{(\roman{enumi})}
\renewcommand{\theenumi}{\roman{enumi}}
\item The set of graded quasi-equivalence classes of Adams positively graded smooth dg algebras $\Si$ satisfying $\RHom_{\Si^e}(\Si,\Si^e)[d+1]\simeq\Si_{\geq a-1}(a)$ in $\rD^\Z(\Si^e)$.
\item The set of quasi-equivalence classes of pairs $(A,U)$ where $A$ is a smooth dg algebra and $U$ is a cyclically invariant $a$-th root of $\RHom_{A^e}(A,A^e)[d]$.
\end{enumerate}
The correspondence from {\rm(i)} to {\rm(ii)} is given by $\Si\mapsto(\Si_0,\Si_1)$, and from {\rm(ii)} to {\rm(i)} by taking the derived tensor algebra.
\end{Thm}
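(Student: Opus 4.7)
The plan is to prove the bijection by constructing the two assignments explicitly and checking mutual inverseness, reusing the key computations from the proof of Theorem~\ref{MM}: this theorem should be a variant of that correspondence where the idempotent truncation (and the projective $P$) are simply removed. From (\ref{smdg}) to (\ref{CYdgalg}) we send $(A,U)$ to $\Si:=\rT^\rL_A U$ with its natural Adams grading by tensor degree, and in the other direction we send $\Si$ to $(A,U):=(\Si_0,\Si_1)$ with $U$ regarded as an $A$-$A$-bimodule.

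For the direction from (\ref{smdg}) to (\ref{CYdgalg}), smoothness of $\Si=\rT^\rL_A U$ follows from the fundamental short exact sequence of $\Si$-bimodules
\[ 0\to \Si\lotimes_A U\lotimes_A\Si\to \Si\lotimes_A\Si\to \Si\to 0, \]
since $A\in\per A^e$ by assumption and $U\in\per A^e$ because $U$ is invertible and $U^a\simeq\RHom_{A^e}(A,A^e)[d]\in\per A^e$, so both outer terms lie in $\per\Si^e$ after induction along $A^e\to\Si^e$. To establish the bimodule identity I would apply $\RHom_{\Si^e}(-,\Si^e)$ to the same sequence and use the adjunction
\[ \RHom_{\Si^e}(\Si\lotimes_A M\lotimes_A\Si,\Si^e)\simeq\Si\lotimes_A\RHom_{A^e}(M,A^e)\lotimes_A\Si \]
for $M\in\per A^e$, valid by smoothness of $A$. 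Using $\RHom_{A^e}(A,A^e)\simeq U^a[-d]$ from the root condition and $\RHom_{A^e}(U,A^e)\simeq U^{a-1}[-d]$ from invertibility together with cyclic invariance, the resulting triangle becomes, after shifting by $[d+1]$ and reindexing Adams grades, exactly the triangle arising from the bimodule resolution $0\to\Si\otimes_A U^a\otimes_A\Si\to\Si\otimes_A U^{a-1}\otimes_A\Si\to\Si_{\geq a-1}\to0$ of the ideal $\Si_{\geq a-1}$. The Adams shift $(a)$ appears because the middle factor in $\Si\lotimes_A U^k\lotimes_A\Si$ contributes $k$ to the Adams degree while $\omega$ carries degree $0$; matching yields $\RHom_{\Si^e}(\Si,\Si^e)[d+1]\simeq\Si_{\geq a-1}(a)$ in $\rD^\Z(\Si^e)$.

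For the direction from (\ref{CYdgalg}) to (\ref{smdg}), set $A=\Si_0$ and $U=\Si_1$. Isolating the Adams degree $-1$ and $0$ components of the identity $\RHom_{\Si^e}(\Si,\Si^e)[d+1]\simeq\Si_{\geq a-1}(a)$ in the triangle induced by the standard bimodule presentation of $\Si_{\geq a-1}$ produces an isomorphism $U^a\simeq\RHom_{A^e}(A,A^e)[d]$ in $\rD(A^e)$; invertibility of $U$ is then forced because $U^a$ is invertible (both sides being autoequivalences on $\rD(A)$), and cyclic invariance is obtained from the fact that the iso holds simultaneously in all Adams degrees, hence compatibly with the rotation $U^i\mapsto U^{i+a}$. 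Smoothness of $A$ will be extracted by tracking the Adams degree $0$ component of a finite resolution of $\Si$ by finitely generated free $\Si^e$-modules witnessing $\Si\in\per\Si^e$. Mutual inverseness follows by direct construction in one direction and by producing a graded quasi-isomorphism $\rT^\rL_{\Si_0}\Si_1\to\Si$ in the other, verified via the bimodule identity. The principal obstacle is precisely this reverse direction: unwinding the single graded identity in (\ref{CYdgalg}) to simultaneously recover smoothness of $\Si_0$, the isomorphism $\Si_1^{\otimes_{\Si_0} a}\simeq\omega_{\Si_0}[d]$, and its cyclic invariance, all of which amount to a careful degree-by-degree analysis of the fundamental triangle in $\rD^\Z(\Si^e)$.
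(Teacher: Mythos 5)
Your overall architecture is right — the maps are $\Si\mapsto(\Si_0,\Si_1)$ and $(A,U)\mapsto\rT^\rL_A U$, and the dualized standard triangle must be matched against the bimodule triangle for $\Si_{\geq a-1}$. But there are two genuine gaps, one in each direction, and they are not cosmetic.

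For (ii)$\to$(i), having $\RHom_{A^e}(A,A^e)\simeq U^a[-d]$ and $\RHom_{A^e}(U,A^e)\simeq U^{a-1}[-d]$ identifies only the \emph{objects} in the first two slots of the two triangles; it does not by itself show that the connecting \emph{morphisms} agree, i.e.\ that the comparison square commutes. This is precisely the content of the compatibility condition (Definition~\ref{compat}) and it is what cyclic invariance is used for: Proposition~\ref{daiji} proves that the square commutes for $(\varphi,-\varphi^+)$ if and only if $\varphi$ is $a$-cyclically invariant, using the Casimir/Hochschild computations of Lemmas~\ref{LemC} and \ref{itsumo}. Your proposal invokes ``together with cyclic invariance'' as a magic word but does not name or supply this mechanism; without it the argument stalls at two triangles with isomorphic first two terms and no reason the isomorphisms assemble to an isomorphism of triangles. (The paper simply cites Theorem~\ref{fakeCY}, which was proved exactly via \ref{daiji}, so the reduction is the whole content here.)

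For (i)$\to$(ii), the clean way to ``unwind the single graded identity'' is missing from your sketch. The paper first uses the stable $t$-structure $\per^{\leq 0}\Si^e\perp\per^{\geq 1}\Si^e$ to deduce $\Si\in\per^{[0,1]}\Si^e$ from the two inputs ($\Si$ positively graded smooth, and $\Si^\vee[d+1]\simeq\Si_{\geq a-1}(a)$ concentrated in Adams degrees $\geq -1$), and then invokes Proposition~\ref{chtensor}, a characterization result from \cite{HaIO}, to conclude that the canonical map $\rT^\rL_{\Si_0}\Si_1\to\Si$ is a graded quasi-equivalence. Only after this is established does the triangle comparison make sense: both triangles sit in $\per^\Z\Si^e$ with left term generated in Adams degree $1$, middle in degree $0$, and isomorphic cones, so they are both the truncation triangle of the cone for the same stable $t$-structure and hence isomorphic; the resulting maps of degree $0$ then descend to $\A^e$-morphisms $\varphi$ and $\psi$, and \ref{daiji} again converts compatibility into cyclic invariance. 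Your ``isolate Adams degree $-1$ and $0$'' step, done before knowing $\Si$ is a tensor category, does not have the triangle to compare against, and your characterization of cyclic invariance as ``the iso holds simultaneously in all Adams degrees, hence compatibly with $U^i\mapsto U^{i+a}$'' is not what Definition~\ref{cyc} says — cyclic invariance is $\Z/a\Z$-invariance of a class in $U^a\lotimes_{\A^e}\A$, and it has to be extracted from the compatible pair $(\varphi,\psi)$ via \ref{daiji}, not from a degree bookkeeping. You explicitly flag this direction as ``the principal obstacle,'' which is an honest assessment; the two missing inputs are Propositions~\ref{chtensor} and \ref{daiji}.
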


The description of the inverse dualizing bimodule of $\Si$ as in (i) shows that although $\Si$ is not Calabi-Yau, it is close enough to be so, which is an important aspect in the description of the cluster category as in Section \ref{irC} below.
We also believe that certain non-Calabi-Yau dg algebras like $\Si$ will be useful for relating different cluster tilting objects.

\bigskip
We now turn to applications of our construction and correspondences, which we describe in the following subsections.
\subsection{$a$-Segre products}
While it is not immediately clear how to reproduce Calabi-Yau algebras from given ones, there is an easy way to do so for root pairs, namely, tensor products: Let $(U,P)$ be an $a$-th root pair of $\RHom_{A^e}(A,A^e)[d]$ on a smooth dg algebra $A$, and let $(V,Q)$ be an $a$-th root pair of $\RHom_{B^e}(B,B^e)[e]$ on another smooth dg algebra $B$. Note that we allow $d$ and $e$ to be different, but require $a$ to be the same. Then it is easy to see (\ref{UotimesV}) that $(U\otimes V,P\otimes B)$ is an $a$-th root pair of $\RHom_{C^e}(C,C^e)[d+e]$ on $C:=A\otimes B$. It is cyclically invariant whenever $(U,P)$ and $(V,Q)$ are.
Under the correspondence \ref{iMM}, the tensor product translates into the operation called {\it $a$-Segre product} (see Section \ref{a-Segre}, also \cite{ha6}) on graded dg algebras in the following sense. For Adams graded dg algebras $R=\bigoplus_{i\in\Z}R_i$ and $S=\bigoplus_{i\in\Z}S_i$, their $a$-Segre product is an Adams graded dg algebra
\[ \bigoplus_{i\in\Z}R_i\otimes\begin{pmatrix}	S_{i} &S_{i-1}&\cdots&S_{i-a+1}\\ S_{i+1}& S_i&\cdots&S_{i-a+2} \\ \vdots&\vdots&\ddots&\vdots \\ S_{i+a-1}&S_{i+a}&\cdots&S_{i}\end{pmatrix}. \]
When $a=1$ this is nothing but the classical Segre product $\bigoplus_{i\in\Z} R_i\otimes S_i$. Also, when $R=k[x]$ with (Adams) $\deg x=a$, then this is the {\it $a$-th quasi-Veronese algebra} of $S$.

Thanks to the correspondence in \ref{iMM}, the simple observation on the tensor product leads to the following reproduction theorem for Calabi-Yau dg algebras.
\begin{Thm}[{=\ref{CYSegre}}]\label{iCYSegre}
	Let $\Pi$ {\rm(}resp. $\Ga${\rm)} be an Adams positively graded $(d+1)$-Calabi-Yau dg algebra {\rm(}resp. $(e+1)$-Calabi-Yau dg category{\rm)} of Gorenstein parameter $a>0$ such that $\Pi_0$ {\rm(}resp. $\Ga_0${\rm)} is perfect over $\Pi$ {\rm(}resp. over $\Ga${\rm)} on each side. Then their $a$-Segre product is a positively graded $(d+e+1)$-Calabi-Yau dg algebra of Gorenstein parameter $a$.
\end{Thm}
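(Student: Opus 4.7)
The plan is to leverage the bijection established in Theorem~\ref{iMM} between Adams positively graded Calabi-Yau dg algebras/categories of Gorenstein parameter $a$ and cyclically invariant $a$-th root pairs on smooth dg categories. Under this correspondence, $\Pi$ and $\Ga$ translate into cyclically invariant $a$-th root pairs $(U,P)$ on a smooth dg category $A$, satisfying $U^a\simeq\RHom_{A^e}(A,A^e)[d]$, and $(V,Q)$ on a smooth dg category $B$, satisfying $V^a\simeq\RHom_{B^e}(B,B^e)[e]$. The perfectness hypotheses on $\Pi_0$ and $\Ga_0$ are precisely what is needed to invoke this correspondence, and the original Calabi-Yau objects are recovered as the $(d+1)$- and $(e+1)$-Calabi-Yau completions of these root pairs.

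Next, I would form the external tensor product $(U\otimes V,\,P\otimes B)$ on the smooth dg category $C:=A\otimes B$. Since derived tensor products commute with one another and with the computation of inverse dualizing bimodules over enveloping algebras (a Künneth-type formula), one obtains
\[ (U\otimes V)^a \simeq U^a\otimes V^a \simeq \RHom_{A^e}(A,A^e)[d]\otimes\RHom_{B^e}(B,B^e)[e] \simeq \RHom_{C^e}(C,C^e)[d+e]. \]
The required semi-orthogonal decomposition of $\per C$ follows by combining those of $\per A$ and $\per B$ via a standard compatibility of Lefschetz decompositions with exterior products. Cyclic invariance of the tensor root pair follows from cyclic invariance of its factors, as the $\Z/a\Z$-equivariant structures on $U$ and $V$ in the sense of \ref{cyc} induce one on $U\otimes V$ that is compatible with the action on the inverse dualizing bimodule of $C$. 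Therefore Theorem~\ref{CYstr} applies and yields a $(d+e+1)$-Calabi-Yau dg category $\Pi^{(1/a)}_{d+e+1}(C;\,U\otimes V,P\otimes B)$.

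The final and main step is to identify this Calabi-Yau completion with the $a$-Segre product $\Pi\seg\Ga$ under the correspondence of Theorem~\ref{iMM}. Concretely, I would compute the idempotent truncation $e(\rT^\rL_C(U\otimes V))e$ of the tensor algebra, using that $\rT^\rL_{A\otimes B}(U\otimes V)$ decomposes in each Adams degree $i$ as $U^i\otimes V^i$, and then match the resulting matrix description under the correspondence from (\ref{smdg}) to (\ref{CYalg}) in Theorem~\ref{iMM} with the explicit Segre product formula displayed in the text: the $i$-th Adams graded component should be $\Pi_i$ tensored with the $a\times a$ matrix whose $(j,k)$-entry is $\Ga_{i+j-k}$. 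I expect this bookkeeping to be the main obstacle, since it requires tracking the matrix form of root pairs on both sides simultaneously and using that the idempotent truncation intertwines the Calabi-Yau completion construction with the Segre product. Once this identification is in place, positivity of the Adams grading and Gorenstein parameter $a$ are immediate from the construction, and the Calabi-Yau property transfers to the $a$-Segre product.
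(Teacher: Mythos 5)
Your proposal tracks the paper's argument step for step: translate $\Pi$ and $\Ga$ into cyclically invariant root pairs via \ref{MM}, take the external tensor product of root pairs (\ref{roottensor}, cyclic invariance by \ref{UotimesV}), apply \ref{CY}, and identify the resulting Calabi--Yau completion with the $a$-Segre product. The only real divergence is in the final step: rather than matching matrix entries directly as you anticipate, the paper observes that the completion is the restriction of $\rT^\rL_{\A\otimes\B}(U\otimes V)$ to $\P\otimes\B$, hence the ordinary Segre product $\Pi\seg\rT^\rL_\B V$ with the tensor \emph{algebra} of the second factor, and then reads off from the $a\times a$ matrix form of $\rT^\rL_\B V$ that this is precisely the $a$-Segre product of $\Pi$ and $\Ga$.
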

For Calabi-Yau algebras (concentrated in cohomological degree $0$) which are module-finite, this is implicitly given in \cite[1.6]{ha6}, and the above result is its vast generalization. Also, as a very special case where $\Pi=k[x]$ is just a polynomial ring, we deduce that the $a$-th quasi-Veronese algebra of a Calabi-Yau dg algebra of Gorenstein parameter $a$ is still Calabi-Yau (\ref{qv}). 
Note that we need a root {\it pair} only on one of the factors, that is, the projective module $Q$ does not play a role in the construction of a root pair $(U\otimes V,P\otimes B)$ on $C$, which is enough to produce a Calabi-Yau dg algebra. We refer to \ref{Pi and Si} for the version of \ref{iCYSegre} in this setting.

\subsection{Folded cluster categories and strict root pairs}\label{irC}
Our next application is to the theory of cluster categories. Let $\Pi$ be a $(d+1)$-Calabi-Yau dg algebra which is connective (i.e.\! $H^i\Pi=0$ for $i>0$) and such that $H^0\Pi$ is finite dimensional. Then the {\it cluster category} \cite{Am09} of $\Pi$ is the Verdier quotient
\[ \rC(\Pi):=\per\Pi/\Db(\Pi) \]
of the perfect derived category by the full subcategory formed by dg modules of finite dimensional total cohomology. The fundamental theorem due to Amiot is that $\rC(\Pi)$ is a $d$-Calabi-Yau triangulated category, and contains a $d$-cluster tilting object $\Pi\in\rC(\Pi)$ \cite{Am09,Guo}, which are essential properties in the categorification of cluster algebras \cite{BMRRT}.
Note that the above definition of $\rC(\Pi)$ makes sense for arbitrary smooth dg algebras (not necessarily Calabi-Yau), and we will denote the above Verdier quotient by $\rC(\Ga)$ for any smooth dg algebra $\Ga$.

It is an important problem to understand the structure of the cluster category $\rC(\Pi)$. When $\Pi$ is the $(d+1)$-Calabi-Yau completion $\Pi_{d+1}(A)$ of a finite dimensional algebra $A$ (with some finiteness), then $\rC(\Pi_{d+1}(A))$ is the triangulated hull \cite{Ke05} of the orbit category $\Db(\mod A)/S_d$, and the image of $A\in\Db(\mod A)$ is the $d$-cluster tilting object $\Pi\in\rC(\Pi)$. This category is called the {\it $d$-cluster category} of $A$, denoted by $\rC_d(A)$. For general Calabi-Yau dg algebras, the structure of $\rC(\Pi)$ is quite mysterious.

We show that for our Calabi-Yau completion $\Pi=\Pi_{d+1}^{(1/a)}(A)=e(\rT^\rL_A\!U)e$ of a root pair $(U,eA)$, there is a nice description of its cluster category. In fact, it is equivalent to the cluster category of the tensor algebra $\Si=\rT^\rL_A\!U$, which shows that it is the triangulated hull of the orbit category $\Db(\mod A)/-\lotimes_AU$. Since $U$ is an $a$-th root of $S_d$, we see that this can be seen as a $\Z/a\Z$-quotient of $\rC_d(A)$, and we therefore call it the {\it $a$-folded $d$-cluster category} of $A$ and denote it by $\rC_d^{(1/a)}(A)$ \cite{ha3,haI}. Moreover, the cluster tilting object $\Pi\in\rC(\Pi)$ is exactly the image of $eA\in\Db(\mod A)$, as in the diagram below.
\[ \xymatrix@R=2mm{
	\Db(\mod A)\ar[r]&\Db(\mod A)/-\lotimes_AU\ar@{^(->}[r]&\rC(\Si)\ar[r]^-\simeq&\rC(\Pi)=:\rC_d^{(1/a)}(A)\\
	\qquad eA\qquad \ar@{|->}[rr]\ar@{}[u]|-\vin&&\quad e\Si\quad \ar@{|->}[r]\ar@{}[u]|-\vin&\quad \Pi\quad\ar@{}[u]|-\vin } \]

Our results are summarized as follows. When $a=1$ this is nothing but Amiot's fundamental theorem \cite{Am09}, and in this way our result establishes an $a$-th root analogue of the cluster category. We say that a dg algebra is {\it proper} if it has finite dimensional total cohomology.
\begin{Thm}[=\ref{rc}]
Let $A$ be a smooth and proper dg algebra, and $(U,P)$ a cyclically invariant $a$-th root pair of $\RHom_{A^e}(A,A^e)[d]$. Let $\Si=\rT^\rL_A\!U$ be the derived tensor algebra and $\Pi=\Pi_{d+1}^{(1/a)}(A)$ the $(d+1)$-Calabi-Yau completion. We assume that $\Pi$ is connective and $H^0\Pi$ is finite dimensional.
\begin{enumerate}
	\item The cluster category $\rC(\Si)$ is the triangulated hull of the orbit category $\per A/-\lotimes_AU$.
	\item There is a triangle equivalence $\rC(\Si)\simeq\rC(\Pi)$.
	\suspend{enumerate}
	Therefore, the cluster category $\rC(\Pi)$ is equivalent to the canonical triangulated hull of $\per A/-\lotimes_A U$.
	\resume{enumerate}
	\item The image of $P\in\per A$ under $\per A\to\per A/-\lotimes_AU\hookrightarrow\rC(\Pi)$ is a $d$-cluster tilting object.
\end{enumerate}
\end{Thm}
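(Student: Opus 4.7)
My plan is to prove the three assertions in order, bootstrapping each from the previous.

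For (1), consider the functor $F : \per A \to \per\Si$, $M \mapsto M\lotimes_A\Si$, and compose with the Verdier quotient $\per\Si \twoheadrightarrow \rC(\Si)$. The composite factors through the orbit category $\per A/{-\lotimes_A U}$: the short exact sequence $0 \to \Si_{\geq 1} \to \Si \to A \to 0$ of $A$-bimodules, tensored with $M$ on the left, identifies $(M\lotimes_A U)\lotimes_A\Si$ with $M\lotimes_A\Si_{\geq 1}$, and its cofiber $M$ lies in $\Db(\Si)$ since $A$ is proper, hence becomes zero in $\rC(\Si)$. Keller's universal property \cite{Ke05} then produces a triangulated functor from the canonical triangulated hull of $\per A/{-\lotimes_A U}$ to $\rC(\Si)$. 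Essential surjectivity follows from $\Si$ classically generating $\per\Si$, while fully faithfulness reduces to the orbit-category Hom formula $\bigoplus_j \Hom_{\per A}(M, N\lotimes_A U^j[i])$, which is computed via $H^*(N\lotimes_A\Si) = \bigoplus_j H^*(N\lotimes_A U^j)$ using smoothness and properness of $A$, in the style of \cite{Ke05, Am09}.

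For (2), use the idempotent $e\in\Si$ with $eA = P$, giving $\Pi = e\Si e$ and a functor $-\lotimes_\Si\Si e : \rD(\Si) \to \rD(\Pi)$. The decisive input is the root pair decomposition $\per A = \thick(P\lotimes_A U^{a-1}) \perp \cdots \perp \thick(P)$: tensoring with $\Si$ over $A$ turns each summand $P\lotimes_A U^j$ into $e\Si$ shifted in the internal (tensor) grading, and such shifts are invisible from the perspective of the underlying dg $\Si$-module. Hence $e\Si$ classically generates $\per\Si$, so $-\lotimes_\Si\Si e$ sends $\per\Si$ onto $\thick\Pi = \per\Pi$ essentially surjectively, and a Hom-computation through this generator shows full faithfulness. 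Similarly $\Db(\Si)$ maps onto $\Db(\Pi)$ via $H^*\Pi = e H^*(\Si) e$, which inherits finite-dimensionality. These combine to the desired triangle equivalence $\rC(\Si) \simeq \rC(\Pi)$. This is the main obstacle I expect: carefully checking that the idempotent truncation interacts compatibly with both $\per$ and $\Db$ and descends to an equivalence of Verdier quotients.

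Finally, for (3), the composite $\per A \to \per A/{-\lotimes_A U} \hookrightarrow \rC(\Si) \xrightarrow{\sim} \rC(\Pi)$ sends $P = eA$ to $(eA)\lotimes_A\Si\lotimes_\Si\Si e = e\Si e = \Pi$. By our first main theorem (\ref{CYstr}), $\Pi$ is smooth and $(d+1)$-Calabi-Yau; combined with the hypotheses that $\Pi$ is connective and $H^0\Pi$ is finite-dimensional, the Amiot-Guo theorem \cite{Am09, Guo} yields that $\Pi \in \rC(\Pi)$ is a $d$-cluster tilting object. Transporting along the equivalence of (2) confirms that the image of $P$ is $d$-cluster tilting in $\rC(\Pi)$, completing the proof. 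Note that cyclic invariance enters only implicitly, via its role in guaranteeing the Calabi-Yau property of $\Pi$ needed in this last step.
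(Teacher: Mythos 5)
The approach in parts~(1) and~(3) is broadly sound, but part~(2) contains a genuine gap that undermines the whole chain.

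You claim that inducing the root-pair decomposition $\per A=\thick(P\lotimes_AU^{a-1})\perp\cdots\perp\thick P$ along $-\lotimes_A\Si$ turns each summand $P\lotimes_AU^j$ into ``$e\Si$ shifted in the tensor grading,'' and that such shifts become invisible after forgetting the grading, so that $e\Si$ generates $\per\Si$. This is false: one has $(P\lotimes_AU^j)\lotimes_A\Si=e\Si_{\geq j}$, a proper submodule of $e\Si$, and this is \emph{not} isomorphic to $e\Si(-j)$ as a graded $\Si$-module (the degree-$l$ piece is $eU^l$ for $l\geq j$, whereas $e\Si(-j)$ would have $eU^{l-j}$ there). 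More concretely, for $a>1$ one expects $\per\Si$ to have strictly larger $K_0$ than $\per\Pi=\thick(e\Si)\subset\per\Si$ --- e.g.\ in the Kronecker example of \ref{example} the quiver of $\Si$ has two vertices while $\Pi=k[x,y]$ has one, so $\thick(e\Si)\subsetneq\per\Si$. Consequently $(-)e\colon\per\Si\to\per\Pi$ is \emph{not} an equivalence, and neither is $(-)e\colon\Db(\Si)\to\Db(\Pi)$. The correct statement (and what the paper proves) is that $(-)e$ becomes an equivalence only after passing to the Verdier quotients $\rC(\Si)\to\rC(\Pi)$; the defect of $e\Si$ failing to generate $\per\Si$ is exactly absorbed by $\Db(\Si)$.

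The mechanism the paper uses to see this is the graded cluster category and the Beilinson-type theorem \ref{B}: one first checks that $\Si$ has Gorenstein parameter $1$ (using \ref{fakeCY} and Serre duality for the proper algebra $A$) and that $\Pi$ has Gorenstein parameter $a$ (via \ref{MM}), so that both $\rC^\Z(\Si)$ and $\rC^\Z(\Pi)$ are equivalent to $\per A$ by \ref{B}, with compatible degree-shift functors given by $-\lotimes_AU$. Only after identifying both with the same orbit category does one descend to the ungraded level; the compatibility of the two identifications with $(-)e$ then requires the bimodule comparison $\Si e\to T=\bigoplus_{i=0}^{a-1}\Pi(i)$ carried out in \ref{ExB}(3), whose cone has finite-dimensional total cohomology. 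None of this graded bookkeeping appears in your proposal, and without it the direct argument for (2) does not go through. If you want to avoid the graded machinery, you would need a different argument showing at least that $e\Si_{\geq i}\in\thick(e\Si)$ inside the quotient $\rC(\Si)$, using that their cofibres lie in $\Db(\Si)$ --- but then you are precisely reproving the Verdier-quotient version, not the statement about $\per\Si$.

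Your part~(1) follows the standard Keller--Amiot recipe and is essentially equivalent to what the paper does via \ref{B}(3); part~(3) is correct given (2) and appeals to \ref{CY} and \cite{Am09,Guo} in the same way as the paper.
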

It is important that the dg algebras $\Pi$ and $\Si$ have the equivalent cluster categories, as stated in (2). To prove this we first discuss the (Adams) graded version of the cluster category, which we call the {\it graded cluster category}. For such categories, we give a Beilinson-type theorem (\ref{B}). This shows that the graded cluster categories of $\Pi$ and $\Si$ are equivalent and have the compatible (Adams) degree shift functors, which implies that the ungraded cluster categories are also equivalent.

Although the above result is valid for dg algebras $A$, it is important that $A$ is an ordinary (finite dimensional) algebra in order to understand the relevant categories as well as the root $U$ of the shifted Serre functor. Lead by this, we introduce and study {\it strictness} (\ref{strict}) for root pairs on finite dimensional algebras. This allows us to understand the root of $\RHom_{A^e}(A,A^e)[d]$ as a certain kind of APR tilting and gives an accessible description of the cluster category, see \ref{twD_4} for an example. 
We show under a mild assumption that strict root pairs on finite dimensional algebras also reproduce under tensor products, up to a derived equivalence (\ref{stricttensor}) which is a counterpart of \ref{iCYSegre}.

Finally, we give examples of root pairs from Dynkin quivers. The classification of $a$-th roots of (the inverse of) the Auslander-Reiten translation $\RHom_{kQ^e}(kQ,kQ^e)[1]$ for connected Dynkin quivers $Q$ (see \ref{Dyn}) asserts that the only possibility is $a=2$ and $Q$ is of even type $A$. We are thus lead to study square roots of type $A_{2n}$ quivers. We give explicit descriptions of the bimodule $U$ which consists a root pair $(U,P)$, the dg algebra $\Si=\rT^\rL_{kQ}\!U$, the Calabi-Yau completion $\Pi_m^{(1/2)}(kQ)$, and the cluster category $\rC_{m-1}^{(1/2)}(kQ)$.
To this end, we obtain the following family of dg path algebras as Calabi-Yau completions of these root pairs. We refer to Section \ref{Dynkin} for the definition of the orientation of the type $A_{2n}$-quiver $Q$, and the root pair $(U,P)$ on $kQ$.
\begin{Thm}[{=\ref{odd}, \ref{dgq}}]
\def\ard{\ar@<-2pt>[d]_-v}
\def\aru{\ar@<-2pt>[u]_-u}
\def\arr{\ar@<2pt>[r]}
\def\arl{\ar@<2pt>[l]}
\def\lo{\ar@(dl,dr)[]}
The following dg path algebra presents the Calabi-Yau completion $\Pi=\Pi^{(1/2)}_{2d+2}(kQ)$, where the vertices numbered as $n, n-1,\ldots$ for odd positions, and as $1,2,\ldots$ for even positions, and $c=a$ if $n$ is odd, and $c=b$ if $n$ is even.
\[ 
\xymatrix{
	n\arr^-a\lo_-t&1\arl^-a\arr^-b\lo_-t&n-1\arr^-a\arl^-b\lo_-t&\cdots\arr\arl^-a&\circ\arr\arl\lo_t&\circ\arl\lo_t\ar@(ul,ur)[]^-c}
\qquad
\xymatrix@R=1mm{
	|a|=|b|=-d, \,|t|=-2d-1\\
	da=db=0, \quad dt=a^2+\e b^2 } \]
Moreover, the dg algebra $\Pi$ is $(2d+2)$-Calabi-Yau if $(-1)^d=-1$.
\end{Thm}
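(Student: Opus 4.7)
The plan is to compute the Calabi--Yau completion concretely by writing down an explicit small cofibrant resolution of $U$ as a $kQ$-bimodule, then reading off the dg path algebra structure from the derived tensor algebra, and finally invoking Theorem~\ref{CYstr} to address the Calabi--Yau property.

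First, I would recall from the earlier explicit construction the description of the root pair $(U,P)$ on $kQ$ for the $A_{2n}$-quiver with the prescribed orientation. Since $kQ$ is a finite-dimensional hereditary algebra of Dynkin type, its bimodule structure is transparent: every simple bimodule $S_i\otimes S_j^{\ast}$ has a free bimodule resolution of length $\leq 2$, and one can combine these into a minimal bimodule resolution of $U$ of length~$2$. Writing out this resolution explicitly, the degree-$0$ term encodes the two bimodule generators which I will denote $a$ and $b$, each of degree $-d$ after the cohomological shift built into $U$, while the degree-$1$ term (syzygy) encodes an extra generator $t$ of degree $-2d-1$ whose boundary is the $kQ^e$-linear combination $a^2+\varepsilon b^2$ of paths. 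The sign $\varepsilon$ is forced by the choice of a compatible isomorphism $U^2\simeq\RHom_{kQ^e}(kQ,kQ^e)[2d+1]$, which in turn is determined by the Nakayama permutation of $Q$ and the parity of $d$.

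The derived tensor algebra $\Si=\rT_{kQ}^\rL U$ is then, by definition, the tensor algebra of this cofibrant resolution of $U$, and presents as a dg quiver algebra: the underlying quiver is obtained by adjoining to $Q$ all arrows $a,b$ (one family in each direction alternating along the $A_{2n}$-spine), together with the loops $t$ at every vertex, with differentials and degrees as stated. The Calabi--Yau completion $\Pi=\Pi^{(1/2)}_{2d+2}(kQ)=e\Si e$ is then the idempotent truncation at the support of $P$; reading off which vertices survive, the resulting quiver collapses precisely to the diagram displayed in the statement, with the extra loop at the terminal vertex arising from folding two consecutive arrows into a single one via the idempotent restriction.

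For the Calabi--Yau assertion, I would apply Theorem~\ref{CYstr}. What must be checked is the cyclic invariance of $(U,P)$ in the sense of Definition~\ref{cyc}: there must exist a coherent isomorphism $U^2\simeq\RHom_{kQ^e}(kQ,kQ^e)[2d+1]$ which is compatible with the cyclic $\Z/2\Z$-action. For a type~$A_{2n}$ quiver with the chosen orientation, the Nakayama automorphism acts on $kQ$ by the canonical involution, and comparing the two sides of the candidate isomorphism reduces to checking a single sign, which is precisely $(-1)^d\cdot(-1)=1$, i.e.\ $(-1)^d=-1$. When this holds, cyclic invariance is verified, and Theorem~\ref{CYstr} delivers the $(2d+2)$-Calabi--Yau property.

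The main obstacle will be the bookkeeping of signs: pinning down the exact value of $\varepsilon$ and verifying cyclic invariance both amount to tracking how the Koszul sign rule interacts with the shift $[2d+1]$ and the involution on the resolution. Once the sign is fixed consistently in the resolution of $U$, the path algebra presentation and the Calabi--Yau criterion follow in parallel from the same computation.
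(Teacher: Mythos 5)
Your overall strategy (explicit cofibrant bimodule model for $U$, read off $\rT^\rL_{kQ}U$ and its idempotent truncation as dg path algebras, then invoke Theorem~\ref{CY}) is the one the paper takes, and it can be made to work. But as written the proposal contains several inaccuracies that would derail an actual execution of the plan.

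First, the arrows $a$, $b$, $t$ of $\Pi$ are not generators of a bimodule resolution of $U$. The cofibrant model of $U$ used in the paper has two families of generators, $u$ in cohomological degree $-d$ and $v$ in degree $-d-1$, one per column connecting the two rows of $Q$, with $du=0$ and $dv=(-1)^d(xux+\varepsilon yuy)$. In the tensor algebra $\Si=\rT^\rL_{kQ}U$ the arrows are $x,y$ (from $Q$) plus $u,v$. Only after passing to the idempotent subalgebra $\Pi=e\Si e$ at the chosen half of the vertices do $a:=ux$, $b:=uy$, and $t:=uv$ appear, as the paths through the removed vertices; their degrees and the differential $dt=a^2+\varepsilon b^2$ then follow by the Leibniz rule. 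Your phrasing (generators $a,b$ "of degree $-d$", an "extra generator $t$ whose boundary is $a^2+\varepsilon b^2$", the terminal loop "arising from folding two consecutive arrows") conflates the bimodule resolution with the truncated path algebra and, taken literally, does not produce a correct cofibrant model.

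Second, the claim that "the sign $\varepsilon$ is forced by the choice of a compatible isomorphism" is wrong: the paper proves that for \emph{either} choice $\varepsilon\in\{\pm1\}$ the complex $U$ is a square root of $\RHom_{kQ^e}(kQ,kQ^e)[2d+1]$, and cyclic invariance depends only on the parity of $d$ (it holds iff $(-1)^d=-1$), independently of $\varepsilon$. You appear to be transplanting the Kronecker computation of Example~\ref{example}, where $\varepsilon$ does enter the cyclic-invariance condition, to the $A_{2n}$ case where it does not.

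Third, "comparing the two sides of the candidate isomorphism reduces to checking a single sign" does not actually describe the cyclic-invariance criterion of Definition~\ref{cyc}. That condition is a statement about the $\Z/2\Z$-action on $H^{-2d-1}(U^2\lotimes_{A^e}A)$, and checking it requires translating the chosen quasi-isomorphism $\varphi\colon\cHom_{A^e}(pA,A^e)\to U^2$ into an element of $U^2\otimes_{A^e}A$ via the Casimir element of $pA$ (Lemma~\ref{LemC}(\ref{C**})) and then comparing it with its image under the cyclic permutation. Without that step there is no principled way to see where the sign $(-1)^d=-1$ comes from. Finally, you take the root pair $(U,P)$ for granted, but conditions (\ref{P}) and (\ref{exc}) of Definition~\ref{setup} for $P$ supported on vertices $1,\dots,n$ also need to be verified for Theorem~\ref{CY} to apply.
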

We refer to \ref{typeA} for examples of the ($2$-)folded cluster categories $\rC^{(1/2)}_{2d+1}(kQ)$ associted to these $\Pi$.

\subsection*{Acknowledgements}
The author would like to thank Bernhard Keller, Tasuki Kinjo, Osamu Iyama, Hiroyuki Minamoto, and Steffen Oppermann for stimulating discussions and valuable comments.

\section{Calabi-Yau completions}
We know from \cite{Ke11} that given a smooth dg category $\A$ and $n\in\Z$ one can form the {\it $(n+1)$-Calabi-Yau completion}
\[ \Pi_{n+1}(\A):=\rT^\rL_{\A}(\RHom_{\A^e}(\A,\A^e)[n]), \]
which turns out to be an $(n+1)$-Calabi-Yau dg algebra. In fact, it is known that the Calabi-Yau completion has a canonical exact left Calabi-Yau  structure \cite{Ye16,Ke11+}. The first aim of this paper is to construct a root-of-tau version of this Calabi-Yau completion.
\subsection{Inverse dualizing bimodules and tensor categories}\label{prelim}
Let us first collect some basic terminology and conventions.

For a dg category $\A$, we denote by $\A^e=\A^\op\otimes\A$ the {\it enveloping category}. We will identify left and right modules over $\A^e$ by the automorphism $(\A^e)^\op\simeq\A^e$, $a\otimes b\leftrightarrow(-1)^{|a||b|}b\otimes a$, which are further identified with ($k$-central) bimodules over $\A$. We will often denote by $(-)^\vee:=\RHom_{\A^e}(-,\A^e)\colon\rD(\A^e)\to\rD(\A^e)$ the {\it bimodule dual}. Recall that the {\it inverse dualizing bimodule} of $\A$ is an object
\[ \RHom_{\A^e}(\A,\A^e)\in\rD(\A^e), \]
the derived category of bimodules.

We start with the following relationship between the bimodule dual and a one-sided dual.
\begin{Lem}\label{duals}
	Let $\A$ be a dg category, and $X$ an $(\A,\A)$-bimodule such that $X\in\per\A$ and $\RHom_\A(X,\A)\in\per\A$. Then there is a natural isomorphism in $\rD(\A^e)$:
	\[ \RHom_{\A^e}(X,\A^e)\simeq\RHom_\A(X,\A)\lotimes_\A\RHom_{\A^e}(\A,\A^e). \]
\end{Lem}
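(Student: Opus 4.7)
The plan is to construct a canonical comparison morphism and then reduce, via the perfectness hypothesis, to the tautological case $X=\A$.

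First I would exhibit a natural morphism
\[ \alpha_X\colon \RHom_\A(X,\A)\lotimes_\A\RHom_{\A^e}(\A,\A^e)\longrightarrow\RHom_{\A^e}(X,\A^e) \]
in $\rD(\A^e)$, functorial in $X\in\rD(\A^e)$. A convenient presentation is as the composite
\[ X^\vee\lotimes_\A\A^!\ \longrightarrow\ \RHom_\A(X,\A^!)\ \xsimeq\ \RHom_{\A^e}(\A\lotimes_\A X,\A^e)\ =\ \RHom_{\A^e}(X,\A^e), \]
where I write $X^\vee=\RHom_\A(X,\A)$ and $\A^!=\RHom_{\A^e}(\A,\A^e)$; the first arrow is the standard evaluation map, the middle identification is tensor-hom adjunction along the canonical $(\A^e,\A)$-bimodule structure on $\A$, and the last equality uses the $\otimes$-unit property $\A\lotimes_\A X\simeq X$.

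Second, I would check that the first arrow of this composite is a quasi-isomorphism. This is the standard perfectness input: for any $X\in\per\A$ and any $Y\in\rD(\A)$, the evaluation map $\RHom_\A(X,\A)\lotimes_\A Y\to\RHom_\A(X,Y)$ is an isomorphism; indeed, both sides are exact triangulated functors of $X$ that commute with arbitrary sums, they agree tautologically for $X=\A$, hence on the thick closure $\per\A$. Applying this with $Y=\A^!$ (viewed in $\rD(\A)$ via restriction of scalars) and invoking $X\in\per\A$ completes the argument.

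Third, one must verify that $\alpha_X$ lifts to a morphism in $\rD(\A^e)$, i.e., that each identification above respects both $\A$-module structures. Since an isomorphism in $\rD(\A^e)$ may be tested on underlying complexes, once $\alpha_X$ is known to be a bimodule map and a quasi-isomorphism of complexes, the lemma follows.

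The main obstacle is precisely this bookkeeping of bimodule structures through the chain of adjunctions. There are two natural $\A^e$-actions on $\A^e$ (the ``outer'' and the ``inner''), and one has to verify that the structure chosen for $\A^!$ on the left-hand side of the lemma and the one inherited on the right-hand side match through the composite; the hypothesis $\RHom_\A(X,\A)\in\per\A$ is helpful here to identify the relevant duals unambiguously and to invoke biduality when needed. Once the formal setup is pinned down, the genuine mathematical content reduces to the standard perfectness input recalled in the second step.
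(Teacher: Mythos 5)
Your overall strategy (build a natural comparison map, then use the perfectness of $X$ to reduce to the free case) is in the same spirit as the paper's chain of isomorphisms. However, the middle step of your composite has a genuine gap. You claim the identification
\[
\RHom_\A(X,\A^!)\ \simeq\ \RHom_{\A^e}(\A\lotimes_\A X,\A^e)
\]
via ``tensor--hom adjunction along the canonical $(\A^e,\A)$-bimodule structure on $\A$.'' But $\A$ carries no such bimodule structure: the left $\A^e$-action $(a\otimes b)\cdot x=axb$ and a putative right $\A$-action $x\cdot c=xc$ do not commute unless $\A$ is commutative, so the required $(\A^e,\A)$-bimodule over which one could tensor simply does not exist, and the adjunction you invoke does not apply. (The object that genuinely is an $(\A^e,\A)$-bimodule is $\A^e$ itself, but feeding it into tensor--hom adjunction gives $\RHom_{\A^e}(\A^e\lotimes_\A X,\A^e)\simeq\RHom_\A(X,\A^e)$, not $\RHom_\A(X,\A^!)$.) The correct adjunction in this situation is the one the paper uses, namely $\RHom_{\A^e}(X,\A^e)\simeq\RHom_{\A^e}(\A,\RHom_\A(X,\A^e))$, after which one must still commute the remaining $\RHom_{\A^e}(\A,-)$ past the tensor, and that step is where the second hypothesis enters.

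Indeed, your argument only visibly uses $X\in\per\A$, whereas the lemma's second hypothesis $\RHom_\A(X,\A)\in\per\A$ never appears in a load-bearing role (you acknowledge it as ``helpful'' but never actually invoke it). The paper's proof uses it precisely at the step
\[
\RHom_{\A^e}(\A,\,\A\otimes\RHom_\A(X,\A))\ \simeq\ (\A\otimes\RHom_\A(X,\A))\lotimes_{\A^e}\RHom_{\A^e}(\A,\A^e),
\]
which requires $\A\otimes\RHom_\A(X,\A)\in\per\A^e$, and this is exactly what $\RHom_\A(X,\A)\in\per\A$ guarantees. The remark following the lemma in the paper makes the point explicit: if one drops this hypothesis one must instead assume $\A$ smooth to salvage that step. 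A proof that never touches the second hypothesis (and never assumes smoothness) cannot be complete, so the missing ingredient in your write-up is not mere bookkeeping but a genuine logical gap. To repair it, replace the unjustified adjunction by the composite $\RHom_{\A^e}(X,\A^e)\simeq\RHom_{\A^e}(\A,\RHom_\A(X,\A^e))\simeq\RHom_{\A^e}(\A,\A\otimes X^\vee)$ (using $X\in\per\A$ for the second isomorphism), and then apply the duality isomorphism $\RHom_{\A^e}(\A,Y)\simeq Y\lotimes_{\A^e}\A^!$ valid for $Y\in\per\A^e$, which is where $X^\vee\in\per\A$ is used.
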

\begin{proof}
	We have the following sequence of isomorphisms in $\rD(\A^e)$:
	\[
	\begin{aligned}
		\RHom_{\A^e}(X,\A^e)&\overset{(1)}{=}\RHom_{\A^e}(\A,\RHom_\A(X,\A^e))\\
		&\overset{(2)}{=}\RHom_{\A^e}(\A,\A\otimes\RHom_\A(X,\A))\\
		&\overset{(3)}{=}(\A\otimes\RHom_\A(X,\A))\lotimes_{\A^e}\RHom_{\A^e}(\A,\A^e)\\
		&=\RHom_\A(X,\A)\lotimes_\A\RHom_{\A^e}(\A,\A^e)\lotimes_\A\A\\
		&=\RHom_\A(X,\A)\lotimes_\A\RHom_{\A^e}(\A,\A^e).
	\end{aligned}
	\]
	Indeed, these follow from (1) a general formula $\RHom_{\A^e}(-,?)=\RHom_{\A^e}(\A,\RHom_\A(-,?))$ on $\rD(\A^e)$, (2) $X\in\per\A$, and (3) $\RHom_\A(X,\A)\in\per\A$, hence $\A\otimes\RHom_\A(X,\A)\in\per\A^e$.
\end{proof}

\begin{Rem}
The proof shows that if $\A$ is smooth, then the same conclusion holds when $X\in\per\A$ without assuming $\RHom_\A(X,\A)\in\per\A$. This is because the isomorphism (3) holds by the smoothness of $\A$.
\end{Rem}

For a dg category $\A$ we denote by $\per_\dg\!\A$ the {\it dg perfect derived category}, which is the idempotent completion of the pretriangulated hull of $\A$. Concretely, it is (quasi-equivalent to) the dg category whose objects are perfect and cofibrant dg $\A$-modules, and whose morphism spaces are $\Hom$-complexes over $\A$.
We will identify an object $A\in\A$ and a representable dg module $\A(-,A)\in\rD(\A)$ (or $\A(A,-)\in\rD(\A^\op)$).

For a bimodule $X$ over a dg category $\A$ and $n\geq0$ we write $X^{n}=X^{\lotimes_\A n}=X\lotimes_\A\cdots\lotimes_\A X$ ($n$ factors). Then the {\it derived tensor category} is the dg category $\rT^\rL_{\A}\!X$ with the same objects as $\A$ and with morphism spaces $(\rT^\rL_{\A}\!X)(A,B)=\bigoplus_{i\geq0}\RHom_\A(A,B\lotimes_\A X^i)$.

We need an observation on general tensor categories. Let $\Si=\rT^\rL_{\A}\!X$ be the derived tensor category. For $n\geq0$ we denote by $\Si_{\geq n}$ the $(\Si,\Si)$-bimodule $\bigoplus_{l\geq n}X^l$.
\begin{Lem}\label{ten}
There exists a triangle
\[ \xymatrix{ \Si\lotimes_\A X^{n+1}\lotimes_\A\Si\ar[r]& \Si\lotimes_\A X^n\lotimes_\A\Si\ar[r]& \Si_{\geq n} } \]
in $\rD(\Si^e)$, where the first map is given by $1 \otimes (x_0,\ldots,x_{n})\otimes1\mapsto x_0\otimes(x_1,\ldots, x_{n})\otimes1-1\otimes(x_0,\ldots, x_{n-1})\otimes x_{n}$.
\end{Lem}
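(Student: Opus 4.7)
The plan is to exhibit the desired triangle as coming from an honest short exact sequence of $\Si$-bimodules. After choosing a cofibrant replacement of $X$ over $\A^e$, the derived tensor algebra $\Si = \rT^\rL_\A X$ is represented strictly by $\bigoplus_{i \geq 0} X^{\otimes_\A i}$, and every derived tensor product appearing in the statement becomes an ordinary tensor product; no signs beyond the usual Koszul rule intervene.

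First, I would define two $\Si$-bilinear maps explicitly. The map
\[
\beta \colon \Si \otimes_\A X^n \otimes_\A \Si \longrightarrow \Si_{\geq n}, \qquad s\otimes x\otimes t \longmapsto s\cdot x\cdot t,
\]
is multiplication; it factors through $\Si_{\geq n}$ since $x \in X^n$ has tensor degree $n$. The map
\[
\alpha \colon \Si \otimes_\A X^{n+1} \otimes_\A \Si \longrightarrow \Si \otimes_\A X^n \otimes_\A \Si
\]
is the $\Si$-bilinear extension of the formula in the statement. Then $\beta\circ\alpha=0$ is immediate: the two summands in $\alpha$ become equal after multiplication and cancel.

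Next, since the forgetful functor $\rD(\Si^e)\to\rD(\A^e)$ is conservative, it suffices to check that $0\to \Si\otimes_\A X^{n+1}\otimes_\A\Si \xrightarrow{\alpha} \Si\otimes_\A X^n\otimes_\A \Si \xrightarrow{\beta}\Si_{\geq n}\to 0$ is a short exact sequence of $\A$-bimodules; its derived class then yields the claimed triangle in $\rD(\Si^e)$. I would decompose each term by tensor degree. For $k \geq n$, the degree-$k$ component of the source of $\alpha$ is $\bigoplus_{i+j=k-n-1}X^i\otimes_\A X^{n+1}\otimes_\A X^j \cong (X^k)^{\oplus(k-n)}$ by associativity, that of the target is $(X^k)^{\oplus(k-n+1)}$, and that of $\Si_{\geq n}$ is $X^k$. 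Under these identifications, $\alpha$ sends the basis vector of the $i$-th summand to the $(i+1)$-th summand minus the $i$-th summand, while $\beta$ sums all summands. This is precisely $X^k$ tensored with the augmented cellular chain complex of an interval with $k-n+1$ vertices, which is acyclic, proving exactness in each tensor degree.

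The main obstacle will be the combinatorial bookkeeping in this last step: confirming that, with the associativity isomorphisms chosen, the induced differential on each graded piece really is the incidence map of a path (with the right signs), and that the multiplication map $\beta$ corresponds on the nose to the augmentation. Once this is done, the rest is a formal consequence.
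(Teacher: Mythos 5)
Your argument is correct, and it is a genuinely different route from the paper's. The paper's proof is shorter and more conceptual: it invokes the well-known $n=0$ case, namely the standard bimodule triangle $\Si\lotimes_\A X\lotimes_\A\Si\to\Si\lotimes_\A\Si\to\Si$ for a (derived) tensor category, and then simply applies $-\lotimes_\Si\Si_{\geq n}$, using the $(\A,\Si)$-bimodule identification $\Si_{\geq n}\simeq X^n\lotimes_\A\Si$ to recognize the resulting triangle as the one stated. Your proof, by contrast, establishes the $n\geq0$ case directly by exhibiting the sequence as a short exact sequence of $\Si$-bimodule complexes and checking exactness degreewise in the tensor (Adams) grading, where each graded piece becomes, after the evident associativity identifications, an instance of the augmented chain complex of a path graph. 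The paper's argument buys brevity and reuses a known fact; yours is more elementary and self-contained, and additionally gives the stronger statement that the triangle arises from a genuine short exact sequence of $\Si$-bimodules (rather than merely a triangle in $\rD(\Si^e)$), which is occasionally useful. Two small points: your reduction to $\A$-bimodules is better justified by observing that the restriction functor from $\Si$-bimodule complexes to $\A$-bimodule complexes (or to graded $k$-modules) is exact and faithful, so exactness can be checked after restriction; invoking conservativity of the derived forgetful functor is a slightly different statement and not quite the relevant one. Also, since the two summands in the formula for $\alpha$ arise purely from regrouping tensor factors (no transpositions), you are right that no Koszul signs appear beyond the obvious ones, but it is worth saying this explicitly when writing the proof up.
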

\begin{proof}
	For $n=0$, the sequence in question is a well-known one
	\[ \xymatrix{ \Si\lotimes_\A X\lotimes_\A\Si\ar[r]& \Si\lotimes_\A\Si\ar[r]& \Si}, \]
	with the first map given by $1\otimes x\otimes 1\mapsto x\otimes1-1\otimes x$. Applying $-\lotimes_\Si \Si_{\geq n}$ yields a triangle
	\[ \xymatrix{ \Si\lotimes_\A X\lotimes_\A \Si_{\geq n}\ar[r]& \Si\lotimes_\A \Si_{\geq n}\ar[r]& \Si_{\geq n}}, \]
	which gives the assertion since $\Si_{\geq n}=X^n \lotimes_\A\Si$ as $(\A,\Si)$-bimodules.
\end{proof}

We end this preliminary subsection with the following compatibility. Recall that dg categories $\A$ and $\B$ are {\it derived Morita equivalent} if there is an $(\A,\B)$-bimodule $X$ giving an equivalence $-\lotimes_\A X\colon\per\A\xsimeq\per\B$. They are furthermore {\it quasi-equivalent} if the equivalence restricts to $H^0\A\xsimeq H^0\B$ under the Yoneda embeddings $H^0\A\to\per\A$ and $H^0\B\to\per\B$.
\begin{Lem}\label{cten}
Let $X\in\rD(\A^\op\otimes\B)$ be a bimodule giving a derived Morita equivalence (resp. a quasi-equivalence), and let $U\in\rD(\A^e)$ and $V\in\rD(\B^e)$ be bimodules which are perfect on the right such that $U\lotimes_\A X\simeq X\lotimes_\B V$ in $\rD(\A^\op\otimes\B)$, that is, the diagram
\[ \xymatrix@!R=2mm{
	\per\A\ar[r]^-{-\lotimes_\A U}\ar[d]_-{-\lotimes_\A X}&\per\A\ar[d]^-{-\lotimes_\A X}\\
	\per\B\ar[r]^-{-\lotimes_\B V}\ar[r]&\per\B } \]
is commutative. Then the derived tensor categories $\rT^{\rL}_\A\!U$ and $\rT^{\rL}_\B\!V$ are derived Morita equivalent (resp. quasi-equivalent).
\end{Lem}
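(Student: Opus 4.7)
Set $\Si := \rT^\rL_\A U$ and $\Ga := \rT^\rL_\B V$. I would construct explicitly an invertible $(\Si,\Ga)$-bimodule $Y$ which implements the desired equivalence. Starting from the given isomorphism $\phi \colon U \lotimes_\A X \xsimeq X \lotimes_\B V$, I would build inductively isomorphisms $\phi_n \colon U^n \lotimes_\A X \xsimeq X \lotimes_\B V^n$ in $\rD(\A^\op \otimes \B)$ by $\phi_0 = \mathrm{id}$ and $\phi_{n+1} = (\phi \lotimes_\B V^n)\circ(U \lotimes_\A \phi_n)$. Their direct sum gives an isomorphism of $(\A,\B)$-bimodules $\Si \lotimes_\A X \simeq X \lotimes_\B \Ga$, which I denote by $Y$; the left $\Si$-action coming from the first expression and the right $\Ga$-action from the second combine to make $Y$ into an $(\Si,\Ga)$-bimodule.

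Let $X^\vee$ be the inverse Morita bimodule, so $X \lotimes_\B X^\vee \simeq \A$ and $X^\vee \lotimes_\A X \simeq \B$. Applying $X^\vee \lotimes_\A (-) \lotimes_\B X^\vee$ to $\phi$ yields the symmetric compatibility $V \lotimes_\B X^\vee \simeq X^\vee \lotimes_\A U$, and the same construction gives a $(\Ga,\Si)$-bimodule $Y' \simeq \Ga \lotimes_\B X^\vee \simeq X^\vee \lotimes_\A \Si$. A direct computation then produces
\[ Y \lotimes_\Ga Y' \simeq (X \lotimes_\B \Ga) \lotimes_\Ga (\Ga \lotimes_\B X^\vee) \simeq \bigoplus_{n \geq 0} X \lotimes_\B V^n \lotimes_\B X^\vee \simeq \bigoplus_{n \geq 0} U^n = \Si \]
in $\rD(\Si^e)$, where the last step uses the $\phi_n$ together with $X \lotimes_\B X^\vee \simeq \A$. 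A symmetric computation gives $Y' \lotimes_\Si Y \simeq \Ga$ in $\rD(\Ga^e)$, so $Y$ is invertible and $-\lotimes_\Si Y \colon \per\Si \xsimeq \per\Ga$ is a derived Morita equivalence.

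For the quasi-equivalence case, if $X$ restricts to $H^0\A \xsimeq H^0\B$, then for each object $A$ of $\A$ the right $\B$-module $X(A,-)$ is isomorphic in $\per\B$ to a representable $\B(-,B)$ for some $B \in \B$. The image of the representable $\Si(-,A)$ under $-\lotimes_\Si Y$ is then $Y(A,-) \simeq X(A,-) \lotimes_\B \Ga \simeq \Ga(-,B)$, which is representable in $\per\Ga$; hence the equivalence restricts to $H^0\Si \xsimeq H^0\Ga$.

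The hard part will be justifying the bimodule structures in the derived setting: the isomorphisms $\phi_n$ live a priori only in $\rD(\A^\op \otimes \B)$, so realizing $Y$ as a genuine $(\Si,\Ga)$-bimodule and establishing the bimodule isomorphism $Y \lotimes_\Ga Y' \simeq \Si$ in $\rD(\Si^e)$ will require working with cofibrant representatives of $U$, $V$, $X$, $X^\vee$, and $\phi$, so that the two tensor-algebra actions and the isomorphisms $\phi_n$ commute on the nose. With this bookkeeping in place, the remaining verifications reduce to the invertibility of $X$ and the compatibility $\phi$.
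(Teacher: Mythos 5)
Your plan of exhibiting an explicit invertible $(\Si,\Ga)$-bimodule $Y$ is the natural one to try, and if $Y$ could be built it would indeed prove the lemma. But the step you flag as the ``hard part'' is more than bookkeeping; it is where the argument, as stated, breaks.

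The isomorphism $\phi\colon U\lotimes_\A X\simeq X\lotimes_\B V$ lives only in $\rD(\A^\op\otimes\B)$. After replacing $U$, $V$, $X$ by cofibrant representatives, $\phi$ is realised by an honest morphism of complexes of bimodules, but that morphism is merely a quasi-isomorphism between cofibrant objects, hence a homotopy equivalence, \emph{not} an isomorphism of complexes. Consequently the $\phi_n$ are quasi-isomorphisms, not isomorphisms, and one cannot use them (or their homotopy inverses $\phi^{-1}$, which are likewise only homotopy inverses) to literally transport the right $\Ga$-action from $X\otimes_\B\Ga$ to $\Si\otimes_\A X$, or vice versa. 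The left $\Si$-action and the right $\Ga$-action on your candidate $Y$ then commute only up to a system of homotopies, so $Y$ is not a genuine $(\Si,\Ga)$-bimodule; turning it into one is a nontrivial strictification (essentially an $A_\infty$-module rectification). The same difficulty recurs in checking $Y\lotimes_\Ga Y'\simeq\Si$ and $Y'\lotimes_\Si Y\simeq\Ga$ as isomorphisms in $\rD(\Si^e)$ and $\rD(\Ga^e)$, since these identifications again hinge on the $\phi_n$.

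The paper sidesteps this entirely by a different reduction. Replacing $\B$ by the quasi-equivalent dg category $\C=\{X(-,A)\mid A\in\A\}\subset\per_\dg\!\B$, one may assume that the derived Morita equivalence (resp.\ quasi-equivalence) between $\A$ and $\B$ is implemented by an honest dg functor $\A\to\B$ which is a Morita functor (resp.\ quasi-functor); after transporting $V$ along this dg functor one may also take $U$ to be the restriction of $V$. There is then a \emph{strict} canonical dg functor $\rT^\rL_\A U\to\rT^\rL_\B V$, and the only thing left to verify is that it is a Morita functor (resp.\ quasi-equivalence), which follows from $U^{\lotimes_\A n}\xsimeq V^{\lotimes_\B n}$ for all $n\geq0$. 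Because everything is done with dg functors rather than derived bimodules, no coherence of higher $\phi_n$'s ever needs to be addressed. If you want to pursue your bimodule route, you would need to import machinery of the kind used for dg orbit categories (choosing a dg lift of the relevant autoequivalence) to make $Y$ strictly defined; the dg-functor reduction is the cleaner way.
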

\begin{proof}
	We may assume that the derived Morita equivalence (resp. the quasi-equivalenc) between $\A$ and $\B$ is given by a {\it Morita functor} (resp. a {\it quasi-functor}) $\A\to\B$, that is, a dg functor such that the induction $-\lotimes_\A\B$ yields an equivalence $\per\A\xsimeq\per\B$ (resp. an equivalence $\per\A\xsimeq\per\B$ restricting to an equivalence $H^0\A\xsimeq H^0\B$ via the Yoneda embeddings).
	Indeed, let $\C:=\{X(-,A)\mid A\in\A\}\subset\per_\dg\!\B$ and consider the dg functor $\A\to\per_\dg\!\B$, $A\mapsto X(-,A)$ and the Yoneda embedding $\B\to\per_\dg\!\B$. The first one is a quasi-equivalence (hence a derived Morita equivalence), and the second one is always a derived Morita equivalence, and restricts to a quasi-equivalence $\B\to\C$ when $X$ gives a quasi-equivalence.
	We have a bimodule $W$ over $\C$ making the following diagram commutative.
	\[ \xymatrix{
		\per\A\ar[r]^-\simeq\ar[d]_-{-\lotimes_\A U}&\per\C\ar[d]^-{-\lotimes_\C W}&\per\B\ar[l]_-\simeq\ar[d]^-{-\lotimes_\B V}\\
		\per\A\ar[r]^-\simeq&\per\C&\per\B\ar[l]_-\simeq } \]
	This shows that we may assume there is a Morita functor (resp. a quasi-functor) $\A\to\B$, and in this case $U\in\rD(\A^e)$ is just the restriction of $V\in\rD(\B^e)$.
	Then there is a canonical dg functor $\rT^{\rL}_\A\!U\to\rT^{\rL}_\B\!V$ on the tensor categories, which is certainly a Morita functor (resp. a quasi-equivalence) since $U^{\lotimes_\A n}\xsimeq V^{\lotimes_\B n}$ in $\rD(\A^e)$ for every $n\geq0$. 
\end{proof} 

\subsection{Root pairs and their Calabi-Yau completions}
Let $\A$ and $\B$ be dg categories. Recall that an $(\A,\B)$-bimodule $X$ is {\it invertible} if there is a $(\B,\A)$-bimodule $Y$ such that $X\lotimes_\B Y\simeq\A$ in $\rD(\A^e)$ and $Y\lotimes_\A X\simeq\B$ in $\rD(\B^e)$.
In this case we have $Y\simeq\RHom_\B(X,\B)\simeq\RHom_{\A^\op}(X,\A)$ as bimodules, which we denote by $X^{-1}$. When $\A=\B$ we also write $X^{-n}=\RHom_\A(X,\A)^n$ for each $n\geq0$.

We are now in a position to state our setup, in which we formulate roots of shifted Serre functors in the setting of dg categories.
\begin{Def}\label{setup}
Let $\A$ be a dg category, $d\in\Z$, and $a\geq1$. An {\it $a$-th root} of the shifted inverse dualizing complex $\RHom_{\A^e}(\A,\A^e)[d]$ is an invertible bimodule $U$ satisfying
\begin{enumerate}
	\renewcommand\labelenumi{(\roman{enumi})}
	\renewcommand\theenumi{\roman{enumi}}
	\item\label{root} $U^{a}\simeq\RHom_{\A^e}(\A,\A^e)[d]$ in $\rD(\A^e)$.
\suspend{enumerate}
An {\it $a$-th root pair}, or simply a {\it root pair}, $(U,\P)$ over $\A$ consists of an $a$-th root $U$ of the shifted inverse dualizing complex and a full subcategory $\P\subset\A$ satisfying the following. 
\resume{enumerate}
	\renewcommand\labelenumi{(\roman{enumi})}
	\renewcommand\theenumi{\roman{enumi}}
	\item\label{P} $\per\A=\thick\{ P\lotimes_\A U^i\mid P\in\P, 0\leq i\leq a-1\}$,
	\item\label{exc} $\RHom_\A(P\lotimes_\A U^i,Q)=0$ for all $P,Q\in\P$ and $0\leq i\leq a-1$.
\end{enumerate}
\end{Def}
If $(U,\P)$ is a root pair, then by (\ref{exc}) there is a semi-orthogonal decomposition $\per\A=\thick(\P\lotimes_\A U^a)\perp\cdots\perp\thick\P$. Also by (\ref{P}), the pair $(U,\P)$ determines $\A$ up to a derived Morita equivalence.

\begin{Ex}\label{first}
\begin{enumerate}
\item The case $a=1$ is the ordinary one with the additional assumption that the inverse dualizing complex is invertible. Indeed, the conditions (\ref{P}) and (\ref{exc}) are empty, while (\ref{root}) requires the invertibility of $\RHom_{\A^e}(\A,\A^e)$.
\item\label{Bei} The following case is the motivating example. Let $d\geq1$ and let $A$ be the $d$-Beilinson algebra, thus it is presented by the following quiver with $d+1$ arrows between the vertices, and with commutativity relations.
\[ \xymatrix{ 0\ar@3[r]&1\ar@3[r]&\cdots\ar@3[r]&d } \]
Let $\A=A$ viewed as a dg category and $\P=P_0$ be the subcategory corresponding to the projective module at $0$. Also, let $U=A/P_0\oplus\tau_d^{-1}P_0$, which we view as a bimodule over $A$ by fixing an isomorphism $A\xsimeq\End_A(U)$. Then we have $U^{d+1}\simeq\RHom_A(DA,A)[d]$ and these data fit into the above setting.
More generally, we have a natural $a$-th root pair for $d$-representation infinite algebras arising from graded Calabi-Yau algebras of Gorenstein parameter $a$ \cite{ha3}.
\item Let $\B$ be a dg category and $\A=\B\times\cdots\times\B$, the product of $a$ copies of $\B$. Define $U\in\rD(\A^e)$ by the matrix
\[ \begin{pmatrix} 0&\cdots& 0&\theta \\ \B&\cdots&0& 0\\ \vdots&\ddots&\vdots&\vdots\\0&\cdots&\B&0 \end{pmatrix} \]
with $\theta:=\RHom_{\B^e}(\B,\B^e)[d]$ so that $-\lotimes_\A U\colon (X_1,\ldots,X_a)\mapsto (X_2,\ldots,X_a,X_1\lotimes_\B\theta)$. Letting $\P\subset\A$ be the full subcategory corresponding to the first entry, it is clear that $(U,\P)$ is an $a$-th root pair.
\end{enumerate}
\end{Ex}

The notion of root pairs is left-right symmetric in the following sense.
\begin{Lem}
If $(U,\P)$ is a root pair on $\A$, then $(U,\P^\op)$ is a root pair on $\A^\op$.
\end{Lem}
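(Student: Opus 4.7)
The plan is to verify the three defining conditions of Definition \ref{setup} for $(U,\P^\op)$ on $\A^\op$ one by one, using two standard tools: the swap identification $\A^e\cong(\A^\op)^e$ recalled in Section \ref{prelim}, and the contravariant bimodule duality $(-)^\vee=\RHom_\A(-,\A)\colon(\per\A)^\op\xsimeq\per\A^\op$.

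First, the swap $a\otimes b\leftrightarrow b\otimes a$ induces an equivalence $\rD(\A^e)\simeq\rD((\A^\op)^e)$ that intertwines the respective inverse dualizing bimodules $\RHom_{\A^e}(\A,\A^e)\simeq\RHom_{(\A^\op)^e}(\A^\op,(\A^\op)^e)$. Under this identification the $a$-fold tensor power $U^a$ over $\A$ corresponds to the $a$-fold tensor power of $U$ over $\A^\op$; the ordering of the factors is reversed, but all factors are copies of the same bimodule. Hence condition (\ref{root}) transfers directly.

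For (\ref{P}) and (\ref{exc}) I would use that $(-)^\vee$ sends the Yoneda representable $P\in\A$ to $P^\op\in\A^\op$ and, since $U$ is invertible, satisfies by tensor-Hom adjunction the relation
\[ (P\lotimes_\A U^i)^\vee\simeq P^\op\lotimes_{\A^\op}U^{-i}, \]
where $\A$- and $\A^\op$-bimodules are identified via the swap. Applying $(-)^\vee$ to (\ref{P}) yields
\[ \per\A^\op=\thick\{P^\op\lotimes_{\A^\op}U^{-i}\mid P\in\P,\ 0\leq i\leq a-1\}, \]
and since $-\lotimes_{\A^\op}U^{a-1}$ is an autoequivalence of $\per\A^\op$, the right-hand side coincides with $\thick\{P^\op\lotimes_{\A^\op}U^j\mid P\in\P,\ 0\leq j\leq a-1\}$, which is (\ref{P}) for $(U,\P^\op)$ on $\A^\op$. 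For (\ref{exc}) the duality together with tensor-Hom adjunction gives
\[ \RHom_\A(P\lotimes_\A U^i,Q)\simeq\RHom_{\A^\op}(Q^\op\lotimes_{\A^\op}U^i,P^\op), \]
so the vanishings transport across as $P$ and $Q$ range over $\P$.

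The only obstacle is the bookkeeping of left-right bimodule conventions together with the sign-flip $i\mapsto -i$ introduced by the contravariant duality; the observation that resolves it is that tensoring with $U^{a-1}$ is an autoequivalence of $\per\A^\op$ realigning the window $\{-i:0\leq i\leq a-1\}$ with $\{j:0\leq j\leq a-1\}$ of exponents.
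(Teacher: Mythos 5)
Your proof is correct and follows essentially the same route as the paper's: dualize via $\RHom_\A(-,\A)\colon\per\A\leftrightarrow\per\A^\op$, then realign the window of exponents $\{-i : 0\leq i\leq a-1\}$ with $\{0,\dots,a-1\}$ using that tensoring by an invertible power of $U$ is an autoequivalence. The paper performs the exponent shift before dualizing and writes the opposite-side modules as $U^{-i}\lotimes_\A P$ rather than $P^\op\lotimes_{\A^\op}U^{-i}$, but these are only cosmetic differences.
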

\begin{proof}
	Clearly the condition (\ref{root}) is left-right symmetric. We have to verify the following.
	\begin{itemize}
	\item[(\ref{P})$^\op$] $\per\A^\op=\thick\{ U^i\lotimes_\A P\mid P\in\P, 0\leq i\leq a-1\}$.
	\item[(\ref{exc})$^\op$] $\RHom_{\A^\op}(U^i\lotimes_\A P,Q)=0$ for all $P,Q\in\P$ and $0\leq i\leq a-1$.
	\end{itemize}
	We make use of the duality $\RHom_\A(-,\A)\colon\per\A\leftrightarrow\per\A^\op$. By (\ref{P}) we also have that $\per\A=\thick\{P\lotimes_\A U^{-i}\mid P\in\P, 0\leq i\leq a-1\}$ since $U$ is invertible. Applying the duality we deduce (\ref{P})$^\op$. Similarly for (\ref{exc})$^\op$, applying the duality to $\RHom_\A(P\lotimes_\A U^i,Q)=0$ yields $\RHom_{\A^\op}(Q,U^{-i}\lotimes_\A P)=0$, hence the desired condition (\ref{exc})$^\op$.
\end{proof}

The above setting leads to the following definition of the $a$-th root version of the Calabi-Yau completion.
\begin{Def}\label{CYcomp}
Let $(U,\P)$ be an $a$-th root pair on a dg category $\A$. We define $\Pi^{(1/a)}_{d+1}(\A)$ to be the full subcategory of the derived tensor category $\rT^\rL_{\A}\!U$ formed by the objects in $\P$, and call it the {\it $(d+1)$-Calabi-Yau completion} of the $a$-th root pair $(U,\P)$.
\end{Def}
Thus the objects of $\Pi^{(1/a)}_{d+1}(\A)$ are the same as $\P$, and the morphisms are given by
\[ \Pi^{(1/a)}_{d+1}(\A)(P,Q)=\bigoplus_{i\geq0}\RHom_\A(P,Q\lotimes_\A U^i) \]
for each $P,Q\in\P$ with the natural composition.
Notice that $\Pi^{(1/a)}_{d+1}(\A)$ depends on $(U,\P)$, thus the notation is ambiguous.
\begin{Rem}
We shall see that the Calabi-Yau completion $\Pi=\Pi^{(1/a)}_{d+1}(\A)$ is {\it not} in general Calabi-Yau; it is only {\it twisted} Calabi-Yau in the sense that there is an isomorphism $\RHom_{\Pi^e}(\Pi,\Pi^e)[d+1]\simeq\Pi_{\al}$ for some automorphism $\al$ of $\Pi$. For $\Pi$ to be (non-twisted) Calabi-Yau we have to make a careful choice of our bimodule $U$, which will be discussed in the following section as {\it cyclically invariant} bimodules. We refer to \ref{example} for a concrete example where the Calabi-Yau property fails.
\end{Rem}

Let us discuss how much the category $\Pi^{(1/a)}_{d+1}(\A)$ depends on $U$, $\P$ or $\A$. 
For this we introduce some equivalence relations on root pairs based on the standard ones of {derived Morita equivalence} and {quasi-equivalence} for dg categories, see the end of Section \ref{prelim}.
\begin{Def}\label{qe}
Let $(U,\P)$ be a root pair on $\A$ and $(V,\cQ)$ a root pair on $\B$ (for fixed $d$ and $a$).
\begin{enumerate}
\item We say $(U,\P)$ and $(V,\cQ)$ are {\it derived Morita equivalent} if the following conditions are satisfied.
\begin{itemize}
	\item There exists an invertible $(\A,\B)$-bimodule $X$ which restricts to an invertible $(\P,\cQ)$-bimodule.
	\item We have an isomorphism $U\lotimes_\A X\simeq X\lotimes_\B V$ in $\rD(\A^\op\otimes\B)$.
\end{itemize}
In other words, there are commutative diagrams of equivalences
\[
\xymatrix@!R=2mm{
	\per\A\ar[r]^-{-\lotimes_\A X}&\per\B\\
	\per\P\ar@{}[u]|-{\vsubset}\ar[r]&\per\cQ,\ar@{}[u]|-{\vsubset} }
\qquad
\xymatrix@!R=2mm{
	\per\A\ar[r]^-{-\lotimes_\A U}\ar[d]_-{-\lotimes_\A X}&\per\A\ar[d]^-{-\lotimes_\A X}\\
	\per\B\ar[r]^-{-\lotimes_\B V}\ar[r]&\per\B. } \]
\item We say $(U,\P)$ and $(V,\cQ)$ are {\it quasi-equivalent} if they are derived Morita equivalent in such a way that $X$ restricts to a quasi-equivalence from $\P$ to $\cQ$. 
\end{enumerate}
\end{Def}

\begin{Rem}
\begin{enumerate}
\item In (1) the commutativity $U\lotimes_\A X\simeq X\lotimes_\B V$ for $a=1$ is automatic \cite[3.10(e)]{Ke11}. 
\item In (2) we do {\it not} require $\A$ and $\B$ to be quasi-equivalent, we only require them to be derived Morita equivalent. This is because the Calabi-Yau completion $\Pi_{d+1}^{(1/a)}(\A)$ depends only on the {\it derived} Morita equivalence class of $\A$, see also the proof of \ref{cMor}(\ref{QE}) below.
\end{enumerate}
\end{Rem}
We have the following natural analogue of \cite[4.2]{Ke11}.
\begin{Prop}\label{cMor}
Let $(U,\P)$ and $(V,\cQ)$ be root pairs on $\A$ and $\B$ respectively.
\begin{enumerate}
\item\label{MO} If $(U,\P)$ and $(V,\cQ)$ are derived Morita equivalent, then $\Pi^{(1/a)}_{d+1}(\A)$ and $\Pi_{d+1}^{(1/a)}(\B)$ are derived Morita equivalent.
\item\label{QE} If $(U,\P)$ and $(V,\cQ)$ are quasi-equivalent, then $\Pi^{(1/a)}_{d+1}(\A)$ and $\Pi_{d+1}^{(1/a)}(\B)$ are quasi-equivalent.
\end{enumerate}
\end{Prop}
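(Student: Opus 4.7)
The strategy is to apply Lemma \ref{cten} to the tensor categories $\Si_\A:=\rT^\rL_\A\!U$ and $\Si_\B:=\rT^\rL_\B\!V$, and then to restrict the resulting equivalence to the full sub-dg-categories on $\P$ and $\cQ$, which by Definition \ref{CYcomp} are precisely $\Pi^{(1/a)}_{d+1}(\A)$ and $\Pi^{(1/a)}_{d+1}(\B)$.

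For part (\ref{MO}), the invertible $(\A,\B)$-bimodule $X$ from Definition \ref{qe} satisfies $U\lotimes_\A X\simeq X\lotimes_\B V$, so Lemma \ref{cten} produces a derived Morita equivalence between $\Si_\A$ and $\Si_\B$. Following the reduction step used in the proof of Lemma \ref{cten}, one may replace $(V,\cQ)$ by a derived Morita equivalent root pair so that this equivalence arises from a Morita functor $f\colon\A\to\B$ along which $V$ restricts to $U$; the induced dg functor $f'\colon\Si_\A\to\Si_\B$ between derived tensor categories is then again a Morita functor. The additional condition in Definition \ref{qe} that $X$ restricts to an invertible $(\P,\cQ)$-bimodule means that $f$ sends each object of $\P$ to an object of $\cQ$ (up to replacement in $\per\B$) and induces a derived Morita equivalence between $\P$ and $\cQ$. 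Since by Definition \ref{CYcomp} the morphism complex between $P,P'\in\P$ in $\Pi^{(1/a)}_{d+1}(\A)$ is exactly the morphism complex between them in $\Si_\A$, and similarly on the $\B$-side, the functor $f'$ restricts to a dg functor $\Pi^{(1/a)}_{d+1}(\A)\to\Pi^{(1/a)}_{d+1}(\B)$. Its associated bimodule is the restriction of the invertible $(\Si_\A,\Si_\B)$-bimodule, and its invertibility then follows from the invertibility of $X$ on $(\P,\cQ)$, giving the desired derived Morita equivalence of Calabi-Yau completions.

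For part (\ref{QE}), if in addition $X$ restricts to a quasi-equivalence from $\P$ to $\cQ$, then the restriction $f|_\P\colon\P\to\cQ$ is itself a quasi-functor, which is inherited by the restriction of $f'$ to the Calabi-Yau completions: the induced map $H^0\Pi^{(1/a)}_{d+1}(\A)\to H^0\Pi^{(1/a)}_{d+1}(\B)$ is essentially surjective since $f|_\P$ already is. The main technical point is to verify that the extra structure carried by the sub-dg-categories $\Pi^{(1/a)}_{d+1}(\A)$ and $\Pi^{(1/a)}_{d+1}(\B)$, namely that they are the \emph{$\P$-part} and \emph{$\cQ$-part} of the tensor categories, is preserved by the equivalence from Lemma \ref{cten}; this is built into the compatibility conditions imposed on $X$ in Definition \ref{qe} together with the identification of morphism complexes noted above, so no further obstruction arises.
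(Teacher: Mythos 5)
Your proposal is correct and follows essentially the same route as the paper's proof: invoke Lemma \ref{cten} to get a derived Morita (resp. quasi-)equivalence of the tensor categories $\rT^\rL_\A\!U$ and $\rT^\rL_\B\!V$, then restrict to the full sub-dg-categories on $\P$ and $\cQ$, using the compatibility conditions of Definition \ref{qe} to see that the restriction inherits the equivalence. The paper is terser — for (1) it simply says the corresponding subcategories are derived Morita equivalent, and for (2) it displays the morphism complex $\Pi_{d+1}^{(1/a)}(\A)(P,Q)=\bigoplus_{i\geq0}\RHom_\A(P,Q\lotimes_\A U^i)$ and observes the claim follows — but the content is the same.
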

\begin{proof}
	(\ref{MO})  Since $\A$ and $\B$ are derived Morita equivalent and $U$ and $V$ are corresponding autoequivalences, the tensor categories $\rT^{\rL}_\A\!U$ and $\rT^{\rL}_\B\!V$ are derived Morita equivalent by \ref{cten}. Therefore their corresponding subcategories $\Pi^{(1/a)}_{d+1}(\A)$ and $\Pi_{d+1}^{(1/a)}(\B)$ are derived Morita equivalent.
	
	(\ref{QE})  Recall that the morphism complex of the Calabi-Yau completion is given as
	\[ \Pi_{d+1}^{(1/a)}(\A)(P,Q)=\bigoplus_{i\geq0}\RHom_\A(P,Q\lotimes_\A U^i) \]
	for each $P,Q\in\P$. Then the assertion certainly follows from this description.
\end{proof}

We end this section with the following important question.
\begin{Qs}
Let $(U,\P)$ and $(U,\cQ)$ be root pairs on a dg category $\A$ for the same $U$. Then are the Calabi-Yau completions derived Morita equivalent?
\end{Qs}

%


\section{Statements of the results}\label{results}
We state the general main results of this article. We have defined the Calabi-Yau completion \ref{CYcomp} of an $a$-th root pair $(U,\P)$ of $\RHom_{\A^e}(\A,\A^e)[d]$ on a dg category $\A$. We formulate a property of a bimodule $U$ which assures the Calabi-Yau property of $\Pi$.
\subsection{The Calabi-Yau property}\label{CYstr}
Let us first summarize our setup.
\begin{itemize}
	\item $\A$ is a dg category and $(U,\P)$ is an $a$-th root pair of $\RHom_{\A^e}(\A,\A^e)[d]$ on $\A$. We assume that $U$ is cofibrant as an $(\A,\A)$-bimodule.
	\item Let $\Si=\rT_\A\!U$ be the tensor category, and $\Pi^{(i)}$ a full subcategory of $\Si$ consisting of objects in $\P^{(i)}:=\{P\otimes_\A U^i\mid P\in\P\}\subset\per_\dg\!\A$. We let $\Pi:=\Pi^{(0)}=\Pi^{(1/a)}_{d+1}(\A)$.
\end{itemize}
Note that we have quasi-equivalences $\P\xsimeq\P^{(i)}$ given by $P\mapsto P\otimes_\A U^i$, and the corresponding quasi-equivalences $\Pi\xsimeq\Pi^{(i)}$.

Let $M$ be a cofibrant bimodule over a dg category $\X$. Consider the action of the cyclic group of order $n$ on $\X\otimes_{\X^e}M^n$ given by
\[ 1\otimes(z_1\otimes\cdots\otimes z_n)\mapsto (-1)^{|z_1|(|z_2|+\cdots+|z_n|)}1\otimes(z_2\otimes\cdots\otimes z_n\otimes z_1). \]
Clearly the action descends to the cohomology.
\begin{Def}
We say that a closed morphism $\varphi\colon\RHom_{\A^e}(\A,\A^e)\to M^n$ of degree $-d$ is {\it $n$-cyclically invariant} if the preimage of $\varphi$ under the canonical isomorphisms
\[ \xymatrix{ \Hom_{\rD(\A^e)}(\A^\vee[d],M^n)&H^{-d}(M^n\lotimes_{\A^e}\A^{\vee\vee})\ar[l]_-\simeq&\ar[l]_-\simeq H^{-d}(M^n\lotimes_{\A^e}\A) } \]
is stable under the action of $\Z/n\Z$.
\end{Def}

\begin{Def}\label{cyc}
We say that an $a$-th root $U$ of $\RHom_{\A^e}(\A,\A^e)[d]$ is {\it cyclically invariant} if one can choose a quasi-isomorphism
\[ \xymatrix{\RHom_{\A^e}(\A,\A^e)\ar[r]& U^a }\]
of degree $-d$ to be $a$-cyclically invariant. We call a root pair $(U,\P)$ {\it cyclically invariant} if $U$ is.
\end{Def}

Now we are ready to state the first main result of the paper.
\begin{Thm}\label{CY}
Let $\A$ be a smooth dg category and $(U,\P)$ an $a$-th root pair of $\RHom_{\A^e}(\A,\A^e)[d]$ which is cyclically invariant. Then $\Pi:=\Pi_{d+1}^{(1/a)}(\A)$ is smooth and is $(d+1)$-Calabi-Yau, that is, there is an isomorphism in $\rD(\Pi^e)$:
\[ \RHom_{\Pi^e}(\Pi,\Pi^e)[d+1]\simeq\Pi. \] 
\end{Thm}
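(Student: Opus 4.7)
My plan is to work through the derived tensor dg category $\Si = \rT^{\rL}_\A U$, in which $\Pi$ sits as the full subcategory on $\P$, and to exploit the natural bar-like resolution
\[ \Si \lotimes_\A U \lotimes_\A \Si \longrightarrow \Si \lotimes_\A \Si \longrightarrow \Si \]
in $\rD(\Si^e)$ given by Lemma~\ref{ten} for $n=0$. Since $\A$ is smooth and $U$ is perfect over $\A^e$ (it is invertible with $U^{a}\simeq\A^{\vee}[d]$ perfect), both terms in the resolution are perfect over $\Si^{e}$, so $\Si$ is smooth; smoothness of $\Pi_{0}=e\A e$ follows from the admissibility of $\P\subset\A$ provided by root pair condition~(ii), and this descends to $\Pi$.

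To compute $\Si^{\vee}=\RHom_{\Si^e}(\Si,\Si^e)$ I would dualize the above resolution. Lemma~\ref{duals} combined with the smoothness of $\A$ and the identifications $\A^{\vee}\simeq U^{a}[-d]$ and $U^{\vee}\simeq U^{-1}\lotimes_\A\A^{\vee}\simeq U^{a-1}[-d]$ yields a triangle
\[ \Si^{\vee}[d] \to \Si \lotimes_\A U^{a} \lotimes_\A \Si \to \Si \lotimes_\A U^{a-1} \lotimes_\A \Si \to \Si^{\vee}[d+1]. \]
Comparing with Lemma~\ref{ten} for $n=a-1$, whose cofiber is $\Si_{\ge a-1}$, both triangles share their middle terms. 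The dualized connecting map sends $1\otimes u\otimes 1$ to $u\otimes 1-1\otimes u$; transported through the iso $\A^{\vee}\simeq U^{a}[-d]$, it becomes the telescoping map $1\otimes(u_1 \otimes \cdots \otimes u_a) \otimes 1 \mapsto u_1 \otimes (u_2 \otimes \cdots \otimes u_a) \otimes 1 - 1 \otimes(u_1 \otimes \cdots \otimes u_{a-1}) \otimes u_a$ of Lemma~\ref{ten}, but only \emph{up to cyclic rotation of the tensor factors}. The cyclic invariance hypothesis is precisely the choice that makes these two maps agree, and we conclude
\[ \Si^{\vee}[d+1]\simeq\Si_{\ge a-1} \]
in $\rD(\Si^e)$, refining to $\Si^{\vee}[d+1]\simeq\Si_{\ge a-1}(a)$ in the Adams-graded setting.

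Finally, I would descend to $\Pi$. Restricting along the inclusion $\Pi\hookrightarrow\Si$ (on the $\P$-part) gives $\Pi^{\vee}[d+1]\simeq e\Si_{\ge a-1}e$ as $\Pi^{e}$-modules. The semi-orthogonal vanishings $\RHom_\A(P\lotimes_\A U^{i},Q)=0$ for $1\le i\le a-1$ from root pair condition~(iii), combined with the periodicity furnished by $U^{a}\simeq\A^{\vee}[d]$, force $eU^{l}e=0$ for all $l\not\equiv 0\pmod{a}$ and $eU^{la}e\simeq(eU^{a}e)^{\otimes_{\Pi_{0}}l}$. Consequently $\Pi\simeq\rT^{\rL}_{\Pi_{0}}(\Pi_{0}^{\vee}[d])=\Pi_{d+1}(\Pi_{0})$ as ungraded dg categories, and Keller's theorem \cite{Ke11} delivers the Calabi--Yau property; equivalently, the identification of $e\Si_{\ge a-1}e$ with $\Pi$ gives $\Pi^{\vee}[d+1]\simeq\Pi$ directly.

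The main obstacle is this final descent: matching $e\Si_{\ge a-1}e$ with $\Pi$ (equivalently, proving the vanishing $eU^{l}e=0$ for $l\not\equiv 0\pmod a$ beyond the range $1\le l\le a-1$ given directly by the semi-orthogonal decomposition) relies crucially on the cyclic invariance of the root, which governs how the Serre-type twist $-\lotimes_\A\A^{\vee}$ interacts with the semi-orthogonal decomposition. Without cyclic invariance, the identification fails and one recovers only a twisted Calabi--Yau structure on $\Pi$, as illustrated by~\ref{example}.
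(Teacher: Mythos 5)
Your first two steps are in the right spirit: proving $\Si^{\vee}[d+1]\simeq\Si_{\geq a-1}$ via the bar resolution, with cyclic invariance matching the dualized differential to the telescoping map of Lemma~\ref{ten}, is essentially what the paper does (there, rigorously, via Casimir elements in Proposition~\ref{daiji}). The fatal problem is the final descent. You restrict $\Si_{\geq a-1}$ along $e(-)e$ with the \emph{same} idempotent on both sides, but $e\Si_{\geq a-1}e=\bigoplus_{l\geq a-1}eU^{l}e=\Pi_{\geq a-1}$, the two-sided ideal of $\Pi$ generated in Adams degree $\geq a-1$, which is not $\Pi$. Likewise the claimed vanishing $eU^{l}e=0$ for $l\not\equiv 0\pmod a$ is false: condition~(iii) of a root pair gives $\RHom_{\A}(P\lotimes_{\A}U^{i},Q)=0$ for $1\leq i\leq a-1$, equivalently $eU^{-i}e=0$ for those $i$, but says nothing about positive $l$. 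Already in the Beilinson example ($\A$ the $d$-Beilinson algebra, $a=d+1$, $\Pi=k[x_{0},\dots,x_{d}]$) every $\Pi_{l}=eU^{l}e$ is nonzero, and your conclusion $\Pi\simeq\rT^{\rL}_{\Pi_{0}}(\Pi_{0}^{\vee}[d])$ would yield $k[x]$ rather than $k[x_{0},\dots,x_{d}]$. Your intermediate assertion that restriction ``gives $\Pi^{\vee}[d+1]\simeq e\Si_{\geq a-1}e$'' is also unjustified: $(e\Si e)^{\vee}\neq e\Si^{\vee}e$ for idempotent subcategories in general, and in this example the two sides disagree ($\Pi$ versus $\Pi_{\geq a-1}$).

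The correct descent is asymmetric: one must restrict along $e_{0}(-)e_{a-1}$, i.e.\ take $\P^{(0)}=\P$ on one side and $\P^{(a-1)}=\{P\lotimes_{\A}U^{a-1}\}$ on the other. Then the vanishing furnished by~(iii) reads $e_{0}U^{l}e_{a-1}=0$ for $0\leq l<a-1$, so $e_{0}\Si_{\geq a-1}e_{a-1}\simeq e_{0}\Si e_{a-1}\simeq\Pi$, and the left side is related to $\RHom_{\Pi^{e}}(\Pi,\Pi^{e})$ via the adjunctions coming from Lemma~\ref{Pi-dual}, namely $\RHom_{\Pi}(\Si e_{a-1},\Pi)\simeq e_{0}\Si$ and $\RHom_{\Pi^{\op}}(e_{0}\Si,\Pi)\simeq\Si e_{a-1}$, which in turn rest on the same $e_{i}\Si e_{j}\simeq\Pi$ identifications for $i\leq j$. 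This duality lemma is the missing ingredient and it is \emph{not} where cyclic invariance enters; cyclic invariance is spent entirely at the $\Si$-level to get $\Si^{\vee}[d+1]\simeq\Si_{\geq a-1}$ untwisted, whereas the descent to $\Pi$ uses only condition~(iii). Without cyclic invariance one gets a twist already on $\Si$, not a failure of the descent as you suggest.
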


\begin{Rem}
The Calabi-Yau completion $\Pi^{(1/a)}_{d+1}(\A)$ in fact has a canonical {\it left Calabi-Yau structure} in the sense of \cite{KS,Ke11+,Ye16}, which is a refinement of \ref{CY} in terms of the (negative) cyclic homology, and is the `correct' definition of the Calabi-Yau property. 
We will not give a proof of this fact here and postpone it to another paper.
\end{Rem}

In fact, we shall prove the following intermediate result for $a$-th roots of a shifted inverse dualizing complex (with $\P$ possibly absent), which yields a dg category which is not Calabi-Yau, but is still useful and close to being Calabi-Yau.
\begin{Thm}\label{fakeCY}
Let $\A$ be a smooth dg category and $U$ an $a$-th root of $\RHom_{\A^e}(\A,\A^e)[d]$ which is cyclically invariant. Then the tensor category $\Si=\rT^{\rL}_\A\!U$ satisfies
\[ \RHom_{\Si^e}(\Si,\Si^e)[d+1]\simeq\Si_{\geq a-1} \]
in $\rD(\Si^e)$, where $\Si_{\geq a-1}=\bigoplus_{l\geq a-1}U^l$ the two-sided ideal of $\Si$.
\end{Thm}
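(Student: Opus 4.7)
My strategy is to compute $\RHom_{\Si^e}(\Si,\Si^e)$ by dualizing the short triangle of Lemma \ref{ten} at $n=0$,
\[ \Si\lotimes_\A U\lotimes_\A\Si \to \Si\lotimes_\A\Si \to \Si, \]
and then matching the outcome with the triangle Lemma \ref{ten} supplies at $n=a-1$,
\[ \Si\lotimes_\A U^a\lotimes_\A\Si \to \Si\lotimes_\A U^{a-1}\lotimes_\A\Si \to \Si_{\geq a-1}. \]
Uniqueness of cones will then yield the desired isomorphism $\RHom_{\Si^e}(\Si,\Si^e)[d+1]\simeq\Si_{\geq a-1}$.

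The dualization step is routine. Extension of scalars along $\A^e\to\Si^e$ gives the adjunction isomorphism $\RHom_{\Si^e}(\Si\lotimes_\A X\lotimes_\A\Si,\Si^e)=\RHom_{\A^e}(X,\Si^e)$ for any $\A$-bimodule $X$. Smoothness of $\A$ yields $\A\in\per\A^e$, and invertibility of $U$ yields $U\in\per\A^e$; consequently the canonical map $X^\vee\lotimes_{\A^e}\Si^e\xrightarrow{\sim}\RHom_{\A^e}(X,\Si^e)$ is an isomorphism for both $X=\A$ and $X=U$, and unwinding the $\Si$-bimodule structure identifies the left-hand side with $\Si\lotimes_\A X^\vee\lotimes_\A\Si$. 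The root relation supplies $\A^\vee\simeq U^a[-d]$, and Lemma \ref{duals} gives $U^\vee\simeq U^{-1}\lotimes_\A\A^\vee\simeq U^{a-1}[-d]$. Applying $\RHom_{\Si^e}(-,\Si^e)[d+1]$ to the Lemma \ref{ten} triangle at $n=0$ and rotating therefore produces a triangle
\[ \Si\lotimes_\A U^a\lotimes_\A\Si \xrightarrow{\partial}\Si\lotimes_\A U^{a-1}\lotimes_\A\Si \to \RHom_{\Si^e}(\Si,\Si^e)[d+1] \]
in $\rD(\Si^e)$.

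The main obstacle, and the sole place where the cyclic invariance hypothesis is used, is the identification of $\partial$ with the connecting map of Lemma \ref{ten} at $n=a-1$, which sends $1\otimes(x_0,\ldots,x_{a-1})\otimes 1$ to $x_0\otimes(x_1,\ldots,x_{a-1})\otimes 1-1\otimes(x_0,\ldots,x_{a-2})\otimes x_{a-1}$. Tracing the adjunctions, $\partial$ is dual to the $\A$-bimodule map $U\to\Si\lotimes_\A\Si$, $x\mapsto x\otimes 1-1\otimes x$, transported through the identifications $\A^\vee\simeq U^a[-d]$ and $U^\vee\simeq U^{a-1}[-d]$. The summand of $\partial$ coming from $x\mapsto x\otimes 1$ absorbs an outermost factor of the tensor product $U^a$ into the left $\Si$, producing the first term $x_0\otimes(x_1,\ldots,x_{a-1})\otimes 1$; the summand from $x\mapsto 1\otimes x$ absorbs instead into the right $\Si$, producing $1\otimes(x_0,\ldots,x_{a-2})\otimes x_{a-1}$. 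A priori, these two absorptions use different factors of $U^a$ and are related by a cyclic rotation of the $a$ tensor slots, which is exactly the $\Z/a\Z$-action appearing in Definition \ref{cyc}. The hypothesis of cyclic invariance says precisely that the chosen quasi-isomorphism $\A^\vee\to U^a$ is fixed by this action, so the rotation becomes the identity in $\rD(\Si^e)$ and the two summands of $\partial$ match the Lemma \ref{ten} formula on the nose, completing the identification and hence the proof.
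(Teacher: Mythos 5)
Your overall strategy is the same as the paper's: dualize the standard bimodule triangle for $\Si$, compare with the triangle from Lemma~\ref{ten} at $n=a-1$, and argue that cyclic invariance makes the connecting morphisms match, so the cones agree. This much is correct and reproduces the paper's reduction.

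The gap is in the final identification of $\partial$ with the Lemma~\ref{ten} differential. Your description --- that one summand of $\partial$ ``absorbs the outermost factor of $U^a$ into the left $\Si$'' and the other into the right, and that these are related by the $\Z/a\Z$-rotation --- is precisely the statement to be proved, not a proof of it. The obstacle is that $\RHom_{\Si^e}(-,\Si^e)$ is not an element-level operation: the map ``$x\mapsto x\otimes 1 - 1\otimes x$'' lives on $U$, and its bimodule dual, transported through the identifications $\A^\vee\simeq U^a[-d]$ and $U^\vee\simeq U^{a-1}[-d]$, has no a priori element-level description in terms of ``slot absorption.'' Moreover the $\Z/a\Z$-action in Definition~\ref{cyc} acts on the Hochschild chain complex $U^a\lotimes_{\A^e}\A$, not on the bimodule $U^a$ in $\rD(\A^e)$ (there is no cyclic rotation of a bimodule $a$-fold tensor power that is $\A^e$-linear); so ``the rotation becomes the identity'' is not a literal statement about a map on $\Si\lotimes_\A U^a\lotimes_\A\Si$, but rather an assertion about the stability of a class in Hochschild homology, and translating that back into a commutativity statement in $\rD(\Si^e)$ is exactly the nontrivial content. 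The paper handles this with an entire technical apparatus: Casimir elements (Lemma~\ref{LemC}), the comparison of bimodule and one-sided Casimir elements (Lemma~\ref{chase}, Lemma~\ref{itsumo}), and the notion of compatible pairs $(\varphi,\varphi^+)$ (Definition~\ref{compat}), culminating in Proposition~\ref{daiji}, which shows that cyclic invariance of $\varphi$ is equivalent to compatibility and thus to commutativity of the square. Your proposal cites none of this and instead asserts the conclusion of \ref{daiji}; to turn the sketch into a proof you must either invoke \ref{daiji} explicitly or redo its Hochschild-chain-level computation. As written, you have restated why the result is plausible rather than proved the crucial step.
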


We shall prove these results in Section \ref{proof}. Before that we present a sketch of the proof of \ref{CY} below which is convincing enough. 
Let $\Si=\rT^\rL_{\A}\!U$ be the derived tensor category and consider the triangle
\[ \xymatrix{ \Si\lotimes_\A U\lotimes_\A\Si\ar[r]& \Si\lotimes_\A\Si\ar[r]& \Si }. \]
We will denote by $(-)e_i$ the restriction of a $\Si$-module to the objects in $\P^{(i)}$, and similarly for left modules or bimodules.
In view of \ref{setup}(\ref{exc}) we have $e_0\Si e_{a-1}\simeq \Pi$ in $\rD(\Pi^e)$ via the quasi-equivalence $\Pi\to\Pi^{(a-1)}$. Therefore restricting the triangle on the left to $\P=\P^{(0)}$ and on the right to $\P^{(a-1)}$ yields a triangle
\[ \xymatrix{ e_0\Si\lotimes_\A U\lotimes_\A\Si e_{a-1}\ar[r]& e_0\Si\lotimes_\A\Si e_{a-1}\ar[r]& \Pi } \]
in $\rD(\Pi^e)$. Applying $\RHom_{\Pi^e}(-,\Pi^e)$ to the sequence and using adjunctions, we get a morphism
\[ \xymatrix{ \RHom_{\A^e}(\A,\RHom_{\Pi^e}(e_0\Si\otimes\Si e_{a-1},\Pi^e))[d]\ar[r]&\RHom_{\A^e}(U,\RHom_{\Pi^e}(e_0\Si\otimes\Si e_{a-1},\Pi^e))[d] } \]
whose mapping cone is $\RHom_{\Pi^e}(\Pi,\Pi^e)[d+1]$.
We have $\RHom_{\Pi^e}(e_0\Si\otimes\Si e_{a-1},\Pi^e)=\RHom_{\Pi^\op}(e_0\Si,\Pi)\otimes\RHom_{\Pi}(\Si e_{a-1},\Pi)$, which is isomorphic to $\Si e_{a-1}\otimes e_0\Si$ by \ref{Pi-dual} below, so that the above morphism extends to the triangle
\[ \xymatrix{e_0\Si\lotimes_\A\RHom_{\A^e}(\A,\A^e)[d]\lotimes_\A\Si e_{a-1}\ar[r]&e_0\Si\lotimes_\A\RHom_{\A^e}(U,\A^e)[d]\lotimes_\A\Si e_{a-1}\ar[r]&\RHom_{\Pi^e}(\Pi,\Pi^e)[d+1] }. \]

On the other hand, let $\Si_{\geq a-1}$ be the submodule $\bigoplus_{l\geq a-1}U^l$ of $\Si$ and consider the triangle
\[ \xymatrix{ \Si\lotimes_\A U^{a}\lotimes_\A\Si\ar[r]& \Si\lotimes_\A U^{a-1}\lotimes_\A\Si\ar[r]& \Si_{\geq a-1} } \]
from \ref{ten}. Restricting as $e_0(-)e_{a-1}$ and noting that $e_0(\Si_{\geq a-1})e_{a-1}\simeq\Pi$ again by \ref{setup}(\ref{exc}), we have a triangle
\[ \xymatrix{ e_0\Si\lotimes_\A U^{a}\lotimes_\A\Si e_{a-1}\ar[r]& e_0\Si\lotimes_\A U^{a-1}\lotimes_\A\Si e_{a-1}\ar[r]& \Pi } \]
in $\rD(\Pi^e)$.

Now by \ref{setup}(\ref{root}) we have $\RHom_{\A^e}(\A,\A^e)[d]=U^a$, and \ref{duals} yields $\RHom_{\A^e}(U,\A^e)[d]=\RHom_\A(U,\A)\lotimes_\A\RHom_{\A^e}(\A,\A^e)[d]=U^{a-1}$. We have therefore obtained two triangles whose first two terms are isomorphic. In fact, one can show that the morphisms are isomorphic, that is, the square in the diagram of triangles below is commutative in $\rD(\Pi^e)$, under the assumption that the root pair is cyclically invariant.
\[ \xymatrix{
	e_0\Si\lotimes_\A\RHom_{\A^e}(\A,\A^e)[d]\lotimes_\A\Si e_{a-1}\ar[r]\ar[d]^-{\rsimeq}&e_0\Si\lotimes_\A\RHom_{\A^e}(U,\A^e)[d]\lotimes_\A\Si e_{a-1}\ar[r]\ar[d]^-{\rsimeq}&\RHom_{\Pi^e}(\Pi,\Pi^e)[d+1]\\
	e_0\Si\lotimes_\A U^{a}\lotimes_\A\Si e_{a-1}\ar[r]& e_0\Si\lotimes_\A U^{a-1}\lotimes_\A\Si e_{a-1}\ar[r]& \Pi } \]
We shall therefore conclude that $\RHom_{\Pi^e}(\Pi,\Pi^e)[d+1]\simeq\Pi$ in $\rD(\Pi^e)$.

\subsection{Correspondences}\label{corr}
We establish a bijection between Calabi-Yau algebras with Gorenstein parameters and higher representation infinite algebras with roots of the Auslander-Reiten translations. 

Let us start by recalling the relevant notions. We will work on ($\Z$-){\it graded dg categories}, that is, a $\Z^2$-graded category with a differential of degree $(1,0)$ subject to the Leibniz rule. The first grading is the usual cohomological grading on dg categories, and the second one is called the {\it Adams grading}. For a graded dg category $\A$ we denote by $\rD^\Z(\A)$ the {\it graded derived category} (see \cite[Section 2.1]{haI} for details).

This leads to the following definition of Calabi-Yau dg categories in the graded setting.
\begin{Def}\label{GPDef}
Let $\Pi$ be an $n$-Calabi-Yau dg category which is Adams graded. We say that $\Pi$ has {\it Gorenstein parameter $a$} if there is an isomorphism
\[ \RHom_{\Pi^e}(\Pi,\Pi^e)[n]\simeq\Pi(a) \]
in the graded derived category $\rD^\Z(\Pi^e)$.
\end{Def}

We next turn to the finite dimensional algebra side.
Let $A$ be a finite dimensional algebra of finite global dimension and $d\in\Z$. We denote by
\[ \nu_d=-\lotimes_ADA[-d]\colon\Db(\mod A)\to \Db(\mod A) \]
the shifted Serre functor. Its inverse is given by $\nu_d^{-1}=-\lotimes_A\RHom_A(DA,A)[d]=-\lotimes_A\RHom_{A^e}(A,A^e)[d]$.
\begin{Def}[{\cite{HIO}}]\label{dRIDef}
Let $d\geq0$ be an integer. A finite dimensional algebra is {\it $d$-representation infinite} if
\[ \nu_d^{-i}A\in\mod A\subset\Db(\mod A) \]
for all $i\geq0$.
\end{Def}
We understand $0$-representation infinite as semisimple. It is a reformulation of the definition that a finite dimensional algebra $A$ is $d$-representation infinite if and only if its $(d+1)$-Calabi-Yau completion is concentrated in degree $0$.

The following result is the bijection which we aim to generalize.
\begin{Thm}[{\cite{Ke11,MM}, see also \cite{AIR,HIO}}]\label{mm}
Let $k$ be a perfect field and $d\geq0$. There exists a bijection between the following.
\begin{enumerate}
	\renewcommand{\labelenumi}{(\roman{enumi})}
	\renewcommand{\theenumi}{\roman{enumi}}
	\item\label{CYalg} The set of Morita equivalence classes of $(d+1)$-Calabi-Yau algebras of Gorenstein parameter $1$, such that each degree part is finite dimensional.
	\item\label{dRI} The set of Morita equivalence classes of $d$-representation infinite algebras.
\end{enumerate}
	The correspondence from (\ref{CYalg}) to (\ref{dRI}) is given by taking the degree $0$ part, and from (\ref{dRI}) to (\ref{CYalg}) by taking the $(d+1)$-Calabi-Yau completion.
\end{Thm}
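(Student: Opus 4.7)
The plan is to verify that the two maps of the theorem are well-defined and mutually inverse.

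\textbf{Well-definedness of $A \mapsto \Pi_{d+1}(A)$.} Let $A$ be $d$-representation infinite and $\Pi := \Pi_{d+1}(A) = \rT^\rL_A(\theta)$ where $\theta$ is a cofibrant resolution of $\RHom_{A^e}(A, A^e)[d]$. By Keller's theorem, $\Pi$ is $(d+1)$-Calabi-Yau as an ungraded dg algebra, and the tensor grading on $\rT^\rL_A(\theta)$ provides a natural Adams grading. The $d$-representation infinite hypothesis states exactly that $\theta^{\lotimes_A i} \simeq \nu_d^{-i} A$ is concentrated in cohomological degree zero with finite-dimensional $H^0$ for every $i \geq 0$; hence $\Pi$ is a genuine Adams positively graded algebra with finite-dimensional graded pieces. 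The Gorenstein parameter equals $1$ because $\theta$ sits in Adams degree $1$ by construction, and the left Calabi-Yau structure of $\Pi$ respects the Adams grading, giving $\RHom_{\Pi^e}(\Pi, \Pi^e)[d+1] \simeq \Pi(1)$ in $\rD^\Z(\Pi^e)$.

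\textbf{Well-definedness of $\Pi \mapsto \Pi_0$.} Let $\Pi$ be as in (i) and set $A := \Pi_0$. Finite global dimension of $A$ follows from the smoothness of $\Pi$: a finite bimodule resolution of $\Pi$ over $\Pi^e$ restricts in Adams degree zero to a finite $A$-bimodule resolution of $A$. The key technical step is to identify $\Pi_i \simeq \nu_d^{-i} A$ as $A$-bimodules. For $i = 1$, applying the Adams degree-$1$ component functor to $\RHom_{\Pi^e}(\Pi, \Pi^e)[d+1] \simeq \Pi(1)$ and invoking \ref{duals} together with the fact that $A \in \per \Pi$ on either side yields $\RHom_{A^e}(A, A^e)[d] \simeq \Pi_1$. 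For general $i$, one shows that the multiplication map $\Pi_1^{\lotimes_A i} \to \Pi_i$ is an isomorphism, using that $\Pi$ is generated in Adams degree $1$ over $A$. This gives $\nu_d^{-i} A \simeq \Pi_i \in \mod A$, so $A$ is $d$-representation infinite.

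\textbf{Mutual inverseness.} Starting from (ii), the composition $A \mapsto \Pi_{d+1}(A) \mapsto \Pi_{d+1}(A)_0$ recovers $A$ directly from the construction of the tensor algebra. Conversely, starting from (i), the identifications above produce a canonical morphism of Adams graded dg algebras $\Pi_{d+1}(\Pi_0) \to \Pi$, induced by $\theta \simeq \RHom_{\Pi_0^e}(\Pi_0, \Pi_0^e)[d] \simeq \Pi_1 \hookrightarrow \Pi$; this is a graded quasi-isomorphism since it is an isomorphism in each Adams degree.

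\textbf{Main obstacle.} The crux lies in the second step: showing that $\Pi$ is generated in Adams degree $1$ over $\Pi_0$, equivalently that the maps $\Pi_1^{\lotimes_A i} \to \Pi_i$ are isomorphisms. This is the graded Calabi-Yau analogue of an Artin--Schelter type condition; my plan is to combine the Gorenstein parameter $1$ hypothesis with Calabi-Yau duality to produce a Koszul-type minimal bimodule resolution of $\Pi_0$ over $\Pi$ whose shape forces generation in degree $1$, following the strategy of Minamoto--Mori.
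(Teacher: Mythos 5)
The paper does not supply its own proof of \ref{mm}: it is recalled from the references, and is subsumed by the generalization \ref{MM} with its specialization \ref{fdMM} (this statement is the case $a=1$). So the comparison is against the classical argument and against the machinery the paper deploys for the generalization.

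Your outline correctly identifies the crux and flags it honestly: showing that a positively graded $(d+1)$-Calabi--Yau algebra $\Pi$ of Gorenstein parameter $1$ with finite dimensional graded pieces is generated in Adams degree $1$ over $\Pi_0$, i.e.\ that $\Pi$ is the derived tensor algebra on $\Pi_1$. Your plan to produce a Koszul-type minimal resolution in the spirit of Minamoto--Mori is one viable route, but the paper closes the same gap by a cleaner and more general mechanism: Proposition \ref{chtensor} says that a positively graded dg category $\Ga$ with $\Ga\in\per^{[0,1]}\!\Ga^e$ is automatically the derived tensor category $\rT^\rL_{\Ga_0}\!\Ga_1$, and the hypothesis $\Pi\in\per^{[0,1]}\!\Pi^e$ is extracted directly from the Gorenstein parameter. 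Indeed, positive grading gives $\Pi\in\per^{\geq0}\!\Pi^e$, hence $\Pi^\vee\in\per^{\leq0}\!\Pi^e$; on the other hand $\Pi^\vee\simeq\Pi(1)[-d-1]$ is concentrated in Adams degree $\geq-1$, so also $\Pi^\vee\in\per^{\geq-1}\!\Pi^e$; combining, $\Pi^\vee\in\per^{[-1,0]}\!\Pi^e$ and dually $\Pi\in\per^{[0,1]}\!\Pi^e$. This is precisely the argument in the proof of \ref{fakeMM}, and its advantage over your plan is that it works uniformly in the dg setting and for higher Gorenstein parameters.

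Two smaller steps in your sketch also need repair. First, restricting a finite $\Pi^e$-bimodule resolution of $\Pi$ to Adams degree $0$ does not obviously yield a finite $A^e$-resolution of $A$: while $(\Pi\otimes\Pi)_0=A\otimes A$, the resolution terms typically carry Adams shifts, and $(\Pi\otimes\Pi(-n))_0$ for $n>0$ involves the higher pieces $\Pi_i$, not free $A^e$-modules. The smoothness of $A=\Pi_0$ is true but should be argued as in \ref{Pi_0} or directly from the tensor-algebra identification. Second, the identification $\RHom_{A^e}(A,A^e)[d]\simeq\Pi_1$ is not a matter of ``taking the Adams degree-$1$ component'' of the Calabi--Yau isomorphism $\RHom_{\Pi^e}(\Pi,\Pi^e)[d+1]\simeq\Pi(1)$; it requires either the full tensor-algebra structure just discussed, or the route through the graded cluster category and the Beilinson-type theorem (\ref{B}, \ref{power}), which is how the paper establishes the analogous fact in the proof of \ref{MM}.
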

We extend the above theorem to higher Gorenstein parameters in terms of root pairs. In fact we give the correspondence in the setting of dg categories, and deduce the aforementioned case as the formal case.
We shall apply the construction of Calabi-Yau algebras from the previous section to get a map from (\ref{dRI}) to (\ref{CYalg}).

\subsubsection{Calabi-Yau dg categories and root pairs}
To state our results we recall some equivalence relations for dg categories in the Adams graded setting.
We say that two $\Z$-graded dg categories $\A$ and $\B$ are {\it graded derived Morita equivalent} if there is a graded $(\A,\B)$-bimodule $X$ such that $-\lotimes_\A\B\colon\rD^\Z(\A)\to\rD^\Z(\B)$ is an equivalence. We also say $\A$ and $\B$ are {\it graded quasi-equivalent} if $X$ restricts to an equivalence $H^0\A_0\xsimeq H^0\B_0$, where we regard $H^0\A_0\hookrightarrow\rD^\Z(\A)$ and $H^0\B_0\hookrightarrow\rD^\Z(\B)$ via the Yoneda embedding.

\begin{Thm}\label{MM}
Let $d$ be an integer and $a$ a positive integer. There exists a bijection between the following.
\begin{enumerate}
	\renewcommand{\labelenumi}{(\roman{enumi})}
	\renewcommand{\theenumi}{\roman{enumi}}
	\item\label{CYdgalg} The set of graded quasi-equivalence classes of positively graded $(d+1)$-Calabi-Yau category $\Pi$ of Gorenstein parameter $a$ such that $\Pi_0\in\per\Pi$ and $\Pi_0\in\per\Pi^\op$.
	\item\label{smdg} The set of quasi-equivalence classes of cyclically invariant root pairs $(U,\P)$ on smooth dg categories $\A$.
\end{enumerate}
The correspondences are given as follows.
\begin{itemize}
\item From {\rm(\ref{CYdgalg})} to {\rm(\ref{smdg})}: Take $\Pi$ to $\A=\begin{pmatrix}\Pi_0&0&\cdots&0\\ \Pi_1&\Pi_0&\cdots&0\\ \vdots&\vdots&\ddots&\vdots\\ \Pi_{a-1}&\Pi_{a-2}&\cdots&\Pi_0\end{pmatrix}$, $U=\begin{pmatrix}\Pi_1 &\Pi_0&\cdots& 0\\\vdots&\vdots&\ddots&\vdots \\ \Pi_{a-1}&\Pi_{a-2}&\cdots&\Pi_0\\ \Pi_a&\Pi_{a-1}&\cdots&\Pi_1\end{pmatrix}$, and $\P$ the full subcategory of $\A$ corresponding to the upper left entry of the matrix.
\item From {\rm(\ref{smdg})} to {\rm(\ref{CYalg})}: Take the $(d+1)$-Calabi-Yau completion.
\end{itemize}
\end{Thm}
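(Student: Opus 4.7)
The plan is to verify both maps are well-defined and mutually inverse, with cyclic invariance being the most delicate point.

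For the map $\Psi$ from (ii) to (i), given a cyclically invariant root pair $(U,\P)$ on a smooth dg category $\A$, set $\Pi:=\Pi_{d+1}^{(1/a)}(\A;U,\P)$ with the Adams grading induced by the tensor-power grading on $\rT^{\rL}_{\A}\!U$. Theorem \ref{CY} already gives that $\Pi$ is smooth and $(d+1)$-Calabi-Yau. The Adams grading is positive by construction, with $\Pi_0$ quasi-equivalent to $\P$. For the Gorenstein parameter, I would revisit the proof sketch of \ref{CY}: the isomorphism $\RHom_{\Pi^e}(\Pi,\Pi^e)[d+1]\simeq\Pi$ is built from the cone of a map between bimodules involving $U^a$ and $U^{a-1}$, carrying Adams degrees $a$ and $a-1$, while the restriction $e_{a-1}$ contributes an additional shift identifying $\Pi^{(a-1)}\simeq\Pi(a-1)$; tracking the Adams shifts shows the right-hand side is $\Pi(a)$. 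For $\Pi_0\in\per\Pi$ and $\Pi_0\in\per\Pi^{\op}$, I would iteratively apply Lemma \ref{ten} for $n=0,1,\ldots,a-2$ combined with the semi-orthogonality \ref{setup}(\ref{P}) to build a finite filtration of $\Pi_0=\Pi/\Pi_{\geq 1}$ whose subquotients are perfect on each side over $\Pi$.

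For the map $\Phi$ from (i) to (ii), I would identify $\A$ with the dg endomorphism category $\REnd_{\rD^\Z(\Pi)}(T)$ of $T:=\bigoplus_{i=0}^{a-1}\Pi(i)$, via $\Hom_{\rD^\Z(\Pi)}(\Pi(j),\Pi(i))=\Pi_{i-j}$, and identify $U$ with the $(\A,\A)$-bimodule $\RHom_{\rD^\Z(\Pi)}(T,T(1))$; these recover the matrix forms in the statement. Smoothness of $\A$ follows from that of $\Pi$ via the semi-orthogonal decomposition of $\rD^\Z(\Pi)$ induced by the Gorenstein parameter: the subcategory generated by the $\Pi(i)$ for $0\leq i\leq a-1$ inherits smoothness. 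Conditions \ref{setup}(\ref{P}) and \ref{setup}(\ref{exc}) follow from the positivity of the Adams grading together with the definition of $T$, with $\P$ corresponding to the summand $\Pi(0)$. The identity $U^a\simeq\RHom_{\A^e}(\A,\A^e)[d]$ emerges by combining $U^a\simeq\RHom_{\rD^\Z(\Pi)}(T,T(a))$, the Gorenstein-parameter isomorphism $\Pi(a)\simeq\RHom_{\Pi^e}(\Pi,\Pi^e)[d+1]$ in $\rD^\Z(\Pi^e)$, and the duality of Lemma \ref{duals}.

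The hardest step is the cyclic invariance of the constructed $U$. I would promote the Gorenstein-parameter isomorphism to the cochain level using a representative of the left Calabi-Yau structure on $\Pi$: a Connes-symmetric negative cyclic cocycle in the graded cyclic complex of $\Pi$. Extracting its Adams degree $a$ component and unwinding the identification of the relevant piece of Hochschild chains with $\A\lotimes_{\A^e}U^a$, the Connes cyclic symmetry becomes precisely the $\Z/a\Z$-symmetry of Definition \ref{cyc}. This step is delicate because cyclic invariance depends on the choice of a chain-level representative, so one must argue carefully that the natural cyclic representative on $\Pi$ descends to a cyclically invariant representative for $U$.

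Finally, to see the maps are mutually inverse: starting with $(U,\P)$, the Calabi-Yau completion has $\Pi_n(P,Q)=\RHom_\A(P,Q\lotimes_\A U^n)$, and applying the matrix construction to this $\Pi$ yields a dg category whose $(i,j)$-entry is $\RHom_\A(\P\lotimes_\A U^j,\P\lotimes_\A U^i)$, recovering $\A$ by \ref{setup}(\ref{P}) and $U$ by tracking shifts. Starting with a graded Calabi-Yau $\Pi$, the matrix construction yields $\A=\REnd(T)$ and $U=\RHom(T,T(1))$, and their Calabi-Yau completion at the subcategory $\P$ corresponding to $\Pi(0)$ assembles
\[ \bigoplus_{n\geq 0}\RHom_\A(\Pi,\Pi\lotimes_\A U^n)=\bigoplus_{n\geq 0}\Hom_{\rD^\Z(\Pi)}(\Pi,\Pi(n))=\bigoplus_{n\geq 0}\Pi_n, \]
reproducing $\Pi$ with its Adams grading.
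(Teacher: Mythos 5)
Your overall architecture (matrix construction one way, Calabi--Yau completion the other, then verify the two are inverse) matches the paper. But the step you yourself flag as hardest --- cyclic invariance of the constructed $U$ --- is where your proposal has a genuine gap.

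You propose promoting the Gorenstein-parameter isomorphism to the cochain level via a Connes-symmetric negative cyclic cocycle, i.e.\ a left Calabi--Yau structure on $\Pi$. But the hypothesis in (i) only provides an isomorphism $\RHom_{\Pi^e}(\Pi,\Pi^e)[d+1]\simeq\Pi(a)$ in $\rD^\Z(\Pi^e)$ --- a Hochschild-cohomology class, not a negative cyclic lift. The existence of such a lift is extra data that is not assumed (the paper explicitly defers the question of left Calabi--Yau structures on these completions to a later paper, in the remark after \ref{CY}). Even granting a lift, you would still need to extract $\Z/a\Z$-invariance of a specific element of $H^{-d}(U^a\lotimes_{\A^e}\A)$ from the $S^1$-equivariance encoded by the cyclic cocycle, and you acknowledge this passage is unclear. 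The paper avoids the issue entirely: it shows (\ref{geqa}, \ref{well}) that the Hochschild map $\HH_\ast(\Pi)\to\HH_\ast(\Si)$ induced by the inclusion $\Pi\hookrightarrow\Si$ already sends graded isomorphisms $\Pi^\vee[d+1]\to\Pi(a)$ to graded isomorphisms $\Si^\vee[d+1]\to\Si_{\geq a-1}(a)$, and then invokes \ref{fakeMM}. The decisive point inside \ref{fakeMM} is that in the Adams-graded setting the two triangles (the dual of the bar triangle and the triangle \ref{ten}) are both truncations of their third terms with respect to the stable $t$-structure $\per^{\leq0}\!\Si^e\perp\per^{>0}\!\Si^e$, so they are automatically isomorphic, which forces a compatible pair $(\varphi,\psi)$ in the sense of \ref{compat}; then \ref{daiji} converts compatibility into cyclic invariance. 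No negative cyclic data is needed.

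Two smaller points. First, for (ii)$\to$(i), your plan to track Adams shifts in the proof of \ref{CY} to get the Gorenstein parameter is essentially what the paper does, and the paper's \ref{Pi_0} handles $\Pi_0\in\per\Pi$, $\Pi_0\in\per\Pi^{\op}$ with a single application of the standard tensor triangle --- your iterated filtration via \ref{ten} is workable but more than is needed. Second, for (i)$\to$(ii), you take $U^{\lotimes_\A l}\simeq\RHom_\Pi^\Z(T,T(l))$ and the smoothness of $\A$ as essentially automatic, but neither is: the paper devotes \ref{power} (via the Beilinson-type theorem \ref{B} and \ref{F} in the appendix) to the first, and obtains smoothness of $\A$ only after establishing $\Si\in\per^{[0,1]}\!\Si^e$ inside the proof of \ref{fakeMM}. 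These pieces are where the real work is, and your sketch does not account for them.
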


We get the following consequence as the case our dg Calabi-Yau category $\Pi$ is concentrated in (cohomological) degree $0$. 
\begin{Cor}\label{fdMM}
Let $d\geq0$ be an integer and $a$ a positive integer. There exists a bijection between the following.
\begin{enumerate}
	\renewcommand{\labelenumi}{(\roman{enumi})}
	\renewcommand{\theenumi}{\roman{enumi}}
	\item\label{CYalg/a} The set of graded Morita equivalence classes of positively graded $(d+1)$-Calabi-Yau algebra $\Pi$ of Gorenstein parameter $a$ such that each $\Pi_i$ is finite dimensional.
	\item\label{dRI/a} The set of Morita equivalence classes of cyclically invariant $a$-th root pairs on $d$-representation infinite algebras.
\end{enumerate}
\end{Cor}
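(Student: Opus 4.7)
The plan is to derive \ref{fdMM} as the formal case of Theorem \ref{MM}, i.e., the restriction of that bijection to dg data concentrated in cohomological degree zero. Under this restriction, the quasi-equivalences appearing in \ref{MM} reduce to ordinary graded Morita equivalences of graded algebras (resp.\ Morita equivalences on the root pair side), since the relevant bimodules become classical bimodules between ordinary algebras. Hence my task reduces to matching the subclasses (\ref{CYalg/a}) and (\ref{dRI/a}) explicitly under the correspondence of \ref{MM}.

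For the direction from (\ref{CYalg/a}) to (\ref{dRI/a}), let $\Pi$ be a positively graded $(d+1)$-Calabi-Yau algebra of Gorenstein parameter $a$ with each $\Pi_i$ finite dimensional. The matrix algebra $A$ and bimodule $U$ provided by \ref{MM} are evidently concentrated in cohomological degree zero, so $A$ is an ordinary finite dimensional algebra and $U$ an ordinary $A$-bimodule. To see that $A$ is $d$-representation infinite, I would use the root pair identity $U^a\simeq\RHom_{A^e}(A,A^e)[d]$ to compute
\[ \nu_d^{-i}A = A\lotimes_A\bigl(\RHom_{A^e}(A,A^e)[d]\bigr)^{\lotimes_Ai}\simeq U^{ai}; \]
since $U^{ai}$ identifies with an explicit matrix whose entries are the Adams pieces $\Pi_{ai},\ldots,\Pi_{ai+a-1}$, each of which is a module concentrated in cohomological degree zero, one concludes $\nu_d^{-i}A\in\mod A$ for all $i\geq0$.

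Conversely, given a cyclically invariant $a$-th root pair $(U,\P)$ on a $d$-representation infinite algebra $A$, I would verify that $\Pi=\Pi^{(1/a)}_{d+1}(A)$ lies in (\ref{CYalg/a}). Its morphism spaces are
\[ \Pi(P,Q)=\bigoplus_{i\geq0}\RHom_A(P,Q\lotimes_AU^i), \]
and the goal is to show each summand is concentrated in cohomological degree zero with finite dimensional total. Writing $i=aq+r$ with $0\leq r<a$, we have $U^i\simeq\nu_d^{-q}A\lotimes_AU^r$: the factor $\nu_d^{-q}A$ lies in $\mod A$ by the $d$-representation infiniteness hypothesis, while $U^r$ for $0\leq r<a$ is concentrated in degree zero by virtue of $A$ and $U$ being ordinary. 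Since $P,Q\in\P$ are finitely generated projective, each $\RHom_A(P,Q\lotimes_AU^i)$ reduces to $\Hom_A(P,Q\lotimes_AU^i)$, a finite dimensional vector space in degree zero, and finite dimensionality of each Adams piece of $\Pi$ follows from finiteness of $A$ together with the semi-orthogonal decomposition in \ref{setup}(\ref{P}). The main obstacle I anticipate is precisely the step that tensor powers $U^r$ for $0\leq r<a$ stay concentrated in cohomological degree zero, which should be built into the interpretation of ``root pair on an ordinary algebra'' or deduced from invertibility of $U$ over the ordinary ring $A$. Granted this, the corollary follows directly from \ref{MM} via taking degree-zero parts on both sides.
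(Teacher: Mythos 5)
Your high-level plan is the one the paper intends: Corollary 5.6 is the specialization of Theorem \ref{MM} to the class where $\Pi$ is an ordinary (cohomological degree $0$) algebra. The direction from (\ref{CYalg/a}) to (\ref{dRI/a}) is handled correctly: using \ref{power}, $U^{ai}\simeq\RHom^\Z_\Pi(T,T(ai))=\Hom^\Z_\Pi(T,T(ai))$ because $T$ is graded projective over the ordinary algebra $\Pi$, so $\nu_d^{-i}A\simeq U^{ai}$ is a module, and $A$ is $d$-representation infinite and finite dimensional because each $\Pi_i$ is.

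The gap you flag in the direction from (\ref{dRI/a}) to (\ref{CYalg/a}) is genuine, and the resolution you offer in the second clause is not quite right. You suggest that the concentration of $U^r$ in degree $0$ might be ``deduced from invertibility of $U$ over the ordinary ring $A$.'' But an invertible object of $\rD(A^e)$ over a finite dimensional algebra $A$ is only a two-sided tilting complex, and these are in general not quasi-isomorphic to complexes concentrated in degree $0$; nothing about the ambient ring $A$ being ordinary forces $U$ to be a genuine bimodule. (The paper's own Example \ref{example} with $n\neq0$ produces invertible bimodules over a finite dimensional algebra that sit in a single nonzero degree.) What does work is the first of your two suggestions, made precise: if one reads ``root pair on a $d$-representation infinite algebra'' as requiring $U$ to be a bimodule concentrated in degree $0$ — which is exactly what the map from (\ref{CYalg/a}) produces, since $U$ is the lower-triangular matrix with entries $\Pi_1,\dots,\Pi_a$ — then an invertible bimodule concentrated in degree $0$ is automatically a two-sided progenerator (a tilting complex concentrated in one degree is projective), so $U^{\otimes_A r}=U^{\lotimes_Ar}$ remains a module. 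Combined with $U^{aq}\simeq\nu_d^{-q}A\in\mod A$ from $d$-representation infiniteness, this gives $U^i$ a module for all $i\geq0$ and hence $\Pi^{(1/a)}_{d+1}(A)$ concentrated in degree $0$. Without making this degree-$0$ constraint on $U$ explicit — or without a separate argument that $d$-representation infiniteness of $A$ together with the root-pair axioms already forces $U$ to be isomorphic in $\rD(A^e)$ to such a bimodule — the direction (\ref{dRI/a}) $\to$ (\ref{CYalg/a}) is not established, and the proposal as written leaves this step conditional.

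Two further small points worth a line in a complete write-up: one should note that $\Pi_0\in\per\Pi$ and $\Pi_0\in\per\Pi^\op$ (hypotheses of \ref{MM}(\ref{CYdgalg})) hold under the finiteness assumptions of (\ref{CYalg/a}), so that \ref{MM} really does apply; and one should confirm that graded quasi-equivalence of ordinary graded algebras and quasi-equivalence of root pairs with $\P$, $\A$ ordinary reduce to the stated graded Morita and Morita equivalences, which you assert but do not spell out.
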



\subsubsection{Non-Calabi-Yau dg categories and roots of inverse dualizing complexes}
In fact, we have the following version which characterizes dg categories as in \ref{fakeCY}. Let $\A$ be a dg category and $U$ an $a$-th root of $\RHom_{\A^e}(\A,\A^e)[d]$. We say that such pairs $(\A,U)$ and $(\B,V)$ are {\it quasi-equivalent} if there is an $(\A,\B)$-bimodule $X$ giving a quasi-equivalence between $\A$ and $\B$ such that $U\lotimes_\A X\simeq X\lotimes_\B V$ in $\rD(\A^\op\otimes\B)$. 
\begin{Thm}\label{fakeMM}
Let $d$ be an integer and let $a$ be a positive integer. There exists a bijection between the following.
\begin{enumerate}
\renewcommand{\labelenumi}{(\roman{enumi})}
\renewcommand{\theenumi}{\roman{enumi}}
\item The set of graded quasi-equivalence classes of Adams positively graded smooth dg categories $\Si$ satisfying $\RHom_{\Si^e}(\Si,\Si^e)[d+1]\simeq\Si_{\geq a-1}(a)$ in $\rD^\Z(\Si^e)$.
\item The set of quasi-equivalence classes of pairs $(\A,U)$ where $\A$ is a smooth dg category and $U$ is a cyclically invariant $a$-th root of $\RHom_{\A^e}(\A,\A^e)[d]$.
\end{enumerate}
The correspondence from {\rm (i)} to {\rm (ii)} is given by $\Si\mapsto(\Si_0,\Si_1)$, and from {\rm (ii)} to {\rm (i)} by taking the derived tensor category.
\end{Thm}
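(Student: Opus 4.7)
The plan is to exhibit mutually inverse maps and verify they preserve the stated equivalence relations, following the pattern of Theorem~\ref{MM} but adapted to the non-Calabi-Yau setting where the role of the root pair $(U,\P)$ is played by just the root $U$, and correspondingly $\Si$ satisfies only the weaker RHom condition in place of being Calabi-Yau.

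For the forward map (ii) $\to$ (i), I take $(\A,U)$ to $\Si:=\rT^\rL_\A U$, endowed with the Adams grading coming from tensor degree (so $\Si_0=\A$ and $\Si_1=U$). Positivity is immediate. Smoothness of $\Si$ follows by noting that $\A$ is smooth and $U$ is invertible (hence perfect on each side over $\A$): the triangle of \ref{ten} together with \ref{duals} displays $\Si$ as an iterated extension of perfect $\Si^e$-modules. The RHom condition is exactly Theorem~\ref{fakeCY} refined to the graded setting, where the Adams shift $(a)$ records that under the identification $U^a\simeq\RHom_{\A^e}(\A,\A^e)[d]$, the left-hand side lives in Adams degree $a$. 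Finally, well-definedness on quasi-equivalence classes is provided by \ref{cten}.

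For the reverse map (i) $\to$ (ii), I set $\A:=\Si_0$ and $U:=\Si_1$, viewed as an $\A$-bimodule via the multiplication of $\Si$. I must verify: (a) $\A$ is smooth; (b) $U$ is invertible over $\A$ with $U^a\simeq\RHom_{\A^e}(\A,\A^e)[d]$; (c) this isomorphism is cyclically invariant; (d) the canonical dg functor $\rT^\rL_\A U\to\Si$ extending the inclusions of $\Si_0$ and $\Si_1$ is a quasi-equivalence. The key step is (d), which will imply the rest: once $\Si\simeq\rT^\rL_\A U$ is established, smoothness of $\A$ is transferred from $\Si$ (as $\Si_0$ is perfect over $\Si$ by the positive grading), and the graded RHom isomorphism restricted to appropriate Adams degrees recovers both the $a$-th root property and its cyclic invariance from the intrinsic description of the RHom side as $\Si_{\geq a-1}(a)$.

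The main obstacle is proving (d), namely that the higher Adams components $\Si_i$ for $i\geq 2$ are determined, up to quasi-isomorphism, as $U^{\lotimes_\A i}$ by the data of $\Si_0$ and $\Si_1$ alone. My approach is to induct on $i$ using a graded analogue of the triangle in \ref{ten}: the graded RHom condition together with the positivity of the grading pins down $\Si_{\geq a-1}$, and hence by descent along the triangle $\Si\lotimes_\A U^{n+1}\lotimes_\A\Si\to\Si\lotimes_\A U^n\lotimes_\A\Si\to\Si_{\geq n}$, pins down each graded piece $\Si_n$ in terms of derived tensor powers of $U$ over $\A$. Once this rigidity is established, the mutual inverseness follows: starting from $(\A,U)$, the tensor category manifestly satisfies $(\rT^\rL_\A U)_0=\A$ and $(\rT^\rL_\A U)_1=U$; starting from $\Si$, the quasi-equivalence $\rT^\rL_{\Si_0}\Si_1\xsimeq\Si$ of (d) closes the loop.
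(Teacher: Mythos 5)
Your overall architecture matches the paper's, but there is a genuine gap in your key step (d). You propose to show $\rT^\rL_\A U \to \Si$ is a quasi-equivalence ``by descent along the triangle $\Si\lotimes_\A U^{n+1}\lotimes_\A\Si\to\Si\lotimes_\A U^n\lotimes_\A\Si\to\Si_{\geq n}$''. But that triangle (Lemma~\ref{ten}) is \emph{proved} for derived tensor categories; invoking it for the arbitrary positively graded $\Si$ of (i) assumes what you are trying to prove. The paper instead appeals to the structural characterization of Proposition~\ref{chtensor}: a positively graded dg category $\Ga$ with $\Ga\in\per^{[0,1]}\!\Ga^e$ is canonically the derived tensor category of $\Ga_1$ over $\Ga_0$. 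The real work is verifying the hypothesis $\Si\in\per^{[0,1]}\!\Si^e$, which the paper does by combining smoothness and positivity (giving $\Si\in\per^{\geq0}\!\Si^e$, hence $\Si^\vee\in\per^{\leq 0}\!\Si^e$) with the graded isomorphism $\Si^\vee[d+1]\simeq\Si_{\geq a-1}(a)$ (which forces $\Si^\vee$ to be generated in Adams degree $\geq -1$), so that $\Si^\vee\in\per^{[-1,0]}\!\Si^e$ and hence $\Si\in\per^{[0,1]}\!\Si^e$. Without this input your induction has no base to stand on, and without \ref{chtensor} (or an equivalent characterization) the inductive step has no non-circular mechanism.

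The cyclic-invariance claim is also underspecified. Once $\Si\simeq\rT^\rL_\A U$, the paper compares the bimodule dual of the standard triangle $\Si\lotimes_\A U\lotimes_\A\Si(-1)\to\Si\lotimes_\A\Si\to\Si$ with the triangle $\Si\lotimes_\A U^{a}\lotimes_\A\Si(-1)\to\Si\lotimes_\A U^{a-1}\lotimes_\A\Si\to\Si_{\geq a-1}(a-1)$: in both, the leftmost term is generated in Adams degree $1$, the middle in degree $0$, and the right ends agree by hypothesis, so they are the \emph{same} truncation with respect to the stable $t$-structure $\per^{\leq 0}\!\Si^e\perp\per^{>0}\!\Si^e$ and are therefore isomorphic. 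One then observes that any degree-$0$ $\Si^e$-linear map between induced bimodules $\Si\lotimes_\A X\lotimes_\A\Si\to\Si\lotimes_\A Y\lotimes_\A\Si$ is of the form $1\otimes f\otimes1$ for a unique $\A^e$-linear $f\colon X\to Y$; this produces a compatible pair $(\varphi,\psi)$ in the sense of Definition~\ref{compat}, and Proposition~\ref{daiji} converts compatibility into cyclic invariance. Simply ``restricting the RHom isomorphism to appropriate Adams degrees'' does not obviously produce this compatible pair, so you should make this step explicit.
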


The proofs of above results will be given in Section \ref{proof}. Let us demonstrate at this point that we have to make a careful choice of the $a$-th root $U$ in order for $\Pi_{d+1}^{(1/a)}(\A)$ to be Calabi-Yau.
\begin{Ex}\label{example}
	Let $A$ be the path algebra of the Kronecker quiver $\xymatrix{0\ar@2[r]^-x_-y&1}$. Let $\e\in\{\pm1\}$, $n\in\Z$ and consider the dg $(A,A)$-bimodule $U$ concentrated in degrees $-n$ and $-n-1$.
	\[ \xymatrix@R=1mm{
		Ae_1\otimes e_0A\ar[r]&Ae_0\otimes e_1A\\
		e_1\otimes e_0\ar@{|->}[r]& (-1)^n(x\otimes y-\e y\otimes x)} \]
	It is easy to see that as a one-sided module, we have $U\simeq (P_1\oplus\tau^{-1}P_0)[n]$, the shift of the reflection at the simple projective module, and the canonical morphism $A\to\End_A(U)$ is an isomorphism for both $\e=\pm1$. We observe the following.
	\begin{enumerate}
		\item $U\lotimes_AU\simeq\RHom_{A^e}(A,A^e)[2n+1]$ for both $\e=\pm1$.
		\item $U$ is cyclically invariant if and only if $\e=(-1)^n$.
	\end{enumerate}
	It follows from (1) that $(U,P_0)$ is a root pair on $A$, and it is easy to see that its $(2n+2)$-Calabi-Yau completion is the (skew) polynomial ring $k\langle x,y\rangle/(xy-\e yx)$ with $\deg x=\deg y=-n$ and trivial differentials. We know such a formal dg algebra is Calabi-Yau if and only if $\e=(-1)^n$ \cite[5.2]{ha3}, which is consistent with claim (2) and our main result \ref{CY}.

	Let us now verify these claims. To see (1), we can first compute the left-hand-side as
	\[ \xymatrix@R=1mm{
		Ae_0\otimes e_1Ae_1\otimes e_0A\ar[dr] && e_0\otimes e_1e_1\otimes e_0\ar@{|->}[r]&e_0\otimes x\otimes y-\e e_0\otimes y\otimes x\\
		\oplus&Ae_0\otimes V\otimes e_1A&&\\
		Ae_1\otimes e_0Ae_0\otimes e_1A\ar[ur] && e_1\otimes e_0e_0\otimes e_1\ar@{|->}[r]&(-1)^n(x\otimes y\otimes e_1-\e y\otimes x\otimes e_1), } \]
	the complex concentrated in degree $-2n-1$ and $-2n$, with $V=e_1Ae_0$ the $2$-dimensional vector space with basis $\{x,y\}$.
	We next compute the right-hand-side. Consider the bimodule resolution
	\[ \xymatrix@R=0.3mm{
		&&Ae_0\otimes e_0A && v\otimes e_0\\
		pA:&Ae_1\otimes V\otimes e_0A\ar[ur]\ar[dr]&\oplus&e_1\otimes v\otimes e_0\ar@{|->}[ur]\ar@{|->}[dr]&\\
		&&Ae_1\otimes e_1A && -e_1\otimes v} \]
	of $A$. Dualizing, we see that the $\RHom_{A^e}(A,A^e)$ is isomorphic to the complex 
	\[ \xymatrix@R=0.3mm{
		Ae_0\otimes e_0A\ar[dr] && e_0\otimes e_0\ar@{|->}[r]&-e_0\otimes x^\ast\otimes x-e_0\otimes y^\ast\otimes y\\
		\oplus&Ae_0\otimes DV\otimes e_1A&&\\
		Ae_1\otimes e_1A\ar[ur] && e_1\otimes e_1\ar@{|->}[r]&x\otimes x^\ast\otimes e_1+y\otimes y^\ast\otimes e_1 } \]
	concentrated in degree $0$ and $1$, where $\{x^\ast,y^\ast\}$ is the basis of $DV$ dual to $\{x,y\}$.
	We see that there is an isomorphism $\varphi\colon\RHom_{A^e}(A,A^e)\xsimeq U\lotimes_AU$ of degree $-2n-1$ given by multiplication by $\e$ on $Ae_0\otimes e_0A=Ae_0\otimes e_1Ae_1\otimes e_0A$, by $(-1)^n$ on $Ae_1\otimes e_1A=Ae_1\otimes e_0Ae_0\otimes e_1A$, and via $DV\to e_1Ae_0$; $x^\ast\mapsto-y$, $y^\ast\mapsto \e x$ on the remaining term. (Note that since $\varphi$ is of odd degree $-2n-1$, we should get skew-commutative squares.) This proves (1).
	
	To see (2), we use the computation in Section \ref{proof}. Note first that the Casimir element for $pA$ is $(e_0\otimes e_0)\otimes(e_0^\ast\otimes e_0^\ast)+(e_1\otimes e_1)\otimes(e_1^\ast\otimes e_1^\ast)+(\text{terms for arrows})\in pA\otimes_{A^e}(pA)^\vee$. Then by \ref{LemC}(\ref{C**}), we see that the element of $U^2\otimes_{A^e}A$ corresponding (up to homotopy) to $\varphi$ under the quasi-isomorphism $U^2\otimes_{A^e}A\ysimeq U^2\otimes_{A^e}pA\xsimeq\cHom_{A^e}((pA)^\vee,U^2)$ is $\varphi(e_0\otimes e_0)\otimes e_0+\varphi(e_1\otimes e_1)=\e(e_0\otimes e_1e_1\otimes e_0)\otimes e_0+(-1)^n(e_1\otimes e_0e_0\otimes e_1)\otimes e_1$. Recalling that the $\Z/2\Z$-action on $U^2\otimes_{A^e}A$ is the permutation of elements of $U$, we deduce that this is cyclically invariant if and only if $\e=(-1)^n$.

Note that this example also shows that cyclic invariance is not preserved under the suspension.
\end{Ex}

\section{Proofs}\label{proof}
The point is to interpret the commutativity of the diagram in the sketch in Section \ref{CYstr} in terms of Hochschild homology. Let us prepare some necessary material.
\subsection{Casimir elements}
Let $X\to Y$ be a morphism of complexes and $y\in Y$ a cocycle. We say that a cocycle $x\in X$ is a {\it homotopy preimage} of $y$ if its image is homotopic to $y$. Clearly, homotopy preimages under quasi-isomorphisms uniquely exist up to homotopy.

Let $\X$ be a dg category and $M$ a cofibrant dg $\X$-module which is perfect in the derived category $\rD(\X)$. We have a canonical quasi-isomorphism
\[ \xymatrix{M\otimes_\X\cHom_\X(M,\X)\ar[r]&\cHom_\X(M,M) }. \]
Following \cite{Ke11,Ke11+}, a {\it Casimir element} is an element of $M\otimes\cHom_\X(M,\X)$ which is a homotopy preimage of the identity. By definition, a Casimir element is unique up to homotopy.

Let us collect some easy properties of Casimir elements. We denote by $(-)^\ast=\cHom_\X(-,\X)$, and by $\e\colon M\to M^{\ast\ast}$ the evaluation map $x\mapsto (\e_x\colon f\mapsto (-1)^{|f||x|}f(x))$.
\begin{Lem}\label{LemC}
Let $\X$ be a dg category and $M$ a cofibrant $\X$-module which is perfect. Let $\sum_ix_i\otimes x_i^\ast\in M\otimes_\X M^\ast$ be a Casimir element.
\begin{enumerate}
\item\label{mor} For any morphism $f\colon M\to N$ of dg $\X$-modules, its homotopy preimage under the quasi-isomorphism $N\otimes_\X M^\ast\to\cHom_\X(M,N)$ is $\sum_if(x_i)\otimes x_i^\ast$.
\item\label{C*} A Casimir element of $M^\ast$ is $\sum_i\e_{x_i}\otimes x_i^\ast\in M^{\ast\ast}\otimes_\X M^{\ast}$.
\item\label{C**} The homotopy preimage of $f$ under the quasi-isomorphisms $\cHom_{\X^\op}(M^\ast,N)\ysimeq M^{\ast\ast}\otimes_\X N\ysimeq M\otimes_\X N$ is $\sum_i(-1)^{|f||x_i|}x_i\otimes f(x_i^\ast)$.
\item\label{equal} Let $f, g\in\cHom_{\X^\op}(M^\ast,N)$ be closed morphisms of the same degree. Then $f$ and $g$ are homotopy equivalent if and only if $\sum_i(-1)^{|f||x_i|}x_i\otimes f(x_i^\ast)$ and $\sum_i(-1)^{|g||x_i|}x_i\otimes g(x_i^\ast)$ are homotopic in $M\otimes_\X N$.
\end{enumerate}
\end{Lem}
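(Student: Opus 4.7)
The plan is to establish the four assertions sequentially, each built on the previous, with the canonical quasi-isomorphism $M\otimes_\X M^\ast\to\cHom_\X(M,M)$ and its naturality as the sole tool. The key observation is that the definition of a Casimir element is stable under naturality squares, so one can transport Casimirs along morphisms and dualities.

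For (\ref{mor}), I would use functoriality of the canonical map in the first variable. A closed morphism $f\colon M\to N$ yields a commutative diagram
\[ \xymatrix{
M\otimes_\X M^\ast \ar[r]\ar[d]_-{f\otimes 1} & \cHom_\X(M,M) \ar[d]^-{f\circ -} \\
N\otimes_\X M^\ast \ar[r] & \cHom_\X(M,N).
} \]
Since the Casimir $\sum_i x_i\otimes x_i^\ast$ is, by definition, a homotopy preimage of $\mathrm{id}_M$ on the top row, chasing it around gives that $\sum_i f(x_i)\otimes x_i^\ast$ is a homotopy preimage of $f\circ\mathrm{id}_M=f$ on the bottom row.

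For (\ref{C*}), I would check that $\sum_i\e_{x_i}\otimes x_i^\ast$ maps to a closed morphism homotopic to $\mathrm{id}_{M^\ast}$ under $M^{\ast\ast}\otimes_\X M^\ast\to\cHom_{\X^\op}(M^\ast,M^\ast)$. Applying the duality $(-)^\ast$ produces a commutative square
\[ \xymatrix{
M\otimes_\X M^\ast \ar[r]\ar[d]_-{\e\otimes 1} & \cHom_\X(M,M) \ar[d]^-{(-)^\ast} \\
M^{\ast\ast}\otimes_\X M^\ast \ar[r] & \cHom_{\X^\op}(M^\ast,M^\ast),
} \]
in which the left vertical is a quasi-isomorphism because $\e$ is one for perfect cofibrant $M$. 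Tracing the Casimir of $M$ through this square, and using that $(\mathrm{id}_M)^\ast=\mathrm{id}_{M^\ast}$, yields that $\sum_i\e_{x_i}\otimes x_i^\ast$ is a homotopy preimage of $\mathrm{id}_{M^\ast}$, i.e., a Casimir for $M^\ast$.

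Part (\ref{C**}) is then obtained by applying (\ref{mor}) to $M^\ast$ with the Casimir produced in (\ref{C*}): for $f\colon M^\ast\to N$, its preimage under $N\otimes_\X M^{\ast\ast}\to\cHom_{\X^\op}(M^\ast,N)$ is $\sum_i f(x_i^\ast)\otimes\e_{x_i}$. It remains to transport this element through the Koszul sign-swap $N\otimes_\X M^{\ast\ast}\simeq M^{\ast\ast}\otimes_\X N$ and then along the quasi-isomorphism $\e^{-1}\colon M^{\ast\ast}\to M$ to reach $M\otimes_\X N$, which produces $\sum_i(-1)^{|f||x_i|}x_i\otimes f(x_i^\ast)$. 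I expect the main obstacle to lie precisely here, in carefully tracking Koszul signs through the tensor swap combined with the double-dual identification; this is the only genuine sign computation in the entire proof, and the remainder is formal naturality. Finally, (\ref{equal}) is an immediate consequence of (\ref{C**}) together with the general fact that a quasi-isomorphism of complexes induces a bijection on homotopy classes of closed morphisms of any fixed degree.
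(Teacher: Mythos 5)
Your proof is correct and follows exactly the same approach as the paper: the two commutative naturality squares (one for $f\otimes 1$ vs.~postcomposition, one for $\e\otimes 1$ vs.~$(-)^\ast$) give (1) and (2), from which (3) and (4) are deduced formally. The paper leaves the derivation of (3) and (4) as "follow immediately," so your more careful account of the Koszul sign bookkeeping through the tensor swap and double-dual identification simply spells out what the paper elides.
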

\begin{proof}
	The assertion (\ref{mor}) follows from the commutative diagram on the left below, and (\ref{C*}) from the one on the right.
	\[ \xymatrix{
		M\otimes_\X M^\ast\ar[r]^-\simeq\ar[d]_-{f\otimes1}&\cHom_\X(M,M)\ar[d]^-{f\cdot}\\
		N\otimes_\X M^\ast\ar[r]^-\simeq&\cHom_\X(M,N) }
	\qquad
	   \xymatrix{
		M\otimes_\X M^\ast\ar[r]^-\simeq\ar[d]_-{\e\otimes1}&\cEnd_\X(M)\ar[d]^-{(-)^\ast}\\
		M^{\ast\ast}\otimes_\X M^\ast\ar[r]^-\simeq&\cEnd_{\X^\op}(M^\ast) } \]
	Then (\ref{C**}) and (\ref{equal}) follow immediately.  
\end{proof}

Now suppose that $\X=\A^e$ for a dg category $\A$ and $M$ is a cofibrant and perfect bimodule over $\A$. The commutative diagram
\[ \xymatrix@R=2mm{
	M\otimes_{\A^e}\cHom_{\A^e}(M,\A^e)\ar[drr]\ar@{<->}[dd]&&\\
	&&\cHom_{\A^e}(M,M)\\
	\cHom_{\A^e}(M,\A^e)\otimes_{\A^e}M\ar[urr]&&} \]
shows that the Casimir elements for bimodules are left-right symmetric.	


Let $\A$ be a smooth dg category and $M$ an $(\A,\A)$-bimodule. The isomorphism
\[ \RHom_{\A^e}(M,-)=\RHom_{\A^e}(\A,\RHom_\A(M,-)) \]
on $\rD(\A^e)$ shows that $M$ is perfect as a bimodule if it is perfect as a right (resp. left) module (\cite[proof of 2.5]{Ke98}). In this case, it therefore makes sense to talk about the Casimir element of the bimodule $M$ and that of a right module, and we discuss their relationship.
\begin{Lem}\label{chase}
	Let $\A$ be a smooth dg category and $M$ an $(\A,\A)$-bimodule which is perfect on each side.
	\begin{enumerate}
		\item
		There exists a commutative diagram in $\rD(k)$ consisting of canonical isomorphisms.
		\[ \xymatrix{
			\RHom_{\A^e}(\A,\A)\ar[r]&\RHom_{\A^e}(M,M)\\
			&M\lotimes_{\A^e}\RHom_{\A^e}(M,\A^e)\ar@{-}[d]^-{\ref{duals}}\ar[u]\\
			\A\lotimes_{\A^e}\RHom_{\A^e}(\A,\A^e)\ar[uu]\ar@{=}[r]&\A\lotimes_{\A^e}(M\lotimes_\A\RHom_\A(M,\A)\lotimes_\A\RHom_{\A^e}(\A,\A^e)), } \]
		where the upper horizontal map is given by the multiplication, and the lower one is induced by $\A\xsimeq M\lotimes_\A\RHom_\A(M,\A)$.
		\item If $M$ is invertible and $\sum_ix_i\otimes x_i^\ast\in\A\lotimes_{\A^e}\RHom_{\A^e}(\A,\A^e)$ is a Casimir element for $\A$, and $\sum_lz_l\otimes z_l^\ast\in M\lotimes_\A\RHom_\A(M,\A)$ is a Casimir element for the right module $M$, then the image of the Casimir element of the bimodule $M$ under the isomorphism
		\[ M\lotimes_{\A^e}\RHom_{\A^e}(M,\A^e)\simeq M\lotimes_{\A^e}(\RHom_\A(M,\A)\lotimes_\A\RHom_{\A^e}(\A,\A^e)) \]
		is $\sum_{i,l}\pi(x_i)z_l\otimes z_l^\ast\otimes x_i^\ast$.
	\end{enumerate}
\end{Lem}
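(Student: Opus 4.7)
The plan is to establish part (1) by a diagram chase with canonical maps, and then derive part (2) by tracking the Casimir of $\A$ through the diagram.

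For part (1), I would identify each of the four maps in the diagram as a canonical construction. The upper horizontal $\RHom_{\A^e}(\A,\A)\to\RHom_{\A^e}(M,M)$ is the action of a central map on the bimodule $M$ via $\A\lotimes_\A M\simeq M\simeq M\lotimes_\A\A$. The right (resp.\! left) vertical is the evaluation $X\lotimes_{\A^e}\RHom_{\A^e}(X,\A^e)\to\RHom_{\A^e}(X,X)$ for $X=M$ (resp.\! $X=\A$), which is a quasi-isomorphism because $X$ is perfect as a bimodule; for $X=\A$ this is the smoothness of $\A$, and for $X=M$ it follows from the one-sided perfectness combined with smoothness. The bottom equality comes from the isomorphism $\A\xsimeq M\lotimes_\A\RHom_\A(M,\A)$ (inverse counit, an isomorphism by invertibility of $M$) combined with \ref{duals} and the cyclic identification $X\lotimes_{\A^e}Y\simeq\A\lotimes_{\A^e}(X\lotimes_\A Y)$ for bimodules $X,Y$. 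Commutativity then reduces to naturality: both routes compute the central action of $\mathrm{id}_\A$ on $M$, which is $\mathrm{id}_M$.

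For part (2), the Casimir of the bimodule $M$ is by definition a homotopy preimage of $\mathrm{id}_M$ under the right vertical evaluation. By the commutativity established in part (1), an equivalent preimage is obtained by starting from the Casimir $\sum_i x_i\otimes x_i^\ast$ of $\A$ (which maps to $\mathrm{id}_\A$ under the left vertical, by definition of being a Casimir) and transporting it along the bottom edge. This edge replaces $\A$ by $M\lotimes_\A\RHom_\A(M,\A)$, under which $1\in\A$ corresponds to the right-module Casimir $\sum_l z_l\otimes z_l^\ast$ (since the latter is the preimage of $\mathrm{id}_M\in\RHom_\A(M,M)\simeq\A$). Therefore each $x_i$ corresponds to $\sum_l x_iz_l\otimes z_l^\ast=\sum_l\pi(x_i)z_l\otimes z_l^\ast$, yielding the formula in the statement.

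The main obstacle is bookkeeping the tensor rearrangements---in particular the cyclic isomorphism $X\lotimes_{\A^e}Y\simeq\A\lotimes_{\A^e}(X\lotimes_\A Y)$ and its compatibility with \ref{duals}---and verifying that the standard naturality of the evaluation map survives these identifications. Signs should not intervene at this stage since we are tracking a degree-zero element; the bulk of the work is formal diagram chasing, relying on \ref{LemC} to convert between homotopy-class statements and the explicit tensor-product representatives appearing in the statement.
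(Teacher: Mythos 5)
Your proof follows essentially the same route as the paper: establish the commutative diagram in part~(1), then for part~(2) use it to replace the bimodule Casimir of $M$ by that of $\A$ (since both lift $\mathrm{id}_M$ through the evaluation quasi-isomorphism), and compute the latter's image along the bottom edge via the right-module Casimir of $M$. The paper carries out the commutativity verification by tracking each map through an explicit resolved diagram rather than your brief appeal to naturality, but the logic and the final Casimir chase are the same.
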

\begin{proof}
	We track the above diagram in the resolutions. We may assume that $M$ is cofibrant as a bimodule, and let $\pi\colon p\A\to\A$ be a cofibrant resolution over $\A^e$. With the canonical quasi-isomorphism $\cHom_{\A^e}(-,?)=\cHom_{\A^e}(\A,\cHom_\A(-,?))\xsimeq\cHom_{\A^e}(p\A,\cHom_\A(-,?))$ 
	and the left multiplication map $p\A\to\A\to\cEnd_\A(M)$ in mind, we get the diagram below which represents and refines the above one.
	\[ \xymatrix@C=0.3mm{
		&\cEnd_{\A^e}(M)\ar[d]_-\rsimeq&&M\otimes_{\A^e}\cHom_{\A^e}(M,\A^e)\ar[d]^-\rsimeq\ar[ll]\\
		\cHom_{\A^e}(p\A,p\A)\ar[r]&\cHom_{\A^e}(p\A,\cEnd_\A(M))&&M\otimes_{\A^e}\cHom_{\A^e}(p\A,\cHom_{\A}(M,\A^e))\ar[ll]\\
		p\A\otimes_{\A^e}\cHom_{\A^e}(p\A,\A^e)\ar[u]\ar[r]&\cEnd_\A(M)\otimes_{\A^e}\cHom_{\A^e}(p\A,\A^e)\ar[u]&\\
		&(M\otimes_\A\cHom_\A(M,\A))\otimes_{\A^e}\cHom_{\A^e}(p\A,\A^e)\ar[u]&&M\otimes_{\A^e}(\cHom_\A(M,\A)\otimes_\A\cHom_{\A^e}(p\A,\A^e))\ar[uu]\ar@{=}[ll]} \]
	Here, the rightmost column is the isomorphism constructed in \ref{duals}, and the leftgoing map in the second row is $m\otimes \psi\mapsto (x\mapsto(n\mapsto m\cdot\psi(x)(n)))$. It is not difficult to verify the commutativity.
	
	Note that both $1\in\cEnd_{\A^e}(p\A)$ (in the leftmost column) and $1\in\cEnd_{\A^e}(M)$ (in the top row) are mapped to the same element in $\cHom_{\A^e}(p\A,\cEnd_\A(M))$ (which takes $x\in p\A$ to the left multiplication map by $\pi(x)$). 
	It follows from the commutativity of the diagram that the image of the Casimir element of the bimodule $M$ in the bottom right corner is $\sum_{i,l}\pi(x_i)z_l\otimes z_l^\ast\otimes x_i^\ast$.
\end{proof}

We now assume that the bimodule $M$ is cofibrant and invertible over $\A$. Suppose further that we have a closed morphism
\[ \xymatrix{\varphi\colon\cHom_{\A^e}(p\A,\A^e)\ar[r]&M^n } \]
of degree $-d$ for some $n>0$. Via \ref{duals} it gives rise to a morphism
\[ \xymatrix{
	\varphi^+\colon\cHom_{\A^e}(M,\A^e)\ar[r]^-\simeq&\cHom_\A(M,\A)\otimes_\A\cHom_{\A^e}(p\A,\A^e)\ar[r]^-{1\otimes\varphi}&\cHom_\A(M,\A)\otimes_\A M^n\ar[r]^-{\ev\otimes1^{(n-1)}}&M^{n-1} } \]
of the same degree $-d$. Here the first map is obtained by lifting the isomorphism in \ref{duals} up to homotopy, that is, it makes the following triangle commutative up to homotopy. We denote this map by $\al$.
\[ \xymatrix@R=3mm{
	\cHom_{\A^e}(M,\A^e)\ar[r]^-\simeq\ar@/_18pt/[rr]_-\al&\cHom_{\A^e}(p\A,\cHom_\A(M,\A^e))&\ar[l]_-\simeq\cHom_\A(M,\A)\otimes_\A\cHom_{\A^e}(p\A,\A^e) \\&&} \]
Recall that we denoted the bimodule cofibrant resolution by $\pi\colon p\A\to\A$.
\begin{Lem}\label{itsumo}
	Let $\sum_ix_i\otimes x_i^\ast\in p\A\otimes_{\A^e}\cHom_{\A^e}(p\A,\A^e)$ be the Casimir element for $p\A$, and let $\sum_jy_j\otimes y_j^\ast\in M\otimes_{\A^e}\cHom_{\A^e}(M,\A^e)$ be the Casimir element for $M$. Then we have
	\[  \sum_i(-1)^{d|x_i|}\pi(x_i)\otimes\varphi(x_i^\ast)=\sum_j(-1)^{d|y_j|}y_j\otimes\varphi^+(y_j^\ast) \]
	up to homotopy, under the canonical quasi-isomorphism $\A\otimes_{\A^e}M^n\simeq M\otimes_{\A^e}M^{n-1}$ given by $1\otimes(z_1\otimes\cdots z_n)\leftrightarrow z_1\otimes(z_2\otimes\cdots\otimes z_n)$.
\end{Lem}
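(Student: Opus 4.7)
The plan is to identify both sides of the claimed identity as homotopy preimages of $\varphi$ and $\varphi^+$ under canonical quasi-isomorphisms, and then to relate the two sides using Lemma~\ref{chase}(2) together with the definition of $\varphi^+$ as a composition.

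First, I would apply Lemma~\ref{LemC}(\ref{C**}) with $\X=\A^e$, $M=p\A$, $N=M^n$, and $f=\varphi$ of degree $-d$, to realize $\sum_i(-1)^{d|x_i|}x_i\otimes\varphi(x_i^\ast)$ as the homotopy preimage of $\varphi$ under the quasi-isomorphism $p\A\otimes_{\A^e}M^n\xsimeq\cHom_{\A^e}((p\A)^\vee,M^n)$; composing with $\pi\otimes 1$ identifies it with the left-hand side in $\A\otimes_{\A^e}M^n$. Analogously, the same Lemma applied to $\varphi^+$ shows that the right-hand side is precisely the homotopy preimage of $\varphi^+$ under $M\otimes_{\A^e}M^{n-1}\xsimeq\cHom_{\A^e}(M^\vee,M^{n-1})$.

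Next, I would unfold the definition
$$\varphi^+\;=\;\bigl(\ev\otimes 1^{(n-1)}\bigr)\circ(1\otimes\varphi)\circ\al,$$
and use the functoriality of Lemma~\ref{LemC}(\ref{mor}) to trace the Casimir element of the bimodule $M$ through this composition. By Lemma~\ref{chase}(2), the quasi-isomorphism $\al$ sends $\sum_j y_j\otimes y_j^\ast$, up to homotopy, to $\sum_{i,l}\pi(x_i)z_l\otimes z_l^\ast\otimes x_i^\ast$ inside $M\otimes_{\A^e}(\cHom_\A(M,\A)\otimes_\A(p\A)^\vee)$, where $\sum_l z_l\otimes z_l^\ast$ is a Casimir element of the right $\A$-module $M$. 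Applying $1\otimes\varphi$ and then evaluating on the first $M$-factor of $M^n$ transforms the right-hand side, up to homotopy, into
$$\sum_{i,l}\pi(x_i)z_l\otimes\bigl(\ev\otimes 1^{(n-1)}\bigr)\bigl(z_l^\ast\otimes\varphi(x_i^\ast)\bigr)\;\in\;M\otimes_{\A^e}M^{n-1}.$$
Writing $\varphi(x_i^\ast)$ schematically as $u_1\otimes u'$ with $u_1\in M$ and $u'\in M^{n-1}$, the evaluation yields $z_l^\ast(u_1)\cdot u'$, and migrating the scalar $z_l^\ast(u_1)\in\A$ across the tensor using the bimodule relation in $M\otimes_{\A^e}M^{n-1}$ together with the Casimir identity $\sum_l z_l\cdot z_l^\ast(u_1)\simeq u_1$ collapses the inner sum to $\pi(x_i)u_1\otimes u'$. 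Under the canonical isomorphism $\A\otimes_{\A^e}M^n\xsimeq M\otimes_{\A^e}M^{n-1}$ of the statement, this is exactly the image of $\sum_i\pi(x_i)\otimes\varphi(x_i^\ast)$, matching the left-hand side.

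The hard part will be sign bookkeeping: the Koszul signs $(-1)^{d|x_i|}$ and $(-1)^{d|y_j|}$ provided by Lemma~\ref{LemC}(\ref{C**}) must propagate consistently through the composition defining $\varphi^+$ and through the cyclic identification imposed by $\otimes_{\A^e}$, and these must cancel against the signs generated by $\al$ and $\ev$. A secondary difficulty is that $\al$ is determined only up to homotopy, so each step has to be performed at the cochain level on the fixed cofibrant resolutions, descending to $\rD(k)$ only at the end to obtain the asserted equality up to homotopy.
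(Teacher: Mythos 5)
Your proposal follows essentially the same route as the paper: push the Casimir element of $M$ through $1\otimes\al$ using Lemma~\ref{chase}(2), unfold $\varphi^+=(\ev\otimes 1^{(n-1)})\circ(1\otimes\varphi)\circ\al$, and compare with the Casimir of $p\A$ pushed through $\varphi$. The paper organizes the last step as a commuting-up-to-homotopy square that replaces $\ev\otimes 1^{(n-1)}$ by $\eta\otimes 1^n$ for a lift $\eta$ of the counit, whereas you ``collapse the inner sum'' directly via the Casimir identity $\sum_l z_lz_l^\ast(u_1)\simeq u_1$; these are the same snake-identity argument in different packaging. One word of caution on your phrasing: $u_1$ is not a cycle of $M$ on its own (it is merely a tensor factor of $\varphi(x_i^\ast)$), so $\sum_l z_l z_l^\ast(u_1)\simeq u_1$ is not an identity of elements but the statement that $\mu(C)\simeq\mathrm{id}_M$ as chain maps. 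The collapse is therefore really the claim that $1\otimes(\mu(C)\otimes 1^{n-1})$ is homotopic to the identity on $\A\otimes_{\A^e}M^n$, applied to the cycle $\sum_i(\pm)\pi(x_i)\otimes\varphi(x_i^\ast)$; formulated this way the step is sound, and it is exactly what the counit square in the paper records.
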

\begin{proof}
	Let $\sum_lz_l\otimes z_l^\ast\in M\lotimes_\A\RHom_\A(M,\A)$ be the Casimir element for the right module $M$. By \ref{chase}(2), the image of $\sum_jy_j\otimes y_j^\ast$ under $1\otimes\al$ is homotopic to $\sum_{i,l}\pi(x_i)z_l\otimes z_l^\ast\otimes x_i^\ast$.
	Now we follow the diagram below.
	\[ \xymatrix@C=3mm{
		M\otimes_{\A^e}(\cHom_\A(M,\A)\otimes_\A\cHom_{\A^e}(p\A,\A^e))\ar[r]^-{1\otimes(1\otimes\varphi)}&M\otimes_{\A^e}(\cHom_\A(M,\A)\otimes_\A M^n)\ar[rr]^-{1\otimes(\ev\otimes1^{(n-1)})}\ar@{=}[d]&&M\otimes_{\A^e}M^{n-1}\ar@{=}[d]\\
		&\A\otimes_{\A^e}(M\otimes_\A\cHom_\A(M,\A)\otimes_\A M^n)\ar[rr]^-{1\otimes(1\otimes\ev\otimes1^{n-1})}\ar@{=}[d]&&\A\otimes_{\A^e}(M\otimes_\A M^{n-1})\ar@{=}[d]\\
		&\A\otimes_{\A^e}(M\otimes_\A\cHom_\A(M,\A)\otimes_\A M^n)\ar[rr]^-{1\otimes(\eta\otimes1^{n})}&&\A\otimes_{\A^e}M^n } \]
	It is clear that the upper square is commutative, and the lower one is also commutative up to homotopy (note that $\eta$ is a lift of the counit).
	As the image of $\sum_jy_j\otimes y_j^\ast$ in the upper right corner, we get the element $\sum_j(-1)^{d|y_j|}y_j\otimes\varphi^+(y_j^\ast)\in M\otimes_{\A^e} M^{n-1}$. On the other hand, the image of $\sum_{i,l}\pi(x_i)z_l\otimes z_l^\ast\otimes x_i^\ast$ into the lower right corner is $\sum_i(-1)^{d|x_i|}\pi(x_i)\otimes\varphi(x_i^\ast)\in\A\otimes_{\A^e}M^n$. We conclude that they are homotopic under the canonical identification $M\otimes_{\A^e}M^{n-1}\simeq\A\otimes_{\A^e}M^n$.
\end{proof}

\subsection{Compatible morphisms}
Let us formulate the following which plays a crucial role in the proof of the results in Section \ref{results}. As before let $M$ be an invertible bimodule over a smooth dg category $\A$, $d$ an integer, and $n$ a positive integer.

Let $\Ga=\rT^{\rL}_\A\!M$ be the derived tensor category. We have the standard triangle
\[ \xymatrix{\Ga\lotimes_\A M\lotimes_\A\Ga\ar[r]&\Ga\lotimes_\A\Ga\ar[r]&\Ga } \]
in $\rD(\Ga^e)$, whose bimodule dual is
\[ \xymatrix{\Ga\lotimes_\A\RHom_{\A^e}(\A,\A^e)\lotimes_\A\Ga\ar[r]&\Ga\lotimes_\A\RHom_{\A^e}(M,\A^e)\lotimes_\A\Ga\ar[r]&\RHom_{\Ga^e}(\Ga,\Ga^e)[1] }. \]
Indeed, for $X\in\per\A^e$ we have $\Ga\lotimes_\A X\lotimes_\A\Ga= X\lotimes_{\A^e}\Ga^e$, so its bimodule dual is $\RHom_{\A^e}(X,\Ga^e)$ by adjunction, which is further isomorphic to $\Ga^e\lotimes_{\A^e}\RHom_{\A^e}(X,\A^e)=\Ga\lotimes_\A\RHom_{\A^e}(X,\A^e)\lotimes_\A\Ga$ since $X$ is perfect as a bimodule.
\begin{Def}\label{compat}
	Consider the derived tensor category $\Ga=\rT^{\rL}_\A\!M$.	We say that a pair $(\varphi,\psi)$ of morphisms $\varphi\colon\RHom_{\A^e}(\A,\A^e)[d]\to M^n$ and $\psi\colon\RHom_{\A^e}(M,\A^e)[d]\to M^{n-1}$ in $\rD(\A^e)$ is {\it compatible} is the following diagram in $\rD(\Ga^e)$ is commutative.
	\begin{equation}\label{eqcompat}
	\xymatrix{
		\Ga\lotimes_\A \RHom_{\A^e}(\A,\A^e)[d]\lotimes_\A\Ga\ar[r]\ar[d]_-{1\otimes\varphi\otimes1}&\Ga\lotimes_\A \RHom_{\A^e}(M,\A^e)[d]\lotimes_\A\Ga\ar[d]^-{1\otimes\psi\otimes1}\\
		\Ga\lotimes_\A M^n\lotimes_\A\Ga\ar[r]&\Ga\lotimes_\A M^{n-1}\lotimes_\A\Ga }
	\end{equation}
	Here, the second row is the triangle in \ref{ten}, and the first row is the bimodule dual of the one for $n=1$.
\end{Def}

Now we give the technical heart of the proof. Our discussion closely follows \cite{Ke11+}, but the formulation of cyclic invariance is new.
\begin{Prop}\label{daiji}
The following are equivalent for $\varphi\colon\RHom_{\A^e}(\A,\A^e)[d]\to M^n$.
\begin{enumerate}
	\renewcommand\labelenumi{(\roman{enumi})}
	\renewcommand\theenumi{\roman{enumi}}
	\item $\varphi$ is $n$-cyclically invariant.
	\item There is $\psi\colon\RHom_{\A^e}(M,\A^e)[d]\to M^{n-1}$ such that $(\varphi,\psi)$ is compatible.
\end{enumerate}
In this case, $\psi$ is equal to $-\varphi^+$ as a morphism in $\rD(\A^e)$.
\end{Prop}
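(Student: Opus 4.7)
The plan is to translate the diagram \eqref{eqcompat} into an equation between Casimir-element images living in the cyclic complex $M^n\lotimes_{\A^e}\A$, and to recognize this equation as the defining condition of cyclic invariance.

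First, I would reduce the commutativity in $\rD(\Ga^e)$ to a statement in $\rD(\A^e)$. Both horizontal maps in \eqref{eqcompat} arise from applying $\Ga\lotimes_\A-\lotimes_\A\Ga$ to morphisms already defined at the $\A$-level: the top one is the bimodule dual of the canonical $M\to\A$, which via Lemma \ref{duals} is identified with $\A^\vee[d]\to M^\vee[d]$, and the bottom one is the initial map of the triangle in Lemma \ref{ten}. Since $\RHom_{\Ga^e}(\Ga\lotimes_\A X\lotimes_\A\Ga,-)\simeq\RHom_{\A^e}(X,-)$ on the subcategory generated by bimodules restricted from $\A$, checking commutativity in $\rD(\Ga^e)$ reduces to checking commutativity of the corresponding square between $\A^\vee[d],\,M^\vee[d]$ and $M^n,\,M^{n-1}$ in $\rD(\A^e)$.

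Next, I would apply the Casimir-element formalism. By Lemma \ref{LemC}\eqref{equal}, two parallel morphisms out of $\A^\vee[d]$ or $M^\vee[d]$ are homotopic iff their Casimir-element images in $\A\lotimes_{\A^e}M^n$ or $M\lotimes_{\A^e}M^{n-1}$ are homotopic. The element attached to $\varphi$ in $\A\lotimes_{\A^e}M^n$ is, by Lemma \ref{itsumo}, identified under the canonical quasi-isomorphism $\A\lotimes_{\A^e}M^n\xsimeq M\lotimes_{\A^e}M^{n-1}$ with the element attached to $-\varphi^+$. Thus the existence of a $\psi$ making the reduced square commute is equivalent to the element of $\varphi$ being stable under this one-step identification, and any such $\psi$ must equal $-\varphi^+$ in $\rD(\A^e)$ by uniqueness of the Casimir representation.

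Finally, I would identify the quasi-isomorphism $\A\lotimes_{\A^e}M^n\xsimeq M\lotimes_{\A^e}M^{n-1}$, $1\otimes(z_1\otimes\cdots\otimes z_n)\mapsto z_1\otimes(z_2\otimes\cdots\otimes z_n)$, with a single step of the $\Z/n\Z$-action on $M^n\lotimes_{\A^e}\A$ appearing in Definition \ref{cyc}. Since $\Z/n\Z$ is generated by this one-step shift, stability of the class of $\varphi$ under one step is equivalent to its stability under the whole action, i.e.\ to $n$-cyclic invariance. Combined with the previous step, this yields the equivalence (i)$\Leftrightarrow$(ii) together with the identification $\psi=-\varphi^+$. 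The main obstacle will be the careful bookkeeping of Koszul signs: the factors $(-1)^{d|x_i|}$ coming from the shift by $d$ in Lemma \ref{itsumo} and the signs built into the $\Z/n\Z$-action must combine to produce exactly the minus sign in $\psi=-\varphi^+$, rather than $+\varphi^+$.
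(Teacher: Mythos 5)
Your reduction from $\rD(\Ga^e)$ to $\rD(\A^e)$ is where the argument breaks down, and it is a genuine gap rather than a bookkeeping issue. You assert that both horizontal maps in \eqref{eqcompat} arise from applying $\Ga\lotimes_\A-\lotimes_\A\Ga$ to $\A^e$-linear morphisms, so that commutativity can be checked on a ``reduced square'' between $\A^\vee[d],\,M^\vee[d]$ and $M^n,\,M^{n-1}$ in $\rD(\A^e)$. But the bottom map is $1\otimes(x_0,\ldots,x_n)\otimes 1\mapsto x_0\otimes(x_1,\ldots,x_n)\otimes 1-1\otimes(x_0,\ldots,x_{n-1})\otimes x_n$: the two summands land in distinct tensor degrees of $\Ga\lotimes_\A M^n\lotimes_\A\Ga$, so the map does not factor through any $\A^e$-morphism $M^{n+1}\to M^n$. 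The same is true of the top map (it is the bimodule dual of $1\otimes m\otimes 1\mapsto m\otimes 1-1\otimes m$). There is therefore no ``reduced square in $\rD(\A^e)$'' to which the diagram collapses, and $\Hom_{\rD(\Ga^e)}$ between these induced bimodules is strictly larger than $\Hom_{\rD(\A^e)}(X,Y)$.

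This matters because the commutativity condition in $\rD(\Ga^e)$, once unwound via the Casimir formalism, decomposes according to the tensor bigrading on $\Ga\lotimes_\A M^{n-1}\lotimes_\A\Ga$ into \emph{two} independent conditions at the $\A$-level: one living in the summand $M^n\otimes_{\A^e}\A$ and one in $M^{n-1}\otimes_{\A^e}M$. In the paper these are $c_{11}\sim c_{22}$ and $c_{12}\sim c_{21}$. Lemma~\ref{itsumo} supplies the first relation (this is what pins down $\psi$ to be $-\varphi^+$), while the cyclic invariance of $\varphi$ is exactly the second, after using the first to rewrite $c_{22}$ as a rotation of $c_{21}$. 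Your proposal compresses these two constraints into a single one and thereby loses the separate role of \ref{itsumo}. Relatedly, \ref{itsumo} as stated identifies the Casimir image of $\varphi$ with that of $\varphi^+$ (no sign), so your appeal to it to produce $-\varphi^+$ directly does not go through; the sign emerges only from the interaction of the two relations within the differential in \ref{ten}, which you have discarded by the faulty reduction.
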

\begin{proof}
	We may assume that $M$ is cofibrant, and we also denote by $\pi\colon p\A\to\A$ a bimodule cofibrant resolution. To simplify notation we view $\varphi$ as a (closed) morphism $\cHom_{\A^e}(p\A,\A^e)\to M^n$ of degree $-d$, and similarly for $\psi\colon\cHom_{\A^e}(M,\A^e)\to M^{n-1}$ or $\varphi^+$.
	
	Let $(\varphi,\psi)$ be a pair of closed morphisms of degree $-d$. The commutativity of the diagram (\ref{eqcompat}) in \ref{compat} is presented by the square
	\begin{equation}\label{eqHom}
	\xymatrix@C=3mm{
		\cHom_{\Ga^e}(\Ga\otimes_\A\cHom_{\A^e}(M,\A^e)\otimes_\A\Ga,\Ga\otimes_\A M^n\otimes_\A\Ga)\ar[r]\ar[d]&\cHom_{\Ga^e}(\Ga\otimes_\A\cHom_{\A^e}(p\A,\A^e)\otimes_\A\Ga,\Ga\otimes_\A M^n\otimes_\A\Ga)\ar[d]\\
		\cHom_{\Ga^e}(\Ga\otimes_\A\cHom_{\A^e}(M,\A^e)\otimes_\A\Ga,\Ga\otimes_\A M^{n-1}\otimes_\A\Ga)\ar[r]&\cHom_{\Ga^e}(\Ga\otimes_\A\cHom_{\A^e}(p\A,\A^e)\otimes_\A\Ga,\Ga\otimes_\A M^{n-1}\otimes_\A\Ga), }
	\end{equation}
	that is, the morphism $1\otimes\varphi\otimes1$ lies in the upper right corner, while $1\otimes\psi\otimes1$ lies in the lower left corner, and the commutativity of (\ref{eqcompat}) amounts to saying that their images in the lower right corner are homotopic. By the canonical natural isomorphisms, the above square is quasi-isomorphic to
	\begin{equation}\label{eqotimes}
	\xymatrix{
		(\Ga\otimes_\A M^n\otimes_\A\Ga)\otimes_{\Ga^e}(\Ga\otimes_\A M\otimes_\A\Ga)\ar[r]\ar[d]&(\Ga\otimes_\A M^n\otimes_\A\Ga)\otimes_{\Ga^e}(\Ga\otimes_\A p\A\otimes_\A\Ga)\ar[d]\\
		(\Ga\otimes_\A M^{n-1}\otimes_\A\Ga)\otimes_{\Ga^e}(\Ga\otimes_\A M\otimes_\A\Ga)\ar[r]&(\Ga\otimes_\A M^{n-1}\otimes_\A\Ga)\otimes_{\Ga^e}(\Ga\otimes_\A p\A\otimes_\A\Ga). }
	\end{equation}
	By \ref{LemC}(\ref{C**}), the element $1\otimes\varphi\otimes1$ in the upper right corner of (\ref{eqHom}) becomes
	\[ c_1:=\sum_i(-1)^{|x_i|}(1\otimes\varphi(x_i^\ast)\otimes1)\otimes(1\otimes x_i\otimes1) \]
	in (\ref{eqotimes}), where $\sum_i x_i\otimes x_i^\ast\in p\A\otimes_{\A^e}\cHom_{\A^e}(p\A,\A^e)$ is the Casimir element of the bimodule $p\A$. Similarly, the element $1\otimes\psi\otimes1$ in the lower left corner of (\ref{eqHom}) becomes
	\[ c_2:=\sum_j(-1)^{|y_j|}(1\otimes\psi(y_j^\ast)\otimes1)\otimes(1\otimes y_j\otimes1) \]
	in (\ref{eqotimes}), where $\sum_j y_j\otimes y_j^\ast\in M\otimes_{\A^e}\cHom_{\A^e}(M,\A^e)$ is the Casimir element of $M$.
	Let us write $\varphi(x_i^\ast)\in M^n$ as $\varphi_1(x_i^\ast)\otimes\cdots\otimes\varphi_n(x_i^\ast)$, and define $\varphi_{1\sim n-1}(x_i^\ast)\in M^{n-1}$ and so on in an obvious way. Then the image of $c_1$ in the lower right corner of (\ref{eqotimes}) is
	\[ \sum_i(-1)^{|x_i|}(\varphi_1(x_i^\ast)\otimes\varphi_{2\sim n}(x_i^\ast)\otimes1)\otimes(1\otimes x_i\otimes1)-\sum_i(-1)^{|x_i|}(1\otimes\varphi_{1\sim n-1}(x_i^\ast)\otimes\varphi_n(x_i^\ast))\otimes(1\otimes x_i\otimes1), \]
	which we denote by $c_{11}-c_{12}$. Also, the image of $c_2$ is
	\[ \sum_j(-1)^{|y_j|}(1\otimes\psi(y_j^\ast)\otimes1)\otimes(y_j\otimes \widetilde{1}\otimes1)-\sum_j(-1)^{|y_j|}(1\otimes\psi(y_j^\ast)\otimes1)\otimes(1\otimes \widetilde{1}\otimes y_j), \]
	which we denote by $c_{21}-c_{22}$, and where $\widetilde{1}$ is a preimage of the identity maps.
	Let us track these elements under the isomorphism
	\[ (\Ga\otimes_\A M^{n-1}\otimes_\A\Ga)\otimes_{\Ga^e}(\Ga\otimes_\A p\A\otimes_\A\Ga)=(\Ga\otimes_\A M^{n-1}\otimes_\A\Ga)\otimes_{\A^e}p\A. \]
	It is readily seen that $c_{11}, c_{22}$ lie in the summand $(M\otimes_\A M^{n-1}\otimes_\A\A)\otimes_{\A^e}p\A=M^n\otimes_{\A^e}\A$, and $c_{12}, c_{21}$ lie in $(\A\otimes_\A M^{n-1}\otimes_\A M)\otimes_{\A^e}p\A=M^{n-1}\otimes_{\A^e}M$.
	As elements in $M^n\otimes_{\A^e}\A$, we have 
	\begin{equation}\label{eq1122}
	c_{11}=\sum_i(-1)^{|x_i|}\varphi(x_i^\ast)\otimes\pi(x_i)=\sum_i(-1)^{d|x_i|}\pi(x_i)\varphi(x_i^\ast)\otimes1, \quad \text{and} \quad c_{22}=\sum_j(-1)^{d|y_j|}y_j\psi(y_j^\ast)\otimes1.
	\end{equation}
	On the other hand, as elements of $M^{n-1}\otimes_{\A^e}M=M^n\otimes_{\A^e}\A$, we have 
	\begin{equation}\label{eq1221}
	c_{12}=\sum_i(-1)^{|x_i|}\varphi(x_i^\ast)\otimes\pi(x_i)=\sum_i(-1)^{d|x_i|}\pi(x_i)\varphi(x_i^\ast)\otimes1, \quad \text{and} \quad c_{21}=\sum_j(-1)^{|y_j|}\psi(y_j^\ast)\otimes y_j.
	\end{equation}
	Notice that $c_{22}$ is nothing but the image of $c_{21}$ under the $\Z/n\Z$ action on $M^n\otimes_{\A^e}\A$.
	
	Now we prove (i) implies (ii). Suppose that $\varphi$ is $n$-cyclically invariant. We claim that $(\varphi,-\varphi^+)$ is compatible. For this we have to show that $c_{11}$ and $c_{22}$ in (\ref{eq1122}) as well as $c_{12}$ and $c_{21}$ in (\ref{eq1221}) are homotopic for $\psi=\varphi^+$. By \ref{itsumo}, we know that $c_{11}$ and $c_{22}$ are homotopic. Also, since $c_{21}$ is the cyclic permutation of $c_{22}$, the cyclic invariance of $\varphi$ implies $c_{12}$ and $c_{21}$ are homotopic.
	
	Let us prove the converse. We assume that $(\varphi,-\psi)$ is compatible and show that $\psi=\varphi^+$ and that $\varphi$ is $n$-cyclically invariant. By assumption, we know that $c_{11}$ and $c_{22}$ in (\ref{eq1122}) are homotopic. On the other hand, we also know by \ref{itsumo} that $c_{11}$ and $c_{22}^\prime:=\sum_j(-1)^{d|y_j|}y_j\varphi^+(y_j^\ast)\otimes1$ are homotopic. Therefore, $c_{22}$ and $c_{22}^\prime$ are homotopic, hence $\psi$ and $\varphi^+$ gives the same morphism in the derived category by \ref{LemC}(\ref{equal}). Then the homotopy equivalence of $c_{12}$ and $c_{21}$ in (\ref{eq1221}) implies $\varphi$ is $n$-cyclically invariant.
\end{proof}

Now we make use of a characterization of derived tensor categories among graded dg categories.
Let $\X$ be an (Adams) positively graded dg category. For a subset $I\subset\Z$ we denote by
\[ \per^I\!\X=\thick\{ \X(-,A)(-i)\mid A\in\X,\, i\in I\}, \]
the thick subcategory of $\per^\Z\!\X$ generated in degree $i\in I$. If $I=\{i\in\Z \mid i\geq a\}$ we use the notation $\per^{\geq a}\!\X$ and so on. Then the graded perfect derived category $\per^\Z\!\X$ has a stable $t$-structure
\[ \per^\Z\!\X=\per^{\leq l}\!\X\perp\per^{\geq l+1}\!\X \]
for each $l\in\Z$.
\begin{Prop}[\cite{HaIO}]\label{chtensor}
Let $\Ga$ be a positively graded dg category with $\Ga_0=:\A$ and $\Ga_1=:M$. Suppose that $\Ga\in\per^{[0,1]}\!\Ga^e$. Then the canonical map $\rT^{\rL}_\A\!M\to\Ga$ is a graded quasi-equivalence.
\end{Prop}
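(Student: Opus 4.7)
Write $\rho\colon\Si:=\rT^\rL_\A\!M\to\Ga$ for the canonical dg functor of positively graded dg categories. By construction $\rho$ is the identity on objects, induces the identity $\A\to\A$ on Adams-degree-zero components, and the identity $M\to M$ on Adams-degree-one components. Since $\Si_0=\A=\Ga_0$, proving that $\rho$ is a graded quasi-equivalence reduces to showing that the component $\rho_n\colon M^{\lotimes_\A n}\to\Ga_n$ is a quasi-isomorphism in $\rD(\A^e)$ for every $n\ge0$, which I plan to do by induction on $n$ with $n\le 1$ being trivial.

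The key tool is to compare standard presentation triangles on both sides. Lemma~\ref{ten} (with $n=0$) provides, in $\rD^\Z(\Si^e)$, a canonical triangle
\begin{equation*}
\Si\lotimes_\A M\lotimes_\A\Si(-1)\longrightarrow\Si\lotimes_\A\Si\longrightarrow\Si.
\end{equation*}
Applying the derived induction $\Ga\lotimes_\Si(-)\lotimes_\Si\Ga$ and using $\Ga\lotimes_\Si\Si\simeq\Ga$ on each side yields a triangle in $\rD^\Z(\Ga^e)$
\begin{equation*}
\Ga\lotimes_\A M\lotimes_\A\Ga(-1)\longrightarrow\Ga\lotimes_\A\Ga\longrightarrow\Ga\lotimes_\Si\Ga,
\end{equation*}
together with a natural map $\Ga\lotimes_\Si\Ga\to\Ga$ induced by the multiplication of $\Ga$. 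The central claim is that this map is an isomorphism in $\rD^\Z(\Ga^e)$; granting this, the triangle above becomes a presentation triangle for $\Ga$ that is structurally identical to the one for $\Si$. Taking Adams-degree-$n$ parts and noting that the ``edge'' summands with $i=0$ or $j=0$ reduce to $\Ga_n$ itself via the $\A$-action, the inductive hypothesis $\Ga_i\simeq M^{\lotimes_\A i}$ for $i<n$ will then propagate to $i=n$ by a standard diagram chase.

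The heart of the argument is the central claim $\Ga\lotimes_\Si\Ga\xrightarrow{\sim}\Ga$. Both sides lie in $\per^{[0,1]}\!\Ga^e$: the left-hand side by the triangle just produced, whose first two terms are manifestly generated by $\Ga^e$ shifted by Adams degrees $0$ and $1$, and the right-hand side by hypothesis. Furthermore, the map is clearly an isomorphism in Adams degrees $0$ and $1$, where both sides are just $\A$ and $M$ respectively and the comparison is the identity. The proof will therefore be completed by the following general fact: a morphism between objects of $\per^{[0,1]}\!\Ga^e$ whose Adams-degree $0$ and $1$ components are quasi-isomorphisms is itself an isomorphism. This should follow by considering the cofiber $C\in\per^{[0,1]}\!\Ga^e$, whose Adams-degree $0$ and $1$ parts vanish, and combining this with the stable $t$-structure $\per^\Z\!\Ga^e=\per^{\le l}\!\Ga^e\perp\per^{\ge l+1}\!\Ga^e$ to conclude $C$ is zero in all Adams degrees.

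The main obstacle I expect is the last conservativity statement: extracting from the generation hypothesis that vanishing in Adams degrees $0$ and $1$ forces vanishing of an object of $\per^{[0,1]}\!\Ga^e$. Intuitively this is clear because the generators live in precisely those degrees, but turning it into a clean triangulated argument (likely via the orthogonality coming from the stable $t$-structure on $\per^\Z\!\Ga^e$, or alternatively via a bar-type spectral sequence argument) is the one step that genuinely uses the hypothesis $\Ga\in\per^{[0,1]}\!\Ga^e$ and deserves the most care.
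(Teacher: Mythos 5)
Your overall strategy is sound, and the crucial conservativity statement you isolate at the end is indeed true: using that the truncation functors for the stable $t$-structure $\per^{\leq 0}\Ga^e\perp\per^{\geq 1}\Ga^e$ are exact and send the degree-$0$ (resp.\ degree-$1$) generators into $\per^{\{0\}}\Ga^e$ (resp.\ $\per^{\{1\}}\Ga^e$), one sees that $\per^{[0,1]}\Ga^e$ inherits the decomposition $\per^{\{0\}}\Ga^e\perp\per^{\{1\}}\Ga^e$, and since $(-)_i\colon\per^{\{i\}}\Ga^e\to\per\A^e$ is an equivalence with inverse $-\lotimes_{\A^e}\Ga^e(-i)$, an object whose degree-$0$ and degree-$1$ parts vanish must be zero. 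The paper does not include a proof of this proposition (it cites \cite{HaIO}, which is in preparation), so I can only assess your argument on its own.

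Two places do deserve more than you give them. First, the claim that $\Ga\lotimes_\A\Ga$ and $\Ga\lotimes_\A M\lotimes_\A\Ga(-1)$ are ``manifestly'' in $\per^{\{0\}}\Ga^e$ and $\per^{\{1\}}\Ga^e$ is not automatic: these identify with $\A\lotimes_{\A^e}\Ga^e$ and $M\lotimes_{\A^e}\Ga^e(-1)$, so you need $\A\in\per\A^e$ and $M\in\per\A^e$. Both do follow from the hypothesis $\Ga\in\per^{[0,1]}\Ga^e$ --- e.g.\ $(-)_0$ is exact and carries the generators of $\per^{[0,1]}\Ga^e$ into $\per\A^e$, giving $\A=\Ga_0\in\per\A^e$; for $M$ one uses the decomposition of $\Ga$ and the degree-$1$ part of the truncation triangle --- but this is precisely the kind of step you should not call manifest, since without it $\Ga\lotimes_\Si\Ga$ need not even be perfect.

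Second, and more seriously, the final ``standard diagram chase'' is hiding the one genuinely delicate point. In Adams degree $n$ the middle term $\bigoplus_{i+j=n}\Ga_i\lotimes_\A\Ga_j$ contains the two edge summands $\Ga_0\lotimes_\A\Ga_n\simeq\Ga_n$ and $\Ga_n\lotimes_\A\Ga_0\simeq\Ga_n$, which are exactly the unknown you are trying to control, so a na\"ive five-lemma argument is circular. What saves you: the induced map on cones is $X^{\oplus 2}\to X$ with $X=\cone(M^{\lotimes_\A n}\to\Ga_n)$, and it is the \emph{sum} map (both edge multiplications are the identity on cohomology); the triangle of cones forces it to be an isomorphism, and a sum map $X\oplus X\to X$ is an isomorphism if and only if $X=0$, since on cohomology it is always surjective with kernel the antidiagonal. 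This observation is the heart of the induction and should be spelled out. Finally, note that a slightly shorter route avoids the $\Ga\lotimes_\Si\Ga$ detour altogether: truncate $\Ga$ directly along $\per^{\{0\}}\Ga^e\perp\per^{\{1\}}\Ga^e$, identify the truncations via their degree-$0$ and degree-$1$ parts as $\Ga\lotimes_\A\Ga$ and $\Ga\lotimes_\A M\lotimes_\A\Ga(-1)[1]$, and then run the same induction on the resulting triangle; this packages your central claim and conservativity step into a single use of the equivalence $(-)_i\colon\per^{\{i\}}\Ga^e\simeq\per\A^e$.
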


We are now ready to prove \ref{fakeCY} as a direct consequence of \ref{daiji}, and also \ref{fakeMM}.

\begin{proof}[Proof of \ref{fakeCY}]
	Dualizing the canonical sequence $\Si\lotimes_\A U\lotimes_\A\Si\to\Si\lotimes_\A\Si\to\Si$ in $\rD(\Si^e)$ we get a morphism in the first row below, whose mapping cone is $\RHom_{\Si^e}(\Si,\Si^e)[d+1]$. On the other hand, we also have a morphism in \ref{ten} on the second row below whose mapping cone is $\Si_{\geq a-1}$.
	By cyclic invariance of $U$ we have an isomorphism $\varphi\colon \RHom_{\A^e}(\A,\A^e)[d]\to U^a$ which makes the diagram
	\[ \xymatrix{
		\Si\lotimes_\A\RHom_{\A^e}(\A,\A^e)[d]\lotimes_\A\Si\ar[r]\ar[d]_-{1\otimes\varphi\otimes1}&\Si\lotimes_\A\RHom_{\A^e}(U,\A^e)[d]\lotimes_\A\Si\ar[d]^-{1\otimes\varphi^+\otimes1}\\
		\Si\lotimes_\A U^a\lotimes_\A\Si\ar[r]&\Si\lotimes_\A U^{a-1}\lotimes_\A\Si } \]
	commutative by \ref{daiji}. Comparing the mapping cones, we obtain $\RHom_{\Si^e}(\Si,\Si^e)[d+1]\xsimeq\Si_{\geq a-1}$.
\end{proof}

\begin{proof}[Proof of \ref{fakeMM}]
	We have seen above that the correspondence from (ii) to (i) is well-defined, and it is clear that the composite of the correspondences (ii)$\to$(i)$\to$(ii) is the identity.
	
	We prove that given a graded dg category $\Si$ as in (i), then $\A:=\Si_0$ and $U:=\Si_1$ form a cyclically invariant pair as in (ii), and $\Si=\rT^{\rL}_\A\!U$. Since $\Si$ is a positively graded smooth dg category, we have $\Si\in\per^{\geq0}\!\Si^e$, thus $\Si^\vee\in\per^{\leq0}\!\Si^e$. On the other hand, $\Si^\vee$ is a perfect bimodule concentrated in (Adams) degree $\geq-1$ by $\Si_{\geq a-1}(a)\simeq\Si^\vee[d+1]$, so $\Si^\vee\in\per^{\geq1}\!\Si^e$. It follows that $\Si^\vee\in\per^{[-1,0]}\!\Si^e$, hence $\Si\in\per^{[0,1]}\!\Si^e$. Then \ref{chtensor} applies and we get a graded quasi-equivalence $\rT^{\rL}_\A\!U\xsimeq\Si$.
	
	It remains to prove that $U$ is a cyclically invariant root of the shifted inverse dualizing complex. Since $\Si$ is a tensor category, one has the standard triangle $\Si\lotimes_\A U\lotimes_\A \Si(-1)\to\Si\lotimes_\A\Si\to \Si$ in $\rD^\Z(\Si^e)$. Dualizing we get a triangle
	\[ \xymatrix{ \Si\lotimes_\A\RHom_{\A^e}(\A,\A^e)[d]\lotimes_\A \Si(-1)\ar[r]&\Si\lotimes_\A\RHom_{\A^e}(U,\A^e)[d]\lotimes_\A\Si\ar[r]& \RHom_{\Si^e}(\Si,\Si^e)(-1)[d+1] } \]
	in $\rD^\Z(\Si^e)$. On the other hand, the standard triangle as in \ref{ten} gives
	\[ \xymatrix{ \Si\lotimes_\A U^{a}\lotimes_\A\Si(-1)\ar[r]&\Si\lotimes_\A U^{a-1}\lotimes_\A\Si\ar[r]&\Si_{\geq a-1}(a-1) } \]
	in $\rD^\Z(\Si^e)$. Notice that in each of the triangles, the leftmost term is generated in degree $1$, the middle term is generated in degree $0$, and the two rightmost terms are isomorphic by assumption. Therefore these are truncations (of the rightmost terms) with respect to the stable $t$-structure $\per^{\leq0}\!\Si^e\perp\per^{>0}\!\Si^e$ on $\per^\Z\!\Si^e$, hence the above two triangles are isomorphic. Now, since the isomorphism on the leftmost term (resp. on the middle term) has (Adams) degree $0$, it must be of the form $1\otimes\varphi\otimes1$ for some isomorphism $\varphi\colon\RHom_{\A^e}(\A,\A^e)[d]\xsimeq U^a$ (resp. $1\otimes\psi\otimes1$ for some isomorphism $\psi\colon\RHom_{\A^e}(U,\A^e)[d]\xsimeq U^{a-1}$). Indeed, any $\Si^e$-linear morphism $\Si\lotimes_\A X\lotimes_\A\Si\to\Si\lotimes_\A Y\lotimes_\A\Si$ of degree $0$ is induced from an $\A^e$-linear $X\to Y$ by $\RHom_{\Si^e}^\Z(\Si\lotimes_\A X\lotimes_\A\Si,\Si\lotimes_\A Y\lotimes_\A\Si)=\RHom_{\A^e}(X,(\Si\lotimes_\A Y\lotimes_\A\Si)_0)=\RHom_{\A^e}(X,Y)$. We see that $U$ is an $a$-th root of $\RHom_{\A^e}(\A,\A^e)[d]$, and that $\varphi$ and $\psi$ are compatible in the sense of \ref{compat}, hence $U$ is cyclically invariant by \ref{daiji}.
\end{proof}

\subsection{Proof of \ref{CY}}
We are almost ready to prove the first main result \ref{CY}.
Let $(U,\P)$ be an $a$-th root pair on a smooth dg category $\A$, $\Si=\rT^{\rL}_\A\!U$ the derived tensor category, and $\Pi=\Pi^{(1/a)}_{d+1}(\A)$ the Calabi-Yau completion. For a $\Si$-module $X$ and $0\leq i\leq a-1$, we denote by $Xe_i$ the restriction of $X$ to $\Pi^{(i)}$, and similarly for the left modules.
\begin{Lem}
$\Pi$ is smooth.
\end{Lem}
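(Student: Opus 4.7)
The plan is to reduce the smoothness of $\Pi$ to that of the ambient derived tensor category $\Si=\rT^\rL_\A U$. For $\Si$ itself, I would invoke the standard tensor-algebra argument: since $\A$ is smooth and $U$ is invertible, both $\A$ and $U$ lie in $\per\A^e$, so the extension-of-scalars functor $X\mapsto\Si\lotimes_\A X\lotimes_\A\Si = X\lotimes_{\A^e}\Si^e$ sends $\per\A^e$ into $\per\Si^e$. Applied to the standard fundamental triangle $\Si\lotimes_\A U\lotimes_\A\Si \to \Si\lotimes_\A\Si \to \Si$ in $\rD(\Si^e)$, this shows that the two leftmost terms are perfect, and hence so is $\Si$.

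Next, I would restrict the same triangle along the functor $e_0(-)e_{a-1}\colon\rD(\Si^e)\to\rD(\Pi^e)$. Using the identification $e_0\Si e_{a-1}\simeq\Pi$ in $\rD(\Pi^e)$ (which comes from the quasi-equivalence $\Pi\simeq\Pi^{(a-1)}$ together with axiom (\ref{exc}) of the root pair), we obtain a triangle
\[ e_0\Si\lotimes_\A U\lotimes_\A\Si e_{a-1} \to e_0\Si\lotimes_\A\Si e_{a-1} \to \Pi \]
in $\rD(\Pi^e)$. Hence it suffices to show that the two leftmost terms, or more generally $e_0\Si\lotimes_\A X\lotimes_\A\Si e_{a-1}$ for every $X\in\per\A^e$, belong to $\per\Pi^e$.

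The final step is to verify the perfectness of the one-sided transfer bimodules $e_0\Si\in\per(\Pi^\op\otimes\A)$ and $\Si e_{a-1}\in\per(\A^\op\otimes\Pi)$; once this is in place, tensoring over $\A$ with $X\in\per\A^e$ immediately yields a perfect $\Pi^e$-module. By axiom (\ref{P}), every object of $\A$ lies in the thick closure of the generators $\{P\lotimes_\A U^i\mid P\in\P,\,0\leq i\leq a-1\}$ in $\per\A$, and the problem reduces to computing the one-sided restrictions at these generators. A direct calculation, in which axiom (\ref{exc}) is invoked to annihilate the orthogonal contributions, shows that the restrictions collapse to (shifts of) representable $\Pi$-modules. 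The main technical obstacle lies in this last bookkeeping: one needs to keep careful track of the bimodule variances and apply (\ref{exc}) in the correct range in order for the collapse to occur. Once it is done, the smoothness of $\Pi$ follows formally.
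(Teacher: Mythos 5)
Your argument matches the paper's proof: restrict the fundamental triangle $\Si\lotimes_\A U\lotimes_\A\Si\to\Si\lotimes_\A\Si\to\Si$ along $e_0(-)e_{a-1}$, identify $e_0\Si e_{a-1}\simeq\Pi$ via axiom (\ref{exc}), and reduce smoothness to the perfectness of $e_0\Si$ over $\Pi^\op$ and $\Si e_{a-1}$ over $\Pi$, which the paper verifies directly by decomposing $e_0\Si\simeq\bigoplus_{i=0}^{a-1}e_0\Si e_i\simeq\Pi^{\oplus a}$ (again using (\ref{exc})). Your opening paragraph on the smoothness of $\Si$ and the insistence on perfectness over $\Pi^\op\otimes\A$ rather than the one-sided $\per\Pi^\op$ are not needed, but the essential reduction is the same.
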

\begin{proof}
	The restriction of the standard triangle $\Si\lotimes_\A U\lotimes_\A\Si\to \Si\lotimes_\A\Si\to \Si$ along $e_0(-)e_{a-1}$ yields
	\[ \xymatrix{ e_0\Si\lotimes_\A U\lotimes_\A\Si e_{a-1}\ar[r]&e_0\Si\lotimes_\A \A\lotimes_\A\Si e_{a-1}\ar[r]&e_0\Si e_{a-1} }. \]
	In view of \ref{setup}(\ref{exc}) we know that $e_0\Si e_{a-1}=\Pi$ in $\rD(\Pi^e)$, and we also have $e_0\Si=\bigoplus_{i=0}^{a-1}e_0\Si e_i=\Pi^{\oplus a}$ as left $\Pi$-modules, and similarly $\Si e_{a-1}=\bigoplus_{i=0}^{a-1}e_i\Si e_{a-1}=\Pi^{\oplus a}$ as right $\Pi$-modules, in particular $e_0\Si\in\per\Pi^\op$ and $\Si e_{a-1}\in\per\Pi$. We conclude from the above triangle that $\Pi\in\per\Pi^e$.
\end{proof}

We prepare another important lemma in the setting of root pairs.
\begin{Lem}\label{Pi-dual}
There are canonical isomorphisms
\[ \RHom_\Pi(\Si e_{a-1},\Pi)\simeq {e}_0\Si,\qquad \RHom_{\Pi^\op}({e}_0\Si,\Pi)\simeq\Si e_{a-1} \]
in $\rD(\Si^\op\otimes\Pi)$ and in $\rD(\Pi^\op\otimes\Si)$, respectively.
\end{Lem}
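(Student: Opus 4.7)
The strategy is to exhibit the matrix-like Morita structure of $\Si$ coming from the semi-orthogonal decomposition in the root pair, and thereby reduce the identification to an elementary computation. The essential step is to show that for every $0\leq i\leq j\leq a-1$ there is a canonical isomorphism $e_i\Si e_j\simeq\Pi$ of $(\Pi,\Pi)$-bimodules (after transport along the quasi-equivalences $\Pi\simeq\Pi^{(i)}$, $\Pi\simeq\Pi^{(j)}$). To see this, compute
\[ \Si(Q\lotimes_\A U^i,P\lotimes_\A U^j)=\bigoplus_{k\geq 0}\RHom_\A(Q,P\lotimes_\A U^{j-i+k}) \]
for $P,Q\in\P$, and invoke the vanishing $\RHom_\A(Q,P\lotimes_\A U^l)=0$ for $-(a-1)\leq l\leq -1$ implied by \ref{setup}(\ref{exc}) and the invertibility of $U$: only the terms with $m=j-i+k\geq 0$ survive, and these sum to $\Pi(Q,P)$. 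Specializing yields $e_0\Si e_{a-1}\simeq\Pi$ as a $(\Pi,\Pi)$-bimodule, while the summand decompositions $e_0\Si=\bigoplus_i e_0\Si e_i$ and $\Si e_{a-1}=\bigoplus_i e_i\Si e_{a-1}$ each identify with $\Pi^{\oplus a}$ as one-sided $\Pi$-modules, consistently with the paragraph preceding the lemma.

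With these identifications in hand, I would construct the first isomorphism as the map $(\Pi,\Si)$-adjoint, via tensor-Hom, to the multiplication
\[ \mu\colon e_0\Si\otimes_\Si\Si e_{a-1}\;=\;e_0\Si e_{a-1}\;\xsimeq{}\;\Pi. \]
To check that the resulting $\varphi\colon e_0\Si\to\RHom_\Pi(\Si e_{a-1},\Pi)$ is a quasi-isomorphism, decompose the target as $\bigoplus_i\RHom_\Pi(e_i\Si e_{a-1},\Pi)$ to match the source decomposition $\bigoplus_i e_0\Si e_i$. The restriction of $\varphi$ to the $i$-th summand is the adjoint of the partial multiplication $e_0\Si e_i\otimes_{\Pi^{(i)}}e_i\Si e_{a-1}\to e_0\Si e_{a-1}$, which becomes the ordinary product $\Pi\otimes_\Pi\Pi\to\Pi$ after the identifications of the first paragraph; its adjoint is the identity $\Pi\to\RHom_\Pi(\Pi,\Pi)=\Pi$, so $\varphi$ is a quasi-isomorphism. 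The second isomorphism $\RHom_{\Pi^\op}(e_0\Si,\Pi)\simeq\Si e_{a-1}$ is obtained as the other adjoint of the same map $\mu$ by the parallel argument.

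The main obstacle is the bookkeeping required in the first paragraph: one must confirm that, after the identifications $e_i\Si e_j\simeq\Pi$, the multiplication $e_0\Si e_i\otimes_{\Pi^{(i)}}e_i\Si e_{a-1}\to e_0\Si e_{a-1}$ really does correspond to the standard product on $\Pi$. Concretely, a tensor-degree $k\geq i$ morphism $f\colon Q\lotimes_\A U^i\to P\lotimes_\A U^k$ is sent to the Adams-degree $k-i$ element $f\lotimes\id_{U^{-i}}\colon Q\to P\lotimes_\A U^{k-i}$ of $\Pi(Q,P)$; composition in $\Si$ adds tensor degrees, and the symmetric-monoidal interchange law ensures that the shifts $U^{-i}$ and $U^{-(a-1)}$ balance against these compositions correctly, yielding the standard product on $\Pi$. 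Once this matching is verified, everything else is formal.
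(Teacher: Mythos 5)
Your overall strategy coincides with the paper's: construct the canonical multiplication map, decompose into summands along $e_0\Si=\bigoplus_i e_0\Si e_i$ and $\Si e_{a-1}=\bigoplus_i e_i\Si e_{a-1}$, and reduce to the identification $e_i\Si e_j\simeq\Pi$ for $i\leq j$, which the paper also deduces from \ref{setup}(\ref{exc}). So the route is the right one.

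However, the displayed formula in your first paragraph has the source and target slots of $\Si(-,-)$ swapped, and this is not a cosmetic issue. With the paper's convention (where $e_0\Si$ is a \emph{left} $\Pi$-module and $\Si e_{a-1}$ a \emph{right} $\Pi$-module, so $e_i$ on the left restricts the target and $e_j$ on the right restricts the source), the bimodule $e_i\Si e_j$ is computed by
\[
e_i\Si e_j \;\leftrightarrow\; \Si(Q\lotimes_\A U^{\,j},\, P\lotimes_\A U^{\,i})\;=\;\bigoplus_{k\geq 0}\RHom_\A\bigl(Q,\, P\lotimes_\A U^{\,i-j+k}\bigr),
\]
with exponent $i-j+k$. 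For $i\leq j$ this exponent ranges over $\{m\geq i-j\}$, including the negative values $-(j-i)\leq m\leq -1\subseteq [-(a-1),-1]$, and it is exactly there that the vanishing from \ref{setup}(\ref{exc}) (plus invertibility of $U$) is invoked, leaving $\bigoplus_{m\geq0}\RHom_\A(Q,P\lotimes_\A U^m)=\Pi(Q,P)$. As you have written it, with exponent $j-i+k$, every term already has $m\geq j-i\geq 0$, so the vanishing is never used, and what you compute is $\Pi_{\geq j-i}(Q,P)$ — a proper subcomplex whenever $i<j$ (for instance the $m=0$ term $\RHom_\A(Q,P)$ is absent). Your sentence ``only the terms with $m=j-i+k\geq 0$ survive'' is vacuous as stated and masks the gap; try it for $a=2$, $i=0$, $j=1$: your formula gives $\Pi_{\geq 1}$, not $\Pi$.

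Once this slot swap is corrected, your second and third paragraphs are fine and coincide in substance with the paper's proof (the multiplication indeed transports to the identity $\Pi\to\RHom_\Pi(\Pi,\Pi)$, and the three identifications $e_0\Si e_i\simeq\Pi$, $e_i\Si e_{a-1}\simeq\Pi$, $e_0\Si e_{a-1}\simeq\Pi$ you use each satisfy the ``first index $\leq$ second index'' pattern). The obstacle you flag at the end, about the partial multiplications matching the $\Pi$-product, is not the real obstacle; the real one is the index bookkeeping in the first paragraph.
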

\begin{proof}
	There is a canonical multiplication map $e_0\Si\to\RHom_{\Pi}(\Si e_{a-1},e_0\Si e_{a-1})$ in $\rD(\Pi^\op\otimes\Si)$, and we show that this is a quasi-isomorphism. For this we restrict each side to $\P^{(i)}$ and verify that $e_0\Si e_i\to\RHom_{\Pi}(e_i\Si e_{a-1},e_0\Si e_{a-1})$ is a quasi-isomorphism of complexes. This follows from \ref{setup}(\ref{exc}) since it yields $e_i\Si e_j\simeq \Pi$ whenever $i\leq j$. The second isomorphism is proved in a similar way.
\end{proof}

Now we are ready to prove \ref{CY} using that $(U,\P)$ is cyclically invariant.
\begin{proof}[Proof of \ref{CY}]
	We restrict the isomorphism
	\[ \RHom_{\Si^e}(\Si,\Si\otimes\Si)[d+1]\simeq\Si_{\geq a-1} \]
	in $\rD(\Si^e)$ from \ref{fakeCY} as $e_0(-)e_{a-1}$ to $\rD(\Pi^e)$. By \ref{setup}(\ref{exc}) the right-hand-side becomes quasi-isomorphic to $\Pi$ since $e_0U^le_{a-1}=0$ for $0\leq l<a-1$. On the other hand, (the $[-d-1]$ of) the left-hand-side restricts to $\RHom_{\Si^e}(\Si,\Si e_{a-1}\otimes e_0\Si)$, which is isomorphic by \ref{Pi-dual} to $\RHom_{\Si^e}(\Si,\RHom_{\Pi^\op}(e_0\Si,\Pi)\otimes\RHom_\Pi(\Si e_{a-1},\Pi))=\RHom_{\Si^e}(\Si,\RHom_{\Pi^e}(e_0\Si\otimes\Si e_{a-1},\Pi^e))$. By adjunction this is isomorphic to $\RHom_{\Pi^e}(e_0\Si e_{a-1},\Pi^e)=\RHom_{\Pi^e}(\Pi,\Pi^e)$, again by \ref{setup}(\ref{exc}).
\end{proof}

\subsection{Proof of \ref{MM}}\label{MMproof}
The correspondence from (\ref{smdg}) to (\ref{CYdgalg}) is a graded version of \ref{CY}, together with the following observation on the degree $0$ part.
\begin{Prop}\label{Pi_0}
Let $(U,\P)$ be a root pair and $\Pi$ its Calabi-Yau completion. Then $\Pi_0\in\per^\Z\!\Pi$ and $\Pi_0\in\per^\Z\!\Pi^\op$.
\end{Prop}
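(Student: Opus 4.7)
The plan is to reduce the perfectness of $\Pi_0$ over $\Pi$ to a statement about $\per^\Z\Si^\op$ for the derived tensor category $\Si=\rT^\rL_\A U$, where the semi-orthogonal decomposition from \ref{setup} provides the needed control. First I would consider the augmentation triangle
\[ \Pi_{\geq 1}\to\Pi\to\Pi_0 \]
of $\Pi$-bimodules and, after fixing $P\in\P$, reduce the problem to showing that the right $\Pi$-module $\Pi_{\geq 1}(-,P)$ lies in $\per^\Z\Pi^\op$, using that $\Pi(-,P)$ is representable hence perfect. The corresponding statement for $\Pi_0\in\per^\Z\Pi$ will follow from the symmetric root pair $(U,\P^\op)$ on $\A^\op$.

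Next I would identify $\Pi_{\geq 1}(-,P)$ with the restriction to $\P$ of the right $\Si$-module $\Si_{\geq 1}(-,P)$, and use the graded $\A$-$\Si$-bimodule isomorphism $\Si_{\geq 1}\simeq U\lotimes_\A\Si$ (with $U$ placed in Adams degree $1$) to realize this as the induction of the perfect right $\A$-module $\A(-,P)\lotimes_\A U\in\per\A$ along the inclusion $\A\hookrightarrow\Si$. By the semi-orthogonal decomposition, $\A(-,P)\lotimes_\A U$ admits a finite iterated extension in $\per\A$ with subquotients in $\thick(\P\lotimes_\A U^i)$ for $0\leq i\leq a-1$. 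Pushing this filtration through induction and then restricting to $\P$ expresses $\Pi_{\geq 1}(-,P)$ as a finite iterated extension of right $\Pi$-modules of the form $\Pi_{\geq i+1}(-,Q)$ with $Q\in\P$ and Adams shifts determined by $i$.

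The main obstacle I anticipate is closing the iteration: the pieces $\Pi_{\geq i+1}(-,Q)$ with $i\geq 1$ demand the same analysis again, and naively the process could continue indefinitely. The key input will be the wrap-around relation $U^a\simeq\RHom_{\A^e}(\A,\A^e)[d]$ together with the smoothness of $\A$, which yields $\A^\vee\in\per\A^e$: after at most $a$ iterations, the Adams grading accounting forces the construction to terminate in a finite resolution of $\Pi_{\geq 1}(-,P)$ by Adams-shifted representables of $\Pi$. Carrying out this bookkeeping carefully, while tracking the Adams grading through the induction-restriction functors between $\per^\Z\Si^\op$ and $\per^\Z\Pi^\op$, is the technical heart of the argument.
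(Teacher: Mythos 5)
Your overall strategy --- restrict the canonical triangle $U\lotimes_\A\Si(-1)\to\Si\to\A$ of the derived tensor category $\Si=\rT^\rL_\A\!U$ to $\P$ and show the outer term lies in $\per^\Z\Pi$ --- is indeed the shape of the paper's argument. But the obstruction you flag at the end is genuine, and the fix you propose does not resolve it.

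Restricting both sides of the triangle to $\P=\P^{(0)}$, as you do, and pushing the semi-orthogonal filtration of $U(-,P)\in\per\A$ through $-\lotimes_\A\Si e_0$ produces pieces of the form $(Q\lotimes_\A U^i)\lotimes_\A\Si e_0$ for $0\le i\le a-1$. The $i=0$ piece is an Adams shift of the representable $\Pi(-,Q)$, hence perfect; but for $1\le i\le a-1$ the resulting right $\Pi$-module is, up to Adams shift, $\Pi_{\ge i}(-,Q)$ --- exactly the same kind of object you are trying to control. Iterating the analysis on these just reproduces $\Pi_{\ge 1},\dots,\Pi_{\ge a-1}$ again with Adams shifts, and since thick subcategories are closed under Adams shifts, nothing is gained. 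The wrap-around $U^a\simeq\RHom_{\A^e}(\A,\A^e)[d]$ together with smoothness of $\A$ does show that $\Pi_{\ge a}$ is a cohomological and Adams shift of $\Pi$, but it says nothing about the intermediate indices $1,\dots,a-1$; so it cannot break the circularity. (Even in the Beilinson $\PP^1$ example with $a=2$, the only piece produced is $\Pi_{\ge 1}=(x,y)\subset k[x,y]$ itself.)

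The way to avoid the iteration is to restrict at the \emph{opposite} end of the semi-orthogonal chain, using the quasi-equivalence $\Pi\simeq\Pi^{(a-1)}$. Then for $A\in\P^{(i)}$, $A=R\lotimes_\A U^i$, the right $\Pi$-module $\A(-,A)\lotimes_\A\Si e_{a-1}$ computed on $P'=P\lotimes_\A U^{a-1}$ is
\[
\bigoplus_{l\ge 0}\RHom_\A\bigl(P\lotimes_\A U^{a-1},\,R\lotimes_\A U^{\,i+l}\bigr)\;\simeq\;\bigoplus_{m\ge i-a+1}\RHom_\A(P,R\lotimes_\A U^m),
\]
and the terms with $m<0$ vanish by axiom (\ref{exc}) of \ref{setup} since $1\le -m\le a-1$. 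Hence this module is an Adams shift of $\Pi(-,R)$ for \emph{every} $0\le i\le a-1$, so $\Si e_{a-1}\in\per^\Z\Pi$ holds outright from the root-pair axioms (this is exactly the observation already used to prove $\Pi$ smooth). The restricted triangle $e_{a-1}U\lotimes_\A\Si e_{a-1}(-1)\to\Pi\to\Pi_0$ then immediately gives $\Pi_0\in\per^\Z\Pi$, with no iteration. The left-module claim $\Pi_0\in\per^\Z\Pi^\op$ follows symmetrically, now restricting at $\P^{(0)}$ on the left where the inequalities go the other way (and this is consistent with your appeal to the opposite root pair). In short: do not try to show $\Si e_0\in\per^\Z\Pi$; use $\Si e_{a-1}$, where the semi-orthogonality makes perfectness manifest.
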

\begin{proof}
	We only consider the right module. Since $\Si=\rT^\rL_\A\!U$ is a tensor category we have a triangle $U\lotimes_\A\Si(-1)\to\Si\to\A$ in $\rD^\Z(\A^\op\otimes\Si)$. Restricting along $e_0(-)e_{0}$ we get
	\[ \xymatrix{ e_0U\lotimes_\A\Si e_{0}(-1)\ar[r]& \Pi\ar[r]&\Pi_0 } \]
	in $\rD^\Z(\Pi)$, which gives the assertion by $e_0U\in\per\A$ and $\Si e_{0}\in\per^\Z\!\Pi$.
\end{proof}

The rest of this section is devoted to establishing the converse correspondence.
We shall make use of two stable $t$-structures on the graded derived categories of graded dg categories. Let $\X$ be a positively graded dg category.

(1)  The first one is what we already noted above \ref{chtensor}. Recall that we write, for example, $\per^I\!\X=\thick\{ \X(-,A)(-i)\mid A\in\X,\, i\in I\}$ for a subset $I\subset\Z$. 
Then the graded perfect derived category $\per^\Z\!\X$ has a stable $t$-structure
\[ \per^\Z\!\X=\per^{\leq l}\!\X\perp\per^{\geq l+1}\!\X \]
for each $l\in\Z$.

(2)  The second one is a standard one with respect to the grading: let
\[ 
\begin{aligned}
	\rD^\Z(\X)_{\geq0}&=\{M\in\rD^\Z(\X)\mid M \text{ is concentrated in degree}\geq0 \},\\
	\rD^\Z(\X)_{<0}&=\{M\in\rD^\Z(\X)\mid M \text{ is concentrated in degree}<0 \}.
\end{aligned}
\]
Then $(\rD^\Z(\X)_{\geq0},\rD^\Z(\X)_{<0})$ is a stable $t$-structure on $\rD^\Z(\X)$. There are obvious variants for truncation at other degrees.

\medskip

Let $\Pi$ be as in \ref{MM}(\ref{CYdgalg}), thus it is an Adams positively graded $(d+1)$-Calabi-Yau dg category of Gorenstein parameter $a$ such that $\Pi_0\in\per^\Z\!\Pi$ and $\Pi_0\in\per^\Z\!\Pi^\op$. We put $T=\Pi\oplus\Pi(1)\oplus\cdots\oplus\Pi(a-1)\in\per^\Z\!\Pi$ and define the dg category $\A$ and a bimodule $U$ by
\[ \A=\REnd_\Pi^\Z(T)=\begin{pmatrix}\Pi_0&0&\cdots&0\\ \Pi_1&\Pi_0&\cdots&0\\ \vdots&\vdots&\ddots&\vdots\\ \Pi_{a-1}&\Pi_{a-2}&\cdots&\Pi_0\end{pmatrix}, \quad U=\RHom_\Pi^\Z(T,T(1))=\begin{pmatrix}	\Pi_{1} & \Pi_0&\cdots& 0\\ \vdots& \vdots&\ddots&\vdots \\ \Pi_{a-1}&\Pi_{a-2}&\cdots&\Pi_0 \\ \Pi_{a}&\Pi_{a-1}&\cdots&\Pi_{1}\end{pmatrix}. \]
Formally, the objects of $\A$ are $a$ copies of those of $\Pi$; we denote by $\P^{(i)}$ the full subcategory of $\A$ corresponding to the $i$-th copy for $0\leq i\leq a-1$. The morphisms of $\A$ are given by $\Pi_{j-i}(A,B)$ if $A\in\P^{(i)}$ and $B\in\P^{(j)}$ (thus is $0$ when $i>j$). Similarly, the $(\A,\A)$-bimodule $U$ takes the value $\Pi_{j-i+1}(A,B)$ at $A\in\P^{(i)}$ and $B\in\P^{(j)}$.


Let us prepare some necessary materials.
\begin{Lem}\label{power}
There is an isomorphism $U^{\lotimes_\A l}\simeq\RHom_\Pi^\Z(T,T(l))$ in $\rD(\A^e)$ for every $l\geq0$.
\end{Lem}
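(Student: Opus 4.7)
\textit{Proof plan.} I will argue by induction on $l$. The cases $l = 0$ (where $U^{\lotimes 0} = \A \simeq \REnd_\Pi^\Z(T) = \RHom_\Pi^\Z(T,T)$) and $l = 1$ are immediate from the very definitions of $\A$ and $U$.

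For the inductive step, my plan is to transport the Adams shift $(1)$ on $\rD^\Z(\Pi)$ across the Morita-style adjunction associated with $T$. Since $T \in \per^\Z\Pi$ and the canonical map $\A \to \REnd_\Pi^\Z(T)$ is a quasi-isomorphism, the functor $L := -\lotimes_\A T\colon \rD(\A) \to \rD^\Z(\Pi)$ will be fully faithful with right adjoint $R := \RHom_\Pi^\Z(T,-)$, and its essential image is the localizing subcategory $\Loc\{T\} \subseteq \rD^\Z(\Pi)$. Granting that $(1)$ preserves $\Loc\{T\}$, it corresponds under this equivalence to an autoequivalence $\Phi := R\circ(1)\circ L$ of $\rD(\A)$, which is necessarily of the form $-\lotimes_\A U'$ for the invertible bimodule $U' = \Phi(\A) = R(T(1)) = U$. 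Iterating $l$ times then yields
\[ U^{\lotimes l} \;=\; \Phi^l(\A) \;=\; R\bigl(L(\A)(l)\bigr) \;=\; R\bigl(T(l)\bigr) \;=\; \RHom_\Pi^\Z(T,T(l)), \]
as required.

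The content of the argument thus reduces to verifying that $T(l) \in \Loc\{T\}$ for every $l \geq 0$. Since $T = \bigoplus_{i=0}^{a-1}\Pi(i)$, an obvious shift-by-shift induction reduces this to showing $\Pi(a) \in \Loc\{\Pi, \Pi(1), \ldots, \Pi(a-1)\}$. This is precisely where the standing hypotheses of \ref{MM}\,(\ref{CYdgalg})---namely Gorenstein parameter $a$ together with $\Pi_0 \in \per^\Z\Pi$ on both sides---will enter: the perfectness of $\Pi_0$ combined with the positivity of the Adams grading produces a finite graded resolution of $\Pi_0$ by shifts $\Pi(-j)$ with $j \geq 0$, and the Calabi-Yau duality $\RHom_{\Pi^e}^\Z(\Pi,\Pi^e)[d+1] \simeq \Pi(a)$ lets us relate the required high shift $\Pi(a)$ back to the shifts already present in $T$.

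I expect this last verification of the localizing-subcategory containment to be the main obstacle; once it is secured, the remainder of the argument is the purely formal transport of structure described in the second paragraph.
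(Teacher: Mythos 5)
Your plan is essentially a transport-of-structure argument: realize $\rD(\A)$ as a subcategory of $\rD^\Z(\Pi)$ via $L=-\lotimes_\A T$, conjugate the Adams shift $(1)$ through $L$ and its right adjoint $R$, and iterate. You correctly flag the crux: for the conjugated functor $\Phi=R\circ(1)\circ L$ to iterate as $\Phi^l(\A)=R(T(l))$ you need $T(l)\in\Loc\{T\}$. But this containment is \emph{false} in general, and the obstruction is unavoidable, not just unverified.

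Here is why it fails. Since $\Pi$ is Adams \emph{positively} graded, each summand $\Pi(i)$ of $T=\bigoplus_{i=0}^{a-1}\Pi(i)$ has cohomology concentrated in Adams degree $\geq -(a-1)$. The full subcategory of $\rD^\Z(\Pi)$ of objects with this property is closed under shifts, coproducts, and cones, hence is localizing and contains $\Loc\{T\}$. On the other hand $T(1)$ contains the summand $\Pi(a)$, whose Adams degree $-a$ part is $H^\bullet(\Pi)_0\neq 0$. So $T(1)\notin\Loc\{T\}$, and the very first iteration $\Phi^2(\A)$ already breaks: $L(R(T(1)))\not\simeq T(1)$, since the counit of the adjunction is an isomorphism only on $\Loc\{T\}$. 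Your suggested fix via a resolution of $\Pi_0$ by $\Pi(-j)$ with $j\geq 0$ only builds objects in degrees $\geq 0$ from negative shifts, not the other way around, and the bimodule Calabi--Yau identity $\Pi^\vee[d+1]\simeq\Pi(a)$ does not produce a one-sided triangle expressing $\Pi(a)$ in terms of $\Pi,\dots,\Pi(a-1)$. (Already for $\Pi=k[x]$ with $a=1$ and $T=\Pi$, one has $\Pi(1)\notin\Loc\{\Pi\}$.)

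The paper circumvents this exactly where you would expect: it passes to the Verdier quotient $\rC^\Z(\Pi)=\per^\Z\Pi/\thick\{\Pi_0(i)\mid i\in\Z\}$, where the subcategory being killed \emph{is} shift-stable, so $(1)$ descends to a genuine autoequivalence, and where $T$ \emph{does} generate (this is \ref{GEN}, whose proof uses precisely the finite resolution of $\Pi_0$ you invoke, but now working modulo $\thick\Gamma_0(\Z)$ so the resolution can be ``inverted''). Theorem~\ref{B}(\ref{U}) then gives your commuting square on the nose, yielding $U^{\lotimes l}\simeq\RHom_\C^\Z(T,T(l))$ for the dg enhancement $\C$ of $\rC^\Z(\Pi)$. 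The final, and indispensable, step is Lemma~\ref{F}: for $X\in\per^{\geq -a+1}\Pi$ and $Y\in\per^{\leq 0}\Pi$ the quotient functor is fully faithful on morphisms from $X$ to $Y$, and $T\in\per^{[-a+1,0]}\Pi$, $T(l)\in\per^{\leq 0}\Pi$ for $l\geq 0$ lie in the allowed range. This is what lets you replace $\RHom_\C^\Z$ by $\RHom_\Pi^\Z$ and land the statement back in $\rD(\A^e)$. In short: the adjunction idea is right, but it has to be run against the cluster-category quotient, not against $\Loc\{T\}\subset\rD^\Z(\Pi)$, and the comparison Lemma~\ref{F} is the missing ingredient that lets you descend from the quotient.
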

\begin{proof}
	By assumption we have $\Pi_0\in\per^\Z\!\Pi$. Let 
	\[ \rC^\Z(\Pi):=\per^\Z\!\Pi/\thick\{ \Pi_0(i) \mid i\in\Z\} \]
	be the {graded cluster category} as in Appendix \ref{A}. Then by \ref{B}(\ref{U}) there is a commutative diagram
	\[ \xymatrix@R=4mm@!C=14mm{
		\rC^\Z(\Pi)\ar@{-}[r]^-\simeq\ar[d]_-{(1)}&\per\A\ar[d]^-{-\lotimes_\A U}\\
		\rC^\Z(\Pi)\ar@{-}[r]^-\simeq&\per\A } \]
	of equivalences, and we deduce that $U^{\lotimes_\A l}=\RHom^\Z_\C(T,T(l))$ for the canonical (graded) dg enhancement $\C$ of $\rC^\Z(\Pi)$. By \ref{F} the canonical map $\RHom_\Pi^\Z(T,T(l))\to\RHom_\C^\Z(T,T(l))$ is a quasi-isomorphism for each $l\geq0$, which proves the assertion.
\end{proof}

In what follows we take a cofibrant resolution of $U$ over $\A^e$, and still denote it by $U$. Let 
\[ \Si=\rT_{\A}\!U \]
be the (derived) tensor category which carries a natural tensor grading. We show that there is an isomorphism as in \ref{fakeCY}, which will yield  by \ref{fakeMM} that $(U,\P^{(0)})$ forms a cyclically invariant root pair.
We denote by $\Pi^{(i)}$ the full subcategory of $\Si$ on objects of $\P^{(i)}$. By \ref{power} each $\Pi^{(i)}$ is isomorphic as a graded dg category to the Calabi-Yau dg category $\Pi$. We represent
\[ \Pi^{(0)}=\Pi, \]
so that there is an inclusion $\Pi\to\Si$ as a subcategory.

We track the morphism $\HH_\ast(\Pi)\to\HH_\ast(\Si)$ on the Hochschild homologies induced by this inclusion. Recall that for a smooth dg category $\X$ and $M\in\rD(\X^e)$ we have a canonical isomorphism
\[ \xymatrix{M\lotimes_{\X^e}\X\ar[r]&\RHom_{\X^e}(\X^\vee,M) }. \]
In particular, the map on Hochschild homologies yields a morphism
\[ \xymatrix{\RHom_{\Pi^e}(\Pi^\vee,\Pi)\ar[r]&\RHom_{\Si^e}(\Si^\vee,\Si) }. \]
Note that $\Pi$ and $\Si$ are graded, and the above morphism preserves the grading.
\begin{Lem}\label{geqa}
The above morphism restricts to
\[ \xymatrix{\RHom_{\Pi^e}(\Pi^\vee,\Pi)_{\geq a}\ar[r]&\RHom_{\Si^e}(\Si^\vee,\Si_{\geq a-1}) }. \]
In particular, any isomorphism $\Pi^\vee[d+1]\to\Pi(a)$ in $\rD^\Z(\Pi^e)$ gives rise to a morphism $\Si^\vee[d+1]\to\Si_{\geq a-1}(a)$ in $\rD^\Z(\Si^e)$.
\end{Lem}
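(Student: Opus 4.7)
The plan is to use the short exact sequence $\Si_{\geq a-1} \hookrightarrow \Si \twoheadrightarrow \Si/\Si_{\geq a-1}$ of graded $\Si$-bimodules and show that the composition of the morphism in question with the quotient $\Si \to \Si/\Si_{\geq a-1}$ vanishes on the Adams degree $\geq a$ part. This will produce the desired lift through the triangle
\[ \RHom_{\Si^e}(\Si^\vee, \Si_{\geq a-1}) \to \RHom_{\Si^e}(\Si^\vee, \Si) \to \RHom_{\Si^e}(\Si^\vee, \Si/\Si_{\geq a-1}). \]

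The key input is that the morphism $\RHom_{\Pi^e}(\Pi^\vee, \Pi) \to \RHom_{\Si^e}(\Si^\vee, \Si)$ preserves the Adams grading; this is already recorded in the text since it factors as $\RHom_{\Pi^e}(\Pi^\vee, \Pi) \simeq \Pi \lotimes_{\Pi^e} \Pi \to \Si \lotimes_{\Si^e} \Si \simeq \RHom_{\Si^e}(\Si^\vee, \Si)$, with the middle arrow being the Hochschild functoriality induced by the Adams-grading preserving full embedding $\Pi \hookrightarrow \Si$ on objects $\P^{(0)}$ (where $\Pi_l = U^l$ on $\P^{(0)}$ by \ref{power}).

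The main computation is to show that $\RHom_{\Si^e}(\Si^\vee, \Si/\Si_{\geq a-1}) \simeq (\Si/\Si_{\geq a-1}) \lotimes_{\Si^e} \Si$ is concentrated in Adams degrees $\leq a-1$. For this I would apply $-\lotimes_{\Si^e}(\Si/\Si_{\geq a-1})$ to the standard tensor-algebra triangle $\Si \lotimes_\A U \lotimes_\A \Si \to \Si \lotimes_\A \Si \to \Si$ and use the identification $(\Si \lotimes_\A X \lotimes_\A \Si) \lotimes_{\Si^e} M \simeq X \lotimes_{\A^e} M$ to obtain a triangle
\[ U \lotimes_{\A^e} (\Si/\Si_{\geq a-1}) \to \A \lotimes_{\A^e} (\Si/\Si_{\geq a-1}) \to (\Si/\Si_{\geq a-1}) \lotimes_{\Si^e} \Si. \]
Since $\Si/\Si_{\geq a-1} = \bigoplus_{l=0}^{a-2} U^l$ lives in Adams degrees $0, \ldots, a-2$, the first term lives in Adams degrees $1, \ldots, a-1$ and the second in $0, \ldots, a-2$. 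Hence the cone is concentrated in Adams degrees $\leq a-1$, so an Adams-degree-preserving morphism from something of Adams degree $\geq a$ lands in a trivial piece, giving vanishing of the composite. The ``in particular'' statement then follows as any isomorphism $\Pi^\vee[d+1] \simeq \Pi(a)$ gives a class of Adams degree $a$, producing the morphism $\Si^\vee[d+1] \to \Si_{\geq a-1}(a)$.

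The principal technical hurdle is the identification $M \lotimes_{\Si^e} \Si \simeq \RHom_{\Si^e}(\Si^\vee, M)$, which requires $\Si \in \per \Si^e$. I expect this to follow by first establishing smoothness of $\A$ (derived Morita equivalent to the graded Calabi-Yau $\Pi$ via \ref{B}) and $U \in \per \A^e$, so that the defining triangle of $\Si = \rT_\A U$ expresses $\Si$ as a two-term extension of induced perfect $\Si^e$-bimodules.
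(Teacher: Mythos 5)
Your proposal is correct and follows essentially the same route as the paper: both use the triangle $\Si_{\geq a-1}\to\Si\to\Si_{<a-1}$, reduce the lifting problem to showing $\RHom_{\Si^e}(\Si^\vee,\Si_{<a-1})\simeq\Si_{<a-1}\lotimes_{\Si^e}\Si$ is concentrated in Adams degree $<a$, and establish that bound by tensoring the standard bar-type resolution $\Si\lotimes_\A U\lotimes_\A\Si(-1)\to\Si\lotimes_\A\Si\to\Si$ against $\Si_{<a-1}$ over $\Si^e$. The smoothness of $\Si$ needed for the identification $\RHom_{\Si^e}(\Si^\vee,-)\simeq(-)\lotimes_{\Si^e}\Si$, which you flag as a hurdle, is indeed implicitly in force in the paper's context (it is established from $\A$ smooth and $U\in\per\A^e$ via the same bar triangle, as in the unnumbered smoothness Lemma), so your treatment is consistent.
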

\begin{proof}
	Consider the diagram
	\[ \xymatrix{
		\RHom_{\Pi^e}(\Pi^\vee,\Pi)_{\geq a}\ar[r]&\RHom_{\Pi^e}(\Pi^\vee,\Pi)\ar[r]\ar[d]&\RHom_{\Pi^e}(\Pi^\vee,\Pi)_{< a}\\
		\RHom_{\Si^e}(\Si^\vee,\Si_{\geq a-1})\ar[r]&\RHom_{\Si^e}(\Si^\vee,\Si)\ar[r]&\RHom_{\Si^e}(\Si^\vee,\Si_{< a-1}). } \]
	We claim that there is no morphism from the upper left corner to the lower right corner, which yields the desired morphism. Since the upper left corner is concentrated in (Adams) degree $\geq a$ it is enough to show that the lower right corner is concentrated in degree $<a$. Note that $\RHom_{\Si^e}(\Si^\vee,\Si_{< a-1})=\Si_{<a-1}\lotimes_{\Si^e}\Si$ can be computed by the bimodule resolution $\Si\lotimes_\A U\lotimes_\A\Si(-1)\to\Si\lotimes_\A\Si\to\Si$, which yields a triangle
	\[ \xymatrix{ \Si_{<a-1}\lotimes_{\A^e}U(-1)\ar[r]&\Si_{<a-1}\lotimes_{\A^e}\A\ar[r]&\Si_{<a-1}\lotimes_{\Si^e}\Si } \]
	in $\rD^\Z(\Mod k)$. Since $\Si_{<a-1}$ is concentrated in degree $[0,a-1)$, this triangle shows that the rightmost term is concentrated in degree $[0,a-1]$, as desired.
\end{proof}
In fact, the above morphism restricts to one between isomorphisms.
\begin{Lem}\label{well}
The map $\HH_\ast(\Pi)\to\HH_\ast(\Si)$ yields a map
\[ \xymatrix{ \Hom^\Z_{\rD(\Pi^e)}(\Pi^\vee[d+1],\Pi(a))^\times\ar[r]&\Hom^\Z_{\rD(\Si^e)}(\Si^\vee[d+1],\Si_{\geq a-1}(a))^\times }. \]
Here $\Hom_\rC(-,-)^\times$ means the set of isomorphisms in a category $\rC$.
\end{Lem}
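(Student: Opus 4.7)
The plan is to realize the induced morphism $\tilde\phi : \Si^\vee[d+1] \to \Si_{\geq a-1}(a)$ as the morphism of cones in a comparison of two standard triangles in $\rD^\Z(\Si^e)$, and then apply the 5-lemma, following the strategy of the proof of Theorem \ref{fakeMM}.

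First, dualizing the standard bimodule resolution $\Si\otimes_\A U\otimes_\A\Si(-1)\to\Si\otimes_\A\Si\to\Si$ yields a triangle
\[
\Si\otimes_\A\A^\vee[d]\otimes_\A\Si\to\Si\otimes_\A U^\vee[d]\otimes_\A\Si(1)\to\Si^\vee[d+1]\xrightarrow{+1}
\]
in $\rD^\Z(\Si^e)$, and the Adams twist by $(a)$ of the triangle from Lemma \ref{ten} gives
\[
\Si\otimes_\A U^a\otimes_\A\Si(a)\to\Si\otimes_\A U^{a-1}\otimes_\A\Si(a)\to\Si_{\geq a-1}(a)\xrightarrow{+1}.
\]
As in the proof of Theorem \ref{fakeMM}, any Adams-degree-$0$ morphism between the corresponding terms of these triangles comes from $\A^e$-linear maps $\varphi:\A^\vee[d]\to U^a$ and $\psi:U^\vee[d]\to U^{a-1}$ via $1\otimes\varphi\otimes1$ and $1\otimes\psi\otimes1$. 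I would show that $\tilde\phi$ arises as the induced map on cones for such a vertical pair $(\varphi,\psi)$, which is moreover compatible in the sense of Definition \ref{compat}. This compatibility is inherited from the naturality of the Hochschild pushforward $\HH_\ast(\iota):\HH_\ast(\Pi)\to\HH_\ast(\Si)$ with respect to the above resolutions, and by Proposition \ref{daiji} it forces $\psi=-\varphi^+$ and $\varphi$ to be $a$-cyclically invariant.

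Second, I need to argue that $\varphi$ is an isomorphism; by Lemma \ref{duals}, $\psi=-\varphi^+$ will then also be an isomorphism. The idea is that the iso $\phi$ restricts, in a controlled Adams degree, to a map that corresponds to $\varphi$ tensored with invertible factors. Concretely, tracking $\phi$ through the Casimir-element formalism of Lemmas \ref{chase} and \ref{itsumo} identifies a specific component of $\phi$ with (a tensor twist of) the morphism $\varphi$. Since $\phi$ is an isomorphism its component is too, whence $\varphi$ is an isomorphism. Given this, the 5-lemma in the triangulated category $\rD^\Z(\Si^e)$ applied to the morphism of triangles above yields that $\tilde\phi$ is an isomorphism.

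The main obstacle is the second step: precisely identifying the appropriate component of $\phi$ with the compatible-pair ingredient $\varphi$ in such a way that the isomorphism property transfers. This is a Casimir-element bookkeeping argument analogous to, but finer than, those in Section \ref{proof}, which translates the Hochschild-level Calabi-Yau class on $\Pi$ into the bimodule-level root datum on $(\A,U)$. Once this translation is made, the 5-lemma conclusion of Step 3 is immediate.
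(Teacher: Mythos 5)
Your proposal has the right structural ingredients (the two standard triangles in $\rD^\Z(\Si^e)$, the reduction to $\A^e$-linear maps $\varphi$ and $\psi$ between their first and middle terms, and the $5$-lemma at the end), but it is not a proof: the step you flag as the ``main obstacle''---showing that $\varphi\colon \A^\vee[d]\to U^a$ is an isomorphism---is precisely the content of the lemma, and you give no argument for it, only a plan to do ``Casimir-element bookkeeping.'' That plan does not obviously close: the Casimir identities in Lemmas \ref{chase}, \ref{itsumo}, \ref{daiji} require $U$ to be invertible, and a clean justification for that invertibility has to be supplied (it does follow from \ref{B}(\ref{U}) via \ref{power}, but you never invoke it), and even then the bookkeeping identifies certain elements up to homotopy without producing the invertibility of $\varphi$. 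So there is a genuine gap, not merely an omitted routine computation.

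More importantly, the paper's actual argument for Lemma \ref{well} does not go through $\varphi$ at all. It shows directly that $g\colon\Si^\vee(-1)[d+1]\to\Si_{\geq a-1}(a-1)$ is an isomorphism by restricting to the pieces $e_i(-)e_{a-1-j}$ for $0\leq i,j\leq a-1$. First one checks $e_0(g)e_{a-1}$ is an isomorphism by tracking the diagram (\ref{eqfg}) that realizes the Hochschild pushforward, using the bimodule duality $\Si e_{a-1}\otimes e_0\Si\simeq\RHom_{\Pi^e}(e_0\Si\otimes\Si e_{a-1},\Pi^e)(2a-2)$ from \ref{gr-Pi-dual} to kill the cone of $\Si e_0\lotimes_\Pi e_0\Si\to\Si$. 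Then, for general $(i,j)$, one identifies the restriction with a twist involving $\Si_{\geq -l}$ for $l=i+j$, and a careful Adams-degree count (using that the restriction $e_0(-)e_{a-1}$ moves generation degrees within a range of width $2a-2$) shows that the relevant triangle is forced to be a $t$-structure truncation, hence the restricted map is an isomorphism. This direct route avoids ever needing to know that $\varphi$ itself is an isomorphism; that fact is harvested afterwards in the proof of Theorem \ref{MM} via \ref{fakeMM}. If you want to salvage your route, you would still need essentially the same degree bookkeeping to show that the truncation producing $\varphi$ from $g$ is an isomorphism on the relevant piece, so nothing is saved, and you should instead follow the restriction-by-restriction computation.
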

The rest of this subsection is devoted to the proof of this lemma. 

As in the previous section, we use the notation, for example, $Xe_i$ for the restriction of a $\Si$-module $X$ to $\P^{(i)}$, which we view as a $\Pi$-module.
Also as before we make use of the two bimodules, $e_0\Si\in\rD^\Z(\Pi^\op\otimes\Si)$ and $\Si e_{a-1}\in\rD^\Z(\Si^\op\otimes\Pi)$.
In view of the grading we have $e_0\Si\simeq\Pi\oplus\Pi(-1)\oplus\cdots\oplus\Pi(-a+1)$ as left $\Pi$-modules and $\Si e_{a-1}\simeq\Pi(-a+1)\oplus\cdots\oplus\Pi(-1)\oplus\Pi$ as right $\Pi$-modules. This gives the following graded refinement of \ref{Pi-dual}.
\begin{Lem}\label{gr-Pi-dual}
	There exist isomorphisms
	\[ \xymatrix{ \Si e_{a-1}(a-1)\ar[r]&\RHom_{\Pi^\op}(e_0\Si,\Pi),& e_0\Si (a-1)\ar[r]&\RHom_\Pi(\Si e_{a-1},\Pi) } \]
	of in $\rD^\Z(\Si^\op\otimes\Pi)$ {\rm (}resp. in $\rD^\Z(\Pi^\op\otimes\Si)${\rm)}.
\end{Lem}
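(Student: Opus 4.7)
\ref{gr-Pi-dual} is a graded refinement of \ref{Pi-dual}, so the plan is to lift the latter's proof to the Adams-graded setting while tracking the Adams-degree shift. Since the canonical multiplication maps used there are already morphisms of graded bimodules, the new content is simply to identify where the shift by $(a-1)$ originates.

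I would first compute each block $e_i\Si e_j$ as a graded $(\Pi,\Pi)$-bimodule. By \ref{power}, $U^l\simeq\RHom_\Pi^\Z(T,T(l))$ with $T=\bigoplus_{k=0}^{a-1}\Pi(k)$, so the $(i,j)$-entry of $U^l$ is $\Pi_{i+l-j}$ placed in Adams degree $l$ of $\Si$. Summing over $l$, and using that the identifications $\Pi^{(0)}=\Pi$ and $\Pi^{(a-1)}\simeq\Pi$ preserve Adams gradings, one obtains $e_i\Si e_j\simeq\Pi(i-j)$ as graded bimodules (with the convention $\Pi(n)_l=\Pi_{l+n}$). In particular $e_0\Si e_{a-1}(a-1)\simeq\Pi$ in $\rD^\Z(\Pi^e)$.

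As in \ref{Pi-dual}, the canonical right-multiplication maps then give graded morphisms
\[ \Si e_{a-1}(a-1)\longrightarrow\RHom_{\Pi^\op}(e_0\Si,e_0\Si e_{a-1}(a-1))\simeq\RHom_{\Pi^\op}(e_0\Si,\Pi) \]
and similarly $e_0\Si(a-1)\to\RHom_\Pi(\Si e_{a-1},\Pi)$, in the appropriate graded derived categories. To verify these are quasi-isomorphisms, I would restrict to $\P^{(j)}$ for each $0\leq j\leq a-1$: the restriction $e_0\Si e_j(a-1)\to\RHom_\Pi(e_j\Si e_{a-1},\Pi)$ has both source and target isomorphic to $\Pi(a-1-j)$ via the block computation above, and the map is the canonical one, so the ungraded argument of \ref{Pi-dual} (invoking the orthogonality condition \ref{setup}(\ref{exc})) carries over.

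The main obstacle is the Adams-grading bookkeeping. The $(a-1)$-shift appears because although $e_0\Si e_{a-1}\simeq\Pi$ as ungraded bimodules via \ref{Pi-dual}, graded-wise this reads $e_0\Si e_{a-1}\simeq\Pi(1-a)$: the copy of $\Pi_0$ inside $e_0\Si e_{a-1}$ first appears at Adams degree $a-1$ (coming from $e_0 U^{a-1}e_{a-1}$), and the shift by $(a-1)$ on the source precisely compensates for this to match the internal Adams grading of the $\RHom$ target.
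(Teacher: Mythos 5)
Your proof is correct and follows essentially the same route as the paper, which (after recording the graded decompositions $e_0\Si\simeq\bigoplus_{j=0}^{a-1}\Pi(-j)$ and $\Si e_{a-1}\simeq\bigoplus_{i=0}^{a-1}\Pi(i-a+1)$) treats the statement as an immediate graded refinement of \ref{Pi-dual}, with the $(a-1)$-shift arising exactly as you explain. One small caveat: the claim $e_i\Si e_j\simeq\Pi(i-j)$ holds only for $i\leq j$ (for $i>j$ the right-hand side would be nonzero in negative Adams degree while $\Si$ is positively graded), but since you only invoke it in the range $i\leq j$ the argument is unaffected.
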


Now we are ready to prove \ref{well}.
\begin{proof}[Proof of \ref{well}]
	We need to track the morphism $\HH_\ast(\Pi)\to\HH_\ast(\Si)$ in detail. Recalling that $\Pi=e_0\Si e_0$, the image of $f\colon\Pi^\vee(-a)[d+1]\to\Pi$ under the morphism $\RHom_{\Pi^e}(\Pi^\vee,\Pi)\to\RHom_{\Si^e}(\Si^\vee,\Si)$ is given by the composition along the following diagram.
	\begin{equation}\label{eqfg}
	\xymatrix@R=3mm@C=15mm{
		\Si e_0\lotimes_\Pi\RHom_{\Pi^e}(\Pi,\Pi^e)(-a)[d+1]\lotimes_\Pi e_0\Si\ar[r]^-{1\otimes f\otimes 1}& \Si e_0\lotimes_\Pi e_0\Si\ar[d]\\
		\RHom_{\Pi^e}(\Pi, e_0\Si\otimes \Si e_0)(-a)[d+1]\ar@{=}[u]&\Si\\
		\RHom_{\Si^e}(\Si e_0\lotimes_\Pi e_0\Si,\Si^e)(-a)[d+1]\ar@{=}[u]&\\
		\RHom_{\Si^e}(\Si,\Si^e)(-a)[d+1]\ar[u]&&, }
	\end{equation}
	where the first and last maps are induced by the multiplication map $\Si e_0\lotimes_\Pi e_0\Si\to \Si$. By \ref{geqa}, for any $f$ of Adams degree $0$, the induced map factors through $\Si_{\geq a-1}$.

	
	Now let $f\colon\Pi^\vee(-1)[d+1]\to\Pi(a-1)$ be an isomorphism and let $g\colon\Si^\vee(-1)[d+1]\to\Si_{\geq a-1}(a-1)$ be its image viewed as a map to $\Si_{\geq a-1}$. We have to show that this is an isomorphism in $\rD^\Z(\Si^e)$. 
	
	We first claim that the restriction of $g$ along $e_0(-)e_{a-1}$ is an isomorphism. It is enough to check that this is the case for the first map $\RHom_{\Si^e}(\Si,\Si^e)\to\RHom_{\Si^e}(\Si e_0\lotimes_\Pi e_0\Si,\Si^e)$ in (\ref{eqfg}) since every other map as well as the inclusion $\Si_{\geq a-1}\to\Si$ evidently restrict to isomorphisms under $e_0(-)e_{a-1}$. Let $\underline{\Si}$ be the mapping cone of $\Si e_0\lotimes_\Pi e_0\Si\to\Si$ in $\rD^\Z(\Si^e)$. Then $e_0\RHom_{\Si^e}(\underline{\Si},\Si^e)e_{a-1}=\RHom_{\Si^e}(\underline{\Si},\Si e_{a-1}\otimes e_0\Si)$ which is isomorphic by \ref{Pi-dual} to $\RHom_{\Si^e}(\underline{\Si},\RHom_{\Pi^e}(e_0\Si\otimes \Si e_{a-1},\Pi^e))$, thus to $\RHom_{\Pi^e}(e_0\underline{\Si}e_{a-1},\Pi^e)$ by adjunction, hence this is $0$ since $\underline{\Si}$ is annihilated by $e_0$. We conclude that $\RHom_{\Si^e}(\Si,\Si e_{a-1}\otimes e_0\Si)\to\RHom_{\Si^e}(\Si e_0\lotimes_\Pi e_0\Si,\Si e_{a-1}\otimes e_0\Si)$ is an isomorphism.
	
	We next prove that for each $0\leq i,j\leq a-1$ the restriction along $e_i(-)e_{a-1-j}$ is an isomorphism, which will prove that $g$ is indeed an isomorphism. We prepare a piece of notation: for $l\geq0$ we denote by $\Si_{\geq-l}$ the $(\Si,\Si)$-bimodule $\RHom_{\Si}(\Si_{\geq l},\Si)$. Being the inverse of $\Si_{\geq l}$, it is also isomorphic to $\RHom_{\Si^\op}(\Si_{\geq l},\Si)$.
	
	Note first that since $e_i\Si=e_0\Si_{\geq i}(i)$ as graded $(\Pi,\Si)$-bimodules, restricting on the left to $e_i$ amounts to applying $e_0\Si_{\geq i}(i)\lotimes_\Si-$. Similarly, restriction on the right to $e_{a-1-j}$ is isomorphic to $-\lotimes_\Si\Si_{\geq j}e_{a-1}(j)$. It follows that the restriction of the codomain of $g$ is $e_i(\Si_{\geq a-1}(a-1))e_{a-1-j}=e_0(\Si_{\geq a-1+i+j}(a-1+i+j))e_{a-1}$, which is $\Pi_{\geq i+j}(i+j)$.
	On the other hand, recalling that the bimodule action of $\Si$ on $\Si^\vee=\RHom_{\Si^e}(\Si,\Si\otimes\Si)$ comes from the inner bimodule structure on $\Si\otimes\Si$, one can compute the restriction of the domain of $g$ as $e_i\RHom_{\Si^e}(\Si,\Si\otimes\Si(-1)[d+1])e_{a-1-j}=\RHom_{\Si^e}(\Si,\Si_{\geq j}e_{a-1}(j)\otimes e_0\Si_{\geq i}(i))(-1)[d+1]=\RHom_{\Si^e}(\Si_{\geq-i-j},\Si e_{a-1}\otimes e_0\Si)(i+j-1)[d+1]$, which is isomorphic to $\RHom_{\Pi^e}(e_0\Si_{\geq-i-j}e_{a-1},\Pi^e)(-2a+i+j+1)[d+1]$ by \ref{gr-Pi-dual} and adjunction.
	
	Now, we want to show that $\RHom_{\Pi^e}(e_0\Si_{\geq-i-j}e_{a-1},\Pi^e)(-2a+1)[d+1]\to \Pi_{\geq i+j}$ is an isomorphism, where we know that this is already the case for $i=j=0$. Note that these morphisms fit into the commutative diagram below for $l:=i+j$.
	\[ \xymatrix@R=3mm{
		\RHom_{\Pi^e}(e_0\Si_{\geq-l}e_{a-1},\Pi^e)(-2a+1)[d+1]\ar[r]\ar[d]& \Pi_{\geq l}\ar[d]\\
		\RHom_{\Pi^e}(e_0\Si e_{a-1},\Pi^e)(-2a+1)[d+1]\ar[r]^-\simeq\ar[d]& \Pi\ar[d]\\
		\RHom_{\Pi^e}(e_0\Si_{[-l,-1]}e_{a-1},\Pi^e)(-2a+1)[d+2]& \Pi_{< l}. } \]
	Therefore it is enough to show that the left triangle is also the truncation with respect to the grading.
	
	We first show that $\RHom_{\Pi^e}(e_0\Si_{\geq-l}e_{a-1},\Pi^e)(-2a+1)$ is concentrated in degree $\geq l$. Note first that the triangle in \ref{ten} generalizes to negative powers, that is, we have a triangle
	\[ \xymatrix{ \Si\lotimes_\A U^{-l+1}\lotimes_\A\Si(-1)\ar[r]&\Si\lotimes_\A U^{-l}\lotimes_\A\Si\ar[r]&\Si_{\geq-l}(-l) } \]
	simply by the same proof as in \ref{ten}. It follows that $\Si_{\geq-l}\in\per^{[-l,-l+1]}\!\Si^e$. Now we apply $e_0(-)e_{a-1}$. Since we have $e_0\Si\simeq\Pi\oplus\Pi(-1)\oplus\cdots\oplus\Pi(-a+1)$ as left $\Pi$-modules and $\Si e_{a-1}\simeq\Pi(-a+1)\oplus\cdots\oplus\Pi(-1)\oplus\Pi$ as right $\Pi$-modules, the restriction $e_0(-)e_{a-1}\colon\per^\Z\!\Si^e\to\per^\Z\!\Pi^e$ takes $\per^{[s,t]}\!\Si^e$ to $\per^{[s,t+2a-2]}\!\Pi^e$. In particular, we see that $e_0\Si_{\geq-l}e_{a-1}\in\per^{[-l,-l+2a-1]}\!\Pi^e$, hence its bimodule dual is in $\per^{[l-2a+1,l]}\!\Pi^e$. We conclude that $\RHom_{\Pi^e}(e_0\Si_{\geq-l}e_{a-1},\Pi^e)(-2a+1)\in\per^{[l,l+2a-1]}\!\Pi^e$ is concentrated in degree $\geq l$.
	
	We next show that $\RHom_{\Pi^e}(e_0\Si_{[-l,-1]}e_{a-1},\Pi^e)(-2a+1)$ is concentrated in degree $<l$. We claim that for each perfect bimodule over $\A=\Si_0$, the complex $\RHom_{\Pi^e}(e_0Me_{a-1},\Pi^e)$ is concentrated in Adams degree $-2a$, which will certainly prove the assertion.
	We have $\RHom_{\Pi^e}(e_0Me_{a-1},\Pi^e)=\RHom_{\Pi^e}(M\lotimes_{\Si^e}(e_0\Si\otimes\Si e_{a-1}),\Pi^e)$, which is isomorphic by adjunction to $\RHom_{\Si^e}(M,\RHom_{\Pi^e}(e_0\Si\otimes\Si e_{a-1},\Pi^e))$, thus by \ref{gr-Pi-dual} to $\RHom_{\Si^e}(M,\Si e_{a-1}\otimes e_0\Si)(2a-2)$. It remains to show that $\RHom_{\Si^e}(M,\Si^e)$ is concentrated in degree $2$. Note that we may assume $M=\Si_0^e$, thus the complex in question becomes $\RHom_{\Si^\op}(\Si_0,\Si)\otimes\RHom_{\Si}(\Si_0,\Si)$. Now by the triangle
	\[ \xymatrix{ U\lotimes_\A\Si(-1)\ar[r]&\Si\ar[r]&\Si_0}, \]
	in $\rD(\A^\op\otimes\Si)$ and its opposite, we easily get $\RHom_{\Si}(\Si_0,\Si)=\RHom_\A(U,\A)(1)[-1]$ and $\RHom_{\Si^\op}(\Si_0,\Si)=\RHom_{\A^\op}(U,\A)(1)[-1]$. Therefore the claim follows, which completes the proof.
\end{proof}

Now that we have an isomorphism $\Si^\vee[d+1]\simeq\Si_{\geq a-1}(a)$, we are ready to prove \ref{MM}.
\begin{proof}[Proof of \ref{MM}]
	We know from \ref{CY} that cyclically invariant root pairs yield $(d+1)$-Calabi-Yau algebras. Let us discuss the grading. We know by \ref{Pi_0} that the degree $0$ part is perfect on each side. By tracking the degree of the isomorphism in \ref{fakeCY}, we see that its graded version is
	\[ \RHom_{\Si^e}(\Si,\Si^e)[d+1]\simeq\Si_{\geq a-1}(a). \]
	In view of the isomorphisms \ref{gr-Pi-dual} and the proof of \ref{CY}, we deduce the desired isomorphism
	\[ \RHom_{\Pi^e}(\Pi,\Pi^e)[d+1]\simeq\Pi(a), \]
	that is, $\Pi$ has Gorenstein parameter $a$.
	
	Suppose conversely we are given a $(d+1)$-Calabi-Yau dg category of Gorenstein parameter $a$. We define $\A$, $U$, and $\P$ as in \ref{MM}.
	Then \ref{well} yields an isomorphism $\Si^\vee[d+1]\simeq\Si_{\geq a-1}(a)$ in $\rD^\Z(\Si^e)$, so \ref{fakeMM} shows that $U$ is a cyclically invariant $a$-th root of $\RHom_{\A^e}(\A,\A^e)[d]$. Finally, it is clear from the descriptions of $\A$ and $U$ as triangular matrices that there is a semi-orthogonal decomposition $\per\A=\U_{a-1}\perp\cdots\perp\U_1\perp\U_0$ with $\U_i=\thick\{ P\lotimes_\A U^i\mid P\in\P\}\subset\per\A$, so that $(U,\P)$ forms a root pair. 
\end{proof}

\section{Folded cluster categories}
Having constructed a Calabi-Yau dg algebra, we can apply the construction of the cluster category of Amiot \cite{Am09} which yields a Calabi-Yau triangulated category with a cluster tilting object. The aim of this section is to demonstrate that our construction of Calabi-Yau dg algebras from root pairs has a nice description of the cluster categories.

Recall that a dg algebra $A$ is called {\it connective} if $H^iA=0$ for all $i>0$. We denote by $\Db(A)$ the full subcategory of $\rD(A)$ consisting of dg modules of finite dimensional total cohomology.
\begin{Thm}[{\cite{Am09}}]\label{Am}
	Let $\Pi$ be a connective $(d+1)$-Calabi-Yau dg algebra such that $H^0\Pi$ is finite dimensional. Then
	\[ \rC(\Pi):=\per\Pi/\Db(\Pi) \]
	is $d$-Calabi-Yau, and the image of $\Pi\in\per\Pi$ in $\rC(\Pi)$ is a $d$-cluster tilting object. We call $\rC(\Pi)$ the {\rm cluster category} of $\Pi$.
\end{Thm}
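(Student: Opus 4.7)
The plan is to follow Amiot's original argument, which rests on three pillars: the inclusion $\Db(\Pi)\subset\per\Pi$ forced by the Calabi-Yau property, a Serre duality on $\per\Pi$ that descends to the Verdier quotient $\rC(\Pi)$, and a fundamental domain in $\per\Pi$ representing $\rC(\Pi)$ and exhibiting the image of $\Pi$ as a $d$-cluster tilting object. The essential structural input throughout is the standard $t$-structure on $\rD(\Pi)$, which restricts to $\per\Pi$ by the connectivity of $\Pi$, and whose heart intersected with $\Db(\Pi)$ consists of finite-dimensional $H^0\Pi$-modules.

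First I would establish $\Db(\Pi)\subset\per\Pi$. Since $H^0\Pi$ is finite dimensional, every object of $\Db(\Pi)$ is a finite iterated extension of shifts of simple $H^0\Pi$-modules, so it suffices to show that each simple $S$ is perfect. For this the Calabi-Yau isomorphism yields
\[ D\RHom_\Pi(\Pi,S)\simeq\RHom_\Pi(S,\Pi)[d+1], \]
whose left-hand side is finite dimensional; this forces $\RHom_\Pi(S,\Pi)$ to have bounded finite-dimensional cohomology, which together with $\Pi$ generating $\per\Pi$ implies $S\in\per\Pi$. Next I would derive Hom-finiteness and the $d$-Calabi-Yau property of $\rC(\Pi)$. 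Starting from the bifunctorial duality $D\Hom_{\rD(\Pi)}(X,Y)\simeq\Hom_{\rD(\Pi)}(Y,X[d+1])$ for $X\in\per\Pi$ and $Y\in\Db(\Pi)$, and using the standard calculus of fractions describing morphisms in $\rC(\Pi)$ as colimits over roofs with denominator in $\Db(\Pi)$, one verifies both that $\Hom_{\rC(\Pi)}(X,Y)$ is finite dimensional and the Serre duality $D\Hom_{\rC(\Pi)}(X,Y)\simeq\Hom_{\rC(\Pi)}(Y,X[d])$; the drop from $d+1$ to $d$ is produced by the Verdier localization together with the dimension shift inherent in replacing $\Db(\Pi)$-morphisms by their cones.

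Finally I would establish $d$-cluster tilting via the fundamental domain
\[ \F:=(\per\Pi)^{\leq 0}\cap{}^\perp(\per\Pi)^{\leq -d-1} \]
defined using the standard $t$-structure on $\per\Pi$. One shows that the canonical functor $\F\to\rC(\Pi)$ is essentially surjective by successive cohomological truncations, that $\Pi\in\F$, and that $\Hom_{\rC(\Pi)}(\Pi,\Pi[i])=0$ for $1\leq i\leq d-1$ using the connectivity of $\Pi$ together with the description of morphisms in the quotient. The converse direction of cluster tilting, namely that $\Hom_{\rC(\Pi)}(\Pi,X[i])=0$ for all $1\leq i\leq d-1$ implies $X\in\add\Pi$ in $\rC(\Pi)$, is obtained by lifting $X$ to $\F$ and employing an inductive cohomological truncation controlled by the Ext-vanishing and the Serre duality.

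The hard part will be this last step: the reconstruction of an object in $\rC(\Pi)$ from the vanishing of its Ext-groups against $\Pi$. This delicate inductive argument simultaneously exploits the fundamental domain, the Serre duality, and Hom-finiteness, and it is precisely here that the hypothesis on $H^0\Pi$ plays its most essential role.
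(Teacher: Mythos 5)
The paper gives no proof of \ref{Am}: the theorem is cited directly from Amiot \cite{Am09}, with the extension from $d=2$ to arbitrary $d$ due to Guo \cite{Guo}. Your sketch correctly reconstructs the architecture of that argument---the inclusion $\Db(\Pi)\subset\per\Pi$, the descent of relative Serre duality to the Verdier quotient, and the fundamental domain $\F=(\per\Pi)^{\leq0}\cap{}^\perp(\per\Pi)^{\leq-d-1}$ for the $d$-cluster-tilting claim---so there is nothing in the paper itself to compare against, only the question of whether your reconstruction is sound.

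There is one genuine gap. For the inclusion $\Db(\Pi)\subset\per\Pi$, you argue that finite-dimensionality of $D\RHom_\Pi(\Pi,S)\simeq\RHom_\Pi(S,\Pi)[d+1]$ forces $S\in\per\Pi$ ``together with $\Pi$ generating $\per\Pi$.'' This does not follow: finite-dimensionality of $\RHom_\Pi(S,\Pi)$ says nothing by itself about perfection of $S$. Take $\Pi=k[\varepsilon]/(\varepsilon^2)$ concentrated in degree $0$ and $S=k$; then $\RHom_\Pi(k,\Pi)\simeq k$ is finite dimensional, yet $k\notin\per\Pi$. What actually makes the inclusion go through is the \emph{smoothness} of $\Pi$, which is built into the bimodule Calabi--Yau hypothesis since $\Pi\in\per\Pi^e$ by definition: one writes $M\simeq M\lotimes_\Pi\Pi$ and propagates perfection of the bimodule $\Pi$ through the derived tensor product, so that $M\in\thick\Pi$ whenever the total cohomology of $M$ is finite dimensional. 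The Serre-duality isomorphism you invoke is a \emph{consequence} of this inclusion, not a route to it. The remaining steps you describe (Hom-finiteness, the $d$-Calabi--Yau duality on $\rC(\Pi)$, essential surjectivity of $\F\to\rC(\Pi)$, and the closing truncation induction) match Amiot's strategy in outline; I would only caution that the phrase ``dimension shift inherent in replacing $\Db(\Pi)$-morphisms by their cones'' glosses over the bifunctorial-pairing and boundary-map bookkeeping that constitutes the technical heart of Amiot's proof of the Calabi--Yau property of $\rC(\Pi)$.
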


It is an important problem to understand the structure of such cluster categories. When $\Pi$ is the (ordinary) Calabi-Yau completion of a finite dimensional algebra $A$, then $\rC(\Pi)$ is the triangulated hull of the orbit category $\Db(\mod A)/-\lotimes_ADA[-d]$. We give the following version for root pairs, which thereby establishes the root version of the cluster category.
Recall that a smooth, proper dg algebra $A$ is {\it $\nu_d$-finite} if for each $X,Y\in\per A$ one has $\Hom_{\rD(A)}(X,\nu_d^iY)=0$ for almost all $i\in\Z$, where $\nu_d=-\lotimes_ADA[-d]$. Also, for a triangulated category $\T$ and an autoequivalence $F$ on $\T$ (with fixed enhancements), we denote by $(\T/F)_\triangle$ the triangulated hull \cite{Ke05} of the orbit category $\T/F$.
\begin{Thm}\label{rc}
Let $A$ be a smooth, proper, and connective dg algebra which is $\nu_d$-finite, let $(U,P)$ be a cyclically invariant $a$-th root pair of $\RHom_{A^e}(A,A^e)[d]$ such that $U$ is concentrated in cohomological degree $\leq0$, and let $\Pi=\Pi_{d+1}^{(1/a)}(A)$ be the $(d+1)$-Calabi-Yau completion of $(U,P)$.
\begin{enumerate}
	\item\label{O} If $e\in A$ is the idempotent of $A$ corresponding to $P$, then the functor $-\lotimes_A(\rT^{\rL}_A\!U) e$ induces an equivalence $(\per A/-\lotimes_AU)_\triangle\xsimeq\rC(\Pi)$.
	\item\label{CTs} $P\in(\per A/-\lotimes_AU)_\triangle$ is a $d$-cluster tilting object, and $\add\{ P\lotimes_AU^i\mid i\in\Z\}\subset\per A$ is a $d$-cluster tilting subcategory.
\end{enumerate}
\end{Thm}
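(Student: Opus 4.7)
The plan is to identify $\rC(\Pi)$ with the triangulated hull $(\per A/-\lotimes_A\!U)_\triangle$ via the functor in the statement, using the intermediate tensor dg algebra $\Si=\rT^\rL_A\!U$ as a bridge. First I verify the hypotheses of Amiot's theorem \ref{Am} for $\Pi$. Since $A$ is connective and $U$ is concentrated in non-positive cohomological degrees, each $U^i$ is connective, and hence so are $\Si=\bigoplus_{i\geq0}U^i$ and its idempotent truncation $\Pi=e\Si e$. Smoothness and the $(d+1)$-Calabi-Yau property of $\Pi$ are \ref{CY}. For finite-dimensionality of $H^0\Pi=\bigoplus_{i\geq0}\Hom_{\rD(A)}(P,P\lotimes_A\!U^i)$, write $i=qa+r$ with $0\leq r<a$ and use $U^a\simeq\RHom_{A^e}(A,A^e)[d]$ to rewrite the summand as $\Hom_{\rD(A)}(P,\nu_d^{-q}(P\lotimes_A\!U^r))$; properness of $A$ makes each summand finite-dimensional, and $\nu_d$-finiteness makes all but finitely many vanish for each of the finitely many residues $r$.

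For (\ref{O}), consider the functor $F\colon\per A\to\rC(\Pi)$, $X\mapsto X\lotimes_A(\rT^\rL_A\!U)e=X\lotimes_A\Si e$. The short exact sequence $\Si_{\geq 1}\hookrightarrow\Si\twoheadrightarrow A$ of $(\Si,\Si)$-bimodules, right-multiplied by $e$ and then tensored from the left with $X$, produces a triangle
\[ X\lotimes_A\Si_{\geq 1}e\longrightarrow X\lotimes_A\Si e\longrightarrow X\lotimes_A\!Ae \]
in $\rD(\Pi)$. Since $Xe=X\lotimes_A\!Ae$ has finite-dimensional total cohomology by properness of $A$, it vanishes in $\rC(\Pi)$, so $F(X\lotimes_A\!U)=X\lotimes_A\Si_{\geq 1}e\xsimeq F(X)$. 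Thus $F$ factors through the orbit category and, by the universal property of the canonical triangulated hull, extends to a triangle functor $\widetilde F\colon(\per A/-\lotimes_A\!U)_\triangle\to\rC(\Pi)$.

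To prove $\widetilde F$ is an equivalence I combine two ingredients. The semi-orthogonal decomposition \ref{setup}(\ref{P}) generates $\per A$ by $\{P\lotimes_A\!U^i\mid 0\leq i\leq a-1\}$, all of which become isomorphic to $P$ in the orbit category, and $\Pi=F(P)$ generates $\rC(\Pi)$ by Amiot's theorem, so $\widetilde F$ is essentially surjective. For fully-faithfulness I factor $\widetilde F$ through the cluster category of the tensor algebra $\Si$: the same orbit argument (now with target $\rC(\Si)$) produces an equivalence $(\per A/-\lotimes_A\!U)_\triangle\xsimeq\rC(\Si)$ via a direct Keller-style $\Hom$-computation using $\nu_d$-finiteness, while the graded Beilinson-type theorem of the appendix applied to the Adams-graded pair $(\Si,\Pi=e\Si e)$ gives a triangle equivalence $\per^\Z\!\Si\simeq\per^\Z\!\Pi$ compatible with the Adams shift $(1)$, which descends to $\rC^\Z(\Si)\simeq\rC^\Z(\Pi)$ and, after ungrading, to $\rC(\Si)\xsimeq\rC(\Pi)$ sending $e\Si$ to $\Pi$.

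For (\ref{CTs}), Amiot's theorem \ref{Am} yields that $\Pi\in\rC(\Pi)$ is a $d$-cluster tilting object; transporting along (\ref{O}), the image of $P$ is a $d$-cluster tilting object in $(\per A/-\lotimes_A\!U)_\triangle$. Unwinding the orbit construction, a cluster tilting object in the orbit quotient of $\per A$ corresponds precisely to the additive closure of its full $-\lotimes_A\!U$-orbit inside $\per A$, giving the stated $d$-cluster tilting subcategory $\add\{P\lotimes_A\!U^i\mid i\in\Z\}$. The main obstacle is the fully-faithfulness of $\widetilde F$ and the comparison $\rC(\Si)\simeq\rC(\Pi)$: one must match hom-spaces between the canonical triangulated hull on one side and a Verdier quotient on the other, where both $\nu_d$-finiteness and the cyclic invariance of $(U,P)$ (which is what makes $\Pi$ genuinely Calabi-Yau rather than only twisted, and hence Amiot's theorem applicable) enter in an essential way.
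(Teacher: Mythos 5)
Your overall architecture matches the paper's: you use the tensor algebra $\Si=\rT^\rL_A U$ as an intermediary and invoke the graded Beilinson-type theorem of the appendix. The verification of Amiot's hypotheses (connectivity of $\Pi$ from connectivity of $A$ and the degree hypothesis on $U$; finite-dimensionality of $H^0\Pi$ from properness plus $\nu_d$-finiteness) and the essential-surjectivity argument are fine, and your reading of part (2) as the standard transport of cluster tilting through orbit covers agrees with the paper.

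There is, however, a genuine gap at the heart of your fully-faithfulness argument. You assert that the Beilinson-type theorem of the appendix, ``applied to the Adams-graded pair $(\Si,\Pi=e\Si e)$, gives a triangle equivalence $\per^\Z\!\Si\simeq\per^\Z\!\Pi$ compatible with the Adams shift $(1)$,'' which then descends to $\rC^\Z(\Si)\simeq\rC^\Z(\Pi)$. This is not what Theorem \ref{B} says, and such an equivalence $\per^\Z\!\Si\simeq\per^\Z\!\Pi$ is not to be expected: the compact generators of $\per^\Z\!\Si$ are the twists of all of $\Si$ (equivalently of $A=\Si_0$), while those of $\per^\Z\!\Pi$ are the twists of $\Pi$, and the idempotent truncation $(-)e$ does not carry the former to the latter. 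What \ref{B} actually gives is an equivalence $\per\A\simeq\rC^\Z(\Ga)$ for a single graded dg category $\Ga$; the paper (Proposition \ref{ExB}) applies this \emph{twice} --- once to $\Si$ (after checking $\Si$ has Gorenstein parameter $1$, which uses \ref{fakeMM} and a Serre-duality computation), and once to $\Pi$ (Gorenstein parameter $a$, via \ref{MM} and \ref{CYGP}) --- yielding two equivalences $(\per A/-\lotimes_AU)_\triangle\simeq\rC(\Si)$ and $(\per A/-\lotimes_AU)_\triangle\simeq\rC(\Pi)$. The comparison is then made \emph{at the level of cluster categories}, not graded perfect derived categories: one exhibits an explicit morphism of $(A,\Pi)$-bimodules $\Si e\to T$ (with $T=\bigoplus_{i=0}^{a-1}\Pi(i)$) whose mapping cone has finite-dimensional total cohomology, which shows the triangle of functors commutes and hence that $(-)e\colon\rC(\Si)\to\rC(\Pi)$ is an equivalence. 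Your ``direct Keller-style $\Hom$-computation using $\nu_d$-finiteness'' for the $\Si$-side equivalence is also left vague where the paper supplies a concrete Gorenstein-parameter verification; without it, the appeal to \ref{B} is unjustified for $\Si$ as well.
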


The statement (\ref{O}) is an analogue of \cite{Am09,Ke05}, and (\ref{CTs}) is an analogue of \cite{BMRRT,Am09,Iy11}.
\begin{Def}
In the setting of \ref{rc}, we call $\rC(\Pi_{d+1}^{(1/a)}(A))$ the {\it $a$-folded $d$-cluster category} of $A$, and denote it by $\rC_d^{(1/a)}(A)$.
\end{Def}
Note that $\rC_d^{(1/a)}(A)$ depends on $U$, so the notation contains an ambiguity. Since $U^a\simeq\RHom_{A^e}(A,A^e)[d]$, the category $\rC_d^{(1/a)}(A)=(\per A/-\lotimes_AU)_\triangle$ is the triangulated hull of $\rC_d(A)/-\lotimes_AU$, the $\Z/a\Z$-action $-\lotimes_AU$. Moreover, thanks to the cyclic invariance of $U$, the $a$-folded $d$-cluster category defined here is $d$-Calabi-Yau, compare \cite{ha3}, and contains a $d$-cluster tilting object, compare \cite{haI}.

Our proof is based on the discussion of graded cluster categories in Appendix \ref{A}. Let $\Si=\rT^\rL_A\!U$ be the derived tensor algebra and let $e\in A$ be the idempotent of $A$ corresponding to $P$ so that $\Pi=e\Si e$. 
\begin{Prop}\label{ExB}
Let $\rC(\Si):=\per\Si/\Db(\Si)$ and $\rC(\Pi)=\per\Pi/\Db(\Pi)$ be cluster categories.
\begin{enumerate}
\item We have an equivalence $-\lotimes_A\Si\colon(\per A/-\lotimes_AU)_\triangle\to\rC(\Si)$.
\item There is an $(A,\Pi)$-bimodule $T$ inducing an equivalence $-\lotimes_AT\colon(\per A/-\lotimes_AU)_\triangle\to\rC(\Pi)$.
\item The diagram
\[ \xymatrix@C=16mm@!R=3mm{
	(\per A/-\lotimes_AU)_\triangle\ar[r]^-{-\lotimes_A\Si}\ar[dr]_-{-\lotimes_AT}&\rC(\Si)\ar[d]^-{(-)e}\\
	&\rC(\Pi) } \]
is commutative. Therefore, the vertical functor is an equivalence.
\end{enumerate}
\end{Prop}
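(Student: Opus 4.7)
\medskip
\noindent\textbf{Proof plan.} The plan is to establish (1) first via Keller's theorem on (enhanced) orbit categories applied to the dg algebra $\Si=\rT^\rL_A\!U$, then deduce (3) by simply choosing the right bimodule $T$, and finally obtain (2) from (1) together with the fact that $(-)e\colon\rC(\Si)\to\rC(\Pi)$ is an equivalence. The last point is the crux and will be handled via the graded cluster categories.

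\medskip
\noindent\emph{Part (1).} First I will check that the dg functor $X\mapsto X\lotimes_A\Si\colon \per_\dg\!A\to\per_\dg\!\Si$, composed with the canonical projection $\per\Si\to\rC(\Si)$, factors through the orbit category $\per A/-\lotimes_A U$. For this I apply $X\lotimes_\Si-$ to the standard triangle $\Si\lotimes_A U\lotimes_A \Si\to\Si\lotimes_A\Si\to\Si$ in $\rD(\Si^e)$ (cf.\ \ref{ten}) to obtain a triangle $X\lotimes_A U\lotimes_A\Si\to X\lotimes_A\Si\to X$ in $\rD(\Si)$, and observe that the third term $X$ (viewed as a $\Si$-module via $\Si\to\Si_0=A$) lies in $\Db(\Si)$ under our hypothesis that $A$ is proper. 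This yields the desired factorization, and the Keller-type theorem on triangulated hulls of orbit categories \cite{Ke05}, in its dg-algebraic generalization, identifies the triangulated hull of $\per A/-\lotimes_A U$ with $\per\Si/\thick_{\Si}A$. It therefore remains to show $\thick_\Si\!A=\Db(\Si)$ in $\per\Si$. The inclusion $\thick_\Si\!A\subset\Db(\Si)$ is clear since $A$ is proper; the converse I plan to prove using the $\nu_d$-finiteness of $A$ and connectivity of $\Si$, which together guarantee that $H^\ast$ of any object in $\Db(\Si)$ is finitely built out of $H^\ast\!A$, hence any object of $\Db(\Si)$ is in $\thick_\Si\!A$.

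\medskip
\noindent\emph{Parts (3) and (2).} Take $T:=\Si e$ regarded as an $(A,\Pi)$-bimodule via the natural map $A\to\Si$ on the left and $\Pi=e\Si e$ acting on the right. Then restriction along $e$ is given by $-\lotimes_\Si\Si e$, so
\[ (X\lotimes_A\Si)e=X\lotimes_A\Si e=X\lotimes_AT \]
for any $X\in\per A$, which gives the commutativity in (3) tautologically. Part (2) then reduces, in view of (1) and (3), to showing that the restriction functor $(-)e\colon\rC(\Si)\to\rC(\Pi)$ is an equivalence.

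\medskip
\noindent\emph{The equivalence $(-)e\colon\rC(\Si)\to\rC(\Pi)$.} This is the main obstacle and I plan to attack it via the graded versions. Both $\Si$ and $\Pi$ carry natural Adams gradings (from the tensor grading on $\Si$ and the inherited one on $\Pi=e\Si e$), and by the Beilinson-type theorem of the appendix (\ref{B}) applied to each of them, the graded cluster categories $\rC^\Z(\Si)$ and $\rC^\Z(\Pi)$ are both equivalent to $\per A$, with the Adams degree-shift functor $(1)$ corresponding in both cases to $-\lotimes_A U$. Moreover, the restriction functor $(-)e\colon\rC^\Z(\Si)\to\rC^\Z(\Pi)$ is compatible with these identifications (both being induced by $T=\Si e$), hence is itself a graded equivalence. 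Passing to the orbit categories by $(1)$ on both sides then gives the ungraded equivalence $(-)e\colon\rC(\Si)\to\rC(\Pi)$. The delicate points here will be to verify that the Beilinson equivalence of \ref{B} applies equally to the non-Calabi-Yau $\Si$ (using the twisted CY-property from \ref{fakeCY}) and to check that the degree-shift functors match under the two applications, so that the ungraded quotients are genuinely equivalent.
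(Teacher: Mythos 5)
Your proposal is on the right track and, once unwound, uses the same engine as the paper (the graded Beilinson-type theorem \ref{B} of the appendix); but the logic is rearranged in a way that hides where the work is, and one step is claimed to be easier than it actually is. Let me flag the two places to be careful.

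For part (1), you propose to bypass \ref{B} and quote a ``Keller-type theorem'' identifying the triangulated hull of $\per A/{-}\lotimes_A U$ with $\per\Si/\thick_\Si A$. This is not an off-the-shelf statement: the paper obtains exactly this identification from \ref{B}(3), and the hypothesis driving \ref{B}(3) is that $T$ generates the graded cluster category (Proposition \ref{GEN}), which in turn uses the Gorenstein-parameter condition. The paper's proof of (1) therefore spends its entire effort on verifying that $\Si$ has Gorenstein parameter $1$ (using the description $\RHom_{\Si^e}(\Si,\Si^e)[d+1]\simeq\Si_{\geq a-1}(a)$ from \ref{fakeCY}/\ref{fakeMM} plus a Serre-duality computation). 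Your ``direct'' route would need to redo that generation argument by hand, and the step ``$\thick_\Si A=\Db(\Si)$ by d\'evissage'' is also only sketched (and is an issue the paper glosses over equally, so you are at least no worse off there).

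For parts (2)--(3), the observation that $(X\lotimes_A\Si)e=X\lotimes_A\Si e$ so that $T:=\Si e$ makes the triangle in (3) tautologically commute is correct and is a genuine simplification of the statement. But it only moves the difficulty: once you set $T=\Si e$, part (2) is the nontrivial claim, and your argument that $(-)e$ is a graded equivalence ``compatible with both identifications, both being induced by $T=\Si e$'' is not right as stated. Applying \ref{B} to $\Pi$ gives the equivalence $\per A\simeq\rC^\Z(\Pi)$ via the bimodule $T_\Pi=\bigoplus_{i=0}^{a-1}\Pi(i)$, not via $\Si e$; these two bimodules are genuinely different (as graded bimodules, $\Si e$ injects into $T_\Pi(-a+1)$ with a finite-dimensional cokernel). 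Showing that the two functors agree in the ungraded cluster category is exactly the content of the paper's proof of (3), where the explicit map $\Si e\to T_\Pi(-a+1)$ is constructed using \ref{power} and the triangular-matrix descriptions of $A$ and $\Si$ in terms of the graded pieces of $\Pi$. You do flag this at the very end as a ``delicate point,'' but your main text treats the compatibility as immediate, which would leave the proof with a genuine hole. In short: the paper's path is (1) via \ref{B}, (2) via \ref{B} with $T_\Pi$, (3) by comparing $\Si e$ with $T_\Pi$; your path is (1) directly (incomplete), (3) trivially, (2) by the same comparison $\Si e$ vs.\ $T_\Pi$ that you did not carry out. The key lemma is the same either way, and it is still missing.
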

\begin{proof}
	(1)  We apply \ref{B} to the graded dg algebra $\Si=\rT^\rL_A\!U$ (with the tensor grading). We claim that $\Si$ has Gorenstein parameter $1$ in the sense of \ref{GP}. By \ref{fakeMM} there is an isomorphism $\RHom_{\Si^e}(\Si,\Si^e)[d+1]\simeq\Si_{\geq a-1}(a)$ in $\rD^\Z(\Si^e)$. Then we have $D\RHom_\Si(A,\Si)=\RHom_\Si(\RHom_{\Si^e}(\Si,\Si^e),A)$ by Serre duality, and using the above isomorphism, the right-hand-side is isomorphic to $\RHom_\Si(\Si_{\geq a-1}(a),A)[d+1]=\RHom_\Si(U^{a-1}\lotimes_A\Si(1),A)[d+1]=\RHom_A(U^{a-1},A)(-1)[d+1]$, which concentrates in Adams degree $1$. This proves the claim, and the assertion follows from \ref{B}(3).
	
	(2)  By \ref{MM}, the algebra $\Pi$ has Gorenstein parameter $a$. Let $T:=\bigoplus_{i=0}^{a-1}\Pi(i)\in\rD^\Z(\Pi)$. Then we have $A=\REnd_\Pi^\Z(T)$ and $U=\RHom_\Pi^\Z(T,T(1))$, and we can view $T$ as an $(A,\Pi)$-bimodule. We obtain the assertion again by \ref{B}(3).
	
	(3)  We construct a morphism $\Si e\to T$ of $(A,\Pi)$-bimodule whose mapping cone has finite dimensional total cohomology, which will prove that the two functors are naturally isomorphic.
	
	By \ref{power} we have an isomorphism $U^{\lotimes_Al}\simeq\RHom_\Pi^\Z(\Pi,\Pi(l))$ for each $l\geq0$, which gives 
	\[ A=\begin{pmatrix}\Pi_0&0&\cdots&0\\ \Pi_1&\Pi_0&\cdots&0\\ \vdots&\vdots&\ddots&\vdots\\ \Pi_{a-1}&\Pi_{a-2}&\cdots&\Pi_0\end{pmatrix}
	\quad\text{ and }\quad
	\Si=\bigoplus_{l\geq0}\begin{pmatrix}\Pi_l&\Pi_{l-1}&\cdots&\Pi_{l-a+1}\\ \Pi_{l+1}&\Pi_l&\cdots&\Pi_{l-a+2}\\ \vdots&\vdots&\ddots&\vdots\\ \Pi_{l+a-1}&\Pi_{l+a-2}&\cdots&\Pi_l\end{pmatrix}. \]
	It follows that we have $\Si e\simeq{}^t\!\begin{pmatrix}\Pi & \Pi_{\geq1} & \cdots &\Pi_{\geq a-1}\end{pmatrix}$ as graded $(A,\Pi)$-bimodules. We then see that $\Si e$ naturally injects into $T(-a+1)={}^t\!\begin{pmatrix}\Pi & \Pi(-1) & \cdots &\Pi(-a+1)\end{pmatrix}$ whose cokernel is certainly finite dimensional.
\end{proof}

Now we are ready to prove \ref{rc}.
\begin{proof}[Proof of \ref{rc}]
	(\ref{O})  This is immediate from \ref{ExB}.
	
	(\ref{CTs})  Noting that it is $P=eA$ that is mapped to $\Pi=e\Si e$ under the equivalence $-\lotimes_A\Si e\colon(\per A/-\lotimes_AU)_\triangle\to\rC(\Pi)$, the first assertion follows from \ref{Am}. Then the second assertion is straightforward.
\end{proof}

We refer to \ref{C^half} and to \ref{typeA} for examples.

\section{Tensor products and $a$-Segre products}
\subsection{Tensor products of root pairs}
We discuss how to reproduce roots of inverse dualizing complexes from given ones by means of tensor products.
Let $\A$ be a dg category and $U$ an $a$-th root of $\RHom_{\A^e}(\A,\A^e)[d]$. Consider another dg category $\B$ and an $a$-th root $V$ of $\RHom_{\B^e}(\B,\B^e)[e]$. We allow $d$ and $e$ to be different, but require $a$ to be the same.
We start our discussion with the following easy observation.
\begin{Prop}\label{UotimesV}
Suppose $\A$ and $\B$ are smooth and let $\C=\A\otimes\B$.
\begin{enumerate}
\item The $(\C,\C)$-bimodule $U\otimes V$ is an $a$-th root of $\RHom_{\C^e}(\C,\C^e)[d+e]$.
\item If $U$ and $V$ are cyclically invariant, then so is $U\otimes V$.
\end{enumerate}
\end{Prop}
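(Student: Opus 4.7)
The plan is to reduce everything to the Künneth-type identifications for the enveloping category of a tensor product. Observe first that $\C^e = \A^e \otimes \B^e$ canonically (as dg categories), and that since $\A$ and $\B$ are smooth, $\A \in \per \A^e$ and $\B \in \per \B^e$. This gives a canonical isomorphism
\[ \RHom_{\C^e}(\C, \C^e) \;\simeq\; \RHom_{\A^e}(\A, \A^e) \otimes \RHom_{\B^e}(\B, \B^e) \quad \text{in } \rD(\C^e), \]
since $\RHom$ commutes with perfect tensor factors over a tensor product of dg categories. The same kind of shuffle isomorphism gives, for any bimodules $X \in \rD(\A^e)$ and $Y \in \rD(\B^e)$, a natural isomorphism $(X \otimes Y)^{\lotimes_\C n} \simeq X^{\lotimes_\A n} \otimes Y^{\lotimes_\B n}$, up to a Koszul sign. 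Applying this with $n = a$ and using the root conditions on $U$ and $V$, one gets
\[ (U \otimes V)^{a} \;\simeq\; U^{a} \otimes V^{a} \;\simeq\; \RHom_{\A^e}(\A, \A^e)[d] \otimes \RHom_{\B^e}(\B, \B^e)[e] \;\simeq\; \RHom_{\C^e}(\C, \C^e)[d+e]. \]
Invertibility of $U \otimes V$ is immediate from the invertibility of $U$ and $V$: its inverse is $U^{-1} \otimes V^{-1}$. This establishes (1).

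For (2), the idea is to choose the comparison morphism for $U \otimes V$ as the tensor product $\varphi_U \otimes \varphi_V$ of those for $U$ and $V$, and then trace this through the cyclic invariance test. Fix cyclically invariant representatives $\varphi_U \colon \RHom_{\A^e}(\A,\A^e) \to U^a$ and $\varphi_V \colon \RHom_{\B^e}(\B,\B^e) \to V^a$, and define
\[ \varphi_{U \otimes V} \;:=\; \varphi_U \otimes \varphi_V \colon \RHom_{\C^e}(\C, \C^e) \;\longrightarrow\; (U \otimes V)^a, \]
using the isomorphisms from part (1). By the definition of cyclic invariance, the claim is that the element of $(U \otimes V)^a \lotimes_{\C^e} \C$ corresponding to $\varphi_{U \otimes V}$ under the chain of canonical isomorphisms in Section~\ref{CYstr} is fixed (up to homotopy) by the $\Z/a\Z$-action. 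Unwinding the identifications, that element is exactly the image of the tensor product of the two preimage elements for $\varphi_U$ and $\varphi_V$ under the Künneth shuffle $U^a \lotimes_{\A^e} \A \otimes V^a \lotimes_{\B^e} \B \xsimeq (U \otimes V)^a \lotimes_{\C^e} \C$.

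The key compatibility to check, and the main technical point, is that under this shuffle isomorphism the cyclic permutation of $(U \otimes V)^a$ corresponds to the diagonal cyclic permutation on $U^a \otimes V^a$, up to the correct Koszul sign. Concretely, the map
\[ (u_1 \otimes v_1) \otimes \cdots \otimes (u_a \otimes v_a) \;\longmapsto\; \pm\, (u_1 \otimes \cdots \otimes u_a) \otimes (v_1 \otimes \cdots \otimes v_a) \]
intertwines the cyclic shift $(u_i \otimes v_i) \mapsto (u_{i+1} \otimes v_{i+1})$ on the left with the simultaneous cyclic shifts on the two factors on the right; the sign works out because the total internal degrees involved are preserved by the reshuffle. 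Granting this sign computation, cyclic invariance of $\varphi_U$ and $\varphi_V$ individually implies that their tensor product is invariant under the diagonal action, hence invariant under the cyclic action on $(U \otimes V)^a \lotimes_{\C^e} \C$. The only real obstacle is the Koszul bookkeeping in this last step; everything else is formal from smoothness and the Künneth isomorphism.
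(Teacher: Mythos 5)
Your proof is correct and takes essentially the same approach as the paper's (which is stated in a single terse sentence for each part): part (1) is the K\"unneth isomorphism for $\RHom$ from smoothness, and part (2) is checking that the K\"unneth shuffle $(U\otimes V)^a\lotimes_{\C^e}\C\simeq (U^a\lotimes_{\A^e}\A)\otimes(V^a\lotimes_{\B^e}\B)$ intertwines the cyclic action on the left with the diagonal cyclic action on the right. Your Koszul sign claim does hold (the exponents agree modulo 2 after expanding), and your extra detail about choosing $\varphi_{U\otimes V}=\varphi_U\otimes\varphi_V$ is exactly the implicit content of the paper's phrase ``the natural corresponding element''.
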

\begin{proof}
	Since $\A$ and $\B$ are smooth, the canonical map $\RHom_{\A^e}(\A,\A^e)\otimes\RHom_{\B^e}(\B,\B^e)\to\RHom_{\C^e}(\C,\C^e)$ is an isomorphism in $\rD(\C^e)$. This shows (1). Also, if elements in $U^a\lotimes_{\A^e}\A$ and $V^a\lotimes_{\B^e}\B$ are stable (up to homotopy) under the $\Z/a\Z$-action, then so is the natural corresponding element in $(U\otimes V)^a\lotimes_{\C^e}\C$. This proves (2).
\end{proof}
%

The basic result of this section is the following reproduction of root pairs. Note that we need a root {\it pair} only in one of the factors.
\begin{Thm}\label{roottensor}
Let $\A$ and $\B$ be smooth dg categories, $(U,\P)$ an $a$-th root pair on $\A$, and $V$ an $a$-th root on $\B$. Then $(U\otimes V,\P\otimes\B)$ is an $a$-th root pair on $\A\otimes\B$.
\end{Thm}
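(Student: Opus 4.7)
The plan is to verify in turn the three conditions (\ref{root}), (\ref{P}), (\ref{exc}) of Definition \ref{setup} for the candidate pair $(U\otimes V,\P\otimes\B)$ on $\C:=\A\otimes\B$. Condition (\ref{root}) is immediate from Proposition \ref{UotimesV}(1), which uses smoothness of $\A$ and $\B$ to split the inverse dualizing bimodule of $\C$ across the tensor product. So the real work lies in checking the generation and orthogonality properties.

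The key identification throughout is
\[ (P\otimes B)\lotimes_\C(U\otimes V)^i\simeq(P\lotimes_\A U^i)\otimes(B\lotimes_\B V^i). \]
For the generation condition (\ref{P}), I would argue as follows. Since representables generate, $\per\C$ is the thick closure of $\{X\otimes Y\mid X\in\per\A,\,Y\in\per\B\}$. Using (\ref{P}) for $(U,\P)$, every $X$ is a thick combination of terms $P\lotimes_\A U^i$ with $P\in\P$ and $i\in\{0,\ldots,a-1\}$, which reduces the problem to objects of the form $(P\lotimes_\A U^i)\otimes Y$ for a \emph{fixed} value of $i$. Invertibility of $V$ makes $-\lotimes_\B V^i$ an autoequivalence of $\per\B$, so one can write $Y\simeq Y'\lotimes_\B V^i$ with $Y'\in\per\B$ a thick combination of representables $B\in\B$. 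Then
\[ (P\lotimes_\A U^i)\otimes Y\simeq(P\otimes Y')\lotimes_\C(U\otimes V)^i \]
is a thick combination of the desired generators $(P\otimes B)\lotimes_\C(U\otimes V)^i$.

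For the orthogonality condition (\ref{exc}), I would combine the identification above with the K\"unneth formula for perfect modules to obtain
\[ \RHom_\C\bigl((P\otimes B)\lotimes_\C(U\otimes V)^i,\,Q\otimes B'\bigr)\simeq\RHom_\A(P\lotimes_\A U^i,Q)\otimes\RHom_\B(B\lotimes_\B V^i,B'), \]
whose first tensor factor already vanishes in the relevant range of $i$ by the orthogonality axiom for $(U,\P)$ on $\A$.

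No step here poses a genuine obstacle. The main conceptual point is that the invertibility of $V$ absorbs any mismatch in the power $V^i$ on the $\B$-side, so a single root pair $(U,\P)$ on $\A$ suffices to carry both the generation and the orthogonality information through to the product, with K\"unneth for perfect modules doing the rest.
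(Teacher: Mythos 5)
Your proof is correct and uses the same ingredients as the paper's: the identification $(P\otimes B)\lotimes_\C(U\otimes V)^i\simeq(P\lotimes_\A U^i)\otimes(B\lotimes_\B V^i)$, invertibility of $V$ to absorb the power $V^i$ on the $\B$-side, condition (\ref{P}) for $(U,\P)$ on $\A$, and the K\"unneth isomorphism for (\ref{exc}). The only cosmetic difference is the direction of the generation argument (you reduce an arbitrary representable down to $\T$, the paper builds up from $\T$), so this is essentially the paper's proof.
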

\begin{proof}
	We write $\C=\A\otimes\B$ and verify the axioms in \ref{setup}.
	
	(\ref{root})  This follows from \ref{UotimesV}(1). 
	
	(\ref{P})  We have to show that $\T:=\{ (P\otimes B)\lotimes_\C (U\otimes V)^i\mid P\in\P, \, B\in\B, \, 0\leq i\leq a-1\}$ generates $\per\C$. Note first that $(P\otimes B)\lotimes_\C (U\otimes V)^i=(P\lotimes_\A U^i)\otimes(B\lotimes_\B V^i)$. Fix $0\leq i\leq a-1$ and consider the subcategory $\T_i:=\{(P\lotimes_\A U^i)\otimes(B\lotimes_\B V^i)\mid P\in\P, \, B\in\B\}$. Since $\per \B$ is generated by $\{B\lotimes_\B V^i\mid B\in\B\}$, we see by applying $(P\lotimes_\A U^i)\otimes-\colon\per \B\to\per\C$ that $(P\lotimes_\A U^i)\otimes B\in\thick \T_i\subset\per\C$ for each $P\in\P$ and $B\in\B$. Now, since $\per\A$ is generated by $\{ P\lotimes_\A U^i\mid P\in\P, \, 0\leq i\leq a-1\}$, we deduce that $A\otimes B\in\thick\{\T_i\mid 0\leq i\leq a-1\}$ for all $A\in\A$ and $B\in\B$.
	
	(\ref{exc})  This is immediate by $\RHom_\C((P\otimes B)\lotimes_\C(U\otimes V)^i,P^\prime\otimes B^\prime)=\RHom_\C((P\otimes_\A U^i)\otimes(B\lotimes_\B V^i),P^\prime\otimes B^\prime)=\RHom_\A(P\lotimes_\A U^i,P^\prime)\otimes\RHom_\B(B,B^\prime)=0$ for $P,P^\prime\in\P$ and $B,B^\prime\in\B$.
\end{proof}
In what follows we study applications of this result, which allows us to reproduce Calabi-Yau algebras in view of the correspondences in Section \ref{corr}.

Let us first note a consequence of \ref{roottensor}. Let $\X$ and $\Y$ be $\Z$-graded dg categories. Recall that the {\it Segre product} of $\X$ and $\Y$ is the dg category
\[ \X\seg\Y:=\bigoplus_{i\in\Z}\X_i\otimes\Y_i, \]
that is, the (non-full) subcategory of the tensor product $\X\otimes\Y$ with the same objects and morphisms spanned by those of the form $f\otimes g$ with (Adams) $\deg f=\deg g$.
\begin{Cor}\label{Pi and Si}
Let $\Pi$ be a graded $(d+1)$-Calabi-Yau dg category of Gorenstein parameter $a$, and let $\Si$ be a graded dg category which is smooth and satisfies $\RHom_{\Si^e}(\Si,\Si^e)[e+1]\simeq\Si_{\geq a-1}(a)$ in $\rD^\Z(\Si^e)$. Then the Segre product $\Pi\seg\Si$ is $(d+e+1)$-Calabi-Yau of Gorenstein parameter $a$.
\end{Cor}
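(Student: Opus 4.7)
The plan is to translate both factors into root-theoretic data, combine them via the tensor product operation of Theorem~\ref{roottensor}, and then recognize the resulting Calabi--Yau completion as the Segre product $\Pi\seg\Si$.

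First, applying the bijection of Theorem~\ref{MM} (from (i) to (ii)) to $\Pi$ yields a smooth dg category $\A$ carrying a cyclically invariant $a$-th root pair $(U,\P)$ of $\RHom_{\A^e}(\A,\A^e)[d]$, with $\Pi$ recovered as the Calabi--Yau completion $\Pi^{(1/a)}_{d+1}(\A;U,\P)$ in a graded quasi-equivalent manner. Similarly, Theorem~\ref{fakeMM} applied to $\Si$ yields a smooth dg category $\B=\Si_0$ together with a cyclically invariant $a$-th root $V$ of $\RHom_{\B^e}(\B,\B^e)[e]$ such that $\Si\simeq\rT^\rL_\B\!V$ as graded dg categories, the tensor grading matching the given Adams grading.

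Setting $\C:=\A\otimes\B$, Proposition~\ref{UotimesV} combined with Theorem~\ref{roottensor} shows that $(U\otimes V,\,\P\otimes\B)$ is a cyclically invariant $a$-th root pair of $\RHom_{\C^e}(\C,\C^e)[d+e]$ on the smooth dg category $\C$. Applying Theorem~\ref{MM} in the opposite direction (from (ii) to (i)), the Calabi--Yau completion
\[ \Lambda:=\Pi^{(1/a)}_{d+e+1}(\C;\,U\otimes V,\,\P\otimes\B) \]
is then a positively graded $(d+e+1)$-Calabi--Yau dg category of Gorenstein parameter $a$, with $\Lambda_0$ perfect on each side.

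It remains to identify $\Lambda$ with $\Pi\seg\Si$ as graded dg categories, and this is the step I expect to require the most care. The objects of $\Lambda$ are those of $\P\otimes\B$, which coincide with those of $(\Pi\seg\Si)_0=\Pi_0\otimes\Si_0$. Using that $(U\otimes V)^{\lotimes_\C i}\simeq U^{\lotimes_\A i}\otimes V^{\lotimes_\B i}$ in $\rD(\C^e)$ together with the standard decomposition of Hom-complexes over a tensor product of dg categories, I obtain
\[ \Lambda(P\otimes B,P'\otimes B')=\bigoplus_{i\geq0}\RHom_\A(P,P'\lotimes_\A U^i)\otimes\RHom_\B(B,B'\lotimes_\B V^i), \]
and the $i$-th summand rewrites as $\Pi_i(P,P')\otimes\Si_i(B,B')$ by the graded identifications furnished by Theorems~\ref{MM} and~\ref{fakeMM}. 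This is precisely the $i$-th graded piece of the morphism complex in $\Pi\seg\Si$. The tensor grading on $\Lambda$ matches the Segre grading term by term, and the remaining task is to verify that these degree-by-degree identifications assemble compatibly with composition into a graded quasi-equivalence $\Lambda\xsimeq\Pi\seg\Si$; this should be bookkeeping once explicit cofibrant models for $U$ and $V$ are fixed, since both sides are assembled from the same tensor products of root bimodules.
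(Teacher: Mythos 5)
Your proposal is correct and follows essentially the same route as the paper: both pass through Theorems \ref{MM} and \ref{fakeMM} to extract cyclically invariant root data $(U,\P)$ on $\A$ and $V$ on $\B$, invoke Theorem \ref{roottensor} (with \ref{UotimesV}) to get the root pair $(U\otimes V,\P\otimes\B)$ on $\A\otimes\B$, and then identify its Calabi--Yau completion with $\Pi\seg\Si$. The paper does the final identification more compactly via the identity $\rT^{\rL}_{\A\otimes\B}(U\otimes V)=\rT^{\rL}_\A\!U\seg\rT^{\rL}_\B\!V$ followed by restriction to $\P\otimes\B$, whereas you unfold the same comparison degree by degree on morphism complexes; the content is the same.
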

\begin{proof}
	By \ref{MM} we can write $\Pi$ as the Calabi-Yau completion of a cyclically invariant $a$-th root pair $(U,\P)$ on a dg category $\A$, and by \ref{fakeMM} we can write $\Si$ as the derived tensor category of a cyclically invariant $a$-th root $V$ on a dg category $\B$. By \ref{roottensor} the pair $(U\otimes V,\P\otimes\B)$ is a cyclically invariant $a$-th root pair on $\A\otimes\B$. Now we consider the Calabi-Yau completion of $(U\otimes V,\P\otimes\B)$. It is the restriction of the tensor category $\rT^{\rL}_{\A\otimes\B}(U\otimes V)=\rT^{\rL}_\A\!U\seg\rT^{\rL}_\B\!V$ to $\P\otimes\B$, hence is $\Pi\seg\Si$, therefore we obtain the assertion.	
\end{proof}

Even for a very special choice of $\Pi$, namely the polynomial ring in one variable, we have the following consequence.
Recall also that for an Adams graded dg category $\X=\bigoplus_{i\in\Z}\X_i$ and $n\in\Z$, its {\it $n$-th Veronese subcategory $\X^{(n)}$} is the dg category $\bigoplus_{i\in\Z}\X_{ni}$, that is, it has the same objects as $\X$, and its morphism complexes are the Adams degree $n\Z$ part of $\X$.
\begin{Cor}\label{v}
	Let $\Si$ be an Adams positively graded dg category satisfying $\RHom_{\Si^e}(\Si,\Si^e)[d+1]\simeq\Si_{\geq a-1}(a)$ in $\rD^\Z(\Si^e)$. Then its $a$-th Veronese subcategory $\Pi:=\Si^{(a)}$ is $(d+1)$-Calabi-Yau of Gorenstein parameter $1$.
\end{Cor}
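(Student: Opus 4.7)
The plan is to deduce this from Corollary~\ref{Pi and Si} by choosing the Calabi-Yau factor to be a polynomial ring in one variable with a suitable grading. First, I would take $\Pi := k[x]$ viewed as an Adams positively graded algebra with $\deg x = a$ (and trivial differential). The Koszul bimodule resolution
\[ 0 \longrightarrow k[x]\otimes k[x](-a) \xrightarrow{\ x\otimes 1 - 1\otimes x\ } k[x]\otimes k[x] \longrightarrow k[x] \longrightarrow 0 \]
in $\rD^\Z(k[x]^e)$ gives $\RHom_{k[x]^e}(k[x], k[x]^e)[1] \simeq k[x](a)$ in $\rD^\Z(k[x]^e)$, showing that $k[x]$ is a $1$-Calabi-Yau dg algebra of Gorenstein parameter $a$. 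The same resolution also shows that $k[x]_0 = k$ is perfect on each side over $k[x]$. Thus $k[x]$ fulfils all the hypotheses on the Calabi-Yau factor in Corollary~\ref{Pi and Si}, with the dimension parameter there equal to $0$.

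Next I would identify the Segre product. Since $k[x]_n = k$ when $a\mid n$ and vanishes otherwise,
\[ k[x] \seg \Si \;=\; \bigoplus_{n\geq 0} k[x]_{na}\otimes\Si_{na} \;\cong\; \bigoplus_{n\geq 0} \Si_{na}, \]
supported in Adams degrees that are multiples of $a$. Rescaling the Adams grading by dividing by $a$ (which is harmless since the whole dg category is concentrated in degrees divisible by $a$) produces precisely the $a$-th Veronese subcategory $\Si^{(a)}$.

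Then I would apply Corollary~\ref{Pi and Si} to the above $\Pi = k[x]$ (contributing $d = 0$ on the Calabi-Yau side) and the given $\Si$ (contributing the hypothesized parameter $d$), obtaining that $k[x]\seg\Si$ is $(d+1)$-Calabi-Yau of Gorenstein parameter $a$, that is,
\[ \RHom_{(k[x]\seg\Si)^e}(k[x]\seg\Si, (k[x]\seg\Si)^e)[d+1] \simeq (k[x]\seg\Si)(a) \]
in the original Adams grading. Under the rescaling described above, the degree shift $(a)$ becomes $(1)$, which gives exactly the desired statement that $\Si^{(a)}$ is $(d+1)$-Calabi-Yau of Gorenstein parameter $1$. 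The only non-trivial step is the grading bookkeeping for the rescale, but this is purely formal since the Veronese by definition re-indexes degree $na$ as degree $n$; all the substantive content is absorbed into Corollary~\ref{Pi and Si}, so I do not anticipate any real obstacle.
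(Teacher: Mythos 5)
Your proposal is correct and follows essentially the same route as the paper: the paper's one-line proof also identifies $\Si^{(a)}$ with the (ordinary) Segre product $k[x]\seg\Si$ where $\deg x=a$ (so that $k[x]=\Pi_1(k)$ is $1$-Calabi-Yau of Gorenstein parameter $a$), applies Corollary~\ref{Pi and Si}, and absorbs the divide-by-$a$ reindexing of the Adams grading. You have simply spelled out the details—the Koszul computation showing $k[x]$ has Gorenstein parameter $a$, the identification $k[x]\seg\Si=\bigoplus_n\Si_{na}$, and the grading bookkeeping—that the paper leaves implicit.
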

\begin{proof}
	This is a consequence of \ref{Pi and Si} since the $a$-th Veronese subcategory of a dg category is nothing but the Segre product with $\Pi_1(k)=k[x]$, with (Adams) $\deg x=a$.
\end{proof}

\begin{Ex}
Let $A$ be the path algebra of the ($2$-)Kronecker quiver $\xymatrix{\circ\ar@2[r]^-u_-v&\circ}$ and consider the square root $U$ of $\RHom_{A^e}(A,A^e)[1]$ as in \ref{example}, with $n=0$ and $\e=1$ so that it is cyclically invariant. Then let $\Si=\rT^\rL_A\!U$ be the tensor algebra, which is presented by the following graded quiver with commutativity relations.
\[ \xymatrix{ \circ\ar@2[r]^-u_-v&\circ\ar@/_15pt/[l]_-t, & \deg t=1, \,utv=vtu} \]
By \ref{fakeCY} we know that $\Si$ satisfies $\RHom_{\Si^e}(\Si,\Si^e)[2]\simeq\Si_{\geq1}(2)$.

(1)  First consider the second Veronese subring of $\Si$ which is just the Segre product with $k[x]$ with $\deg x=2$. Then it is easy to see that it is presented by the quiver below with commutativity relations.
\[ \xymatrix{ \circ\ar@2[r]^-u_-v&\circ\ar@2@/_18pt/[l]_-{u}^-{v} } \]
We see that $\Si^{(2)}$ is nothing but the ($2$-)preprojective algebra of $A$, which is certainly $2$-Calabi-Yau.

(2)  Next let 
\[ \Pi=k[x,y] \]
with $\deg x=\deg y=1$, which is a $2$-Calabi-Yau algebra of Gorenstein parameter $2$.

Then by \ref{Pi and Si} we see that the Segre product $\Pi\seg\Si$ is $3$-Calabi-Yau. In fact, it is not difficult to see that it is given by the quiver below with commutativity relations.
\[ \xymatrix{ \circ\ar@2[r]^-u_-v&\circ\ar@2@/_18pt/[l]_-{tx}^-{ty} }, \]
which is indeed $3$-Calabi-Yau. (This is a non-commutative crepant resolution of the $3$-dimensional $A_1$-singularity $k[a,b,c,d]/(ad-bc)$.)
\end{Ex}

\subsection{$a$-Segre products of Calabi-Yau categories}\label{a-Segre}
We study the effect of tensor products of root pairs on the Calabi-Yau completions.
Recall from \cite{ha6} that, given a positive integer $a$, the {\it $a$-Segre product} of $\Z$-graded dg categories $\X$ and $\Y$ is the $\Z$-graded dg category
\[ \bigoplus_{i\in\Z}\X_i\otimes\begin{pmatrix}	\Y_{i} &\Y_{i-1}&\cdots&\Y_{i-a+1}\\ \Y_{i+1}& \Y_i&\cdots&\Y_{i-a+2} \\ \vdots&\vdots&\ddots&\vdots \\ \Y_{i+a-1}&\Y_{i+a}&\cdots&\Y_{i}\end{pmatrix}. \]
Another description is as follows. We consider the $\Z$-graded dg category $\X$ as a dg category whose objects are $\Z$, and the morphism complex from $i$ to $j$ is given by $\X_{j-i}$, and the same for $\Y$. Now one can naturally view $\X\otimes\Y$ as a dg category whose objects are $\Z^2$. Then the $a$-Segre product is the full subcategory formed by the objects on the `thick diagonal' $\{(i,j)\mid 0\leq j-i\leq a-1\}$.

We are now able to prove the following, which is a translation of \ref{roottensor} to Calabi-Yau algebras.
\begin{Thm}\label{CYSegre}
Let $\Pi$ {\rm(}resp. $\Pi^\prime${\rm)} be a positively graded $(d+1)$-Calabi-Yau dg category {\rm(}resp. $(d^\prime+1)$-Calabi-Yau dg category{\rm)} of Gorenstein parameter $a>0$ such that $\Pi_0$ {\rm(}resp. $\Pi^\prime_0${\rm)} is perfect over $\Pi$ {\rm(}resp. over $\Pi^\prime${\rm)} on each side. Then their $a$-Segre product is positively graded $(d+d^\prime+1)$-Calabi-Yau dg category of Gorenstein parameter $a$.
\end{Thm}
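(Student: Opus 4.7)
The plan is to deduce this from Corollary \ref{Pi and Si} by recognizing the $a$-Segre product $\Pi\seg_a\Pi^\prime$ as an ordinary Segre product of $\Pi$ with a non-Calabi-Yau companion to $\Pi^\prime$, namely the derived tensor dg category attached to $\Pi^\prime$ via the correspondence \ref{MM}. First I would apply the bijection of \ref{MM} to $\Pi^\prime$: since $\Pi^\prime$ is a positively graded $(d^\prime+1)$-Calabi-Yau dg category of Gorenstein parameter $a$ with $\Pi^\prime_0$ perfect on each side, it corresponds to a cyclically invariant $a$-th root pair $(V,\P^\prime)$ on a smooth dg category $\B^\prime$ (the triangular matrix built from $\Pi^\prime_0,\ldots,\Pi^\prime_{a-1}$). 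Set $\Si^\prime:=\rT^\rL_{\B^\prime}\!V$; it is Adams positively graded and, by \ref{fakeMM} (the graded incarnation of \ref{fakeCY}), it is smooth and satisfies
\[ \RHom_{\Si^{\prime e}}(\Si^\prime,\Si^{\prime e})[d^\prime+1]\simeq \Si^\prime_{\geq a-1}(a)\quad\text{in }\rD^\Z(\Si^{\prime e}). \]

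The crucial identification comes next: reading off the matrix formulas in \ref{MM}, the Adams degree $l$ component of $\Si^\prime$ is exactly the $a\times a$ matrix with $(r,s)$-entry $\Pi^\prime_{l+r-s}$, and this is precisely the matrix that appears in the definition of the $a$-Segre product. Consequently
\[ \Pi\seg_a\Pi^\prime\;=\;\bigoplus_{l\geq0}\Pi_l\otimes\Si^\prime_l\;=\;\Pi\seg\Si^\prime, \]
the ordinary Segre product. Now the hypotheses of \ref{Pi and Si} are in place, with $\Pi$ in the role of the Calabi-Yau factor and $\Si^\prime$ (with $e=d^\prime$) in the role of the non-Calabi-Yau factor; its conclusion produces the desired $(d+d^\prime+1)$-Calabi-Yau structure of Gorenstein parameter $a$ on $\Pi\seg_a\Pi^\prime$, and positivity of the grading is visibly preserved by the Segre product.

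The only real work is the matrix identification in the middle paragraph, which is a routine bookkeeping of two notational conventions but is where one must take some care, especially when $\Pi$ has several objects: then the objects of $\Pi\seg_a\Pi^\prime$ are indexed by pairs $(X,s)$ with $X\in\Pi$ and $s\in\{0,\ldots,a-1\}$, and the same indexing arises on the root-pair side as $\P\otimes\B^\prime$, after which the morphism complexes match tautologically. A symmetric alternative would apply \ref{MM} to both factors and invoke \ref{roottensor} directly to build a cyclically invariant root pair $(U\otimes V,\P\otimes\B^\prime)$ on $\A\otimes\B^\prime$, then take its $(d+d^\prime+1)$-Calabi-Yau completion; but the asymmetric route through \ref{Pi and Si} is shorter because that corollary has already absorbed the tensor-product argument.
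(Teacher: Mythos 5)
Your proof is correct and reaches the conclusion by essentially the same route as the paper, just packaged through the already-established Corollary \ref{Pi and Si}. The paper's own proof applies \ref{MM} to \emph{both} factors, producing cyclically invariant root pairs $(U,\P)$ on $\A$ and $(U',\P')$ on $\A'$, then invokes \ref{roottensor}, \ref{UotimesV} and \ref{CY} to show the Calabi--Yau completion of $(U\otimes U',\P\otimes\A')$ is $(d+d'+1)$-Calabi--Yau, and finally identifies this completion with the $a$-Segre product by writing $\rT^{\rL}_{\A\otimes\A'}(U\otimes U')=\rT^{\rL}_\A U\seg\rT^{\rL}_{\A'}U'$ and restricting to $\P\otimes\A'$. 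You instead apply \ref{MM} to $\Pi'$ only, set $\Si'=\rT^\rL_{\B'}V$, observe via the matrix formula from \ref{power} that $\Si'_l$ has $(r,s)$-entry $\Pi'_{l+r-s}$ so that $\Pi\seg_a\Pi'=\Pi\seg\Si'$, and then quote \ref{Pi and Si}. Since \ref{Pi and Si} itself proceeds by applying \ref{MM} to its Calabi--Yau factor, \ref{fakeMM} to $\Si'$, and then \ref{roottensor}, the two proofs unwind to the same argument; the matrix bookkeeping you stress is exactly what the paper's last sentence (``in view of its definition, this is nothing but the $a$-Segre product'') is implicitly carrying out. You correctly flag the symmetric alternative yourself, which is precisely the paper's proof. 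The minor advantage of your packaging is that it makes \ref{CYSegre} visibly a quick consequence of \ref{Pi and Si}, so no new tensor-product argument needs to be re-run.
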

\begin{Rem}
\begin{enumerate}
	\item The statement for $a=1$ becomes as follows: the (usual) Segre product of CY algebras of Gorenstein parameter $1$ is again CY, which is known, at least for ordinary algebras e.g.\,\cite[4.5]{Thi}.
	\item This is shown implicitly in \cite[Section 3]{ha6} for module-finite algebras (concentrated in cohomological degree $0$), using an explicit construction of projective resolutions of simple modules. Our setting and method here are completely different.
\end{enumerate}
\end{Rem}
The proof of \ref{CYSegre} will follow and are summarized as in the following diagram.
\[ \xymatrix@C=20mm{
	\text{root pairs } ((U,\P), (V,\cQ))\ar@{|->}[r]^-{\text{tensor product}}\ar@{|->}[d]_-{\text{CY completions}}&\text{root pair } (U\otimes V,\P\otimes\B)\ar@{|->}[d]^-{\text{CY completion}}\\
	\text{CY categories } (\Pi_{d+1}^{(1/a)}(\A), \Pi_{e+1}^{(1/a)}(\B))\ar@{|->}[r]^-{\text{$a$-Segre product}}&\Pi_{d+e+1}^{(1/a)}(\C) } \]
\begin{proof}
	Let $\Pi$ and $\Pi^\prime$ be Calabi-Yau dg categories. By \ref{MM} we can write $\Pi=\Pi_{d+1}^{(1/a)}(\A)$ and $\Pi^\prime=\Pi_{d^\prime+1}^{(1/a)}(\A^\prime)$ as Calabi-Yau completions of cyclically invariant root pairs $(U,\P)$ on $\A$ and $(U^\prime,\P^\prime)$ on $A^\prime$. By \ref{roottensor}, the pair $(U\otimes U^\prime,\P\otimes\A^\prime)$ is a root pair on $\A\otimes\A^\prime$ which is cyclically invariant by \ref{UotimesV}, and therefore its Calabi-Yau completion $\Pi_{d+d^\prime+1}^{(1/a)}(\A\otimes\A^\prime)$ is $(d+d^\prime+1)$-Calabi-Yau by \ref{CY}.
	
	It remains to verify that $\Pi_{d+d^\prime+1}^{(1/a)}(\A\otimes\A^\prime)$ is the $a$-Segre product of $\Pi$ and $\Pi^\prime$. The Calabi-Yau completion is the restriction of the tensor category $\rT^{\rL}_{\A\otimes\A^\prime}\!(U\otimes U^\prime)=\rT^{\rL}_\A\!U\otimes\rT^{\rL}_{\A^\prime}\!U^\prime$ to the subcategory of objects of $\P\otimes\A^\prime$, hence it is the Segre product $\Pi_{d+1}^{(1/a)}(\A)\seg\rT^{\rL}_{\A^\prime}\!U^\prime$. In view of its definition, this is nothing but the $a$-Segre product of $\Pi_{d+1}^{(1/a)}(\A)$ and $\Pi_{d^\prime+1}^{(1/a)}(\A^\prime)$.
\end{proof}

Recall that for an Adams graded dg category $\X=\bigoplus_{i\in\Z}\X_i$ and $n\in\Z$, its {\it $n$-th quasi-Veronese category $\X^{[n]}$} is the dg category
\[ \X^{[n]}=\bigoplus_{i\in\Z}\begin{pmatrix}\X_{ni}&\X_{ni-1}&\cdots&\X_{n(i-1)+1}\\ \X_{ni+1}&\X_{ni}&\cdots&\X_{n(i-1)+2}\\ \vdots&\vdots&\ddots&\vdots\\ \X_{n(i+1)-1}&\X_{n(i+1)-2}&\cdots&\X_{ni}\end{pmatrix}. \]
Notice that this is nothing but the $n$-Segre product with $k[x]$ with $\deg x=n$, and with the ``divided (Adams) grading''. Indeed, the $n$-Segre product itself with $k[x]$ concentrates in degree multiples of $n$, and dividing the grading by $n$ gives the above $n$-th quasi-Veronese category with the correct Adams grading.
This discussion leads to the following result.
\begin{Cor}\label{qv}
Let $\Pi$ be an Adams positively graded $(d+1)$-Calabi-Yau dg category of Gorenstein parameter $a$, that is, $\RHom_{\Pi^e}(\Pi,\Pi^e)[d+1]\simeq\Pi(a)$ in $\rD^\Z(\Pi^e)$. Suppose that $\Pi_0$ is perfect on each side. Then its $a$-th quasi-Veronese category $\Pi^{[a]}$ is a graded $(d+1)$-Calabi-Yau dg category of Gorenstein parameter $1$.
\end{Cor}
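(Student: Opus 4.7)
The plan is to apply Theorem \ref{CYSegre} with the second Calabi--Yau factor chosen as the polynomial ring $k[x]$ in one variable, and then invoke the identification of the $a$-Segre product with the quasi-Veronese asserted in the paragraph immediately preceding the statement.

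First, I would verify that $k[x]$ equipped with the Adams grading $\deg x = a$ (and concentrated in cohomological degree $0$) is a graded $1$-Calabi--Yau algebra of Gorenstein parameter $a$, and that $k[x]_0 = k$ is perfect on each side over $k[x]$. Perfectness of $k$ follows from the resolution $0 \to k[x](-a) \to k[x] \to k \to 0$, and both the Calabi--Yau property and the value of the Gorenstein parameter follow from dualizing the graded bimodule Koszul resolution $0 \to k[x]^e(-a) \to k[x]^e \to k[x] \to 0$, which gives $\RHom_{k[x]^e}(k[x], k[x]^e)[1] \simeq k[x](a)$ in $\rD^\Z(k[x]^e)$.

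Next, I would apply Theorem \ref{CYSegre} with $\Pi^\prime := k[x]$ and $d^\prime := 0$ to conclude that the $a$-Segre product $\Pi \seg k[x]$ is a positively graded $(d+1)$-Calabi--Yau dg category of Gorenstein parameter $a$ (the hypotheses on the degree zero parts are exactly the assumptions just verified together with the standing assumption on $\Pi_0$).

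Finally, I would invoke the identification recalled in the paragraph preceding the corollary: the $a$-Segre product $\Pi \seg k[x]$ concentrates in Adams degrees in $a\Z$, and after dividing its Adams grading by $a$ it coincides with the $a$-th quasi-Veronese category $\Pi^{[a]}$ endowed with its natural Adams grading. Under this rescaling, the shift $(a)$ in $\rD^\Z((\Pi \seg k[x])^e)$ corresponds to the shift $(1)$ in $\rD^\Z((\Pi^{[a]})^e)$, so the Gorenstein parameter changes from $a$ to $1$, and we conclude that $\Pi^{[a]}$ is $(d+1)$-Calabi--Yau of Gorenstein parameter $1$. The principal point to check carefully will be the compatibility of this grading rescaling with the Calabi--Yau isomorphism; however, this is essentially formal once the preceding theorem is in place, since the rescaling is a purely combinatorial reorganization of the same underlying dg data that the remark preceding the statement already makes explicit.
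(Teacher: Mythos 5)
Your argument is correct and is essentially the paper's own: apply Theorem~\ref{CYSegre} with second factor $\Pi'=k[x]$ of cohomological degree $0$ and Adams degree $a$ (so $d'=0$, $\Pi'$ is $1$-Calabi--Yau of Gorenstein parameter $a$, and $\Pi'_0=k$ is perfect on each side), then pass to the divided Adams grading to identify the $a$-Segre product with $\Pi^{[a]}$, converting Gorenstein parameter $a$ into $1$. One small remark: the paper's printed proof says ``$x$ having cohomological degree $-1$'', but this is a typo for $0$, since $\Pi_1(k)=\rT_k(k[0])=k[x]$ with $|x|=0$, and degree $-1$ would give $\Pi_2(k)$ and hence a $(d+2)$-Calabi--Yau Segre product; your choice of cohomological degree $0$ is the correct one.
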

\begin{proof}
	Consider the $1$-Calabi-Yau completion $\Pi_1(k)$ of the base field with the multiplied Adams grading, that is, $k[x]$ with $x$ having cohomological degree $-1$ and Adams degree $a$. Then the assertion follows from \ref{CYSegre} as the $a$-Segre product of $k[x]$ and $\Pi$ with the divided Adams grading.
\end{proof}

For Calabi-Yau dg categories of Gorenstein parameter $1$, one can freely modify the Calabi-Yau dimension \cite{IQ}\cite[2.16]{haI}.
This can be explained using the ($1$-)Segre product as follows.
\begin{Ex}
Let $\Pi$ be an Adams positively graded $(d+1)$-Calabi-Yau dg category of Gorenstein parameter $1$, so that $\Pi$ is the $(d+1)$-Calabi-Yau completion of a dg category $\A$ by \ref{MM}: $\Pi=\Pi_{d+1}(\A)$.
Let $n$ be an arbitrary integer and consider the $(n+1)$-Calabi-Yau completion $\Pi_{n+1}(k)$ of $k$, thus $\Pi_{n+1}(k)=k[x]$ with $\deg x=(-n,1)$.
By \ref{qv}, the ($1$-)Segre product of $\Pi_{d+1}(\A)$ and $\Pi_{n+1}(k)$ is $(d+n+1)$-Calabi-Yau, and is nothing but the $(d+n)$-Calabi-Yau completion of $\A$;
\[ \Pi_{n+1}(k)\seg\Pi_{d+1}(\A)=\Pi_{d+n+1}(\A). \]
Note that this amounts to shifting the cohomological degree of the inverse dualizing bimodule $\RHom_{\A^e}(\A,\A^e)$ by $n$ by means of the Segre product.
\end{Ex}


\section{Strict root pairs on finite dimensional algebras}
We study root pairs on finite dimensional algebras, and introduce {\it strictness} of root pairs on finite dimensional algebras. These are root pairs which behave nicely with the algebra structure. 
The advantage of strictness is that one can stay inside the world of finite dimensional algebras, not dg algebras, which helps us understand the structures of the categories we are interested in.

\medskip

According to \ref{setup}, an $a$-th root pair on a finite dimensional algebra $A$ is a pair $(U,P)$ consisting of a complex of $(A,A)$-bimodules $U$ and a finitely generated projective module $P$ satisfying the following.
\begin{enumerate}
	\renewcommand\labelenumi{(\roman{enumi})}
	\renewcommand\theenumi{\roman{enumi}}
	\item $U^a\simeq\RHom_A(DA,A)[d]$ in $\rD(A^e)$.
	\item $\per A=\thick(\bigoplus_{i=0}^{a-1}P\lotimes_AU^i)$.
	\item $\Hom_{\rD(A)}(P\lotimes_AU^i,P)=0$ for all $0< i\leq a-1$.
\end{enumerate}

Our prototypical example \ref{first}(\ref{Bei}) suggests the following stronger version for finite dimensional algebras.
\begin{Def}\label{strict}
An $a$-th root pair $(U,P)$ on a finite dimensional algebra $A$ is {\it strict} if it satisfies the following instead of (\ref{P}) above.
\begin{itemize}
\item[(\ref{P})'] $\add A=\add(\bigoplus_{i=0}^{a-1}P\lotimes_AU^i$) in $\rD(A)$.
\end{itemize}
\end{Def}
Note that the condition (\ref{P}) shows that $(U,P)$ determines $A$ up to derived equivalence, while (\ref{P})' shows that $(U,P)$ determines up to Morita equivalence.
We show that strict root pairs also reproduce among finite dimensional algebras under a mild assumption. Note however, that naive tensor product does not work (as it did for general root pairs, \ref{UotimesV}), as the following easiest example suggests. 
\begin{Ex}\label{twD_4}
Let $A$ be the path algebra $kQ$ for the quiver $Q\colon\xymatrix{\circ\ar[r]&\circ}$ of type $A_2$. Then it has a strict (square) root pair $(DA,P)$ for the simple projective module $P$. Indeed, the functor $-\lotimes_ADA$ moves ``one place to the right'' on the Auslander-Reiten quiver, which is $\tau^{-1/2}$. More precisely, we have $DA\lotimes_ADA=\RHom_A(DA,A)[1]$ in $\rD(A^e)$.

Now let $A=B=kQ$ and consider the tensor product $A\otimes B$. Its derived category has a square root of $\nu_2$ given by the tensor product of $\nu_1^{1/2}$ on each factor (\ref{UotimesV}).
We know that $A\otimes B$ is derived equivalent to the path algebra of type $D_4$ via the tilting object $1\oplus2\oplus3\oplus4$ below.
\[ \xymatrix@!R=2mm@!C=2mm{
	\circ\ar[dr]&&\circ\ar[dr]&&2\ar[dr]&&\circ\ar[dr]&&3^\ast\ar[dr]&&\circ\ar[dr]&&\circ\\
	\circ\ar[r]&\circ\ar[r]\ar[ur]\ar[dr]&1\ar[r]&\circ\ar[r]\ar[ur]\ar[dr]&\circ\ar[r]&\circ\ar[r]\ar[ur]\ar[dr]&4\ar[r]&\circ\ar[r]\ar[ur]\ar[dr]&\circ\ar[r]&\circ\ar[r]\ar[ur]\ar[dr]&\circ\ar[r]&\circ\ar[r]\ar[ur]\ar[dr]&\circ\\
	\circ\ar[ur]&&\circ\ar[ur]&&3\ar[ur]&&\circ\ar[ur]&&\circ\ar[ur]&&\circ\ar[ur]&&\circ} \]
An easy computation reveals that we have $\nu_2^{-1/2}=\nu_1^{-1/2}\otimes\nu_1^{-1/2}$ equals $\tau^{-2}$, which is consistent with $\nu=\tau^{-2}$, $[1]=\tau^{-3}$, hence $\nu_2^{-1}=\tau^{-4}$ on $\Db(kD_4)$.
We see that there is {\it no} projective $(A\otimes B)$-module $P$ satisfying (\ref{P})' above for $U=DA\otimes DB$.

On the other hand, if we take the left mutation of the above tilting object at $3$, we obtain a new tilting object $1\oplus2\oplus3^\ast\oplus4$ which is strict for this $\nu_2^{1/2}$. Its endomorphism ring is the path algebra of type $A_4$ modulo the longest path, which is to be described as follows.
\[ \xymatrix{
	1\ar[r]&2\ar[d]&\\
	&4\ar[r]&3^\ast } \]
\end{Ex}

The above example generalizes to the existence result of {strict} root pairs for tensor products, {up to a derived equivalence}. 
\begin{Thm}\label{stricttensor}
Let $(U,P)$ be a strict $a$-th root pair on $A$, and $V$ an $a$-root on $B$. Suppose that we have $V^{i}\in\mod B$ for all $0\leq i\leq a-1$. Then $(U\otimes V,P\otimes B)$ is a strict $a$-th root pair on a finite dimensional algebra $C$ which is derived equivalent to $A\otimes B$. 
\end{Thm}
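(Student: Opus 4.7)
My plan is to realize $C$ as the endomorphism algebra of a tilting module over $A \otimes B$ whose indecomposable summands are precisely the objects $(P \otimes B) \lotimes_{A \otimes B} (U \otimes V)^i$ that must become projective on $C$. Setting $P_i := P \lotimes_A U^i$, I take
\[
T := \bigoplus_{i=0}^{a-1} P_i \otimes V^i \ \in\ \mod(A \otimes B).
\]
By strictness of $(U,P)$ each $P_i$ is a projective $A$-module, and by hypothesis each $V^i$ lies in $\mod B$, so $T$ is a genuine module; moreover $V$ is invertible, hence $V^i \in \per B$ has finite projective dimension, so $\pd_{A \otimes B} T < \infty$.

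The core step is to check that $T$ is a tilting module. For the Ext-vanishing, the K\"unneth formula together with projectivity of each $P_i$ over $A$ gives
\[
\Ext^n_{A \otimes B}(P_i \otimes V^i,\, P_j \otimes V^j) \simeq \Hom_A(P_i, P_j) \otimes \Ext^n_B(V^i, V^j).
\]
For $j \ge i$, invertibility of $V$ identifies $\RHom_B(V^i, V^j)$ with $V^{j-i}$, which lies in $\mod B$, so the second factor vanishes for $n > 0$. For $j < i$, invertibility of $U$ gives $\Hom_A(P_i, P_j) \simeq \Hom_A(P_{i-j}, P_0)$, which vanishes by the orthogonality axiom \ref{setup}(\ref{exc}) of a root pair since $1 \le i - j \le a-1$. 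I expect this Ext computation, and in particular the orthogonality input, to be the main technical step. Generation of $\per(A \otimes B)$ by $T$ is then immediate from \ref{roottensor}: that result ensures $(U \otimes V, P \otimes B)$ is a root pair on $A \otimes B$, and its defining semi-orthogonal decomposition reads
\[
\per(A \otimes B) = \thick\{(P \otimes B) \lotimes_{A \otimes B} (U \otimes V)^i \mid 0 \le i \le a-1\} = \thick T.
\]

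By Rickard's theorem, $T$ is therefore a classical tilting module, and $C := \End_{A \otimes B}(T)$ is a finite-dimensional algebra derived equivalent to $A \otimes B$ via $T$. The equivalence $F \colon \rD(A \otimes B) \xsimeq \rD(C)$ sends $T$ to $C$ and $U \otimes V$ to an invertible bimodule over $C$, thereby transporting $(U \otimes V, P \otimes B)$ to a root pair on $C$. Strictness on $C$ then holds tautologically, since the image $F(\bigoplus_i (P \otimes B) \lotimes_{A \otimes B} (U \otimes V)^i) = F(T) = C$, giving $\add C = \add \bigoplus_i F(P \otimes B) \lotimes_C F(U \otimes V)^i$ as required.
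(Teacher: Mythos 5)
Your proof is correct and follows essentially the same route as the paper's: identify $T=\bigoplus_{i=0}^{a-1}(P\lotimes_AU^i)\otimes V^i$, use \ref{roottensor} for generation, and verify Ext-vanishing via the K\"unneth decomposition of $\RHom_{A\otimes B}(T,T)$, with the first tensor factor handled by strictness plus the orthogonality axiom and the second by the hypothesis $V^i\in\mod B$. The only cosmetic difference is that you split into the cases $j\geq i$ and $j<i$, whereas the paper first argues the $A$-factor is always concentrated in degree $0$ and vanishes for $i>j$, then treats the $B$-factor; the content is identical.
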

\begin{Rem}\label{C}
\begin{enumerate}
\item By the remark that strict root pairs determine algebras, we see that $C$ is (Morita equivalent to) $\End_{\rD(A\otimes B)}(\bigoplus_{i=0}^{a-1}(P\lotimes_A U^i)\otimes (B\lotimes_BV^i))$.
\item The root pair $(U\otimes V,P\otimes B)$ is not strict on $A\otimes B$, but strict on a derived equivalent algebra $C$, which is the point of the above result.
\end{enumerate}
\end{Rem}
\begin{proof}
	We know by \ref{roottensor} that $(U\otimes V,P\otimes B)$ is a root pair. To prove that this is a strict root pair for some finite dimensional algebra, it is enough to show that $T:=\bigoplus_{i=0}^{a-1}(P\otimes B)\lotimes_{A\otimes B}(U\otimes V)^i$ is a tilting object in $\per(A\otimes B)$.
	Since we already know by \ref{roottensor} that $T$ generates $\per(A\otimes B)$, we only have to show the vanishing of extensions. We have $\RHom_{A\otimes B}(T,T)=\bigoplus_{i,j=0}^{a-1}(\RHom_A(P\lotimes_AU^i,P\lotimes_AU^j)\otimes\RHom_B(B\lotimes_BV^i,B\lotimes_BV^j))$. Consider the first tensor factor $\RHom_A(P\lotimes_AU^i,P\lotimes_AU^j)$; it is concentrated in degree $0$ since $\bigoplus_{i=0}^{a-1}P\lotimes_AU^i=A$ (up to additive equivalence), and its $0$-th cohomology is $0$ unless $i\leq j$ by the condition (\ref{exc}) of the root pairs. Therefore it remains to show that the second tensor factor $\RHom_B(B\lotimes_BV^i,B\lotimes_BV^j)=V^{j-i}$ is concentrated in degree $0$ for $0\leq i\leq j\leq a-1$, but this is nothing but our assumption requiring $V^{i}\in\mod B$ for each $0\leq i\leq a-1$.
\end{proof}

Let us demonstrate how strictness can be used through the following example of cluster categories.
\begin{Ex}\label{C^half}
Let $A=B$ be the path algebra of type $A_2$ and $C$ the algebra presented by the following quiver modulo the longest path.
\[ \xymatrix{
	1\ar[r]&2\ar[d]&\\
	&3\ar[r]&4 } \]
We have seen in \ref{twD_4} that $C$ has a strict square root pair $(U,P)$ with $P=(e_1+e_2)C$, where the bimodule $U\in\rD(C^e)$ corresponds to $DA\otimes DB$ under a derived equivalence $\Db(\mod C)\simeq\Db(\mod A\otimes B)$.

Let $\rC_2(C)=\Db(\mod C)/-\lotimes_CDC[-2]$ be the (usual) $2$-cluster category of $C$, and let $\rC_2^{(1/2)}(C)$ be the $2$-folded $2$-cluster category given by the square root $U$ of $\RHom_C(DC,C)[2]$.
Let us give a description of the projection functor $\rC_2(C)\to\rC_2^{(1/2)}(C)$. The Auslander-Reiten quiver of $\rC_2(C)$ is as follows, where the circles depict the fundamental domain, and the black dots the summands of $C\in\rC_2(C)$.
\[ \xymatrix@!R=2mm@!C=2mm{
	\ar[dr]&&\ar[dr]&&\bullet\ar[dr]&&\circ\ar[dr]&&\bullet\ar[dr]&&\circ\ar[dr]&&\\
	\ar[r]&\ar[r]\ar[ur]\ar[dr]&\bullet\ar[r]&\circ\ar[r]\ar[ur]\ar[dr]&\circ\ar[r]&\circ\ar[r]\ar[ur]\ar[dr]&\bullet\ar[r]&\circ\ar[r]\ar[ur]\ar[dr]&\circ\ar[r]&\circ\ar[r]\ar[ur]\ar[dr]&\ar[r]&\ar[r]\ar[ur]\ar[dr]&\\
	\ar[ur]&&\ar[ur]&&\circ\ar[ur]&&\circ\ar[ur]&&\circ\ar[ur]&&\circ\ar[ur]&& } \]
The action $-\lotimes_CU$ is given by $\tau^{-2}$, and $\rC_2^{(1/2)}(C)$ has the following structure, where the circles depict the fundamental domain, and the black dots the summands of $P\in\rC_2^{(1/2)}(C)$ .
\[ \xymatrix@!R=2mm@!C=2mm{
	\ar[dr]&&\ar[dr]&&\bullet\ar[dr]&&\circ\ar[dr]&&\\
	\ar[r]&\ar[r]\ar[ur]\ar[dr]&\bullet\ar[r]&\circ\ar[r]\ar[ur]\ar[dr]&\circ\ar[r]&\circ\ar[r]\ar[ur]\ar[dr]&\ar[r]&\ar[r]\ar[ur]\ar[dr]&\\
	\ar[ur]&&\ar[ur]&&\circ\ar[ur]&&\circ\ar[ur]&& } \]

Notice that the initial cluster tilting object $C\in\rC_2(C)$ is stable under the $\Z/2\Z$-action by the strictness of $(U,P)$ with respect to $C$. This explains the existence of the cluster tilting object $P\in\rC_2^{(1/2)}(C)$ as the image of $C$ under $\rC_2(C)\to\rC_2^{(1/2)}(C)$. 
\end{Ex}


It is also important to know when the Calabi-Yau dg category is an ordinary algebra. In view of \ref{fdMM}, this amounts to discuss higher representation infinite algebras. The following result is a counterpart of \ref{CYSegre} for finite dimensional algebras.
This is also an $a$-th root generalization of \cite[2.10]{HIO} for tensor product of higher representation infinite algebras. 
\begin{Thm}\label{RI}
	Let $A$ be a $d$-representation infinite algebra with a strict $a$-th root pair $(U,P)$ for $\RHom_A(DA,A)[d]$, and let $B$ be an $e$-representation infinite algebra with a strict $a$-th root pair $(V,Q)$ of $\RHom_B(DB,B)[e]$. Suppose that $A/J_A$ or $B/J_B$ is separable over $k$. Then the algebra $C$ in \ref{C} is $(d+e)$-representation infinite with a strict $a$-th root pair $(U\otimes V,P\otimes B)$ for $\RHom_C(DC,C)[d+e]$.
\end{Thm}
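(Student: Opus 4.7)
The plan is to translate the statement into the Calabi-Yau side via the bijection in \ref{fdMM} between cyclically invariant root pairs on $d$-representation infinite algebras and Adams positively graded $(d+1)$-Calabi-Yau algebras of Gorenstein parameter $a$ with finite dimensional graded pieces. The hallmark of this correspondence, which is the key for us, is that $d$-representation infiniteness of the base algebra is equivalent to the Calabi-Yau completion of the root pair being concentrated in cohomological degree $0$.

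First, \ref{stricttensor} yields that $(U\otimes V,P\otimes B)$ is a strict $a$-th root pair on the finite dimensional algebra $C$ of \ref{C}, which is derived equivalent to $A\otimes B$; moreover, cyclic invariance transfers from the two factors by \ref{UotimesV}(2). Set $\Pi_A:=\Pi^{(1/a)}_{d+1}(A;U,P)$ and $\Pi_B:=\Pi^{(1/a)}_{e+1}(B;V,Q)$. Since $A$ and $B$ are representation infinite, \ref{fdMM} implies that $\Pi_A$ and $\Pi_B$ are Adams positively graded Calabi-Yau dg algebras of Gorenstein parameter $a$ concentrated in cohomological degree $0$. By the Morita invariance of Calabi-Yau completions (\ref{cMor}), the completion $\Pi_C:=\Pi^{(1/a)}_{d+e+1}(C;U\otimes V,P\otimes B)$ is derived Morita equivalent to the one computed from the tensor root pair on $A\otimes B$, which by \ref{CYSegre} is
\[ \Pi_C\simeq\Pi_A\seg\Pi_B. \]

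Since the $a$-Segre product is formed componentwise from $k$-linear tensor products of Adams pieces of its two factors, and each factor is concentrated in cohomological degree $0$, the same holds for $\Pi_C$. Applying the other direction of \ref{fdMM} to $\Pi_C$ then recovers exactly $C$ with the strict root pair $(U\otimes V,P\otimes B)$ as the corresponding $(d+e)$-representation infinite algebra, which is what we wanted.

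The main obstacle will be verifying that $\Pi_C$ genuinely satisfies the remaining hypotheses of \ref{fdMM}, namely that each Adams piece is finite dimensional and that $(\Pi_C)_0\simeq C$ is perfect over $\Pi_C$ on both sides. The factor-by-factor versions hold by assumption, and this is precisely where the separability of $A/J_A$ or $B/J_B$ must be invoked: separability ensures that the semisimplicity, finiteness, and perfection properties enjoyed individually by $\Pi_A$ and $\Pi_B$ pass through the $a$-Segre (equivalently, tensor) construction, playing exactly the same role as in the proof of \cite[2.10]{HIO}. Once this bookkeeping is in place, the argument above concludes.
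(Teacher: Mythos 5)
Your approach has a genuine gap: Theorem \ref{RI} does \emph{not} assume that $(U,P)$ or $(V,Q)$ is cyclically invariant, yet your argument invokes cyclic invariance at two essential points --- to apply the bijection \ref{fdMM} (which only concerns cyclically invariant root pairs) and to conclude via \ref{CYSegre} that the Segre product is Calabi-Yau. Without cyclic invariance, the Calabi-Yau completions $\Pi_A$ and $\Pi_B$ are in general only twisted Calabi-Yau (cf.\ the remark after \ref{CYcomp} and Example \ref{example}), so the hypothesis of \ref{CYSegre} fails and the machinery you rely on does not engage. Your line ``moreover, cyclic invariance transfers from the two factors by \ref{UotimesV}(2)'' silently imports a hypothesis that is not present in the statement.

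The paper avoids this entirely by arguing directly. After getting the strict root pair $(U\otimes V,P\otimes B)$ from \ref{stricttensor} --- whose hypothesis $V^i\in\mod B$ for $0\le i\le a-1$ you also did not verify, although it follows from $e$-representation infiniteness of $B$ --- one uses separability only to get that $A\otimes B$, hence $C$, has finite global dimension, and then computes directly that
\[
\RHom_C\bigl(P\otimes B,\,\nu_{d+e}^{-i/a}(P\otimes B)\bigr)\;=\;\RHom_A\bigl(P,\nu_d^{-i/a}P\bigr)\otimes\RHom_B\bigl(B,\nu_e^{-i/a}B\bigr)
\]
is concentrated in cohomological degree $0$ for each $i\ge 0$, which gives $(d+e)$-representation infiniteness. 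This direct check sidesteps the Calabi-Yau side altogether and is thus valid in the greater generality in which \ref{RI} is stated. To salvage your route you would need to either add cyclic invariance to the hypotheses (proving a strictly weaker statement), or show that representation infiniteness of $\Pi_A\seg\Pi_B$ can be deduced from that of $\Pi_A$ and $\Pi_B$ without passing through the Calabi-Yau property --- which is essentially what the direct computation does.
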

\begin{proof}
	We write $\nu_d^{-1/a}$ for $-\lotimes_AU$ on $\Db(\mod A)$ and $\nu_e^{-1/a}$ for $-\lotimes_BV$ on $\Db(\mod B)$. Since $B$ is $e$-representation infinite we have $\nu_e^{-i/a}B\in\add(\bigoplus_{i\geq0}Q\lotimes_BV^i)\subset\mod B$ for all $i\geq0$. Then by \ref{stricttensor} we get a strict root pair $(U\otimes V,P\otimes B)$ on $\Db(\mod A\otimes B)=\Db(\mod C)$.
	Now by the separability assumption the tensor product $A\otimes B$ has finite global dimension, thus so is derived equivalent $C$. Also, since $\RHom_C(P\otimes B,\nu_{d+e}^{-i/a}(P\otimes B))=\RHom_A(P,\nu_d^{-1/a}P)\otimes\RHom_B(B,\nu_e^{-i/a}B)$ is concentrated in degree $0$ for each $i\geq0$, we deduce that $C$ is $(d+e)$-representation infinite.
\end{proof}

\begin{Ex}
	Let $A=B$ be the $d$-Beilinson algebra which has a strict $(d+1)$-st root pair of $\nu_d^{-1}$ corresponding to the $(d+1)$-Calabi-Yau algebra $k[x_0,\ldots,x_d]$. Then the algebra $C$ constructed in \ref{RI} is presented by the following quiver with commutativity relations (for $d=3$).
	\[ \xymatrix{
		\circ\ar@3[r]&\circ\ar@3[r]\ar@3[d]&\circ\ar@3[r]\ar@3[d]&\circ\ar@3[d]&&&\\
		&\circ\ar@3[r]&\circ\ar@3[r]\ar@3[d]&\circ\ar@3[r]\ar@3[d]&\circ\ar@3[d]&&\\
		&&\circ\ar@3[r]&\circ\ar@3[r]\ar@3[d]&\circ\ar@3[r]\ar@3[d]&\circ\ar@3[d]&\\
		&&&\circ\ar@3[r]&\circ\ar@3[r]&\circ\ar@3[r]&\circ,} \]
	where the horizontal arrows are labelled by $\{x_0,\ldots,x_d\}$ and vertical ones by $\{y_0,\ldots,y_d\}$. This algebra is derived equivalent to $\PP^d\times\PP^d$, and is $2d$-representation infinite by \ref{RI}.
	
	The Calabi-Yau completion $\Pi^{(1/(d+1))}_{2d+1}(C)$ of the root pair on $C$ is the $(d+1)$-Segre product of $k[x_0,\ldots,x_d]$ and $k[y_0,\ldots,y_d]$, which is presented by the following quiver with commutativity relations.
	\[ \xymatrix{\circ\ar@3@/^5pt/[r]&\circ\ar@3@/^5pt/[r]\ar@3@/^5pt/[l]&\circ\ar@3@/^5pt/[r]\ar@3@/^5pt/[l]&\circ\ar@3@/^5pt/[l] } \]
	This algebra has the (1-)Segre product $R=k[x_0,\ldots,x_d]\seg k[y_0,\ldots,y_d]$ as an idempotent subalgebra, and the above algebra is a non-commutative crepant resolution of $R$, see \cite[Section 3]{ha6}.
\end{Ex}

\section{Dynkin quivers}\label{Dynkin}
We apply our Calabi-Yau completion of root pairs to path algebras of quivers.
Here we focus on $a$-th roots of $\tau^{-1}=\RHom_{(kQ)^e}(kQ,(kQ)^e)[1]$, the inverse of the Auslander-Reiten translation of the derived category $\Db(\mod kQ)$ of a quiver $Q$. Note that such a root of $\tau$ yields an $a$-th root of $\tau$ on the infinite translation quiver $\Z Q$ as a component of the Auslander-Reiten quiver \cite[6.1]{ha4}. To distinguish the difference of these roots of $\tau$, we give the following definitions.
\begin{Def}
Let $Q$ be a finite quiver and let $a$ be a positive integer.
A {\it combinatorial $a$-th root of $\tau$} is an automorphism $F$ on the infinite translation quiver $\Z Q$ such that $F^a=\tau^{-1}$.
\end{Def}
When we want to emphasize, we will call $U$ a {\it homological} root of $\tau$.

We have the following characterization of quivers with combinatorial roots of $\tau$. We say that quivers $Q$ and $Q^\prime$ are {\it derived equivalent} if the path algebras $kQ$ and $kQ^\prime$ are derived equivalent, which is the case if and only if $\Z Q$ and $\Z Q^\prime$ are isomorphic.
\begin{Prop}[{\cite[6.3]{ha4}}]
Let $Q$ be a finite acyclic quiver and $a$ be a positive integer. Then $\Z Q$ has a combinatorial $a$-th root of $\tau$ if and only if $Q$ is derived equivalent to $Q^\prime$ satisfying the following.
\begin{enumerate}
\renewcommand{\labelenumi}{(\roman{enumi})}
\renewcommand{\theenumi}{\roman{enumi}}
\item The quiver $Q^\prime$ contains $a$ copies $T:=Q^{(0)}, Q^{(1)},\ldots, Q^{(a-1)}$ of a quiver $T$ as full subquivers.
\item\label{sod} There are additional arrows $x\to y$ in $Q$ with $x\in Q^{(l)}$, $y\in Q^{(m)}$ only if $l<m$.
\suspend{enumerate}
We denote by $F$ the permutation of the vertices of $Q$ taking $Q^{(l)}$ to $Q^{(l+1)}$, where $Q^{(a)}:=Q^{(0)}$.
\resume{enumerate}
\renewcommand{\labelenumi}{(\roman{enumi})}
\renewcommand{\theenumi}{\roman{enumi}}
\item $F$ extends to an automorphism of the underlying graph of $Q$.
\end{enumerate}
\end{Prop}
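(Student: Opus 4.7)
The proof naturally splits into the two implications. For both directions, note that $\Z Q\cong\Z Q^\prime$ whenever $Q$ and $Q^\prime$ are derived equivalent, so we may identify these translation quivers and work with a fixed copy of $\Z Q$. Throughout, I will write $\bar F$ for the induced permutation of $Q_0$ (which depends on the slice chosen). A preliminary observation: if $F$ is a combinatorial $a$-th root of $\tau$, then $\bar F^a=\mathrm{id}$, and moreover every $\bar F$-orbit on $Q_0^\prime$ has size exactly $a$; indeed, $\bar F^k v=v$ means $F^k v=\tau^{-j}v$ in $\Z Q$ for some $j$, so iterating $a$ times gives $F^{ka}v=\tau^{-ja}v=\tau^{-k}v$, forcing $k=ja$. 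In particular $a$ divides $|Q_0|$.

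\smallskip

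For the sufficiency, assume $Q^\prime$ satisfies (i)--(iii). I would define $F$ on the vertices $(x,n)\in Q^\prime_0\times\Z=(\Z Q^\prime)_0$ by
\[
F(x,n) = \begin{cases} (\bar F(x),n) & x\in Q^{(l)},\ l<a-1,\\ (\bar F(x),n+1) & x\in Q^{(a-1)}, \end{cases}
\]
so that along any $\bar F$-orbit on $Q^\prime_0$ the $\Z$-coordinate increments exactly once per period, giving $F^a=\tau^{-1}$ on vertices. To extend $F$ to arrows I would use condition (iii): for an arrow $\alpha:x\to y$ of $Q^\prime$, the pair $\{\bar F(x),\bar F(y)\}$ is again an edge, and by condition (ii) its orientation in $Q^\prime$ is determined by the relative position of $\bar F(x)$ and $\bar F(y)$ among the $Q^{(l)}$'s. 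When $F$ maps an arrow to one of reversed orientation on the underlying graph of $Q^\prime$, the image is placed on the corresponding dual arrow of $\Z Q^\prime$ (which exists precisely because $\Z Q^\prime$ is the repetition). A straightforward case analysis on whether $x,y$ lie in the same $Q^{(l)}$ or in adjacent copies, together with the cyclic shift, shows that $F$ is a well-defined automorphism of the translation quiver $\Z Q^\prime$ with $F^a=\tau^{-1}$.

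\smallskip

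For the necessity, assume $F$ is a combinatorial $a$-th root of $\tau$ on $\Z Q$. I would construct a slice $Q^\prime\subset\Z Q$ (equivalently, a quiver derived equivalent to $Q$) in the required shape as follows. Fix any slice $S_0$ and inside it choose, one per $\bar F$-orbit, a set $T$ of representatives; by the orbit-size observation $|T|=|Q_0|/a$. Declaring $Q^{(l)}:=F^l T$ for $0\leq l\leq a-1$, the union $Q^\prime:=T\sqcup FT\sqcup\cdots\sqcup F^{a-1}T$ meets every $\tau$-orbit exactly once (different $F^l T$ lie in different cosets of $\langle\tau\rangle$ in $\langle F\rangle$). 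Any arrow of $\Z Q$ between $F^l T$ and $F^m T$ is, after applying $F^{-l}$, an arrow between $T$ and $F^{m-l}T$; taking $0\leq m-l\leq a-1$ (which can always be achieved by the choice of representatives, modulo $\tau$-shifts of individual orbits) gives condition (ii). Condition (iii) is automatic: since $F$ is a quiver automorphism of $\Z Q$, it preserves the underlying undirected graph, and its descent to $Q^\prime_0$ under the identification $Q_0\cong Q^\prime_0$ preserves edges. Finally, we verify $Q^\prime$ is a slice in the classical sense: connectedness follows from connectedness of $Q$ together with the fact that the $F$-action preserves connected components, and convexity/closure under predecessors follows by a standard argument once a ``monotone'' choice of $T$-representatives has been made.

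\smallskip

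The main obstacle is making the choice of the orbit representatives $T$ compatible with convexity: a bad choice can produce a union $T\cup FT\cup\cdots\cup F^{a-1}T$ that is a section of $\tau$ but is not a slice (not convex, or not closed under predecessors). The remedy is an iterative $\tau$-adjustment procedure: starting from an arbitrary fundamental domain for $\tau$, whenever one encounters an arrow violating condition (ii), apply a $\tau$-shift to the offending $F$-orbit to move it into the correct copy. Termination of this process requires the finiteness/acyclicity of $Q$ and uses essentially that $F$ commutes with $\tau$ so that the shifts do not disturb the cyclic $F$-structure. This is the one place where the combinatorial hypothesis on $F$ is genuinely used, and handling it cleanly is the technical heart of the argument.
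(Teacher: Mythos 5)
This proposition is imported from \cite[6.3]{ha4}; the present paper gives no proof of it, so there is nothing here to compare your argument against. On its own terms, your preliminary observation (every $\bar F$-orbit of $\tau$-orbits has size exactly $a$, because $\tau$-orbits in $\Z Q$ are infinite for $Q$ acyclic, so $\tau^{-ja}=\tau^{-k}$ on an orbit forces $k=ja$) and the sufficiency direction are correct. The piecewise definition of $F$ commutes with $\tau$ and satisfies $F^a=\tau^{-1}$ on vertices since the $+1$ increment occurs exactly once per period; for arrows, the only nontrivial case is $x\to y$ with $x\in Q^{(l)}$, $y\in Q^{(a-1)}$, $l<a-1$, where conditions (ii)--(iii) force the image edge to be oriented $\bar F y\to\bar F x$ (since $\bar F y\in Q^{(0)}$ and $\bar F x\in Q^{(l+1)}$ with $l+1>0$), and the required arrow $(\bar F x,n)\to(\bar F y,n+1)$ of $\Z Q'$ is the $\sigma$-arrow of the corresponding mesh. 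So this half is sound, if condensed.

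The necessity direction contains a genuine gap, which you flag but do not close. The set $Q':=T\sqcup FT\sqcup\cdots\sqcup F^{a-1}T$ built from a choice of $\bar F$-orbit representatives is a section of $\tau$ but not automatically a slice, and condition (ii) is not automatic either. Your parenthetical ``which can always be achieved by the choice of representatives, modulo $\tau$-shifts of individual orbits'' is precisely the assertion to be proved, so the argument is circular at its crux; and the ``iterative $\tau$-adjustment procedure'' is not shown to terminate---shifting one $F$-orbit by $\tau$ to repair a backward arrow can create a new backward arrow between other copies, and you supply no monovariant (a strictly decreasing count of violations, a height bound, or similar) to exclude cycling. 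What is needed is a concrete argument that some choice of $T$ makes $Q'$ a convex slice with all inter-copy arrows pointing forward, for instance a greedy or extremal choice of representatives with respect to a height function on $\Z Q$, or a structural argument via the finite quotient translation quiver $\Z Q/\langle F\rangle$ (finite because $\tau=F^{-a}$). Until that step is filled in, the harder direction of the proposition is not established.
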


It is a subtle question whether this {\it combinatorial} root of $\tau$ lifts to a {\it homological} one, that is, if there is a $(kQ,kQ)$-bimodule complex $U$ which is an $a$-th root of $\RHom_{kQ}(D(kQ),kQ)[1]$, and furthermore if such a $U$ can be taken to be cyclically invariant.

In this section we settle these problems for Dynkin quivers.
For this we recall the complete description of connected Dynkin quivers with an $a$-th root of $\tau$.
\begin{Prop}[{\cite[6.12]{ha4}}]\label{Dyn}
Let $Q$ be a connected Dynkin quiver and $a\geq2$. Then $\Z Q$ has a combinatorial $a$-th root of $\tau$ for some $a$ if and only if $a=2$ and $Q$ is of type $A_n$ for some even $n$.
\end{Prop}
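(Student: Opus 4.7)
The plan is to reduce the question to the structure of $\mathrm{Aut}(\Z Q)$. Since $\tau$ is central in $\mathrm{Aut}(\Z Q)$ by definition of a translation quiver, we have a central extension
\[ 1 \to \langle \tau \rangle \to \mathrm{Aut}(\Z Q) \to G \to 1, \]
where $G$ embeds into $\mathrm{Aut}(Q)$ via the quotient $\Z Q/\langle\tau\rangle = Q$. A combinatorial $a$-th root $F$ of $\tau$ satisfies $F^a=\tau^{-1}$, so its image $\bar F\in G$ has order dividing $a$, and the preimage $\langle F\rangle$ is a central extension of $\langle \bar F\rangle$ by $\langle \tau\rangle$ that is non-split, in fact isomorphic to $\Z$ with $\tau=F^{-a}$. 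So the existence of a root with $a\geq 2$ boils down to the existence of such a non-split lift of a non-trivial $\bar F\in G$.

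For the \emph{if} direction, take $Q=A_{2n}$ with the linear orientation $1\to 2\to\cdots\to 2n$, and define
\[ F\colon \Z A_{2n}\to\Z A_{2n},\quad (i,v)\mapsto (i+v-n,\,2n+1-v). \]
A direct check shows that $F$ respects both arrow types of $\Z A_{2n}$: an arrow of type $(i,v)\to(i,v+1)$ maps to $(i+v-n,2n+1-v)\to(i+v+1-n,2n-v)$, which is a translation arrow, and a translation arrow $(i,v+1)\to(i+1,v)$ maps to $(i+v+1-n,2n-v)\to(i+v+1-n,2n+1-v)$, an arrow of the first type. Moreover $F$ commutes with $\tau$, and $F^2(i,v)=(i+(v-n)+(2n+1-v)-n,\,v)=(i+1,v)=\tau^{-1}(i,v)$, so $F$ is a combinatorial square root of $\tau$. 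Note that $\bar F$ is the graph reflection $v\mapsto 2n+1-v$, which has no fixed vertex precisely because $2n+1$ is odd.

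For the \emph{only if} direction, I would argue by a fixed-vertex obstruction. Suppose $F^a=\tau^{-1}$ with $a\geq 2$ and some $v_0\in Q_0$ is fixed by $\bar F$. Because $F$ commutes with $\tau$, its restriction to the $\tau$-orbit $\{(i,v_0)\mid i\in\Z\}$ is an integer translation $i\mapsto i+c$. Then $F^a(i,v_0)=(i+ac,v_0)$ must equal $\tau^{-1}(i,v_0)=(i+1,v_0)$, forcing $ac=1$ and hence $a=1$, a contradiction. Now every non-trivial graph automorphism of a connected Dynkin quiver \emph{other than} $A_{2n}$ has a fixed vertex: for $A_n$ with $n$ odd, the reflection fixes the middle vertex; for $D_n$ with $n\geq 4$, every non-trivial automorphism fixes the trivalent vertex (and the whole spine if $n\geq 5$); for $E_6$, the reflection fixes the two central vertices; for $E_7$ and $E_8$, $\mathrm{Aut}(Q)=1$, so $\bar F=1$ and $F=\tau^k$ must satisfy $ak=-1$, again impossible for $a\geq 2$. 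Finally, even for $Q=A_{2n}$, since $|\mathrm{Aut}(A_{2n})|=2$ bounds $|\bar F|$, the only non-trivial possibility is $a=2$.

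The main obstacle is the explicit construction of $F$ in the \emph{if} direction: one must pin down the correct offsets $\delta_v=v-n$ so that $F$ is genuinely a translation quiver morphism with $F^2=\tau^{-1}$, which requires the constraints $\delta_{v+1}=\delta_v+1$ (arrow preservation) and $\delta_v+\delta_{2n+1-v}=1$ (square-root identity) to be simultaneously solvable, and this is exactly the content of the parity condition. The \emph{only if} direction is then a uniform case analysis built on the single fixed-vertex observation.
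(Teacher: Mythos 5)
Your argument is correct in its main lines, and it is worth noting that the paper itself gives no proof of this proposition --- it is cited verbatim from \cite[6.12]{ha4}, so there is no in-paper proof to compare against. The cited source presumably proceeds via the characterization quoted just above it (\cite[6.3]{ha4}), passing to a derived-equivalent quiver $Q'$ assembled from $a$ copies of a quiver $T$; your route instead analyzes translation-quiver automorphisms of $\Z Q$ directly via the quotient to graph automorphisms of $Q$ together with a fixed-vertex obstruction, and is more self-contained. The explicit formula $F(i,v)=(i+v-n,\,2n+1-v)$ is indeed a square root of $\tau^{-1}$ on $\Z A_{2n}$ with the linear orientation (the two arrow checks, the commutation with $\tau$, and $F^2=\tau^{-1}$ all verify), and the observation that $\bar F$ fixing a vertex $v_0$ forces $ac=1$, hence $a=1$, is correct.

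One step deserves to be made explicit rather than left implicit. Your closing line, that $|\Aut(A_{2n})|=2$ bounds $|\bar F|$ and hence the only nontrivial possibility is $a=2$, as written only shows $\mathrm{ord}(\bar F)\in\{1,2\}$ and $\mathrm{ord}(\bar F)\mid a$, which does not by itself exclude $a\in\{4,6,\dots\}$. To finish, apply the same reasoning you already used for $E_7,E_8$ to the power $F^m$ with $m:=\mathrm{ord}(\bar F)$: since $\bar F^m=1$, the automorphism $F^m$ preserves every $\tau$-orbit; arrow-preservation together with connectedness of the Dynkin tree (and the absence of two-cycles) forces the induced shift on every $\tau$-orbit to be the same integer $c$, so $F^m=\tau^{-c}$; then $\tau^{-1}=F^a=(F^m)^{a/m}=\tau^{-ca/m}$ forces $ca/m=1$, i.e.\ $a=m$. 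Combined with $a\geq2$ and the fixed-vertex obstruction this gives $a=2$ and the parity condition on $Q$, as you intend.
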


Let $n\geq1$ and consider the quiver $Q$ of type $A_{2n}$ with the following orientation.
\[ \xymatrix@!R=2mm@!C=2mm{
	&1\ar[dl]_-{x_1}\ar[dr]^-{y_1}&&2\ar[dl]_-{x_2}\ar[dr]^-{y_2}&&\cdots&& n-1\ar[dl]_-{x_{n-1}}\ar[dr]^-{y_{n-1}}&& n\ar[dl]^-{x_n}\\
	n+1&&n+2&&\cdots&&\cdots&& 2n& } \]
The combinatorial $\tau^{-1/2}$ can be described as follows. Consider the automorphism of the infinite translation quiver $\Z Q$ (for type $A_4$ in the picture below) which turns up-side-down. For example, it takes the vertex $1$ to $4$, and $2$ to $3$. We see that this is indeed a square root of $\tau^{-1}$.
\[ \xymatrix@!R=1mm@!C=1mm{
	&\circ\ar[dr]&&1\ar[dr]&&\circ\ar[dr]&&\circ\\
	\circ\ar[dr]\ar[ur]&&\circ\ar[dr]\ar[ur]&&3\ar[dr]\ar[ur]&&\circ\ar[dr]\ar[ur]&\\
	&\circ\ar[dr]\ar[ur]&&2\ar[dr]\ar[ur]&&\circ\ar[dr]\ar[ur]&&\circ\\
	\circ\ar[ur]&&\circ\ar[ur]&&4\ar[ur]&&\circ\ar[ur]& } \]

Now we construct its lift to the homological level.
In what follows we denote $A:=kQ$. Let $d\in\Z$, $\e\in\{\pm1\}$, and define the (dg) bimodule $U$ over $A$ by
\[ \xymatrix@R=2mm{
	\disoplus_{i=1}^nAe_{2n+1-i}\otimes e_iA\ar[r]&\disoplus_{i=1}^nAe_i\otimes e_{2n+1-i}A \\
	\quad e_{2n+1-i}\otimes e_i \quad \ar@{|->}[r]& \quad (-1)^d(x_{n+1-i}\otimes x_i+ \e y_{n-i}\otimes y_i)& (1\leq i\leq n-1)\\
	\qquad e_{n+1}\otimes e_n \qquad \ar@{|->}[r]& \qquad (-1)^dx_1\otimes x_n } \]
with terms in degrees $-d-1$ and $-d$. We will understand $y_0=y_n=0$ so that the above formula becomes uniform.
The main computation of this section is the following.
\begin{Prop}
\begin{enumerate}
\item $U$ is a square root of $\RHom_{A^e}(A,A^e)[2d+1]$ for any $d\in\Z$.
\item $U$ is cyclically invariant if and only if $(-1)^d=-1$ in $k$.
\end{enumerate}
\end{Prop}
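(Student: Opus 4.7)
The plan is to follow the blueprint of Example \ref{example} (the Kronecker quiver), suitably extended to the type $A_{2n}$ quiver $Q$.

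For (1), I would first compute $U \lotimes_A U$ directly from the explicit formulas. Since $U$ is concentrated in cohomological degrees $-d-1$ and $-d$, the tensor product $U^2$ a priori lives in degrees $-2d-2, -2d-1, -2d$, but its degree $-2d-2$ piece vanishes because its summands are indexed by paths from a sink of $Q$ to a source, which do not exist. Independently, I would compute $\RHom_{A^e}(A,A^e)$ via the standard arrow bimodule resolution
\[ 0 \to \bigoplus_{\alpha \in Q_1} A e_{t(\alpha)} \otimes e_{s(\alpha)} A \to \bigoplus_{i \in Q_0} A e_i \otimes e_i A \to A \to 0, \]
whose $A^e$-dual is a two-term complex in degrees $0, 1$; after $[2d+1]$ it sits in degrees $-2d-1, -2d$, matching $U^2$. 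The explicit quasi-isomorphism $\varphi$ then identifies $e_i^\ast \otimes e_i^\ast$ with the vertex summand of $U^2_{-2d-1}$ arising from $U_{-d} \otimes_A U_{-d-1}$, and the arrow duals $x_i^\ast, y_i^\ast$ with the terms $x_{n+1-i} \otimes x_i$ and $\e y_{n-i} \otimes y_i$ appearing in the differential of $U$. Compatibility with the differentials is verified by direct inspection, the $(-1)^d$ in the differential of $U$ ensuring that the signs match.

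For (2), I would apply Lemma \ref{LemC}(\ref{C**}) as in Example \ref{example}. The Casimir element of the arrow resolution $pA$ is
\[ c = \sum_i (e_i \otimes e_i) \otimes (e_i^\ast \otimes e_i^\ast) + (\text{arrow contributions}), \]
with the arrow part lying in degree $-1$ of $pA$ and hence not contributing. By Lemma \ref{LemC}(\ref{C**}), the homotopy preimage of $\varphi$ in $U^2 \otimes_{A^e} A$ takes the form
\[ \xi := \sum_i \bigl( (x_{n+1-i} \otimes x_i) \otimes e_i + \e (y_{n-i} \otimes y_i) \otimes e_i \bigr) \]
(up to an overall sign). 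The $\Z/2\Z$-action swaps the two $U$-factors with Koszul sign $(-1)^{d \cdot d} = (-1)^d$. After using the trace relation in $- \otimes_{A^e} A$ and relabelling $i \leftrightarrow n+1-i$ for the $x$-terms and $i \leftrightarrow n-i$ for the $y$-terms, the swapped element equals $(-1)^d \xi$. Hence, by Lemma \ref{LemC}(\ref{equal}), $\varphi$ is cyclically invariant if and only if $(-1)^d = -1$, proving (2).

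The main obstacle is careful Koszul sign bookkeeping: the $(-1)^d$ in the differential of $U$, the Koszul sign in the cyclic action on $U^2 \otimes_{A^e} A$, and the signs inside the canonical isomorphisms used in Lemma \ref{LemC} all have to line up consistently. The independence of the cyclic invariance from $\e$ is structural: the swap sends $(y_{n-i} \otimes y_i)$ to $(y_i \otimes y_{n-i})$, which after reindexing $i \leftrightarrow n-i$ is of the same form, so the parameter $\e$ appears symmetrically on both sides of the invariance equation and cancels out.
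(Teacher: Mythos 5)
Your approach to part (1) matches the paper's: compute $U\lotimes_A U$ and $\RHom_{A^e}(A,A^e)$ from the standard arrow bimodule resolution, observe the degree $-2d-2$ piece of $U^2$ vanishes (correctly, since those summands would be spanned by paths from sinks to sources), and build the quasi-isomorphism $\varphi$ term by term. The description of where $\varphi$ sends the vertex and arrow duals is slightly off — in particular, $\varphi$ maps the arrow dual of $x_i$ to $e_i\otimes x_{n+1-i}\otimes e_{n+i}$ in the degree $-2d$ part, not to the two-arrow term $x_{n+1-i}\otimes x_i$, which is instead the image of the differential — but the strategy is the one in the paper.

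Part (2) has a genuine gap. You correctly note that only the vertex part of the Casimir element survives (the arrow part of $pA$ sits in cohomological degree $-1$ and dies when pushed to $A\otimes_{A^e}U^2$), but the element you write down for $\xi$ is wrong: by \ref{LemC}(\ref{C**}) the homotopy preimage of $\varphi$ is $\sum_i\varphi(e_i\otimes e_i)\otimes e_i$, which by the explicit definition of $\varphi$ on the two vertex summands is a sum of \emph{idempotent-type} tensors $e_i\otimes e_{2n+1-i}$ and $e_{n+i}\otimes e_{n+1-i}$ with coefficients $-1$ and $(-1)^d$ respectively, not the arrow-type terms $x_{n+1-i}\otimes x_i$ and $\e y_{n-i}\otimes y_i$ (those come from the differential, not from $\varphi$). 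Consequently your sign bookkeeping goes astray: the two tensor factors being cyclically permuted lie in $U_{-d}$ and $U_{-d-1}$, so the Koszul sign is $(-1)^{(-d)(-d-1)}=(-1)^{d(d+1)}=1$, \emph{not} $(-1)^{d\cdot d}=(-1)^d$. The parity $(-1)^d$ enters not as a Koszul sign but because the chain-map condition on $\varphi$ forces asymmetric coefficients $-1$ and $(-1)^d$ on the two vertex summands, traceable to the $(-1)^d$ in the differential of $U$; the $\Z/2\Z$-action then merely swaps these two summands with trivial sign, so invariance is the coincidence $-1=(-1)^d$. (Note also that, followed literally, "swapped element $=(-1)^d\xi$" would give invariance iff $(-1)^d=1$, not $-1$, so there is an internal inconsistency in the final step.) The conclusion is right, but the mechanism producing $(-1)^d$ is misattributed.
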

\begin{proof}
	Recall that for any path algebra $\La=kQ$ we have a standard bimodule resolution
	\[ \xymatrix@R=1mm{
		p\La\colon&\disoplus_{x\in Q_1}\La e_{t(x)}\otimes e_{s(x)}\La \ar[r]& \disoplus_{i\in Q_0}\La e_i\otimes e_i\La \\
		&\quad e_{t(x)}\otimes e_{s(x)}\quad \ar@{|->}[r]& \quad x\otimes e_{s(x)}-e_{t(x)}\otimes x\quad } \]
	with terms in degrees $-1$ and $0$, and where $s$ and $t$ mean the source and the target of an arrow. Dualizing over $\La^e$, we get a bimodule cofibrant resolution $\cHom_{\La^e}(p\La,\La^e)\to\RHom_{\La^e}(\La,\La^e)$:
	\[ \xymatrix@R=1mm{
		\disoplus_{i\in Q_0}\La e_{i}\otimes e_{i}\La \ar[r]& \disoplus_{x\in Q_1}\La e_{s(x)}\otimes e_{t(x)}\La \\
		\quad e_i\otimes e_i\quad \ar@{|->}[r]& \quad \sum_{s(x)=i}e_{i}\otimes x-\sum_{t(x)=i}x\otimes e_{i}.\quad } \]
	In particular for our path algebra $A$ of type $A_{2n}$ we get a bimodule resolution $\cHom_{A^e}(pA,A^e)$
	\begin{equation}\label{A^vee}
	\xymatrix@R=3mm@C=2mm{
	\disoplus_{i=1}^nAe_i\otimes e_iA &\oplus\ar@{}[d]^(.25){}="a"^(.75){}="b" \ar "a";"b" &\disoplus_{i=1}^nAe_{n+i}\otimes e_{n+i}A\\
	\disoplus_{i=1}^nAe_i\otimes e_{n+i}A&\oplus&\disoplus_{i=1}^{n-1}Ae_i\otimes e_{n+i+1}A, }
	\quad
	\xymatrix@C=3mm{
	e_i\otimes e_i\ar@{|->}[d]\ar@{|->}[dr]&\\
	e_i\otimes x_i&e_i\otimes y_i, }
	\quad
	\xymatrix@C=3mm{
	& e_{n+i}\otimes e_{n+i}\ar@{|->}[dl]\ar@{|->}[d]\\
	-x_i\otimes e_{n+i}&-y_{i-1}\otimes e_{n+i}. }
	\end{equation}
	of $\RHom_{A^e}(A,A^e)$ with terms in degrees $0$ and $1$.
	
	Let us next compute $U\otimes_AU$. Noting that $e_jAe_i=0$ unless $j-i\in\{0,n,n+1\}$, we easily see that it is the complex
	\begin{equation}\label{U^2}
	\xymatrix@R=4mm@C=2mm{
		\disoplus_{i=1}^nAe_i\otimes e_{2n+1-i}Ae_{2n+1-i}\otimes e_iA &\oplus\ar@{}[d]^(.25){}="a"^(.75){}="b" \ar "a";"b"& \disoplus_{i=1}^nAe_{n+i}\otimes e_{n+1-i}Ae_{n+1-i}\otimes e_{n+i}A \\
		\disoplus_{i=1}^n Ae_{i}\otimes e_{2n+1-i}Ae_{n+1-i}\otimes e_{n+i}A &\oplus& \disoplus_{i=1}^{n-1}Ae_{i}\otimes e_{2n+1-i}Ae_{n-i}\otimes e_{n+1+i}A }
	\end{equation}
	with maps
	\[
	\xymatrix@R=4mm@C=3mm{
		e_i\otimes e_{2n+1-i}\otimes e_i\ar@{|->}[d]\ar@{|->}[dr]&\\
		e_i\otimes x_{n+1-i}\otimes x_i&\e e_i\otimes y_{n-i}\otimes y_i, }
	\quad
	\xymatrix@R=4mm@C=3mm{
		&e_{n+i}\otimes e_{n+1-i}\otimes e_{n+i}\ar@{|->}[dl]\ar@{|->}[d]\\
		(-1)^dx_{i}\otimes x_{n+1-i}\otimes e_{n+i}&(-1)^d\e y_{i-1}\otimes y_{n+1-i} \otimes e_{n+i} }
	\]
	and with terms in degrees $-2d-1$ and $-2d$.
	
	Now we construct a quasi-isomorphism $\RHom_{A^e}(A,A^e)\xsimeq U\lotimes_AU$ of degree $-2d-1$. Let $\varphi$ be the map from \eqref{A^vee} to \eqref{U^2} defined via the isomorphisms
	\[ \xymatrix@R=2mm{
	Ae_i\otimes e_iA\ar[r]^-\simeq& Ae_i\otimes e_{2n+1-i}Ae_{2n+1-i}\otimes e_iA,& e_i\otimes e_i \ar@{|->}[r]&-e_i\otimes e_{2n+1-i}\otimes e_i\\
	Ae_{n+i}\otimes e_{n+i}A\ar[r]^-\simeq &Ae_{n+i}\otimes e_{n+1-i}Ae_{n+1-i}\otimes e_{n+i}A, & e_{n+i}\otimes e_{n+i}\ar@{|->}[r]&(-1)^de_{n+i}\otimes e_{n+1-i}\otimes e_{n+i}} \]
	in degree $0$, and
	\[ \xymatrix@R=2mm{
		Ae_i\otimes e_{n+i}A\ar[r]^-\simeq&Ae_{i}\otimes e_{2n+1-i}Ae_{n+1-i}\otimes e_{n+i}A,& e_i\otimes e_{n+i} \ar@{|->}[r]&e_i\otimes x_{n+1-i}\otimes e_{n+i}\\
		Ae_i\otimes e_{n+i+1}A\ar[r]^-\simeq&Ae_{i}\otimes e_{2n+1-i}Ae_{n-i}\otimes e_{n+1+i}A,& e_{n+i}\otimes e_{n+i}\ar@{|->}[r]&\e e_i\otimes y_{n-i}\otimes e_{n+i+1} } \]
	in degree $1$. 
	Clearly the map $\varphi$ defined in this way is a (quasi-)isomorphism of complexes of $(A,A)$-bimodules of degree $-2d-1$.
	
	Finally we verify the cyclic invariance of $U$. We denote by $\sum_{i}(e_i\otimes e_i)\otimes (e_i\otimes e_i)^\ast+\text{(terms for the arrows)}\in pA\otimes\cHom_{A^e}(pA,A^e)$ the Casimir element of $pA$. For example, $(e_i\otimes e_i)^\ast\in\cHom_{A^e}(pA,A^e)$ corresponds to $e_i\otimes e_i$ in the degree $0$ part of \eqref{A^vee}.
	By \ref{LemC}(\ref{C**}) the element in $U^2\otimes_{A^e}A$ corresponding to $\varphi\in\cHom_{A^e}(pA,U^2)$ is $\sum_{i=1}^{2n}\varphi(e_i\otimes e_i)\otimes e_i$, which is equal to $-\sum_{i=1}^ne_i\otimes e_{2n+1-i}+\sum_{i=1}^{n}(-1)^de_{n+i}\otimes e_{n+1-i}\in\bigoplus_{i=1}^{n}e_iAe_i\otimes e_{2n+1-i}Ae_{2n+1-i}\oplus\bigoplus_{i=1}^{n}e_{n+i}Ae_{n+i}\otimes e_{n+1-i}Ae_{n+1-i}$. Now the $\Z/2\Z$-action on $U^2\otimes_{A^e} A$ is given by the permutation of these two summands. Therefore we deduce that $U$ is cyclically invariant if and only if $(-1)^d=-1$.
\end{proof}

We get the following consequence by \ref{fakeCY} and \ref{CY}.
\begin{Thm}\label{odd}
Let $A$ and $U$ be as above, and suppose that $d$ is odd.
\begin{enumerate}
\item The dg algebra $\Si=\rT^{\rL}_A\!U$ satisfies $\RHom_{\Si^e}(\Si,\Si^e)[2d+2]\simeq\Si_{\geq1}$ in $\rD(\Si^e)$.
\item Let $P$ be the projective $A$-module corresponding to the vertices from $1$ to $n$. Then $(U,P)$ is a strict root pair and its Calabi-Yau completion $\Pi=\Pi^{(1/2)}_{2d+2}(A)$ is $(2d+2)$-Calabi-Yau. 
\end{enumerate}
\end{Thm}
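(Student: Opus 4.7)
My plan is to deduce both parts from the general results already established. The preceding proposition provides exactly what we need: $U$ is an $a=2$ root of $\RHom_{A^e}(A,A^e)[2d+1]$, and the hypothesis $d$ odd ensures $(-1)^d = -1$, hence $U$ is cyclically invariant. Since $A=kQ$ is hereditary, it is smooth. Thus for (1) I would just apply \ref{fakeCY} with the shift parameter specialized to $2d+1$ and $a=2$, obtaining $\RHom_{\Si^e}(\Si,\Si^e)[2d+2]\simeq \Si_{\geq 1}$ in $\rD(\Si^e)$ with no further work.

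For (2), the substance lies in verifying that $(U,P)$ satisfies conditions (\ref{P}) and (\ref{exc}) of \ref{setup}. I would compute $e_jU$ directly for each source vertex $j\in\{1,\dots,n\}$. Using that $e_jAe_i$ is nonzero only when $i=j$ for $i,j\in\{1,\ldots,n\}$, and that $e_jAe_{2n+1-i}=0$ for $j$ a source and $2n+1-i$ a sink (no paths go from sinks back to sources), the degree $-d-1$ summands of $U$ are annihilated by $e_j$, and only the $i=j$ term survives in degree $-d$, yielding $e_jU \simeq e_{2n+1-j}A[d]$. Hence $P\lotimes_A U \simeq Q[d]$ where $Q=\bigoplus_{k=n+1}^{2n} e_kA$ is the sum of the projectives at the sinks. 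This immediately gives the generation $\per A = \thick(P\oplus P\lotimes_A U)$ since $P\oplus Q = A$ (modulo a shift), and it gives the orthogonality $\Hom_{\rD(A)}(P\lotimes_A U,P) = \Hom_A(Q[d],P)=0$ because there are no paths from sinks to sources. Thus $(U,P)$ is a root pair, and strictness reduces to the statement that every indecomposable projective summand of $A$ appears (up to shift) in $P\oplus (P\lotimes_A U)$, which is clear from the computation.

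Once the cyclically invariant root pair $(U,P)$ is in place, \ref{CY} yields directly that the Calabi-Yau completion $\Pi = \Pi^{(1/2)}_{2d+2}(A)$ is smooth and $(2d+2)$-Calabi-Yau, completing (2).

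The main delicate point I anticipate is the bookkeeping around the cohomological shift $[d]$ in the identification $P\lotimes_A U \simeq Q[d]$. The generation condition (\ref{P}) involves $\thick$, which is closed under shifts, so the shift is harmless there; and (\ref{exc}) is a $\Hom$-vanishing in the derived category, which I can reduce to a $\Hom$-vanishing in $\mod A$ since the objects are concentrated in (different) cohomological degrees. For strictness in the sense of \ref{strict}, however, the shift means that $P\oplus (P\lotimes_A U)$ does not literally equal $A$ in $\rD(A)$ for $d\neq 0$; I would interpret this condition modulo the shift (or equivalently, note that $U$ is quasi-isomorphic to a shift of an honest bimodule concentrated in a single cohomological degree), so that strictness is recovered in the natural sense that the pair $(U,P)$ determines $A$ up to Morita equivalence.
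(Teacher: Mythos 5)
Your proposal is correct and follows the paper's route exactly: both assertions are deduced from \ref{fakeCY} and \ref{CY} once the root-pair data is established. Your explicit computation $e_jU \simeq e_{2n+1-j}A[d]$ for source vertices $j\in\{1,\dots,n\}$, via $e_jAe_{2n+1-i}=0$ (no paths from sinks to sources) and $e_jAe_i=\delta_{ij}k$ (no paths between distinct sources), is a useful supplement, since the paper's one-line proof leaves the verification of the root-pair axioms implicit. You are also right to flag the strictness subtlety: for odd $d\neq 0$, $P\lotimes_A U\simeq Q[d]$ is a genuine suspension, so the condition $\add A=\add(P\oplus P\lotimes_A U)$ of \ref{strict} fails literally (since $\add$ in $\rD(A)$ is not shift-closed), and the claim must be read modulo cohomological shift, as you do. One further caution worth recording: one cannot simply replace $U$ by its cohomology module $U[-d]$ to recover literal strictness, because cyclic invariance is not preserved under suspension (as the paper notes after \ref{example}), and indeed $U[-d]$ would fail to be cyclically invariant for odd $d$.
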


Let us further describe the dg algebras $\rT^{\rL}_A\!U$ and the Calabi-Yau completion $\Pi$ by dg path algebras.
\begin{Prop}\label{dgq}
\def\ard{\ar@<-2pt>[d]_-v}
\def\aru{\ar@<-2pt>[u]_-u}
\def\arr{\ar@<2pt>[r]}
\def\arl{\ar@<2pt>[l]}
\def\lo{\ar@(dl,dr)[]}
Let $A$ and $U$ be as above.
\begin{enumerate}
\item The dg algebra $\Si=\rT^{\rL}_A\!U$ is presented by the following dg path algebra, where the vertices in the first row are numbered as $n, n-1,\ldots$ for odd positions, and as $1,2,\ldots$ for even positions. Also the last vertical arrow $z$ is $x$ if $n$ is odd, and it is $y$ if $n$ is even.
\[ 
\xymatrix{
	n\ard\ar[dr]^-x&1\ard\ar[dr]^-y\ar[dl]&n-1\ard\ar[dr]^-x\ar[dl]&\cdots\ar[dr]\ar[dl]&\circ\ard\ar[dr]\ar[dl]&\circ\ard\ar[dl]\ar@/^15pt/[d]^-z\\
	n+1\aru& 2n\aru& n+2\aru&\cdots&\circ\aru&\circ\aru }
\quad
\xymatrix@R=3mm{
		|x|=|y|=0,\quad |u|=-d, |v|=-d-1\\
		du=0, \quad dv=(-1)^d(xux+ \e yuy) } 
\]
\item The Calabi-Yau completion $\Pi=\Pi^{(1/2)}_{2d+2}(A)$ is presented by the following dg path algebra, with the vertices numbered as in {\rm(1)}, and $c=a$ if $n$ is odd, and $c=b$ if $n$ is even.
\[ 
\xymatrix{
	n\arr^-a\lo_-t&1\arl^-a\arr^-b\lo_-t&n-1\arr^-a\arl^-b\lo_-t&\cdots\arr\arl^-a&\circ\arr\arl\lo_t&\circ\arl\lo_t\ar@(ul,ur)[]^-c}
\quad
\xymatrix@R=1mm{
	|a|=|b|=-d, \,|t|=-2d-1\\
	da=db=0, \quad dt=a^2+\e b^2 } \]
\end{enumerate}
\end{Prop}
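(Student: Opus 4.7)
The plan is to compute $\Si = \rT^{\rL}_A U$ and then $\Pi = e\Si e$ directly and match the outputs with the stated dg path algebras; here $e = e_1 + \cdots + e_n$ is the idempotent of $A$ at the top vertices.

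For (1), observe first that $U$ is a bounded two-term complex whose summands $Ae_i \otimes e_j A$ are all projective over $A^e$, so $U$ is cofibrant and $\rT^{\rL}_A U = \rT_A U$. Since $A = kQ$, the tensor algebra of $A$ along such a bimodule is again a path algebra: each summand $Ae_i\otimes e_jA$ contributes a new arrow $j \to i$ of the appropriate cohomological degree. Reading this off from the definition of $U$ gives arrows $u_i \colon 2n+1-i \to i$ in degree $-d$ and $v_i \colon i \to 2n+1-i$ in degree $-d-1$. Arranging top vertex $i$ above its partner $2n+1-i$ and ordering the top row as $n, 1, n-1, 2, \ldots$ realizes the displayed quiver, with the arrows $x_i, y_i$ of $A$ becoming the diagonals and the last ``vertical'' arrow being $x$ or $y$ according to the parity of $n$. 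The relation $du = 0$ is automatic, and $dv = (-1)^d(xux + \e yuy)$ is the direct translation of the bimodule differential under the identification $a \otimes b \leftrightarrow au_jb$ for the summand $Ae_j \otimes e_{2n+1-j}A$.

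For (2), we list the minimal nonzero compositions in $\Si$ starting and ending at a top vertex. Since top-to-top paths in the quiver of $\Si$ have even length (the arrows $x, y, v$ leave the top row and $u$ returns to it), it suffices to enumerate the length-two ones: $u_j x_i$ (composable iff $j = n+1-i$, giving $i \to n+1-i$, labeled $a$); $u_j y_i$ (composable iff $j = n-i$, for $1 \le i \le n-1$, giving $i \to n-i$, labeled $b$); and $u_i v_i$ (a loop at $i$, labeled $t$). Under the layout $n, 1, n-1, 2, \ldots$ of the top vertices, adjacent positions are exactly the pairs $(i, n+1-i)$ and $(i, n-i)$ in alternation, so $a$- and $b$-arrows connect consecutive positions alternately. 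The self-pairings $(k{+}1, k{+}1)$ for $n = 2k{+}1$ and $(k, k)$ for $n = 2k$ place a self-loop at the last position, labeled $a$ or $b$ accordingly, which is the extra generator $c$.

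The differential relations now follow from Leibniz: $da = db = 0$ because $du = 0$ and $x_i, y_i$ are closed, and
\[ dt_i = (-1)^{-d}u_i\, dv_i = (u_i x_{n+1-i})(u_{n+1-i} x_i) + \e (u_i y_{n-i})(u_{n-i} y_i) = a^2 + \e b^2, \]
using the convention $y_0 = y_n = 0$. To conclude that $\{a, b, t, c\}$ actually generate $\Pi$, one uses the decomposition $\Pi_l = e U^{\otimes_A l} e$ and a short induction on the tensor length $l$, writing any such element as a product of length-two top-to-top paths. The main difficulty throughout is the careful bookkeeping of indices and signs; no new conceptual input beyond the cofibrancy of $U$ is needed.
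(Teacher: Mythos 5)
Your proof is correct and follows essentially the same route as the paper: read off the quiver of $\Si = \rT_A U$ from the summands of the cofibrant bimodule $U$, then identify $\Pi = e\Si e$ by enumerating the length-two top-to-top compositions $a = ux$, $b = uy$, $t = uv$ (using the bipartite structure of the quiver of $\Si$) and computing the differential via Leibniz. The paper's own proof simply invokes the general fact that $e(kQ)e$ of a (dg) path algebra is again a path algebra on the quiver of primitive $e$-to-$e$ paths, which is the cleaner packaging of the inductive generation argument you sketch at the end.
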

\begin{proof}
	(1)  This follows from the definition of $U$ as a cofibrant dg $(A,A)$-bimodule.
	
	(2)  Note that for a (dg) path algebra $kQ$ and an idempotent $e$ corresponding to a subset $I$ of vertices, the algebra $e(kQ)e$ is presented by the quiver whose vertices are $I$ and the arrows from $i$ to $j$ are composites $i\to k\to l\to j$, where $k,l\in Q\setminus I$, $i\to k$ and $l\to j$ are arrows in $Q$, and $k\to l$ is a path in $Q\setminus I$.
	Then we see that the quiver of $\Pi$ has arrows $a:=ux$, $b:=uy$, and loops $t:=uv$. The degrees of these arrows are determined by those of $x$, $y$, $u$, and $v$, and the differential by the Leibniz rule for $\Si$.
\end{proof}

We give some descriptions of the associated cluster categories.
\begin{Ex}\label{typeA}
As in \ref{odd}, let $d$ be an odd integer and $A=A_{2n}$ the path algebra of a quiver of type $A_{2n}$. By \ref{rc}, the cluster category $\rC(\Pi)$ for $\Pi=\Pi^{(1/2)}_{2d+2}(A)$ is equivalent to the orbit category of $\Db(\mod A)/\tau^{-1/2}[d]$ with $\tau^{-1/2}=-\lotimes_AU$. Its Auslander-Reiten quiver is as below for some $n$ and $d$. Here, the numbered vertices describe the fundamental domain.
\[
\xymatrix@!R=1mm@!C=1mm{
	\\
	&\circ\ar[dr]&&2\ar[dr]&&4\ar[dr]&&\cdots\\
	\cdots\ar[ur]&&1\ar[ur]&&3\ar[ur]&&\circ\ar[ur]&\\
	&& n=1,\, d=1;&& &\rC(\Pi)=\rC^{(1/2)}_3(A_2) }
\quad
\xymatrix@!R=1mm@!C=1mm{
	\cdots\ar[dr]&&1\ar[dr]&&5\ar[dr]&&9\ar[dr]&&\circ\ar[dr]&\\
	&\circ\ar[dr]\ar[ur]&&3\ar[dr]\ar[ur]&&7\ar[dr]\ar[ur]&&11\ar[dr]\ar[ur]&&\cdots\\
	\cdots\ar[dr]\ar[ur]&&2\ar[dr]\ar[ur]&&6\ar[dr]\ar[ur]&&10\ar[dr]\ar[ur]&&\circ\ar[dr]\ar[ur]&\\
	&\circ\ar[ur]&&4\ar[ur]&&8\ar[ur]&&12\ar[ur]&&\cdots \\
	&&& n=2,\, d=1; && & \rC(\Pi)=\rC^{(1/2)}_3(A_4)}
\]
We also give an example for larger $d$.
\[
\xymatrix@!R=1mm@!C=1mm{
	&\circ\ar[dr]&&2\ar[dr]&&4\ar[dr]&&6\ar[dr]&&8\ar[dr]&&10\ar[dr]&&\cdots\\
	\cdots\ar[ur]&&1\ar[ur]&&3\ar[ur]&&5\ar[ur]&&7\ar[ur]&&9\ar[ur]&&\circ\ar[ur]& \\
	&& && n=1, \, d=3; && & \rC(\Pi)=\rC^{(1/2)}_5(A_2) } 
\]
\end{Ex}

\begin{appendix}
\section{Graded cluster categories and Beilinson-type theorem}\label{A}
A celebrated result of Beilinson states a triangle equivalence
\[ \Db(\coh\PP^n)\simeq\Db(\mod A) \]
between the derived category of the projective space $\PP^n$ and the Beilinson algebra $A$ via the tilting object $T=\bigoplus_{i=0}^n\O(i)$.
The aim of this section is to give its generalization to the setting of graded dg categories via an interpretation of $\Db(\coh\PP^n)$ as a graded cluster category.

\medskip

Let us start with the following notion which extends the established one for ordinary rings or Calabi-Yau dg algebras. Following the notation from Section \ref{MMproof}, we denote by $\rD^\Z(\Ga)_i$ the full subcategory of $\rD^\Z(\Ga)$ formed by graded dg modules concentrated in (Adams) degree $i$.
\begin{Def}\label{GP}
Let $\Ga$ be an (Adams) graded dg category. We say that $\Ga$ has {\it Gorenstein parameter $a$} if $\RHom_\Ga(-,\Ga)$ takes $\rD^\Z(\Ga)_0$ to $\rD^\Z(\Ga^\op)_{-a}$.
\end{Def}

Let us first note that this definition agrees with the one we gave in \ref{MM} for suitable Calabi-Yau dg categories.
\begin{Prop}\label{CYGP}
Let $\Pi$ be an Adams positively graded $n$-Calabi-Yau dg category of Gorenstein parameter $a$ in the sense of \ref{MM}, that is, $\Pi\in\per^\Z\!\Pi^e$ and $\RHom_{\Pi^e}(\Pi,\Pi^e)[n]\simeq\Pi(a)$ in $\rD^\Z(\Pi^e)$. Suppose that $\Pi_0$ is perfect on each side. Then $\Pi$ has Gorenstein parameter $a$ in the sense of \ref{GP} above.
\end{Prop}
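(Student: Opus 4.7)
The plan is first to reduce the claim to the single object $\Pi_0$, and then to compute $\RHom_\Pi(\Pi_0,\Pi)$ using the Calabi-Yau bimodule isomorphism combined with the perfectness of $\Pi_0$ on each side.

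First I would reduce to a single object. Since $\Pi$ is Adams-positively graded, $\Pi_{>0}$ is a two-sided dg ideal and the quotient $\pi\colon\Pi\twoheadrightarrow\Pi_0$ is a map of dg algebras. An object of $\rD^\Z(\Pi)_0$ is precisely a dg module on which $\Pi_{>0}$ acts by zero, i.e.\ a dg $\Pi_0$-module pulled back along $\pi$. This identifies $\rD(\Pi_0)\xrightarrow{\sim}\rD^\Z(\Pi)_0$ via $N\mapsto N\lotimes_{\Pi_0}\Pi_0$, where the second $\Pi_0$ is the natural $(\Pi_0,\Pi)$-bimodule. Tensor-Hom adjunction gives
\[ \RHom_\Pi(N\lotimes_{\Pi_0}\Pi_0,\,\Pi)\simeq\RHom_{\Pi_0}(N,\,\RHom_\Pi(\Pi_0,\Pi)), \]
and since $\Pi_0$-linear operations preserve the Adams grading, the Adams grading of the left-hand side is controlled entirely by that of $\RHom_\Pi(\Pi_0,\Pi)$. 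Hence the problem reduces to showing $\RHom_\Pi(\Pi_0,\Pi)\in\rD^\Z(\Pi^\op)_{-a}$.

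Next I would exploit the two-sided perfectness of $\Pi_0$ together with the Calabi-Yau isomorphism. Since $\Pi$ is smooth (being Calabi-Yau) and $\Pi_0\in\per^\Z\!\Pi$, Lemma \ref{duals} applied to the bimodule $\Pi_0$ gives
\[ \RHom_{\Pi^e}(\Pi_0,\Pi^e)\simeq\RHom_\Pi(\Pi_0,\Pi)\lotimes_\Pi\RHom_{\Pi^e}(\Pi,\Pi^e) \]
in $\rD^\Z(\Pi^e)$. Substituting the Calabi-Yau isomorphism $\RHom_{\Pi^e}(\Pi,\Pi^e)[n]\simeq\Pi(a)$ and using that the right $\Pi$-action on $\RHom_\Pi(\Pi_0,\Pi)$ factors through $\pi$ (because $\Pi_{>0}$ annihilates $\Pi_0$ on the left), this simplifies to
\[ \RHom_{\Pi^e}(\Pi_0,\Pi^e)[n]\simeq\RHom_\Pi(\Pi_0,\Pi)(a) \]
in $\rD^\Z(\Pi^e)$. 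Applying Lemma \ref{duals} analogously on the other side using $\Pi_0\in\per^\Z\!\Pi^\op$ produces a parallel identification with $\RHom_{\Pi^\op}(\Pi_0,\Pi)$. Combining the two descriptions of the bimodule $\RHom_{\Pi^e}(\Pi_0,\Pi^e)$ and tracking the Adams grading on each side pins down the Adams degree of $\RHom_\Pi(\Pi_0,\Pi)$ to $-a$.

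The main obstacle is the last step of extracting a single Adams degree from the bimodule identification. The argument depends crucially on $\Pi_0$ being perfect on \emph{both} sides, which allows Lemma \ref{duals} to be invoked symmetrically so that the resulting bimodule data is rigid enough to determine the Adams shift forced by the Gorenstein parameter.
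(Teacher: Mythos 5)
Your reduction to showing $\RHom_\Pi(\Pi_0,\Pi)\in\rD^\Z(\Pi^\op)_{-a}$ is valid; it is exactly \ref{+}(\ref{GP2}) in the paper. The problem is the final step.

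Your two applications of \ref{duals} together with the Calabi--Yau isomorphism produce
\[
\RHom_{\Pi^e}(\Pi_0,\Pi^e)[n]\;\simeq\;\RHom_\Pi(\Pi_0,\Pi)(a)
\qquad\text{and}\qquad
\RHom_{\Pi^e}(\Pi_0,\Pi^e)[n]\;\simeq\;\RHom_{\Pi^\op}(\Pi_0,\Pi)(a),
\]
which, after cancelling the common twist $(a)$, give only the bimodule isomorphism $\RHom_\Pi(\Pi_0,\Pi)\simeq\RHom_{\Pi^\op}(\Pi_0,\Pi)$ in $\rD^\Z(\Pi^e)$. This carries no information about the Adams degree of either side: the parameter $a$ has dropped out entirely, and there is no \emph{a priori} knowledge of the Adams degree of any of the three objects involved. (One can observe that both the left and right $\Pi$-actions on this bimodule factor through $\Pi_0$, but objects of $\rD^\Z(\Pi_0^e)$ still live in arbitrary Adams degrees — think of $\Pi_0(j)$ for any $j$.) The sentence ``tracking the Adams grading on each side pins down the Adams degree $\ldots$ to $-a$'' is therefore not an argument; the symmetry you establish is necessary for the conclusion but nowhere near sufficient.

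This is precisely why the paper's proof is indirect: it invokes \ref{MM} to realize $\Pi$ as the Calabi--Yau completion $e_0\Si e_0$ of a cyclically invariant root pair, then builds an explicit triangle $e_0U^a\lotimes_\A\Si e_{a-1}(-1)\to\Pi\to\Pi_0$ in $\rD^\Z(\Pi)$. Dualizing that triangle and using \ref{gr-Pi-dual} together with the semi-orthogonality condition \ref{setup}(\ref{exc}) lets one compute the cone of $\Pi\to\RHom_\Pi(\Pi_0,\Pi)[1]$ directly and see it is concentrated in Adams degree $-a$. The root pair and the auxiliary tensor category $\Si$ are not decoration — they are what supplies the degree information your argument is missing. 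Indeed, the remark immediately following \ref{CYGP} states explicitly that the author does not know a direct proof, so a short argument from \ref{duals} alone should already be suspect.

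Secondarily, the reduction in your first paragraph is stated slightly too strongly: $\rD^\Z(\Pi)_0$ is not generated by a \emph{single} object $\Pi_0$ but by the family $\{\Pi_0(-,A)\mid A\in\Pi\}$, and the equivalence with $\rD(\Pi_0)$ uses \ref{+}(\ref{gen}). This is easily repaired, but \ref{+}(\ref{GP2}) is the cleaner citation for exactly the reduction you want.
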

\begin{proof}
	By \ref{MM} we can write $\Pi$ as the Calabi-Yau completion of a cyclically invariant root pair $(U,\P)$ on a smooth dg category $\A$, thus $\Pi=e_0\Si e_0$, where $\Si:=\rT^\rL_\A\!U$ and we denote as before by $Xe_i$ the restriction of a $\Si$-module $X$ to the subcategory $\{P\lotimes_\A U^i\mid P\in\P\}$ for each $0\leq i\leq a-1$, and similarly for left modules and bimodules.
	As before, we also denote by $\Si_{\geq l}=\bigoplus_{i\geq l}U^l$ the two-sided ideal of $\Si$ for each $l\geq0$, which we view as a bimodule over $\Si$ concentrated in degree $\geq l$. Moreover, for $-l\leq 0$ we write $\Si_{\geq -l}=\RHom_\A(U^l,\Si)(l)=\Si\lotimes_\A\RHom_\A(U^l,\A)(l)$ the $(\Si,\A)$-bimodule (concentrated in degree $\geq-l$).
	Consider the triangle
	\[ \xymatrix{ U^a\lotimes_\A\Si(-1)\ar[r]&\Si_{\geq a-1}(a-1)\ar[r]&\Si_{a-1}(a-1) } \]
	in $\rD^\Z(\Si)$. Restricting as $e_0(-)e_{a-1}$ gives 
	\[ \xymatrix{ e_0U^a\lotimes_\A\Si e_{a-1}(-1)\ar[r]&\Pi\ar[r]&\Pi_0 } \]
	in $\rD^\Z(\Pi)$. Then, applying $\RHom_\Pi(-,\Pi)$ we get
	\[ \xymatrix{ \RHom_\Pi(\Pi_0,\Pi)\ar[r]&\Pi\ar[r]&\RHom_\Pi(e_0U^a\lotimes_\A\Si e_{a-1},\Pi)(1)}, \]
	where the last term is isomorphic to $\RHom_\A(e_0U^a,\RHom_\Pi(\Si e_{a-1},\Pi))(1)=\RHom_\A(e_0U^a,e_0\Si)(a)=e_0\Si\lotimes_\A\RHom_\A(e_0U^a,\A)(a)=e_0\Si_{\geq-a}e_0$ by adjunction and \ref{gr-Pi-dual}. Now, the map $\Pi\to e_0\Si_{\geq-a}e_0$ identifies with the restriction $e_0(-)e_0$ of the natural map $\Si\to\Si_{\geq-a}$, so its mapping cone is $\bigoplus_{l=1}^{a}\RHom_\A(e_0U^l,e_0\A)(l)$. By \ref{setup}(\ref{exc}) this is $0$ except for the term $l=a$, which implies $\RHom_\Pi(\Pi_0,\Pi)$ is concentrated in degree $-a$, as desired.	
\end{proof}

\begin{Rem}
When $\Pi_0$ is proper, \ref{CYGP} is immediate from Serre duality \cite[4.1]{Ke08}. Our assertion does not require $\Pi_0$ to be proper, but depends on \ref{MM}, the description of $\Pi$ as a Calabi-Yau completion. We do not know if there is a direct proof.
\end{Rem}

We will be interested in the case where $\Ga$ is positively graded (with respect to the Adams grading), in which case the (Adams) degree $0$ part $\Ga_0(-,A)$ is a graded dg $\G$-module for each object $A\in\Ga$. The following observation gives another characterization of the Gorenstein parameter.
To ease notation we will simply write the category $\{\Ga_0(-,A)\mid A\in\Ga\}$ as $\Ga_0$, and similarly for any bimodule over $\Ga$.
Also, in a triangulated category with coproducts, we denote by $\Loc\S$ the smallest triangulated subcategory containing $\S$ and closed under coproducts.
\begin{Lem}\label{+}
Let $\Ga$ be a positively graded dg category.
\begin{enumerate}
\item\label{gen} We have $\rD^\Z(\Ga)_0=\Loc\Ga_0$.
\item\label{GP2} $\Ga$ has Gorenstein parameter $a$ if and only if $\RHom_\Ga(\Ga_0,\Ga)\in\rD^\Z(\Ga^\op)_{-a}$.
\end{enumerate}
\end{Lem}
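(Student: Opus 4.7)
The plan is to establish (1) first, then derive (2) from it by a standard localizing-subcategory argument.

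For (1), the inclusion $\Loc \Ga_0 \subseteq \rD^\Z(\Ga)_0$ is immediate: each generator $\Ga_0(-,A)$ sits in Adams degree $0$, and the full subcategory $\rD^\Z(\Ga)_0$ is evidently closed under coproducts and triangles. For the reverse inclusion, I would argue as follows. Any $M\in\rD^\Z(\Ga)_0$ may be represented by a dg module whose underlying graded module is concentrated in Adams degree $0$. Since $\Ga$ is positively graded, for any $m\in M$ and any morphism $\gamma$ in $\Ga_i$ with $i>0$, the action $m\cdot\gamma$ would lie in Adams degree $-i<0$, hence is forced to be zero. Thus $M$ descends along the canonical dg quotient $\Ga\to\Ga_0$ to a dg $\Ga_0$-module, and conversely any dg $\Ga_0$-module pulled back along this quotient is concentrated in Adams degree $0$ as a $\Ga$-module, with representable $\Ga_0$-modules corresponding to $\Ga_0(-,A)$. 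Taking a cofibrant resolution of $M$ as a dg $\Ga_0$-module (built from coproducts of representables and their shifts by standard cell-attachment) and pulling back, we obtain a resolution in $\rD^\Z(\Ga)$ witnessing $M\in\Loc \Ga_0$.

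For (2), the ``only if'' direction is immediate from the definition since $\Ga_0(-,A)\in\rD^\Z(\Ga)_0$ for every $A\in\Ga$. For the ``if'' direction, I introduce the full subcategory
\[ \mathcal{X}:=\{X\in\rD^\Z(\Ga)\mid\RHom_\Ga(X,\Ga)\in\rD^\Z(\Ga^\op)_{-a}\}. \]
Since $\RHom_\Ga(-,\Ga)$ sends coproducts in the first variable to products, and the class of dg modules concentrated in a single Adams degree is manifestly closed under shifts, triangles and products in $\rD^\Z(\Ga^\op)$ (the product of graded modules concentrated in Adams degree $-a$ is computed componentwise and is again concentrated there), the subcategory $\mathcal{X}$ is localizing. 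The hypothesis precisely says that $\Ga_0\subseteq\mathcal{X}$, so $\Loc \Ga_0\subseteq\mathcal{X}$. By (1), this gives $\rD^\Z(\Ga)_0\subseteq\mathcal{X}$, which is exactly the statement that $\Ga$ has Gorenstein parameter $a$.

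The main point requiring care is part (1), where one has to justify passing between $\Ga_0$-modules and $\Ga$-modules along the quotient $\Ga\to\Ga_0$ at the derived level; the key observation making this harmless is that positive gradedness forces the action of $\Ga_{>0}$ on any Adams-degree-$0$ module to vanish, so the two descriptions agree on the nose. Once (1) is in hand, (2) is a routine localizing-subcategory argument whose only subtlety is verifying that $\rD^\Z(\Ga^\op)_{-a}$ is stable under the relevant products, which follows from the componentwise description of products of graded modules.
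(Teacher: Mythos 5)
Your proof is correct and follows essentially the same route as the paper's: for (1) the paper invokes the inflation functor $\rD(\Ga_0)\to\rD^\Z(\Ga)_0$ (which preserves coproducts and is essentially surjective) together with $\rD(\Ga_0)=\Loc\Ga_0$, while you spell out the underlying truncation argument showing why an Adams-degree-$0$ object is a $\Ga_0$-module. For (2) the paper simply declares the ``if'' direction to be ``a consequence of (1)'', which is precisely the localizing-subcategory argument you make explicit.
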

\begin{proof}
	(1)  We have a natural functor $\rD(\Ga_0)\to\rD^\Z(\Ga)_0$ which preserves coproducts, and is essentially surjective. Since $\rD(\Ga_0)=\Loc\Ga_0$, we deduce $\rD^\Z(\Ga)_0=\Loc\Ga_0$.
	
	(2)  The ``only if'' part is clear. The ``if'' part is a consequence of (1).
\end{proof}

We will further assume that $\Ga_0$ is perfect on left and right, thus we are in the setting below.
\begin{Setup}\label{setG}
\begin{itemize}
\item $\Ga$ is an Adams positively graded dg category of Gorenstein parameter $a$.
\item $\Ga_0\in\per^\Z\!\Ga$ and $\Ga_0\in\per^\Z\!\Ga^\op$.
\end{itemize}
\end{Setup}

We can define (graded) cluster categories in this setting.
\begin{Def}\label{CZ}
In the setting of \ref{setG}, we define the {\it graded cluster category} as the Verdier quotient
\[ \rC^\Z(\Ga):=\per^\Z\!\Ga/\thick\{\Ga_0(i)\mid i\in\Z\}. \]
Also, the (ungraded) {\it cluster category} in this setting is defined as
\[ \rC(\Ga):=\per\Ga/\thick\Ga_0. \]
\end{Def}
There is a natural forgetful functor $\rC^\Z(\Ga)\to\rC(\Ga)$ and the ungraded cluster category is the triangulated hull of $\rC^\Z(\Ga)/(1)$.

Let us now turn to stating the main result of this appendix.
Recall that a stable $t$-structure $\T=\X\perp\Y$ in a triangulated category $\T$ is also called a {\it semi-orthogonal decomposition}. We will use the notation $\T=\X_1\perp\cdots\perp\X_n$ if $\T$ is decomposed into more components.
\begin{Thm}\label{B}
Let $\Ga$ be a graded dg category satisfying \ref{setG}, and let $T=\bigoplus_{i=0}^{a-1}\Ga(i)$. Define a dg category $\A$ and an $(\A,\A)$-bimodule $U$ by
\[ \A=\REnd_\Ga^\Z(T) =\begin{pmatrix}\Ga_0&0&\cdots&0\\ \Ga_1&\Ga_0&\cdots&0\\ \vdots&\vdots&\ddots&\vdots\\ \Ga_{a-1}&\Ga_{a-2}&\cdots&\Ga_0\end{pmatrix}, \quad
U=\RHom_\Ga^\Z(T,T(1))=\begin{pmatrix}	\Ga_{1} & \Ga_0&\cdots& 0\\ \vdots& \vdots&\ddots&\vdots \\ \Ga_{a-1}&\Ga_{a-2}&\cdots&\Ga_0 \\ \Ga_{a}&\Ga_{a-1}&\cdots&\Ga_{1}\end{pmatrix}. \]
\begin{enumerate}
\item\label{equiv} The functor $-\lotimes_\A T$ gives an equivalence $\per\A\xsimeq\rC^\Z(\Ga)$.
\item\label{SOD} There is a semi-orthogonal decomposition $\rC^\Z(\Ga)=\thick\Ga(a-1)\perp\cdots\perp\thick\Ga(1)\perp\thick\Ga$.
\item\label{U} There exists a commutative diagram below, where $(-)_\triangle$ is the triangulated hull.
\[ \xymatrix@R=5mm@!C=20mm{
	\rC^\Z(\Ga)\ar@{-}[d]^-\rsimeq\ar[r]^-{(1)}&\rC^\Z(\Ga)\ar@{-}[d]^-\rsimeq\ar[r]&\rC(\Ga)\ar@{-}[d]^-\rsimeq\\
	\per\A\ar[r]^-{-\lotimes_\A U}&\per\A\ar[r]&(\per\A/-\lotimes_\A U)_\triangle } \]
\end{enumerate}
\end{Thm}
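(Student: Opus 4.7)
The plan is to exhibit $T=\bigoplus_{i=0}^{a-1}\Ga(i)$ as a (dg) tilting object of $\rC^\Z(\Ga)$ whose endomorphism dg algebra is precisely $\A$; from this (1) follows by Keller's tilting theorem, (2) from the ordering of the summands of $T$ together with the Hom vanishing, and (3) by bookkeeping of the shift action $(1)$.

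The first ingredient will be a structural lemma: $\Ga_0\in\thick\{\Ga(-i)\mid 0\leq i\leq a\}\subset\per^\Z\!\Ga$, and moreover both extreme objects $\Ga$ and $\Ga(-a)$ appear essentially in a minimal resolution. Since $\Ga$ is positively graded and $\Ga_0$ is concentrated in (Adams) degree $0$, a minimal resolution of $\Ga_0$ uses only $\Ga(-i)$ with $i\geq 0$, covering the degree $0$ part by $\Ga$. The upper bound $i\leq a$ is forced by the Gorenstein-parameter condition: if a resolution used $\Ga(-i)$ with $i>a$, then $\RHom_\Ga(\Ga_0,\Ga)$ would have a component in Adams degree $>a$, contradicting $\RHom_\Ga(\Ga_0,\Ga)\in\rD^\Z(\Ga^\op)_{-a}$ (\ref{GP}, \ref{+}); the same computation shows $\Ga(-a)$ does appear nontrivially, since otherwise $\RHom(\Ga_0,\Ga)$ would be concentrated in degree strictly greater than $-a$.

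From this structural result one obtains generation: shifting by $(m)$ gives $\Ga_0(m)\in\thick\{\Ga(m-i)\mid 0\leq i\leq a\}$ in $\per^\Z\!\Ga$, and because $\Ga_0(m)$ vanishes in $\rC^\Z(\Ga)$, the extreme term of this relation can be moved to the other side. Specifically, varying $m\geq a$ upward expresses $\Ga(m)$ in $\rC^\Z(\Ga)$ as an iterated extension of $\Ga(m-1),\dots,\Ga(m-a)$, while varying $m\leq0$ downward expresses $\Ga(m-a)$ in terms of $\Ga(m),\dots,\Ga(m-a+1)$. Iterating both directions, every $\Ga(n)$ lies in $\thick\{\Ga(0),\dots,\Ga(a-1)\}$ in $\rC^\Z(\Ga)$, so $\thick T=\rC^\Z(\Ga)$, settling the generation half of (2).

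The central technical step is to compute morphisms in the Verdier quotient. In $\per^\Z\!\Ga$ one has $\RHom_{\per^\Z\!\Ga}(\Ga(i),\Ga(j)[n])=H^n(\Ga_{j-i})$, which matches the entries of $\A$ in cohomological degree $0$ and vanishes for $j<i$ by positivity of $\Ga$. The aim is to show that the natural map
\[ \A=\REnd^\Z_{\per^\Z\!\Ga}(T)\longrightarrow\REnd_{\rC^\Z(\Ga)}(T) \]
is a quasi-isomorphism and in particular that $\Hom_{\rC^\Z(\Ga)}(\Ga(j),\Ga(i)[*])=0$ for $0\leq i<j\leq a-1$. I would analyze the colimit of roofs $X\to\Ga(j)$ with cone in $\F:=\thick\{\Ga_0(m)\}$: using the Gorenstein-parameter identification $\RHom_\Ga(\Ga_0,\Ga(k))\in\rD^\Z(k)_{-a-k}$ (perfectness of $\Ga_0$ on both sides gives $\RHom_\Ga(\Ga_0,-)=-\lotimes_\Ga D\Ga_0$ with $D\Ga_0$ concentrated in Adams degree $-a$), factorizations of would-be morphisms through $\Ga_0(m)$ force $m-i=a$, and the resulting contributions only reproduce the ``on-diagonal'' Hom values $\A_{ij}=\Ga_{j-i}$ already present in $\per^\Z\!\Ga$ (as an instance of Serre-type duality for $\Ga$). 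The claimed vanishing for $j>i$ then persists, and the SOD in (2) follows from (2)-generation plus this Hom vanishing.

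With $T$ recognized as a tilting object with $\REnd_{\rC^\Z(\Ga)}(T)\simeq\A$, Keller's tilting theorem gives (1) via $-\lotimes_\A T\colon\per\A\xsimeq\rC^\Z(\Ga)$. For (3), the equality $U=\RHom^\Z(T,T(1))$ shows that the autoequivalence $(1)$ of $\rC^\Z(\Ga)$ transports under the equivalence of (1) to $-\lotimes_\A U$ on $\per\A$, making the left square commute; the ungraded cluster category $\rC(\Ga)$ is then the (triangulated hull of the) orbit category under $(1)$, equivalently $(\per\A/-\lotimes_\A U)_\triangle$, yielding the right square. The main obstacle is the Hom computation in Step 3, because $\F$ does not split along the natural stable $t$-structures $\per^{\leq l}\!\Ga\perp\per^{\geq l+1}\!\Ga$: the objects $\Ga_0(m)$ cross multiple degrees, so ruling out spurious morphisms in the quotient requires genuinely using the Gorenstein-parameter condition rather than a purely $t$-structural argument.
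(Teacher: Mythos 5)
You have the right overall strategy — $T$ as a generating/tilting object with $\REnd^\Z_{\rC^\Z(\Ga)}(T)\simeq\A$, from which (1)--(3) follow — and this is indeed the paper's approach. But there are two substantive gaps.

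\textbf{Generation.} Your structural lemma gives $\Ga_0\in\thick\{\Ga(-i)\mid 0\leq i\leq a\}$ with $P_0=\Ga$ as the initial term; passing to $\rC^\Z(\Ga)$ this gives $\Ga\in\thick\{\Ga(-1),\ldots,\Ga(-a)\}$, i.e., iteration \emph{upward}. Your proposed iteration \emph{downward} (``express $\Ga(m-a)$ in terms of $\Ga(m),\dots,\Ga(m-a+1)$'') is not a consequence of this: in the triangles $X_{i+1}\to P_i\to X_i$, the term $\Ga(-a)$ may occur in any of the intermediate $P_1,\ldots,P_n$, not only as a ``last'' term you can move to the other side. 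The paper handles the downward direction not by re-running the same triangles but by observing (Lemma~\ref{star}) that $(-)^\ast=\RHom_\Ga(-,\Ga)$ induces a duality $\rC^\Z(\Ga)\leftrightarrow\rC^\Z(\Ga^\op)$, reducing to the upward direction on the opposite side. You should invoke this duality rather than an informal ``iterate both ways.''

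\textbf{The Hom computation.} You flag this as ``the main obstacle'' and then leave it essentially unresolved: the sketch about ``factorizations of would-be morphisms through $\Ga_0(m)$'' only addresses generators of $\F=\thick\Ga_0(\Z)$, not arbitrary objects of the thick subcategory, and so does not bound all roofs. Your closing remark that the argument ``requires genuinely using the Gorenstein-parameter condition rather than a purely $t$-structural argument'' actually points in the wrong direction: the paper's fix \emph{is} $t$-structural, but with the $t$-structure placed on $\F$ itself rather than on $\per^\Z\!\Ga$. Concretely (Lemma~\ref{F}), one uses the decomposition $\thick\Ga_0(\Z)=\thick\Ga_0(\leq a-1)\perp\thick\Ga_0(\geq a)$ to truncate the obstruction object $Z$; the left piece kills maps into $Y\in\per^{\leq0}\!\Ga$ by the Gorenstein-parameter condition (which makes $\RHom_\Ga(\Ga_0,\Ga)$ live in Adams degree $-a$), and the right piece kills maps from $X\in\per^{\geq-a+1}\!\Ga$ by positivity. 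This yields the clean statement that $\Hom_{\rD^\Z}(X,Y)\to\Hom_{\rC^\Z}(X,Y)$ is an isomorphism on this window, which applied to $X,Y\in\add T$ gives exactly $\REnd_{\rC^\Z(\Ga)}^\Z(T)\simeq\A$ and the SOD vanishing simultaneously. One more caution: in your sketch you write $\RHom_\Ga(\Ga_0,-)=-\lotimes_\Ga D\Ga_0$; the setup \ref{setG} does not assume $\Ga_0$ is proper, so if $D$ means the $k$-linear dual this is not available. You should use $\RHom_\Ga(\Ga_0,\Ga)\lotimes_\Ga-$, which requires only perfectness. Finally, for part (3) the paper appeals to \cite[2.8]{haI} to realize the graded cluster category as a dg orbit category; ``bookkeeping of the shift action'' is not enough to conclude the right square of the diagram.
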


We start the proof of \ref{B} with an observation on a resolution.
\begin{Lem}\label{cof}
\begin{enumerate}
\item If $X\in\per^\Z\!\Ga$ is concentrated in Adams degree $\geq0$, then $X\in\per^{\geq0}\!\Ga$.
\item There exist triangles
\[ \xymatrix@R=1mm{
	X_1\ar[r]&P_0\ar[r]&\Ga_0\\
	X_2\ar[r]&P_1\ar[r]\ar@{}[dd]|-{\vdots}&X_1\\
	\\
	P_n\ar[r]&P_{n-1}\ar[r]&X_{n-1} } \]
in $\per^\Z\!\Ga$ for some $n\geq0$ with $P_0=\Ga$ and $P_1,\ldots,P_n\in\add\{\Ga(-i)\mid 1\leq i\leq a\}$.
\end{enumerate}
\end{Lem}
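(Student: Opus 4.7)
The plan is to prove (1) by a $\Hom$-vanishing and cone-of-zero argument, then to use (1) together with the Gorenstein parameter condition and biduality for perfect objects to prove (2).

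For (1), I would apply the stable $t$-structure $\per^\Z\Ga=\per^{\leq-1}\Ga\perp\per^{\geq 0}\Ga$ with truncation triangle $X_{\leq-1}\to X\to X_{\geq 0}$. By Yoneda, for each generator $\Ga(-,A)(l)[k]$ of $\per^{\leq-1}\Ga$ (so $l\geq 1$), one computes $\Hom^\Z_\Ga(\Ga(-,A)(l)[k],X)=H^{-k}(X(A)_{-l})=0$, since $X$ is concentrated in Adams degree $\geq 0$. Hence $\Hom(\per^{\leq-1}\Ga,X)=0$, and the map $X_{\leq-1}\to X$ vanishes. The cone of a zero map is a direct sum, so $X_{\geq 0}\simeq X\oplus X_{\leq-1}[1]$; this makes $X_{\leq-1}[1]$ a direct summand of $X_{\geq 0}\in\per^{\geq 0}\Ga$ while itself lying in $\per^{\leq-1}\Ga$. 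Therefore $X_{\leq-1}[1]\in\per^{\leq-1}\Ga\cap\per^{\geq 0}\Ga=0$, so $X_{\leq-1}=0$ and $X\in\per^{\geq 0}\Ga$.

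For (2), the core step is to show the intermediate claim $\Ga_0\in\per^{[0,a]}\Ga:=\per^{\leq a}\Ga\cap\per^{\geq 0}\Ga$. I would apply the stable $t$-structure $\per^\Z\Ga=\per^{\leq a}\Ga\perp\per^{\geq a+1}\Ga$ to $\Ga_0$, obtaining $Y\to\Ga_0\to Z$, and dualise by $\RHom_\Ga(-,\Ga)$. Since this contravariant functor sends $\per^{\leq a}\Ga$ to $\per^{\geq-a}\Ga^\op$ and $\per^{\geq a+1}\Ga$ to $\per^{\leq-a-1}\Ga^\op$, the dualised triangle is precisely the truncation of $\RHom(\Ga_0,\Ga)$ with respect to the stable $t$-structure $\per^\Z\Ga^\op=\per^{\leq-a-1}\Ga^\op\perp\per^{\geq-a}\Ga^\op$. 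The Gorenstein parameter condition asserts that $\RHom(\Ga_0,\Ga)$ is concentrated in Adams degree $-a$; being perfect over $\Ga^\op$ (as $\Ga_0\in\per^\Z\Ga^\op$) and concentrated in degree $\geq-a$, it lies in $\per^{\geq-a}\Ga^\op$ by (1) applied to $\Ga^\op$ after an Adams shift. Hence its $\per^{\leq-a-1}$-truncation $\RHom(Z,\Ga)$ vanishes, and biduality for perfect objects forces $Z=0$, giving $\Ga_0\in\per^{[0,a]}\Ga$.

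The resolution is then built iteratively. Set $P_0=\Ga$ with the canonical augmentation $\Ga\to\Ga_0$; the cocone $X_1$ lies in $\per^{[0,a]}\Ga$ (as a cone within the thick subcategory containing $\Ga$ and $\Ga_0$) and in $\per^{\geq 1}\Ga$ by (1), so $X_1\in\per^{[1,a]}\Ga=\thick\{\Ga(-,A)(-i):1\leq i\leq a\}$. At each subsequent step, choose $P_j\in\add\{\Ga(-,A)(-i)[m]:1\leq i\leq a\}$ together with a morphism $P_j\to X_{j-1}$ hitting generators of $X_{j-1}$, and let $X_j$ be the cocone, which remains in $\per^{[1,a]}\Ga$. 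Perfectness of $\Ga_0$ ensures termination in finitely many steps, matching the statement's convention with final triangle $P_n\to P_{n-1}\to X_{n-1}$. The main obstacle is the claim $\Ga_0\in\per^{[0,a]}\Ga$: verifying that $\RHom_\Ga(-,\Ga)$ intertwines the two stable $t$-structures compatibly, and using biduality to pass from $\RHom(Z,\Ga)=0$ to $Z=0$, crucially depend on both the Gorenstein parameter condition and the two-sided perfectness $\Ga_0\in\per^\Z\Ga\cap\per^\Z\Ga^\op$.
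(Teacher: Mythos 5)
Your proof is correct and takes essentially the same approach as the paper. For (1), both arguments amount to showing $\Hom(\per^{<0}\Ga,X)=0$ and then invoking the stable $t$-structure (the paper stops there; you spell out the splitting and intersection-is-zero reasoning). For (2), you first establish $\Ga_0\in\per^{[0,a]}\Ga$ by dualising the truncation $Y\to\Ga_0\to Z$ and using biduality to kill $Z$, whereas the paper sets $X_1=\Ga_{\geq1}$ directly and dualises the triangle $X_1\to\Ga\to\Ga_0$ to get $\Ga_0^\ast\to\Ga\to X_1^\ast$, concluding $X_1^\ast\in\per^{\geq-a}\Ga^\op$; the two routes are mechanically equivalent (both rest on (1) applied to $\Ga^\op$ after an Adams shift, the Gorenstein-parameter condition, and the fact that $(-)^\ast$ is a contravariant duality on perfect modules intertwining the grading-wise $t$-structures), and both leave the remaining iteration and its termination essentially implicit once $X_1\in\per^{[1,a]}\Ga$ is established. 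One small inaccuracy in your justification: the reason $\RHom_\Ga(\Ga_0,\Ga)$ is perfect over $\Ga^\op$ is that $\Ga_0\in\per^\Z\Ga$ and the duality $(-)^\ast\colon\per^\Z\Ga\leftrightarrow\per^\Z\Ga^\op$ preserves perfectness — not the hypothesis $\Ga_0\in\per^\Z\Ga^\op$, which is a separate condition (needed elsewhere, e.g., in the dual argument and in \ref{star}). This slip does not affect the validity of the proof.
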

\begin{proof}
	(1)  If $X$ is concentrated in Adams degree $\geq0$, then $\RHom_\Ga^\Z(P,X)=0$ for all $P\in\per^{<0}\!\Ga$. Then we have the assertion by the stable $t$-structure $\per^\Z\!\Ga=\per^{<0}\!\Ga\perp\per^{\geq0}\!\Ga$.
	
	(2)  One may take the first triangle as just $\Ga_{\geq1}\to\Ga\to\Ga_0$. We have to show that $X_1\in\per^{[1,a]}\!\Ga$. Since $X_1$ is concentrated in Adams degree $\geq1$, we have $X_1\in\per^{\geq1}\!\Ga$ by (1). It remains to show $X_1\in\per^{\leq a}\!\Ga$, in other words, $X_1^\ast\in\per^{\geq-a}\!\Ga^\op$. Dualizing the first triangle, we get
	\[ \xymatrix{ \Ga_0^\ast\ar[r]&\Ga\ar[r]&X_1^\ast }. \]
	Since $\Ga$ has Gorenstein parameter $a$, the first term is concentrated in Adams degree $-a$, in particular lies in $\per^{\geq-a}\!\Ga^\op$ by (1). We see by the triangle above that so is $X_1^\ast$, which finishes the proof.
\end{proof}

\begin{Lem}\label{star}
In the setting of \ref{setG}, the duality $(-)^\ast=\RHom_\Ga(-,\Ga)\colon\per^\Z\!\Ga\leftrightarrow\per^\Z\!\Ga^\op$ restricts to a duality $\thick\Ga_0\leftrightarrow\thick\Ga_0(a)$.
\end{Lem}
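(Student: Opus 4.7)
The strategy is to characterize $\thick\Ga_0$ as the perfect $\Ga$-modules concentrated in Adams degree zero, and analogously on the opposite side, after which the duality reads off from the Gorenstein parameter condition combined with biduality on perfect objects.

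First I would verify
\[ \thick\Ga_0 = \per^\Z\!\Ga\cap\rD^\Z(\Ga)_0 \qquad\text{and}\qquad \thick\Ga_0(a) = \per^\Z\!\Ga^\op\cap\rD^\Z(\Ga^\op)_{-a}. \]
The inclusions $\subseteq$ are immediate: by \ref{setG}, both $\Ga_0\in\per^\Z\!\Ga$ and $\Ga_0(a)\in\per^\Z\!\Ga^\op$, and both right-hand sides are thick subcategories. For $\supseteq$, \ref{+}(\ref{gen}) gives $\rD^\Z(\Ga)_0=\Loc\Ga_0$, and since this subcategory is closed under coproducts inside $\rD^\Z(\Ga)$, any $M\in\per^\Z\!\Ga\cap\rD^\Z(\Ga)_0$ remains compact when viewed as an object of $\Loc\Ga_0\simeq\rD(\Ga_0)$, hence lies in $\thick\Ga_0=\per\Ga_0$. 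The same reasoning on the opposite side gives the second identity.

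Next, the Gorenstein parameter condition precisely says that $(-)^\ast$ maps $\rD^\Z(\Ga)_0$ into $\rD^\Z(\Ga^\op)_{-a}$; combined with the preservation of perfection, this immediately yields a well-defined restriction $(-)^\ast\colon\thick\Ga_0\to\thick\Ga_0(a)$.

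The main obstacle is the reverse direction: showing that the quasi-inverse $(-)^\vee=\RHom_{\Ga^\op}(-,\Ga)$ restricts to a functor $\thick\Ga_0(a)\to\thick\Ga_0$. This amounts to verifying the symmetric, ``opposite-sided'' Gorenstein parameter $\Ga_0^\vee\in\rD^\Z(\Ga)_{-a}$, which is not quite among the hypotheses of \ref{setG}. To derive it I would run the argument scheme of \ref{cof} on the $\Ga^\op$-side, the hypothesis $\Ga_0\in\per^\Z\!\Ga^\op$ allowing the same proof to produce a resolution of $\Ga_0$ as a $\Ga^\op$-module with terms in $\add\{\Ga(-i)\mid 0\le i\le a\}$, the upper bound $a$ being transported from the already-proven dual statement via the biduality $(-)^{\ast\vee}=\mathrm{id}$ on perfect modules. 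Once both $(-)^\ast$ and $(-)^\vee$ are shown to restrict appropriately, biduality on perfect objects produces the required mutually inverse contravariant equivalences, establishing the claimed duality $\thick\Ga_0\leftrightarrow\thick\Ga_0(a)$.
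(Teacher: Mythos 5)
Your forward direction is essentially the paper's argument, differently packaged: both identify $\thick\Ga_0(a)$ with the perfect objects of $\rD^\Z(\Ga^\op)$ concentrated in Adams degree $-a$, using $\rD^\Z(\Ga^\op)_{-a}=\Loc\Ga_0(a)$ from \ref{+}(\ref{gen}) together with the fact that a compact object lying in a localizing subcategory is compact there, after which the Gorenstein parameter hypothesis immediately gives $(-)^\ast(\thick\Ga_0)\subseteq\thick\Ga_0(a)$.

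The reverse direction is where your proposal has a genuine gap. You correctly observe that upgrading the fully faithful $(-)^\ast\colon\thick\Ga_0\to\thick\Ga_0(a)$ to a duality needs the opposite-sided condition $\Ga_0^\vee\in\rD^\Z(\Ga)_{-a}$, which is not a verbatim hypothesis of \ref{setG} (the paper disposes of this with a one-line appeal to ``left-right symmetry of our setting''). But your proposed derivation is circular. Running the argument of \ref{cof}(2) on the $\Ga^\op$-side uses the Gorenstein parameter condition precisely at the step showing $X_1\in\per^{\leq a}$: one dualizes $\Ga_{\geq1}\to\Ga\to\Ga_0$ and needs the first term of the dualized triangle — here $\Ga_0^\vee$ — to lie in Adams degree $-a$, which is exactly the opposite-sided statement you are trying to prove. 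The appeal to biduality does not close the loop: $(-)^\ast$ carries $\per^{[0,a]}\!\Ga$ to $\per^{[-a,0]}\!\Ga^\op$, i.e.\ it controls $\Ga_0^\ast$ (which you already knew from the forward argument), not $\Ga_0$ viewed as a left $\Ga$-module, so it gives no bound beyond what \ref{cof}(1) (positive gradedness only) provides. A complete argument must either add the two-sided Gorenstein parameter condition to \ref{setG} explicitly — it is automatic in the paper's intended applications, where it descends from a bimodule isomorphism as in \ref{CYGP} — or give a genuine proof of the opposite-sided condition, which your sketch does not do.
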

\begin{proof}
	By the left-right symmetry of our setting, we only have to show that $(-)^\ast$ takes $\thick\Ga_0$ to $\thick\Ga_0(a)$. Since $\Ga$ has Gorenstein parameter $a$, we have $\Ga_0^\ast$ is concentrated in (Adams) degree $-a$, thus lies in the localizing subcategory generated by $\Ga_0(a)\in\rD^\Z(\Ga^\op)$ by \ref{+}(\ref{GP2}). Now by our assumption we have that $\Ga_0^\ast$ is compact in $\rD^\Z(\Ga^\op)$, so it must be in the thick subcategory generated by $\Ga_0$.
\end{proof}	

This leads to the following important step.
\begin{Prop}\label{GEN}
The category $T=\bigoplus_{i=0}^{a-1}\Ga(i)$ generates $\rC^\Z(\Ga)$.
\end{Prop}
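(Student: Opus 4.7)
The plan is to reduce the claim to the statement that every shift $\Ga(l)$ lies in the thick subcategory of $\rC^\Z(\Ga)$ generated by $T=\bigoplus_{i=0}^{a-1}\Ga(i)$. Since $\per^\Z\!\Ga$ is by definition the thick subcategory of $\rD^\Z(\Ga)$ generated by all shifts $\Ga(l)$ for $l\in\Z$, and the graded cluster category $\rC^\Z(\Ga)$ is obtained by further killing the $\Ga_0(i)$, it suffices to show that in $\rC^\Z(\Ga)$ one has $\Ga(l)\in\thick\{\Ga,\Ga(1),\ldots,\Ga(a-1)\}$ for every $l\in\Z$.

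The key input is \ref{cof}(\ref{gen}) applied to $\Ga_0$: the iterated triangles there exhibit $\Ga_0$ as an object of $\thick\{\Ga,\Ga(-1),\ldots,\Ga(-a)\}\subset\per^\Z\!\Ga$, since $P_0=\Ga$ and $P_1,\ldots,P_n\in\add\{\Ga(-i)\mid 1\le i\le a\}$. Shifting this by $(j+a)$ for $j\in\Z$ yields
\[ \Ga_0(j+a)\in\thick\{\Ga(j),\Ga(j+1),\ldots,\Ga(j+a)\}\subset\per^\Z\!\Ga. \]
Passing to the Verdier quotient $\rC^\Z(\Ga)=\per^\Z\!\Ga/\thick\{\Ga_0(i)\mid i\in\Z\}$, the object $\Ga_0(j+a)$ becomes zero, so the iterated triangles (each having two out of three vertices in our target category) give a relation which can be solved either for the top shift or the bottom shift. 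Explicitly, one obtains
\[ \Ga(j+a)\in\thick\{\Ga(j),\ldots,\Ga(j+a-1)\}\quad\text{and}\quad \Ga(j)\in\thick\{\Ga(j+1),\ldots,\Ga(j+a)\} \]
inside $\rC^\Z(\Ga)$, for every $j\in\Z$.

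From here a straightforward induction on $|l|$ finishes the proof: the first relation with $j=0$ shows $\Ga(a)\in\thick\{\Ga,\ldots,\Ga(a-1)\}$, and inductively with $j=1,2,\ldots$ gives the same conclusion for all $\Ga(l)$ with $l\ge a$; symmetrically, the second relation with $j=-1$ gives $\Ga(-1)\in\thick\{\Ga,\ldots,\Ga(a-1)\}$, and inductively for all $l<0$. Combining, every $\Ga(l)$ belongs to $\thick T$ in $\rC^\Z(\Ga)$, which together with the observation above proves $\rC^\Z(\Ga)=\thick T$. I expect no serious obstacle here; the only point requiring care is the bookkeeping which extracts the two-sided ``rearrangement'' of the resolution triangle inside the Verdier quotient, but this is routine since $\Ga_0(j+a)\simeq 0$ in $\rC^\Z(\Ga)$.
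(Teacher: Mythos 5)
Your argument for the ``positive'' direction is exactly what the paper does: the iterated triangles of \ref{cof}(2) exhibit $\Ga_0$ as a member of $\thick\{\Ga,\Ga(-1),\dots,\Ga(-a)\}$, and since $\Ga_0\simeq 0$ in $\rC^\Z(\Ga)$ one chases the chain from the bottom up to conclude $\Ga\in\thick\{\Ga(-1),\dots,\Ga(-a)\}$, i.e.\ $\Ga(a)\in\thick T$, and then inducts on $l\geq a$.

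The gap is in the claim that the same triangles can also be ``solved for the bottom shift'' to yield $\Ga(j)\in\thick\{\Ga(j+1),\dots,\Ga(j+a)\}$. The resolution in \ref{cof}(2) is one-directional: it tells you $P_1,\dots,P_n\in\add\{\Ga(-i)\mid 1\le i\le a\}$ but gives no control over which summands $\Ga(-i)$ occur in which $P_j$. In particular $\Ga(-a)$ may appear in several of the $P_j$ (or in none of them), so knowing that $\Ga_0\simeq 0$ lies in $\thick\{\Ga,\dots,\Ga(-a)\}$ does \emph{not} let you isolate $\Ga(-a)$ and conclude $\Ga(-a)\in\thick\{\Ga,\dots,\Ga(-a+1)\}$. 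Triangles let you solve for the third vertex in a single triangle, but there is no ``solve for the last projective term'' principle for a chain of syzygy triangles, precisely because the intermediate syzygies $X_i$ are not themselves shifts of $\Ga$.

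What the paper uses instead is Lemma \ref{star}: the duality $(-)^\ast=\RHom_\Ga(-,\Ga)$ carries $\thick\Ga_0$ into $\thick\Ga_0(a)$, hence descends to a duality $\rC^\Z(\Ga)\leftrightarrow\rC^\Z(\Ga^\op)$. Applying \ref{cof}(2) over $\Ga^\op$ and dualizing back then produces the ``coresolution'' direction: a chain with $Q_i^\ast\in\add\{\Ga(i)\mid 1\le i\le a\}$ and with $\Ga_0^\ast\simeq 0$ in $\rC^\Z(\Ga)$, from which $\Ga\in\thick\{\Ga(1),\dots,\Ga(a)\}$ and hence the negative shifts follow. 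This is not routine bookkeeping; it uses the Gorenstein-parameter hypothesis through \ref{star} (which you never invoke), and without it the backward direction is simply false in general. So the proposal as written does not prove the proposition.
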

\begin{proof}
	The graded cluster category $\rC^\Z(\Ga)$ is certainly generated by $\add\{\Ga(i)\mid i\in\Z\}$. By \ref{star} the duality $(-)^\ast\colon\per^\Z\!\Ga\leftrightarrow\per^\Z\!\Ga^\op$ induces a duality $\rC^\Z(\Ga)\leftrightarrow\rC^\Z(\Ga^\op)$, so it is enough to prove that $\Ga(a)\in\thick T$. Noting that $\Ga_0=0$ in $\rC^\Z(\Ga)$, the triangles in \ref{cof}(2) show $\Ga\in\thick\{\Ga(-i)\mid 1\leq i\leq a\}$.
\end{proof}

The next lemma allows us to compute the morphism space in $\rC^\Z(\Ga)$. For brevity, we denote $\thick\Ga(\Z):=\thick\{\Ga(i)\mid i\in\Z\}$, $\thick\Ga(\geq\!0):=\thick\{\Ga(i)\mid i\geq0\}$, and so on.
\begin{Lem}\label{F}
The natural map $\Hom_{\rD^\Z(\Ga)}(X,Y)\to\Hom_{\rC^\Z(\Ga)}(X,Y)$ is an isomorphism for each $X\in\per^{\geq-a+1}\!\Ga$ and $Y\in\per^{\leq0}\!\Ga$.
\end{Lem}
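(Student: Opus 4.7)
My plan is to reduce the statement to two complementary vanishings in $\rD^\Z(\Ga)$ and then combine them via a two-stage localization. Put $\S_+:=\thick\{\Ga_0(j)[k]\mid j\geq a,\,k\in\Z\}$ and $\S_-:=\thick\{\Ga_0(j)[k]\mid j\leq a-1,\,k\in\Z\}$, so that $\S:=\thick\{\Ga_0(j)\mid j\in\Z\}$ is the thick subcategory generated by $\S_+\cup\S_-$. I will show
\[ \Hom_{\rD^\Z(\Ga)}(X,\S_+)=0\qquad\text{and}\qquad\Hom_{\rD^\Z(\Ga)}(\S_-,Y)=0. \]

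The first is a direct Yoneda computation: $\Hom_{\rD^\Z(\Ga)}(\Ga(l),\Ga_0(j)[k])\simeq H^k((\Ga_0)_{j-l})$ vanishes unless $l=j$ because $\Ga_0$ is concentrated in Adams degree $0$, and since $X\in\per^{\geq-a+1}\!\Ga$ is generated by the objects $\Ga(l)$ with $l\leq a-1$ while $j\geq a$, no such $l$ can equal $j$. The second uses the Gorenstein parameter: by \ref{+}(\ref{GP2}), $\RHom_\Ga(\Ga_0,\Ga)\in\rD^\Z(\Ga^\op)_{-a}$, which translates to $\Hom_{\rD^\Z(\Ga)}(\Ga_0,\Ga(m)[k])=0$ unless $m=-a$, so $\Hom(\Ga_0(j),\Ga(l)[k])=0$ unless $l=j-a$; since $Y\in\per^{\leq0}\!\Ga$ is generated by $\Ga(l)$ with $l\geq0$ and $j\leq a-1$ forces $l<0$, this is impossible.

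To conclude, I would pass through the two-stage quotient $\rD^\Z(\Ga)\to\rD^\Z(\Ga)/\S_-\to\rC^\Z(\Ga)$. Verdier's criterion, combined with $\Hom(\S_-,Y)=0$, gives $\Hom_{\rD^\Z(\Ga)}(X,Y)\xrightarrow{\sim}\Hom_{\rD^\Z(\Ga)/\S_-}(X,Y)$. For the second stage, I would represent a morphism in $\rC^\Z(\Ga)$ by a roof $X\xleftarrow{s}X'\xrightarrow{f}Y$ with $\cone(s)\in\S$ and peel off a filtration of $\cone(s)$ with subquotients $\Ga_0(j)[k]$ by iterated octahedra: the pieces with $j\leq a-1$ are already trivialized at the first stage, while a piece with $j\geq a$ is split off using $\Hom(X,\Ga_0(j)[k])=0$, which forces the connecting map $X\to\Ga_0(j)[k]$ to vanish so that the triangle splits and a direct representative $X\to Y$ emerges. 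Injectivity is symmetric. The main obstacle will be making the octahedral peeling rigorous, since removing a $j\geq a$-piece can take an intermediate fiber out of $\per^{\geq-a+1}\!\Ga$; I must therefore verify that the first vanishing survives in the quotient, i.e.\ that $\Hom_{\rD^\Z(\Ga)/\S_-}(X,\S_+)=0$, which boils down to showing that any roof $X\leftarrow M\to\Ga_0(j)$ with $j\geq a$ and $\cone\in\S_-$ admits a refinement along which the composition becomes null already in $\rD^\Z(\Ga)$.
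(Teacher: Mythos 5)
Your two auxiliary vanishings $\Hom_{\rD^\Z(\Ga)}(X,\S_+)=0$ and $\Hom_{\rD^\Z(\Ga)}(\S_-,Y)=0$ are exactly the ones the paper uses, and your Yoneda and Gorenstein-parameter computations establishing them are correct. But the combination step has a genuine gap, which you yourself flag: you never prove $\Hom_{\rD^\Z(\Ga)/\S_-}(X,\S_+)=0$, and the ``octahedral peeling'' you propose is the wrong tool --- a general object of the \emph{thick} subcategory $\S=\thick\{\Ga_0(i)\mid i\in\Z\}$ need not admit a finite filtration with subquotients of the form $\Ga_0(j)[k]$ (thick subcategories are closed under retracts), so iterated octahedra will not terminate at honest generators. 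What you are missing is a third orthogonality: $\Hom_{\rD^\Z(\Ga)}(\S_-,\S_+)=0$, i.e.\ $\S=\S_-\perp\S_+$ is a stable $t$-structure on $\thick\Ga_0(\Z)$. This holds because for $j'>j$ the object $\Ga_0(j'-j)$ is concentrated in Adams degree $j-j'<0$, while \ref{cof}(2) places $\Ga_0\in\thick\{\Ga(-l)\mid 0\leq l\leq a\}$ and $\RHom^\Z_\Ga(\Ga(-l),\Ga_0(j'-j))=\Ga_0(j'-j)_l=0$ for every $l\geq 0$.

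Once $\S=\S_-\perp\S_+$ is available, the paper's one-pass argument is cleaner than your two-stage reduction: any $Z=\cocone(s)\in\S$ fits in a \emph{single} triangle $Z_-\to Z\to Z_+$ with $Z_\pm\in\S_\pm$; then $\Hom(Z_-,Y)=0$ together with one octahedron replaces the roof by one whose cocone is $Z_+$, and $\Hom(X,Z_+)=0$ collapses it. If you nonetheless insist on the two-stage formulation, the same orthogonality closes your gap directly: for a left roof $X\leftarrow M\to Z_+$ with $D:=\cone(M\to X)\in\S_-$ and $Z_+\in\S_+$, the triangle $D[-1]\to M\to X$ gives an exact sequence $\Hom(X,Z_+)\to\Hom(M,Z_+)\to\Hom(D[-1],Z_+)$, whose outer terms vanish by your first vanishing and by $\Hom(\S_-,\S_+)=0$ respectively, hence so does the middle term. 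So your plan is salvageable, but not as written: the argument hinges on the stable $t$-structure on $\S$, which you did not identify.
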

\begin{proof}
	We first prove injectivity. Suppose that a morphism $X\to Y$ in $\per^\Z\!\Ga$ factors through an object $Z\in\thick\{\Ga_0(i)\mid i\in\Z\}$. We decompose $Z$ as $Z_{\geq-a+1}\to Z\to Z_{\leq-a}$ along the stable $t$-structure $\thick\Ga_0(\Z)=\thick\Ga_0(\leq a-1)\perp\thick\Ga_0(\geq a)$.
	\[ \xymatrix{
		X\ar[dr]\ar[rr]\ar@{-->}[d]&&Y\\
		Z_{\geq-a+1}\ar[r]&Z\ar[r]\ar[ur]&Z_{\leq-a} } \]
	Since $X\in\per^{\geq-a+1}\!\Ga$, we have $\Hom_{\rD^\Z(\Ga)}(X,Z_{\leq-a})=0$, so the morphism $X\to Z$ factors through $Z_{\geq-a+1}$. Now by $Z_{\geq-a+1}\in\thick\Ga_0(\leq a-1)$, $Y\in\per^{\leq0}\!\Ga=\thick\Ga(\geq\!0)$, and the assumption that $\RHom_\Ga(\Ga_0,\Ga)$ is concentrated in Adams degree $-a$, we deduce $\RHom_{\Ga}(Z_{\geq-a+1},Y)$ is concentrated in Adams degree $>0$, hence $\Hom_{\rD^\Z(\Ga)}(Z_{\geq-a+1},Y)=0$.
	
	We next prove surjectivity. Let the diagram
	\[ \xymatrix@R=3mm{
		&W&\\
		X\ar[ur]&&Y\ar[ul]_-s } \]
	with $Z:=\cocone s\in\thick\Ga_0(\Z)$ present a morphism in the Verdier quotient. Truncating $Z$ as above along the stable $t$-structure $\thick\Ga_0(\Z)=\thick\Ga_0(\leq a-1)\perp\thick\Ga_0(\geq a)$ as in the second row below, and noting that $\Hom_{\rD^\Z(\Ga)}(Z_{\geq-a+1},Y)=0$ as in the first step, we get an octahedron
	\[ \xymatrix{
		Z_{\geq-a+1}\ar[r]\ar@{=}[d]&W[-1]\ar[r]\ar[d]&W^\prime[-1]\ar[r]\ar[d]&Z_{\geq-a+1}[1]\ar@{=}[d]\\
		Z_{\geq-a+1}\ar[r]&Z\ar[r]\ar[d]& Z_{\leq-a}\ar[r]\ar[d]&Z_{\geq-a+1}[1]\\
		&Y\ar@{=}[r]&Y&\quad. } \]
	Then the original diagram is equivalent to the following, in which $\cocone s^\prime=Z_{\leq-a}$.
	\[ \xymatrix@R=3mm{
		&W^\prime&\\
		X\ar[ur]&&Y\ar[ul]_-{s^\prime} } \]
	By $\Hom_{\rD^\Z(\Ga)}(X,Z_{\leq-a})=0$ we deduce that the above morphism in $\rC^\Z(\Ga)$ comes from a one in $\rD^\Z(\Ga)$, proving surjectivity.
\end{proof}

Now we are ready to prove \ref{B}.
\begin{proof}[Proof of \ref{B}]
	Let $\C$ be the canonical (graded) dg enhancement of $\rC^\Z(\Ga)$, so that there is a commutative diagram with vertical equivalences
	\[ \xymatrix@R=5mm{
		\per^\Z\!\C\ar[r]\ar@{-}[d]^-\rsimeq&\per\C\ar@{-}[d]^-\rsimeq\\
		\rC^\Z(\Ga)\ar[r]&\rC(\Ga). } \]
	We know by \ref{GEN} that $\rC^\Z(\Ga)$ is generated by $T$, so by \cite[2.8]{haI}, the dg category $\C$ is $\Z$-graded derived Morita equivalent to the dg orbit category of $\REnd_\C^\Z(T)$ by $\RHom_\C^\Z(T,T(1))$.
	
	We have $\REnd^\Z_\C(T)\ysimeq\REnd^\Z_\Ga(T)$ by \ref{F}, which is isomorphic to $\A$, proving (\ref{equiv}).
	To show semi-orthogonality (\ref{SOD}), it is enough to prove $\Hom_{\rC^\Z(\Ga)}(\Ga(i),\Ga[j])=0$ for $1\leq i\leq a-1$ and $j\in\Z$, but this follows from \ref{F} and the fact that $\Ga$ is positively graded. Finally we verify (\ref{U}). The autoequivalence on $\per\A$ corresponding to the degree shift on $\rC^\Z(\Ga)$ is given by the $(\A,\A)$-bimodule $\RHom_\C^\Z(T,T(1))$, which is isomorphic again by \ref{F} to $\RHom_\Ga^\Z(T,T(1))=U$. This gives the commutativity of the left square. Also, the $\Z$-graded quasi-equivalence $\C\simeq\A/U$ (where the right-hand-side is the dg orbit category of $\A$ by $U$) shows that we have a commutative diagram
	\[ \xymatrix@R=5mm{
		\per^\Z\!\C\ar[r]\ar@{-}[d]^-\rsimeq&\per\C\ar@{-}[d]^-\rsimeq\\
		\per\A\ar[r]&(\per\A/-\lotimes_\A U)_\triangle. } \]
	We obtain the commutativity of the right square from the above two diagrams.
\end{proof}

We end this section with some most fundamental examples.
\begin{Ex}
(1)  Let $\Ga$ is a polynomial ring $S=k[x_0,x_1,\ldots,x_n]$ regarded as a graded dg algebra concentrated in cohomological degree $0$ and $\deg x_i=1$ for the Adams grading. 
Then we have $\per^\Z\!\Ga=\Db(\mod^\Z\!S)$, and $\thick\{\Ga_0(i)\mid i\in\Z\}=\Db(\fl^\Z\!S)$, the derived category of finite length graded $S$-modules. Therefore we get $\rC^\Z(\Ga)=\Db(\mod^\Z\!S)/\Db(\fl^\Z\!S)=\Db(\mod^\Z\!S/\fl^\Z\!S)$, which is equivalent to $\Db(\coh\PP^n)$ by Serre's theorem. We see that \ref{B} in this case is Beilinson's theorem. 

(2)  Let $A$ be a smooth, proper, and connective dg algebra, and let $\Pi=\rT^\rL_{A}(\RHom_{A^e}(A,A^e)[d])$ be the $(d+1)$-Calabi-Yau completion. We regard $\Pi$ as a graded dg algebra with the tensor grading. Then $\Pi$ is graded $(d+1)$-Calabi-Yau of Gorenstein parameter $1$ by \cite{Ke11}, see also \ref{MM}(\ref{CYdgalg}).
In this case the graded cluster category is $\rC^\Z(\Pi)=\per^\Z\!\Pi/\thick A(\Z)$, thus it is just a covering of the usual cluster category. The diagram in \ref{B} becomes
\[ \xymatrix{
	\rC^\Z(\Pi)\ar[r]\ar[d]^-\rsimeq&\rC(\Pi)\ar[d]^-\rsimeq\\
	\per A\ar[r]&(\per A/-\lotimes_ADA[-d])_\triangle. } \]
\end{Ex}

We refer to \ref{ExB} for more examples where we compare cluster categories of several different dg algebras.

\end{appendix}
\thebibliography{99}
\bibitem{Am09} C. Amiot, {Cluster categories for algebras of global dimension 2 and quivers with potentional}, Ann. Inst. Fourier, Grenoble 59, no.6 (2009) 2525-2590.
\bibitem{AIR} C. Amiot, O. Iyama, and I. Reiten, {Stable categories of Cohen-Macaulay modules and cluster categories}, Amer. J. Math, 137 (2015) no.3, 813-857.
\bibitem{AS} M. Artin and W. Schelter, {Graded algebras of global dimension 3}, Adv. Math. 66 (1987), no. 2, 171-216.
\bibitem{BD19} C. Brav and T. Dyckerhoff, {Relative Calabi-Yau structures}, Compos. Math. 155 (2019) 372-412.
\bibitem{Br} N. Broomhead, {Dimer models and Calabi-Yau algebras}, Mem. Amer. Math. Soc. 215 (2012) no.1011, viii+86.
\bibitem{BMRRT} A. B. Buan, R. Marsh, M. Reineke, I. Reiten, and G. Todorov, {Tilting theory and cluster combinatorics}, Adv. Math. 204 (2006) 572-618.
\bibitem{FKQ} L. Fan, B. Keller, and Y. Qiu, {On orbit categories with dg enhancement}, arXiv:2405.00093.
\bibitem{Gi} V. Ginzburg, {Calabi-Yau algebras}, arXiv:0612139.
\bibitem{Guo} L. Guo, {Cluster tilting objects in generalized higher cluster categories}, J. Pure Appl. Algebra 215 (2011), no. 9, 2055–2071.
\bibitem{ha3} N. Hanihara, {Cluster categories of formal DG algebras and singularity categories}, Forum of Mathematics, Sigma (2022), Vol. 10:e35 1–50.
\bibitem{ha4} N. Hanihara, {Morita theorem for hereditary Calabi-Yau categories}, Adv. Math. 395 (2022) 108092.
\bibitem{ha6} N. Hanihara, {Non-commutative resolutions for Segre products and Cohen-Macaulay rings of hereditary representation type}, to appear in Trans. Amer. Math. Soc, arXiv:2303.14625.
\bibitem{haI} N. Hanihara and O. Iyama, {Enhanced Auslander-Reiten duality and Morita theorem for singularity categories}, arXiv:2209.14090.
\bibitem{HaIO} N. Hanihara, O. Iyama, and S. Oppermann, {Adams graded dg categories of Gorenstein parameter $1$}, in preparation.
\bibitem{HIO} M. Herschend, O. Iyama, and S. Oppermann, {$n$-representation infinite algebras}, Adv. Math. 252 (2014) 292-342.
\bibitem{IQ} A. Ikeda and Y. Qiu, {$q$-Stability conditions on Calabi-Yau-$\mathbb{X}$ categories}, Compos. Math. 159 (2023), no. 7, 1347--1386.
\bibitem{Iy07b} O. Iyama, {Auslander correspondence}, Adv. Math. 210 (2007) 51-82.
\bibitem{Iy11} O. Iyama, {Cluster tilting for higher Auslander algebras}, Adv. Math. 226 (2011) 1-61.
\bibitem{IW14} O. Iyama and M. Wemyss, {Maximal modifications and Auslander-Reiten duality for non-isolated singularities}, Invent. Math. 197 (2014), no. 3, 521-586.
\bibitem{Ke98} B. Keller, {Invariance and localization for cyclic homology of DG algebras}, J. Pure Appl. Algebra 123 (1998), no. 1-3, 223-273.
\bibitem{Ke05} B. Keller, {On triangulated orbit categories}, Doc. Math. 10 (2005), 551-581.
\bibitem{Ke08} B. Keller, {Calabi-Yau triangulated categories}, in: {Trends in representation theory of algebras and related topics}, EMS series of congress reports, European Mathematical Society, Z\"{u}rich, 2008.
\bibitem{Ke11} B. Keller, {Deformed Calabi-Yau completions}, with an appendix by M. Van den Bergh, J. Reine Angew. Math. 654 (2011) 125-180.
\bibitem{Ke11+} B. Keller, {Erratum to ``Deformed Calabi-Yau completions''}, arXiv:1809.01126.
\bibitem{KMV} B. Keller, D. Murfet, and M. Van den Bergh, {On two examples of Iyama and Yoshino}, Compos. Math. 147 (2011) 591-612.
\bibitem{KS} M. Kontsevich and Y. Soibelman, {Notes on $A_\infty$-algebras, $A_\infty$-categories and non-commutative geometry}, in: Homological mirror symmetry, 153–219, Lecture Notes in Phys, 757, Springer, Berlin, 2009.
\bibitem{Ku07} A. Kuznetsov, {Homological projective duality}, Publ. Math. Inst. Hautes Études Sci. No. 105 (2007), 157–220.
\bibitem{Ku19} A. Kuznetsov, {Calabi-Yau and fractional Calabi-Yau categories}, J. Reine Angew. Math. 753 (2019), 239–267.
\bibitem{MM} H. Minamoto and I. Mori, {The structure of AS-Gorenstein algebras}, Adv. Math. 226 (2011) 4061-4095.
\bibitem{Thi} L.-P. Thibault, {Preprojective algebra structure on skew-group algebras}, Adv. Math. 365 (2020), 107033, 43 pp.
\bibitem{VdB04} M. Van den Bergh, {Non-commutative crepant resolutions}, The legacy of Niels Henrik Abel, 749–770, Springer, Berlin, 2004.
\bibitem{VdB15} M. Van den Bergh, {Calabi-Yau algebras and superpotentials}, Selecta Math. (N.S.) 21 (2015), no. 2, 555-603.
\bibitem{Wu23} Y. Wu, {Relative cluster categories and Higgs categories}, Adv. Math. 424 (2023), Paper No. 109040, 112 pp.
\bibitem{Ye16} W. K. Yeung, {Relative Calabi-Yau completions}, arXiv:1612.06352.
\end{document}